\documentclass[11pt,a4paper]{amsart}

\usepackage[T1]{fontenc}
\usepackage{lmodern}
\usepackage{amsmath}
\usepackage{amssymb,amscd,amsxtra,calc}
\usepackage{mathtools}
\usepackage{mathrsfs}
\usepackage{appendix}
\usepackage{multirow,longtable}
\usepackage{enumitem}
\usepackage{comment}
\usepackage[all]{xy}
\usepackage{tikz-cd}
\usepackage{microtype}
\usepackage[colorinlistoftodos]{todonotes}
\usepackage[colorlinks,linkcolor=blue,anchorcolor=blue,citecolor=green,backref=none]{hyperref}

\theoremstyle{plain}
\newtheorem{theorem}{Theorem}[section]

\newtheorem*{conjecture}{Conjecture}
\newtheorem{corollary}[theorem]{Corollary}
\newtheorem{lemma}[theorem]{Lemma}
\newtheorem{proposition}[theorem]{Proposition}

\theoremstyle{definition}
\newtheorem{example}[theorem]{Example}
\newtheorem{definition}[theorem]{Definition}

\newtheorem*{notation*}{Notation and Terminology}
\newtheorem{remark}[theorem]{Remark}

\theoremstyle{remark}

\numberwithin{equation}{section}
\newcommand{\Z}{\mathbb{Z}}

\newcommand{\nc}{\newcommand}
\nc{\cH}{{\mathcal H}}
\nc{\cA}{{\mathcal A}}
\nc{\cG}{{\mathcal G}}
\nc{\cC}{{\mathcal C}}
\nc{\cO}{{\mathcal O}}
\nc{\cI}{{\mathcal I}}
\nc{\cB}{{\mathcal B}}
\nc{\cY}{{\mathcal Y}}
\nc{\cK}{{\mathcal K}}
\nc{\cX}{{\mathcal X}}
\nc{\cS}{{\mathcal S}}
\nc{\cE}{{\mathcal E}}
\nc{\cF}{{\mathcal F}}
\nc{\cZ}{{\mathcal Z}}
\nc{\cQ}{{\mathcal Q}}
\nc{\cN}{{\mathcal N}}
\nc{\cP}{{\mathcal P}}
\nc{\cL}{{\mathcal L}}
\nc{\cM}{{\mathcal M}}
\nc{\cT}{{\mathcal T}}
\nc{\cW}{{\mathcal W}}
\nc{\cU}{{\mathcal U}}
\nc{\cJ}{{\mathcal J}}
\nc{\cV}{{\mathcal V}}

\nc{\bH}{{\mathbb H}}
\nc{\bA}{{\mathbb A}}
\nc{\bG}{{\mathbb G}}
\nc{\bC}{{\mathbb C}}
\nc{\bO}{{\mathbb O}}
\nc{\bI}{{\mathbb I}}
\nc{\bB}{{\mathbb B}}
\nc{\bY}{{\mathbb Y}}
\nc{\bK}{{\mathbb K}}
\nc{\bX}{{\mathbb X}}
\nc{\bS}{{\mathbb S}}
\nc{\bE}{{\mathbb E}}
\nc{\bF}{{\mathbb F}}
\nc{\bZ}{{\mathbb Z}}
\nc{\bQ}{{\mathbb Q}}
\nc{\bN}{{\mathbb N}}
\nc{\bP}{{\mathbb P}}
\nc{\bL}{{\mathbb L}}
\nc{\bM}{{\mathbb M}}
\nc{\bT}{{\mathbb T}}
\nc{\bW}{{\mathbb W}}
\nc{\bU}{{\mathbb U}}
\nc{\bD}{{\mathbb D}}
\nc{\bJ}{{\mathbb J}}
\nc{\bV}{{\mathbb V}}
\nc{\bbZ}{{\mathbb Z}}
\nc{\bR}{{\mathbb R}}
\nc{\fr}{{\rightarrow}}
\nc{\co}{{\nabla}}

\nc{\cu}{{\overline{\nabla}}}

\title{Varieties Admitting Polarized Galois Endomorphisms}

\author{Zhiyuan Jiang}
\address{Department of Mathematics, Purdue University, West Lafayette, IN 47907, USA}
\email{jian1206@purdue.edu}

\author{Yujie Luo}

\address{School of Mathematical Sciences, University of Science and Technology of China, Hefei 230026, People's Republic of China}

\email{yujieluo96@gmail.com,\ lyj96@ustc.edu.cn}

\date{}
\subjclass[2020]{
Primary 14E05; 
Secondary 14M25, 
14L30} 

\keywords{polarized endomorphism, maximal rationally connected fibration, toric fibration}

\begin{document}

\begin{abstract}
Let $X$ be an $n$-dimensional smooth complex projective variety admitting an int-amplified Galois endomorphism. We prove that its maximal rationally connected fibration is represented by a smooth toric fibration over a smooth $Q$-abelian variety. Consequently, if $X$ is rationally connected, then it is toric. If it also has Picard number one, then $X\cong \mathbf{P}^n$. As another application, we prove that such $X$ is of log Calabi-Yau type as conjectured by Gongyo.
\end{abstract}

\maketitle

\tableofcontents

\section{Introduction}

We work over the field of complex numbers $\mathbb{C}$.

\medskip

For an integer $q>1$, a surjective endomorphism $f:X\to X$ of a projective variety is \emph{$q$-polarized} if $f^*H\sim qH$ for some ample Cartier divisor $H$. It is \emph{polarized} if it is $q$-polarized for some integer $q>1$. Understanding the geometry of projective varieties admitting polarized endomorphisms is a fundamental problem at the intersection of algebraic geometry and algebraic dynamics.

One of the oldest problems in this direction is the following well-known folklore conjecture, dating back to the 1980s (see Hwang and Nakayama \cite[Conjecture~1.1]{HN11} for an equivalent statement).

\begin{conjecture}\label{conj: main}
Let $X$ be an $n$-dimensional Fano manifold of Picard number one. If $X$ admits a non-isomorphic surjective endomorphism, then $X\cong\mathbf{P}^n$.
\end{conjecture}

The conjecture is known for rational homogeneous manifolds \cite{PS89,HM99}. Hwang and Nakayama obtained a related toric criterion \cite[Theorem~2.1]{HN11}. Using Bott vanishing, Kawakami and Totaro proved the conjecture for Fano threefolds and for Fano fourfolds of index greater than one \cite[Theorem~A]{KT25}. Fakhruddin asked more generally whether every smooth projective rationally connected variety admitting a polarized endomorphism is toric \cite[Question~4.4]{Fakh03}. A surjective endomorphism $f:X\to X$ of a projective variety is \emph{int-amplified} if $f^*H-H$ is ample for some ample Cartier divisor $H$ (see Definition~\ref{def: int-amplified}). Every polarized endomorphism is int-amplified. Meng and Zhong later asked the corresponding question for int-amplified endomorphisms \cite[Question~1.2]{MZ23}.

\medskip

More generally, the existence of a polarized endomorphism imposes a strong restriction on Kodaira dimension. It is known that a smooth projective variety $X$ admitting a polarized endomorphism satisfies $\kappa(X)\leq 0$ (\cite[Theorem~4.2]{Fakh03}), a restriction later extended to normal projective varieties by Nakayama and Zhang \cite[Theorem~1.3]{NZ10}. Moreover, a smooth projective variety $X$ as above with $\kappa(X) = 0$ admits a finite quasi-\'{e}tale cover by an abelian variety, that is, $X$ is \emph{$Q$-abelian} (see Definition~\ref{def: quasi-etale}).

The \emph{maximal rationally connected} (MRC) fibration provides further information on the structure of projective varieties admitting polarized endomorphisms \cite{Camp92,KMM92b, Nak10,Zhang10}. Meng and Zhang proved that a suitable MRC fibration can be made equivariant with respect to a given polarized endomorphism and that its base is $Q$-abelian \cite[Proposition~1.6]{MZ18}. The Albanese morphism gives a complementary form of this structure. This aspect builds on Kawamata's work and on later results of Krieger, Reschke, Cascini, Meng and Zhang \cite{Kawa81,KR17,MZ18,CMZ20}. Meng extended this picture to int-amplified endomorphisms: the special MRC fibration has a $Q$-abelian base, is equidimensional, and has irreducible rationally connected fibers, while the Albanese morphism is surjective and equidimensional with irreducible fibers \cite[Theorems~1.7 and 1.8(1)]{Meng20}. Yoshikawa further proved that, after a finite quasi-\'{e}tale cover, the total space is of Fano type over its Albanese variety \cite[Theorem~1.3]{Yosh21b}. These results led Meng and Zhong to ask whether, for a $\mathbb{Q}$-factorial klt projective variety admitting an int-amplified endomorphism, one can pass to a finite quasi-\'{e}tale cover whose Albanese morphism is a \emph{toric fibration} (see Definition~\ref{def: toric fibration}) over an abelian variety \cite[Question~6.4]{MZ24}.

\medskip

We consider smooth projective varieties with int-amplified endomorphisms and impose the additional Galois condition, meaning that the induced extension of function fields is Galois (see Definition~\ref{def: galois}). Aprodu, Kebekus, and Peternell used this condition to study ramified coverings and endomorphisms of Fano manifolds \cite{AKP08}.

Under the Galois hypothesis, our main theorem strengthens the known MRC structure: the MRC fibration is represented by a smooth toric fibration over a smooth $Q$-abelian variety, and the int-amplified endomorphism descends to the base.

\begin{theorem}\label{thm: main}
Let $X$ be a smooth complex projective variety and $f:X\to X$ an int-amplified Galois endomorphism. Then the maximal rationally connected fibration of $X$ is represented by a toric fibration $h:X\to C$, where $C$ is a smooth $Q$-abelian variety.

Moreover, there is a unique int-amplified \'{e}tale Galois endomorphism $f_C:C\to C$ such that $h\circ f = f_C\circ h$, and $\mathrm{Gal}(f_C)$ is abelian. If $f$ is $q$-polarized, then $f_C$ is $q$-polarized.
\end{theorem}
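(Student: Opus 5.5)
We want to show that for smooth $X$ with an int-amplified Galois endomorphism $f$, the MRC fibration is a toric fibration over a smooth $Q$-abelian base. The plan is to exploit the Galois group $G=\mathrm{Gal}(f)$ together with the ramification divisor of $f$. Since $f$ is Galois, $X/G\cong X$ via $f$, and the ramification formula reads $K_X=f^*K_X+R_f$. The branch divisor $B_f$ then gives a log pair $(X,\Delta)$ with $K_X+\Delta=f^*(K_X+\Delta)$. Here $\Delta=\sum(1-1/e_i)B_i$ in the orbifold sense, and the Galois hypothesis guarantees constant ramification indices over each branch component.

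Iterating $f$ and using int-amplifiedness (Meng's results on totally invariant divisors), I expect the union of $f^{-1}$-periodic prime divisors to be a reduced divisor $D$ with $f^{-1}(D)=D$. I also expect $K_X+D\equiv 0$ after replacing $f$ by an iterate, in line with the log Calabi--Yau conclusion. The key claim is that $(X,D)$ is log smooth, i.e.\ $D$ is snc. For this I would use that $f$ is Galois and $X$ is smooth: locally $f$ is a quotient by the stabilizer group. By Chevalley--Shephard--Todd, purity of the branch locus forces the stabilizers to be generated by pseudo-reflections. Combined with invariance under iteration, this should force the local stabilizers to be abelian reflection groups of diagonal type, so that $D$ is locally a union of coordinate hyperplanes.

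Next, run the relative picture over the MRC base. By Meng \cite[Theorems~1.7, 1.8]{Meng20} there is an $f$-equivariant special MRC fibration $X\dashrightarrow C$ with $Q$-abelian base, equidimensional with irreducible rationally connected fibers. I would show that $D$ contains no fiber-dominating part of the base direction, and that the horizontal part of $D$ restricted to a general fiber $F$ is an snc divisor $D_F$ with $K_F+D_F\sim 0$. The key count is that $D_F$ has at least $\dim F+\rho(F)$ components. This comes from the cone of curves being generated by curves in $D$, via the int-amplified dynamics on $N^1$. The complexity criterion of Brown--McKernan--Svaldi--Zong then shows $(F,D_F)$ is toric. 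To make this relative, I would show $h$ is a morphism with all fibers smooth toric. Here I would use that $f$ permutes fibers, together with the structure of the stabilizer group, to see that $G$ contains a torus-like abelian part acting fiberwise; this would give a relative torus action and hence a toric fibration. Smoothness of $C$ would follow since $h$ is smooth (flat with smooth fibers, using the snc structure of $D$ relative to $C$).

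For the descent, $f$ preserves the MRC fibration, which gives $f_C$. The vertical part of $D$ must be empty, since vertical components would lie over $f_C$-totally-invariant divisors and a $Q$-abelian variety has none under an int-amplified map with $K_C\equiv 0$. Hence $f_C$ is étale, and $\mathrm{Gal}(f_C)$ is the image of $G$, i.e.\ $G$ modulo the fiberwise stabilizer. It is abelian because an étale Galois cover of a $Q$-abelian variety pulls back from isogenies of the abelian cover, whose Galois groups are abelian translation groups. Uniqueness is immediate from surjectivity of $h$. If $f^*H\sim qH$, restrict to an ample class pulled back from $C$ and use that $f_C^*$ acts on $N^1(C)$ compatibly; eigenvalue considerations then give $f_C^*H_C\equiv qH_C$, which upgrades to linear equivalence on a $Q$-abelian variety up to torsion.

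The main obstacle is the second step: establishing the snc structure of $D$ and the fiberwise toric structure. The complexity count in particular needs enough totally invariant divisors, and I would first try to produce them via the $f$-periodicity of extremal rays on $F$.
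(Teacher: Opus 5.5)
Your proposal has a genuine gap at its centre, in the second step: the boundary you want to feed into the complexity criterion does not exist in general.

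\textbf{The boundary step fails.} To begin with, Hurwitz for a Galois $f$ gives $K_X=f^*(K_X+\Delta_f)$, where $\Delta_f=\sum(1-1/e_i)B_i$ lives on the target. Nothing produces a single divisor with $K_X+\Delta=f^*(K_X+\Delta)$.

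More seriously, the union $D$ of $f^{-1}$-periodic prime divisors is typically far too small. Two examples on $\mathbf{P}^1$ show this.
\begin{enumerate}
\item Every degree-two map is Galois. For $z\mapsto z^2+c$ on $\mathbf{P}^1$ with $c\neq 0$, the only totally invariant point is $\infty$, for every iterate. So $K_{\mathbf{P}^1}+D$ has degree $-1$.
\item The degree-four Latt\`{e}s map induced by $[2]$ on an elliptic curve $E$ is Galois with group $E[2]$. It has no totally invariant points at all. Products of such maps give examples in every dimension.
\end{enumerate}
So there is no totally invariant boundary with $K+D\equiv 0$, let alone one with $\dim F+\rho(F)$ components. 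The Brown--McKernan--Svaldi--Zong criterion therefore has nothing to act on. The toric boundaries of such varieties are simply not totally invariant under $f$. This is why the paper's boundary in Theorem~\ref{thm:log-cy} is made invariant under the deck group of an \'{e}tale cover, not under $f$.

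\textbf{Your local claim also fails.} The map $[x:y:z]\mapsto[x^4:y^4+z^4:y^2z^2]$ on $\mathbf{P}^2$ is the Galois quotient by the image of $\mu_4\times G(4,2,2)$. The stabilizer of $[1:0:0]$ acts on the tangent plane through the nonabelian reflection group $G(4,2,2)$. So Chevalley--Shephard--Todd gives reflection groups, not diagonal abelian ones.

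\textbf{The torus has no source.} A finite group $G$ cannot supply a relative torus action. That step has no mechanism behind it.

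\textbf{What is missing} is a way to read toricity directly off the Galois group, without any invariant divisor. The paper does this with Cox rings, in the following order.
\begin{enumerate}
\item It shows the Albanese morphism is smooth: the non-smooth locus is backward-invariant under the isogeny $g$, so it is empty by Meng's lemma.
\item It passes to an \'{e}tale cover with $q=\widetilde q$, so that the Albanese morphism is the MRC fibration.
\item Over a $g$-periodic point, the fiber maps form a cycle of Galois covers. Their lifts to Cox characteristic spaces are Galois, with group an extension by $\ker\widehat B$. This group acts on $\mathfrak m/\mathfrak m^2$ as a reflection group.
\item Chevalley--Shephard--Todd then gives $I_0=\Theta(I_0)P_0$ for the relation ideal. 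Int-amplification raises the minimal degree of relations, forcing $I_0=0$.
\item A polynomial Cox ring gives toric fibers by Hu--Keel.
\item Finiteness of polarized toric types, together with Meng's lemma, gives isotriviality.
\item The torus comes from a relative Cox algebra freely generated by line bundles, obtained after isogeny base changes. These are eigenlines of pseudoreflections, plus pullbacks of generators of smaller $\deg_B$ for the invariant part.
\end{enumerate}

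\textbf{Your descent step also needs repair.}
\begin{enumerate}
\item \'{E}tale Galois covers of $Q$-abelian varieties can have nonabelian groups, for example $\pi_0\circ[m]$ over a bielliptic surface. So abelianness must come from identifying $\mathrm{Gal}(f_C)$ with $\ker(g_0)$. This uses that $\mathrm{Gal}(\tau_0)$ commutes with $\mathrm{Gal}(\widetilde f)$ for the Cartesian lift.
\item \'{E}taleness of $f_C$ follows from $f_C\circ\pi_0=\pi_0\circ g_0$, or from $K_C\equiv 0$ plus purity.
\item Smoothness of $C$ comes from freeness of the $G_0$-action on $A_0$ (Lemma~\ref{lem: free action on base}), not from any snc property of $D$.
\end{enumerate}
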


More precisely, let $D$ be the relative toric boundary of $h$. Then all fibers of $(X,D)\to C$ are isomorphic to a fixed smooth projective toric pair $(F,\partial F)$, where $T:= F\setminus\partial F$ is the dense torus. There exist an abelian variety $A$, a connected finite \'{e}tale Galois cover $\pi:A\to C$, and a principal $T$-bundle $P\to A$ such that, over $A$,
$$(X\times_C A,D\times_C A)  \cong (P\times^T F, P\times^T\partial F).$$
The projection $X\times_C A\to A$ is the Albanese morphism of $X\times_C A$ and a \emph{split toric fibration} in the sense of Definition~\ref{def: toric fibration}. Thus Theorem~\ref{thm: main} answers \cite[Question~6.4]{MZ24} in the smooth int-amplified Galois setting.

The smoothness assertion underlying Theorem~\ref{thm: main} does not require the Galois hypothesis. Building on Meng's results \cite[Theorems~1.7 and 1.8(1)]{Meng20}, we give a direct proof of the following proposition.

\begin{proposition}\label{prop: smooth-mrc}
Let $X$ be a normal complex projective variety and let $f:X\to X$ be an int-amplified endomorphism. Then the Albanese morphism $\mathrm{alb}_X$ is surjective and flat, and every geometric fiber is normal and integral. 

Moreover, if $X$ is smooth, then the Albanese morphism is smooth, and the maximal rationally connected fibration of $X$ is represented by a smooth projective morphism $h:X\to C$ with rationally connected fibers, where $C$ is a smooth $Q$-abelian variety.
\end{proposition}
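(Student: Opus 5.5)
We start from Meng's results \cite[Theorems~1.7 and 1.8(1)]{Meng20}: for $X$ normal projective with an int-amplified endomorphism $f$, the Albanese morphism $a=\mathrm{alb}_X:X\to \mathrm{Alb}(X)$ is surjective and equidimensional with irreducible fibers, and (after replacing $f$ by an iterate) $f$ descends to an endomorphism $g$ of $\mathrm{Alb}(X)$ with $a\circ f=g\circ a$. Moreover $g$ is int-amplified, hence \'etale, being a surjective endomorphism of an abelian variety with no eigenvalue of modulus $\le 1$.

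For flatness we use miracle flatness in a weak form. An equidimensional morphism from a Cohen--Macaulay scheme to a smooth base is flat. $X$ need not be CM a priori, but by \cite{Meng20} (or via Yoshikawa's Fano type result, \cite[Theorem~1.3]{Yosh21b}) some quasi-\'etale cover, and hence $X$ itself, has klt type singularities, so $X$ is CM. Normality and integrality of the geometric fibers then come from the dynamics. Let $Z\subset \mathrm{Alb}(X)$ be the locus over which the fiber is non-reduced or non-normal. This locus is closed because $a$ is flat and normality of fibers is an open condition in flat families. It is also stable under $g^{-1}$: since $g$ is \'etale, $f$ induces a finite surjective map $X_{g(y)}\leftarrow X_y$ that is compatible with the fiber structure, so a bad fiber over $y$ forces a bad fiber over $g(y)$. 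A proper closed $g^{-1}$-invariant subset of an abelian variety for an int-amplified $g$ must be empty: its components of top dimension are permuted by $g^{-1}$, hence periodic, hence translates of abelian subvarieties on which an iterate of $g$ restricts to an int-amplified endomorphism, and the descending-union argument forces $Z=\emptyset$. Generic fibers are normal and integral because $X$ is normal in characteristic $0$ and $a$ has connected fibers, so all fibers are normal and integral.

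Now let $X$ be smooth. To show $a$ is smooth we apply the same invariant-locus argument to the set of points of $\mathrm{Alb}(X)$ over which the fiber is singular. This set is closed since $a$ is flat and proper, and it is proper since the generic fiber is smooth. To get invariance we use that $f$ is \'etale in codimension one over the base direction: the ramification divisor $R_f$ satisfies $K_X=f^*K_X+R_f$. If $R_f$ dominated the singular-fiber locus we would lose information, so I instead use the purity of the branch locus for the relative situation. This gives $f$ restricted to $X_y\to X_{g(y)}$ finite, so a singular point of $X_{g(y)}$ pulls back to a point where $a$ is not smooth, because $g$ is \'etale and a finite map onto a smooth variety from a smooth source can be \'etale-locally checked. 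This step, showing that non-smoothness propagates backwards along $f$, is the one I expect to be the main obstacle. My first attempt would be to compare the relative dimensions of the Kähler differentials $\Omega_{X/\mathrm{Alb}}$ under $f^*$.

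For the MRC statement, let $h_0:X\dashrightarrow C_0$ be the special MRC fibration from \cite[Theorem~1.7]{Meng20}: it is equidimensional with rationally connected irreducible fibers and has $Q$-abelian base. Since $X$ is smooth it is a morphism, and it factors the Albanese map. Pass to a quasi-\'etale abelian cover $A\to C_0$, or equivalently normalize. Smoothness of $h$ and of $C$ then follows from the same invariant-singular-locus argument applied to $h$ and the descended int-amplified endomorphism of $C$. In this case $C$ is smooth because $X\to C$ is flat with reduced fibers from a smooth $X$, so $C$ is smooth by descent of regularity along faithfully flat maps.
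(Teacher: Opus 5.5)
Your framework (a bad locus $\Delta\subseteq A$ that is shown to be empty by \cite[Lemma~4.1]{Meng20}) is the right one. However, every step that makes it work is either wrong or missing.

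\textbf{Flatness for normal $X$.} This cannot come from miracle flatness. Neither \cite{Meng20} nor \cite{Yosh21b} shows that $X$ has klt-type, or even Cohen--Macaulay, singularities, and this fails in general. Let $\overline{C}$ be the projective cone over an abelian surface $S$ polarized by a symmetric ample $L$. Then $[n]_S$ induces a polarized endomorphism of $\overline{C}$. For an elliptic curve $E$, the variety $\overline{C}\times E$ has Albanese $E$, but it is not Cohen--Macaulay because $H^1(S,\mathcal O_S)\neq 0$.

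\textbf{Propagating normality.} Your argument runs the wrong way. ``A bad fiber over $y$ forces a bad fiber over $g(y)$'' only gives $Z\subseteq g^{-1}(Z)$, and that does not force $Z=\emptyset$: the origin is such a set. Moreover, a bare finite surjection $X_y\to X_{g(y)}$ transmits normality in neither direction, as any normalization map shows. The idea you are missing is a splitting. The map $\pi=(f,\mathrm{alb}_X):X\to X_g:=X\times_{A,g}A$ is finite and surjective onto a normal variety: $X_g$ is \'{e}tale over $X$ and has connected fibers over $A$. So the normalized trace gives a retraction of $\mathcal O_{X_g}\to\pi_*\mathcal O_X$. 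Retractions survive arbitrary base change, so every $X_b\to X_{g(b)}$ is split. A split finite morphism descends integrality and normality, and it descends flatness over the base, since $\mathcal O_{X_g}$ becomes a direct summand of $\pi_*\mathcal O_X$. Apply this to the largest open $U$ over which $\mathrm{alb}_X$ is flat with geometrically normal integral fibers. You get $g(U)\subseteq U$, that is, $g^{-1}(\Delta)\subseteq\Delta$, and Meng's lemma then gives $\Delta=\emptyset$.

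\textbf{Smoothness of $\mathrm{alb}_X$.} The step you flag as the main obstacle is just the chain rule; ramification of $f$ plays no role. Suppose $\mathrm{alb}_X$ is smooth along $X_b$ and $y\in X_{g(b)}$. Pick $x\in X_b$ with $f(x)=y$, which is possible because $f(X_b)=X_{g(b)}$ by equidimensionality and irreducibility. Then $d(\mathrm{alb}_X)_y\circ df_x=dg_b\circ d(\mathrm{alb}_X)_x$ is surjective. Hence $d(\mathrm{alb}_X)_y$ is surjective, and $\mathrm{alb}_X$ is smooth at $y$.

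\textbf{The MRC statement.} Your argument here is circular. $C_0$ is only $Q$-abelian, so neither miracle flatness nor the Albanese statement gives flatness of $X\to C_0$. Your ``descent of regularity'' presupposes exactly that flatness. The missing ingredient is freeness of a covering group action. Pass to the \'{e}tale Galois cover $Y\to X$ of Lemma~\ref{lem: invariant cover} with $q(Y)=\widetilde q(X)$. On $Y$, the map $\mathrm{alb}_Y$ is the special MRC fibration by Lemma~\ref{lem: maximal irregularity mrc}, and it is smooth by the Albanese part. A nonidentity $\gamma\in\mathrm{Gal}(Y/X)$ fixing $b\in\mathrm{Alb}(Y)$ would act freely on the smooth rationally connected fiber $Y_b$. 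This contradicts $\chi(\mathcal O_{Y_b})=1$, because a free quotient of order $m>1$ would force $\chi=m\,\chi(\mathcal O_{Y_b/\langle\gamma\rangle})$. Hence $C=\mathrm{Alb}(Y)/\mathrm{Gal}(Y/X)$ is smooth and $Y\cong X\times_C\mathrm{Alb}(Y)$. It follows that $h:X\to C$ is smooth by \'{e}tale descent, and $C$ contains no rational curves.
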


The same argument combined with \cite[Theorem~1.1]{Zhuang24} implies that if $X$ is of klt type, then the fibers of $\mathrm{alb}_X$ is of klt type (see Remark~\ref{rem: klt case}).

As a direct corollary of Theorem~\ref{thm: main}, we obtain the following result.

\begin{corollary}\label{cor: albanese toric cover}
Let $X$ be a smooth complex projective variety admitting an int-amplified Galois endomorphism. Then there is a finite \'{e}tale Galois cover $\tau: \widetilde X\to X$ from a smooth variety $\widetilde X$ such that the Albanese morphism of $\widetilde X$ is a split toric fibration.
\end{corollary}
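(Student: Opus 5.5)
We derive Corollary~\ref{cor: albanese toric cover} from Theorem~\ref{thm: main} and the refined description stated after it, by a base change.

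Let $f:X\to X$ be an int-amplified Galois endomorphism. By Theorem~\ref{thm: main}, the MRC fibration is represented by a toric fibration $h:X\to C$ with $C$ a smooth $Q$-abelian variety. The subsequent description provides an abelian variety $A$, a connected finite \'etale Galois cover $\pi:A\to C$, and a principal $T$-bundle $P\to A$ with
$$(X\times_C A,\,D\times_C A)\cong (P\times^T F,\,P\times^T\partial F).$$
We set $\widetilde X:=X\times_C A$ and let $\tau:\widetilde X\to X$ be the first projection.

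\emph{Properties of $\tau$.} Since $\pi$ is finite \'etale, its base change $\tau$ is finite \'etale. Since $X$ is smooth, $\widetilde X$ is smooth. To see that $\widetilde X$ is connected, note that it is a fiber bundle over the connected variety $A$ with connected fiber $F$. Hence $\widetilde X$ is irreducible, being a connected smooth variety.

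\emph{Galois property.} The group $G=\mathrm{Gal}(A/C)$ acts on $A$ over $C$, and therefore on $\widetilde X=X\times_C A$ through the second factor, commuting with $\tau$. Because $A\to C$ is a $G$-torsor, $\widetilde X\to X$ is a $G$-torsor by base change. So $\tau$ is Galois with group $G$; equivalently, $\deg\tau=\deg\pi=|G|$ and $G$ acts faithfully on the fibers.

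\emph{Albanese morphism.} The description after Theorem~\ref{thm: main} states that the projection $\widetilde X\to A$ is the Albanese morphism of $\widetilde X$ and is a split toric fibration in the sense of Definition~\ref{def: toric fibration}. If we prefer to verify the Albanese claim directly, we use that the fibers $F$ are smooth projective toric, hence rationally connected. Then every morphism from $\widetilde X$ to an abelian variety is constant on fibers and factors through the smooth proper map $\widetilde X\to A$, which has connected fibers. Therefore $\widetilde X\to A$ is the Albanese morphism, up to translation.

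Nothing here is a genuine obstacle. The only points needing care are the connectedness of $\widetilde X$ and the fact that Galois covers are stable under base change, both handled above.
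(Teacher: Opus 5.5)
Your proposal is correct and follows the paper's proof: you take $\widetilde X=X\times_C A$ for the cover $A\to C$ from Theorem~\ref{thm: main}, get that $\tau$ is finite \'etale Galois by base change, check connectedness, and read off the Albanese and split toric properties from the description after the theorem. The only cosmetic difference is that the paper gets connectedness from flat base change of $h_*\mathcal O_X=\mathcal O_C$, whereas you use connectedness of the fibers $F$ over the connected base $A$.
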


Related smoothness and local-triviality results are known under positivity assumptions not imposed in Proposition~\ref{prop: smooth-mrc}. Cao proved local triviality of the Albanese morphism when the anticanonical divisor is nef \cite{Cao19}, while Matsumura and Wang obtained a corresponding structure theorem for maximal rationally connected fibrations of klt pairs with nef anti-log-canonical divisor \cite{MW25}. Under an additional assumption on the logarithmic tangent bundle, Druel and Lo Bianco obtained a smooth locally trivial family of toric pairs \cite[Theorem~1.1]{DL22}.

Other toric characterizations assume an invariant boundary, either for polarized endomorphisms \cite{MZ19} or for int-amplified endomorphisms \cite[Theorem~1.5]{MZ23}, while Yoshikawa used a splitting condition for direct images of line bundles \cite[Theorem~1.1]{Yosh21a}. Under a prescribed invariant reduced log Calabi--Yau boundary, Meng and Zhong obtained a splitting toric fibration after a quasi-\'{e}tale cover \cite[Theorem~1.5]{MZ24}. Finite-quotient toric descriptions of log Calabi--Yau pairs, both absolutely and over an abelian variety, were obtained in \cite[Theorem~5.1]{MYY25} and \cite[Theorem~1.1]{MYY24}. Theorem~\ref{thm: main} instead assumes that $X$ is smooth and that the int-amplified endomorphism is Galois. It requires neither nefness of the anticanonical divisor nor the existence of an invariant boundary.

\medskip

When $X$ is rationally connected, the base of its MRC fibration is a point. Theorem~\ref{thm: main} therefore gives the following result. Under the Galois hypothesis, this answers the questions posed by Fakhruddin \cite[Question~4.4]{Fakh03} and by Meng and Zhong \cite[Question~1.2]{MZ23}.

\begin{theorem}\label{thm: rationally connected}
Let $X$ be a smooth projective rationally connected variety admitting an int-amplified Galois endomorphism. Then $X$ is a toric variety. In particular, $\mathrm{Cox}(X)$ is a polynomial ring.
\end{theorem}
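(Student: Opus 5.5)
We deduce this from Theorem~\ref{thm: main}. If $X$ is rationally connected, the maximal rationally connected fibration of $X$ is the structure map to a point. Since Theorem~\ref{thm: main} says this fibration is represented by the toric fibration $h:X\to C$, the base $C$ is a point. The fibration is then just $X$ itself, so $X$ is isomorphic to the fixed fiber $F$.

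To make this precise, I will use the more explicit description after Theorem~\ref{thm: main}. All fibers of $(X,D)\to C$ are isomorphic to a fixed smooth projective toric pair $(F,\partial F)$. There is also a connected finite étale Galois cover $\pi:A\to C$ from an abelian variety $A$, with $X\times_C A\cong P\times^T F$.

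When $C$ is a point, $A$ is a connected finite étale cover of a point, hence a point. Then $P\cong T$ is the trivial $T$-torsor. This gives $X\cong T\times^T F\cong F$, and this isomorphism carries $D$ to $\partial F$. Therefore $X$ is a smooth projective toric variety.

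One point needs care: the base of an MRC fibration is defined only up to birational equivalence. Rational connectedness of $X$ means the base has dimension $0$. The representative $C$ is a normal (indeed smooth) variety, being a smooth $Q$-abelian variety, so $\dim C=0$ forces $C$ to be a point. Alternatively, a nontrivial smooth $Q$-abelian variety is not uniruled, while any variety dominated by a rationally connected variety is rationally connected, so again $C$ must be a point.

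For the final assertion, I will cite the Cox theorem: a normal complete toric variety $X$ whose class group is finitely generated and free has $\mathrm{Cox}(X)\cong \mathbb{C}[x_\rho : \rho\in\Sigma(1)]$, a polynomial ring in one variable per ray of the fan. Since $X$ is smooth and projective, $\mathrm{Cl}(X)=\mathrm{Pic}(X)$. This group is finitely generated, and it is torsion-free because a smooth projective toric variety is simply connected, being rationally connected. So the Cox ring is polynomial.

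There is no real obstacle, since all the difficulty lies in Theorem~\ref{thm: main}. The only step needing attention is checking that the representative $C$ of the MRC base really collapses to a point, which the dimension or uniruledness argument above settles.
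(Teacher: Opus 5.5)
Your argument is correct, but the paper proves this statement by a different route. You specialize Theorem~\ref{thm: main}: rational connectedness forces the smooth base $C$ of the MRC fibration to be a point, so $A$ is a point and $P$ is a trivial $T$-torsor, giving $X\cong F$. This is legitimate, since the proof of Theorem~\ref{thm: main} never uses Theorem~\ref{thm: rationally connected}. It is also how the introduction motivates the statement.

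The paper's proof instead applies Lemma~\ref{lem: fiber-cycle} directly to the one-step cycle $X\xleftarrow{f}X$, the case $s=1$. That lemma shows $\mathrm{Cox}(X)$ is a polynomial ring, and Proposition~\ref{prop: polynomial-cox-toric} then makes $X$ toric.

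The direct route uses only the Cox-ring and reflection-group argument. It bypasses the Albanese analysis, the isotriviality criterion, the relative Cox algebra of Section~\ref{sec: relative-cox}, and the boundary descent. Your route gains nothing extra here, because for rationally connected $X$ the main theorem's proof collapses to the same step. Indeed, $X$ is simply connected by Lemma~\ref{lem: rc-basic}, so the cover in Lemma~\ref{lem: invariant cover} is trivial and $\widetilde q(X)=0$. The Albanese variety is then a point, and the toricity of $X$ in that proof comes exactly from Lemma~\ref{lem: fiber-cycle} with $s=1$, via Lemma~\ref{lem: periodic-toric-fibers} and Proposition~\ref{prop: isotrivial-toric}. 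What your presentation does offer is that the statement visibly appears as the degenerate case of the structure theorem.

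For the final assertion, polynomiality of $\mathrm{Cox}(X)$ is already the output of Lemma~\ref{lem: fiber-cycle}. Your appeal to Cox's theorem, with $\mathrm{Pic}(X)$ torsion-free by simple connectivity, is also fine, and the paper cites \cite{Cox95} the same way.
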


Every smooth Fano variety is rationally connected \cite{Camp92,KMM92a}, so Theorem~\ref{thm: rationally connected} includes the smooth Fano case. The toric conclusion of Theorem~\ref{thm: rationally connected} was previously known for surfaces \cite[Theorem~3]{Nak02}, smooth Fano threefolds \cite[Theorem~1.4]{MZZ22}, several classes of smooth Fano fourfolds \cite{JZ23,SZ24}, and smooth rationally connected threefolds of Picard number two \cite[Theorem~1.3]{CMZ26}.

In Picard number one, every non-isomorphic surjective endomorphism is int-amplified, and a smooth projective toric variety is projective space. We therefore obtain the folklore conjecture under the Galois hypothesis.

\begin{theorem}\label{thm: projective-space}
Let $X$ be an $n$-dimensional smooth Fano variety of Picard number $\rho(X) = 1$. If $X$ admits a Galois endomorphism of degree greater than one, then $X\cong \mathbf{P}^n$.
\end{theorem}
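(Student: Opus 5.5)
The plan is to reduce Theorem~\ref{thm: projective-space} to Theorem~\ref{thm: rationally connected}, and then use the standard classification of smooth projective toric varieties of Picard number one.

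Let $f:X\to X$ be a Galois endomorphism with $\deg f>1$.

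\emph{Step 1: $f$ is int-amplified.} Since $\rho(X)=1$, the space $N^1(X)_{\mathbb R}$ is spanned by the class of an ample generator $H$. Because $f$ is surjective, $f^*$ is an automorphism of $N^1(X)$ preserving the ample cone, so $f^*H\equiv qH$ for some $q>0$. Comparing top self-intersections gives $q^n H^n=\deg(f)\,H^n$, hence $q^n=\deg f>1$ and $q>1$. The class $qH$ is integral because it equals $f^*H$. Since $X$ is Fano, numerical and linear equivalence agree on $X$ ($\mathrm{Pic}(X)$ is torsion-free and $H^1(X,\mathcal O_X)=0$), so $f^*H\sim qH$. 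Therefore $f$ is $q$-polarized, hence int-amplified, and by hypothesis it is Galois.

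\emph{Step 2: $X$ is toric.} A smooth Fano variety is rationally connected \cite{Camp92,KMM92a}. Theorem~\ref{thm: rationally connected} therefore applies and shows that $X$ is a smooth projective toric variety.

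\emph{Step 3: $X\cong\mathbf P^n$.} For a smooth complete toric variety with fan $\Sigma$ in $N\cong\mathbb Z^n$, we have $\rho(X)=|\Sigma(1)|-n$, so $\Sigma$ has exactly $n+1$ rays $v_0,\dots,v_n$. Completeness forces the rays to positively span $N_{\mathbb R}$, so there is a relation $\sum a_i v_i=0$ with every $a_i>0$. The maximal cones are then exactly the $n+1$ cones spanned by $n$ of the rays. Smoothness of the cone omitting $v_0$ says that $v_1,\dots,v_n$ form a $\mathbb Z$-basis of $N$. Writing $v_0=-\sum c_iv_i$ with $c_i>0$ integers, smoothness of the cone omitting $v_j$ forces $c_j=1$. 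Hence $v_0=-\sum v_i$, which is the fan of $\mathbf P^n$. This is the classical fact that $\mathbf P^n$ is the only smooth complete toric variety of Picard number one.

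The only nontrivial input is Theorem~\ref{thm: rationally connected}. Among the remaining steps, the one needing care is Step 1: the passage from numerical to linear equivalence, and the integrality of $q$, both of which rely on $X$ being Fano. This issue can also be sidestepped entirely, since int-amplified only requires $f^*H-H\equiv(q-1)H$ to be ample, and ampleness is a numerical condition.
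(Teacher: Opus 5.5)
Your proposal is correct and follows the paper's proof. The paper obtains int-amplification from $\rho(X)=1$ and $\deg f>1$; you justify this in more detail via $f^*H\equiv qH$ with $q^n=\deg f$. Both then use Theorem~\ref{thm: rationally connected} to get toricity, and pin down the $n+1$ rays by smoothness, where you take the signs from a strictly positive linear relation rather than from adjacent maximal cones lying on opposite sides of a common facet. The only slip is cosmetic: $f^*$ is invertible on $N^1(X)_{\mathbb R}$ but acts on the lattice $N^1(X)$ as multiplication by $q$, so it is not an automorphism there, and nothing you use depends on that claim.
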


Once $X$ is identified with projective space, the endomorphism in Theorem~\ref{thm: projective-space} is described by the classification in \cite[Theorem~1.1]{LZ26}. The smoothness assumption in Theorem~\ref{thm: rationally connected} cannot be removed (see Example~\ref{ex: klt-counterexample}).

\medskip

Let $X$ be a normal variety. A log pair $(X,B)$ is called a \emph{log Calabi--Yau} pair if $(X,B)$ is \emph{log canonical} (cf. \cite{KM98}) and $K_X+B\sim_\mathbb{Q} 0$. As another application of our main theorem, we confirm a Conjecture of Gongyo (cf. \cite[Conjecture~1.2]{BG17}) for Galois self-covers on $X$. This extends a result of the second named author and Zhang \cite[Theorem ~1.4]{LZ26}.

\begin{theorem}\label{thm:log-cy}
Let $X$ be a smooth projective variety and $f:X\to X$ an int-amplified finite Galois cover. Then there exists a divisor $D$ such that $(X,D)$ is a log Calabi--Yau pair.
\end{theorem}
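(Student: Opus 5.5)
We will deduce Theorem~\ref{thm:log-cy} from the structure given by Theorem~\ref{thm: main}. Let $h:X\to C$ be the toric fibration, $D$ the relative toric boundary, and $\pi:A\to C$ the connected finite \'etale Galois cover from an abelian variety with
$$(X_A,D_A):=(X\times_C A,D\times_C A)\cong (P\times^T F,P\times^T\partial F).$$
We will show that $(X,D)$ itself is log Calabi--Yau.

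\textbf{The pair upstairs.} Since $\pi$ is \'etale, the base change $\pi_X:X_A\to X$ is finite \'etale. Because $\pi_X^*K_X=K_{X_A}$ and $\pi_X^*D=D_A$, it suffices to prove two things for $(X_A,D_A)$: that it is log canonical, and that $K_{X_A}+D_A\sim_{\mathbb Q}0$. Both properties then descend. Log canonicity is étale-local. For the $\mathbb Q$-linear triviality, we use the norm argument: if $\pi_X^*L\sim_{\mathbb Q}0$, then $L^{\deg\pi}\sim N(\pi_X^*L)\sim_{\mathbb Q}0$.

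\textbf{Log canonicity.} Locally over $A$ the pair is a product $U\times(F,\partial F)$. Here $F$ is a smooth projective toric variety, so $\partial F$ is a reduced simple normal crossing divisor. Hence $D_A$ is snc, and $(X_A,D_A)$ is log canonical.

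\textbf{$\mathbb Q$-linear triviality.} For the toric variety $F$, the torus-invariant form $\omega=\bigwedge_i d\log t_i$ trivializes $K_F+\partial F$. This gives a canonical isomorphism $\mathcal O_F(K_F+\partial F)\cong\mathcal O_F$ on which $T$ acts through the character $\det$ of the $T$-action on $\mathrm{Lie}(T)$. Since $T$ is abelian, its adjoint action is trivial, so this character is trivial. Therefore $\omega$ is $T$-invariant and glues over $P\times^T F$. This yields
$$K_{X_A/A}+D_A\sim 0,$$
using that the relative log canonical sheaf of $P\times^T F$ is the associated bundle of $\omega_F(\partial F)$. Finally $K_A\sim 0$ because $A$ is an abelian variety, so $K_{X_A}+D_A\sim 0$.

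\textbf{Main obstacle.} The delicate step is making rigorous the equivariant trivialization of $\omega_F(\partial F)$ in the associated-bundle construction, and checking that the identification provided by Theorem~\ref{thm: main} is an isomorphism of pairs compatible with the relative boundary. If that formulation is awkward, we fall back on the log tangent bundle. Since $T_F(-\log\partial F)\cong \mathcal O_F\otimes\mathrm{Lie}(T)$ equivariantly, the relative log tangent bundle of $(X_A,D_A)/A$ is $P\times^T\mathrm{Lie}(T)$, which is trivial because $T$ acts trivially on its Lie algebra. Taking determinants then gives $K_{X_A/A}+D_A\sim0$.
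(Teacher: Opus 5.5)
Your proposal is correct and follows essentially the same route as the paper. Both arguments pass to the \'{e}tale cover $X\times_C A\cong P\times^T F$, use the $T$-invariant log top form to get $K_{X_A/A}+D_A\sim 0$ and hence $K_{X_A}+D_A\sim 0$ (since $K_A\sim 0$), obtain log canonicity from the normal crossing boundary, and descend by pushforward to get $(\deg\pi)(K_X+D)\sim 0$.
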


\medskip

\noindent\textbf{Outline of the proofs.} We first prove Proposition~\ref{prop: smooth-mrc}. Equivariance shows that the induced endomorphism of the Albanese variety maps the locus of smooth fibers into itself. The complement of this nonempty open set must then be empty by Meng's lemma (cf. \cite[Lemma~4.1]{Meng20}), so the Albanese morphism is smooth. We next pass to a suitable finite \'{e}tale Galois cover, whose Albanese morphism coincides with its MRC fibration. Descending this smooth fibration gives the required MRC fibration on $X$.

On this cover, we use the Galois hypothesis to study fibers over periodic points of the Albanese endomorphism. Inspired by the linear-lifting method of Luo and Zhang \cite[Lemma~3.1]{LZ26}, we use reflection-group invariant theory to express every Cox relation in terms of relations pulled back around a periodic orbit. After sufficiently many iterations, int-amplification makes pullback increase the least total degree of a monomial occurring in a nonzero relation, contradicting the existence of a relation for which this degree is minimal. The Cox rings are therefore polynomial, so these fibers are toric by \cite[Corollary~2.10]{HK00}. Since their polarized isomorphism types are finite in number and periodic points are dense, the family is \'{e}tale locally trivial over a nonempty open subset. Local triviality passes forward under the finite equivariant maps, so the locus where it fails contains its inverse image. As in the smoothness argument, Meng's lemma forces this locus to be empty. Thus the family is \'{e}tale locally trivial over the entire Albanese variety with a fixed toric fiber $F$.

It remains to make the toric structures compatible across the family. We first eliminate the finite monodromy on the fiberwise Picard groups by base change along an isogeny, making the Cox gradings consistent across the fibers. A further such base change removes the obstruction to gluing the Cox rings into a relative algebra. In the degrees of the Cox generators, the nontrivial eigenspaces of suitable reflections are one-dimensional. Since they are defined by the group action, these generator lines glue to line bundles. Generators fixed by the reflection group are recovered by pullback from smaller degrees with respect to the integer grading in Definition~\ref{def: relative degree}. Int-amplification makes the decrease strict, so induction supplies all generators. The diagonal transition functions of these line bundles descend through the Cox quotient and give the split presentation $P\times^T F$, where $T$ is the dense torus of $F$.

Finally, the Galois group of the finite \'{e}tale cover commutes with the Galois action along the fibers. We use this compatibility to choose the generators so that the covering group permutes their coordinate divisors. The union of these divisors is an invariant toric boundary, which descends to $X$ and makes its MRC fibration toric. We also show that the endomorphism descends to an \'{e}tale Galois endomorphism of the MRC base. Its Galois group is identified with the kernel of an isogeny and is therefore abelian.

\vspace{2mm}

{\bf Acknowledgements.}
Zhiyuan Jiang would like to thank James M\textsuperscript cKernan for many helpful discussions, and valuable suggestions. Zhiyuan Jiang is also grateful to Takumi Murayama for his support and very helpful suggestions on our main theorems. The authors would like to thank Qianli Fang, Fanjun Meng, Sheng Meng, Junyi Xie, De-Qi Zhang, Lei Zhang, Yiming Zhu for their interest and many helpful discussions and comments on this work.

\medskip

\section{Preliminaries}
\label{sec: preliminaries}

We adopt the standard notation and terminology of \cite{Har77,Laz04}. We write $\mathbb{C}(X)$ for the function field of a variety $X$.

\medskip

In this section, we recall and establish the preliminary results on finite covers, Albanese morphisms, int-amplified maps, toric fibrations, Cox rings, and complex reflection groups that will be used in the proofs below.

\medskip

\subsection{Covers, irregularity, and rational connectedness}

\begin{definition}\label{def: galois}
A finite surjective morphism $\pi:Y\to X$ between normal varieties is \emph{Galois} if the induced extension $\mathbb C(Y)/\pi^*\mathbb C(X)$ is Galois. In this case, we write $\mathrm{Gal}(\pi):= \mathrm{Gal}(\mathbb C(Y)/\pi^*\mathbb{C}(X))$. By normality, this group acts naturally on $Y$ over $X$, and the natural morphism $Y/\mathrm{Gal}(\pi)\to X$ is an isomorphism.
\end{definition}

\begin{definition}\label{def: quasi-etale}
A finite surjective morphism $\pi:Y\to X$ between normal varieties is \emph{quasi-\'{e}tale} if there is a closed subset $Z\subseteq X$ of codimension at least two such that $\pi^{-1}(X\setminus Z)\to X\setminus Z$ is \'{e}tale. A normal projective variety is \emph{$Q$-abelian} if it admits a finite quasi-\'{e}tale cover by an abelian variety.
\end{definition}

\begin{definition}[{\cite{NZ10}}]\label{def: augmented irregularity}
For a normal projective variety $X$, set $q(X)=h^1(X,\mathcal O_X)$. The \emph{augmented irregularity} of $X$ is
$$\widetilde q(X)= \sup\{q(Y):Y\to X \text{ is a connected finite quasi-\'{e}tale cover}\}.$$
\end{definition}

If $X$ is smooth, purity of the branch locus shows that every finite quasi-\'{e}tale cover of $X$ is \'{e}tale \cite[Expos\'{e}~X, Th\'{e}or\`{e}me~3.1]{SGA1}. In particular, a smooth projective variety is $Q$-abelian if and only if it admits a finite \'{e}tale cover by an abelian variety.

\medskip

For a projective variety $X$, let $N^1(X)$ denote the group of Cartier divisors modulo numerical equivalence, and put $N^1(X)_{\mathbb{R}}:= N^1(X)\otimes_{\mathbb{Z}}\mathbb{R}$. Its dimension $\rho(X):= \dim_{\mathbb{R}}N^1(X)_{\mathbb{R}}$ is the Picard number of $X$.

\begin{lemma}\label{lem: rc-basic}
Let $X$ be a smooth projective rationally connected variety. Then the associated complex analytic space $X^{\mathrm{an}}$ is simply connected, $H^j(X,\mathcal O_X)= 0$ for every $j>0$, and $\mathrm{Pic}(X)$ is free. Moreover,
$$\mathrm{Pic}(X)\xrightarrow{\sim} N^1(X).$$
\end{lemma}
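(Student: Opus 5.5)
The plan is to prove the four assertions in turn: simple connectivity, vanishing of $H^j(X,\mathcal O_X)$, freeness of $\mathrm{Pic}(X)$, and the isomorphism with $N^1(X)$. Each follows from standard facts about rationally connected manifolds.

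\emph{Simple connectivity.} We would cite Campana and Koll\'ar--Miyaoka--Mori: a smooth projective rationally connected variety is simply connected. The argument runs as follows. Any connected finite \'etale cover $Y\to X$ is again rationally connected, because rational curves lift. Hence $\chi(\mathcal O_Y)=1$ by the vanishing in the next paragraph. Multiplicativity of $\chi$ under \'etale covers gives $\chi(\mathcal O_Y)=\deg(Y/X)\,\chi(\mathcal O_X)$, so the degree is $1$. Thus $\pi_1(X^{\mathrm{an}})$ has no nontrivial finite quotients. Passing from this to actual triviality is the one nonformal input, namely Campana's theorem. We would simply quote it as \cite{Camp92} and \cite{KMM92a}; no finer argument is needed.

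\emph{Vanishing of $H^j(X,\mathcal O_X)$.} Rational connectedness gives $H^0(X,(\Omega^1_X)^{\otimes m})=0$ for all $m\ge1$. The reason is that a very free rational curve passes through a general point, and $\Omega_X$ restricted to it is a sum of $\mathcal O(a_i)$ with every $a_i<0$, so any tensor form vanishes along such curves and hence everywhere. In particular $H^0(X,\Omega^j_X)=0$ for $j>0$. Hodge symmetry then gives $H^j(X,\mathcal O_X)=\overline{H^0(X,\Omega^j_X)}=0$.

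\emph{Freeness of $\mathrm{Pic}(X)$.} From the exponential sequence and $H^1(X,\mathcal O_X)=H^2(X,\mathcal O_X)=0$, we get $\mathrm{Pic}(X)\cong H^2(X^{\mathrm{an}},\mathbb Z)$. By the universal coefficient theorem, the torsion of $H^2(X^{\mathrm{an}},\mathbb Z)$ equals the torsion of $H_1(X^{\mathrm{an}},\mathbb Z)=\pi_1^{\mathrm{ab}}=0$. Hence $\mathrm{Pic}(X)$ is free. It is also finitely generated, since $H^2$ of a compact manifold is.

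\emph{The isomorphism $\mathrm{Pic}(X)\to N^1(X)$.} The map is surjective by definition, so it remains to show injectivity. Suppose $L$ is numerically trivial. Its first Chern class lies in $H^2(X^{\mathrm{an}},\mathbb Z)$, which is free. Numerical triviality implies $c_1(L)$ is zero in $H^2(X,\mathbb R)$: intersecting with curves detects the image in $H^{1,1}$ modulo torsion, and on a projective manifold a numerically trivial class has some multiple algebraically equivalent to zero, so its real class vanishes. Since the group is free, $c_1(L)=0$ in $H^2(X^{\mathrm{an}},\mathbb Z)$. Now $c_1$ is injective on $\mathrm{Pic}(X)$ because $\mathrm{Pic}^0(X)=H^1(X,\mathcal O_X)/H^1(X^{\mathrm{an}},\mathbb Z)=0$. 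Therefore $L\cong\mathcal O_X$.

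The main obstacle is the simple-connectivity step, which rests on Campana's theorem. I would quote that theorem rather than reprove it.
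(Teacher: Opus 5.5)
Your proposal is correct and follows the paper's proof essentially step for step. The steps are vanishing of holomorphic forms plus Hodge symmetry, then the exponential sequence and universal coefficients (using $\pi_1=1$) for freeness, then $\mathrm{Pic}^0(X)=0$ together with the fact that a numerically trivial bundle is torsion modulo $\mathrm{Pic}^0$ to get $\mathrm{Pic}(X)\cong N^1(X)$. The paper simply quotes simple connectivity (from Koll\'ar rather than Campana/KMM), so your $\chi(\mathcal O)$ sketch is not needed; note also that the genuinely nonformal input there is finiteness of $\pi_1$, not the passage from ``no finite quotients'' to triviality.
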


\begin{proof}
Rational connectedness gives $H^0(X,\Omega_X^j)= 0$ for $j >0$ \cite[Corollary~IV.3.8]{Kol96} and $\pi_1(X^{\mathrm{an}})=1$ \cite[Proposition~2.3]{Koll00}. Hodge symmetry and Dolbeault theory give $H^j(X,\mathcal O_X)= 0$ for $j>0$. The exponential sequence therefore identifies $\mathrm{Pic}(X)$ with $H^2(X,\mathbb{Z})$ (cf. \cite[Lemma~4.13]{KM98}). Since $X^{\mathrm{an}}$ is simply connected, the universal coefficient theorem shows that $H^2(X,\mathbb{Z})$ is torsion-free. Since $H^1(X,\mathcal O_X)= 0$, the abelian variety $\mathrm{Pic}^0(X)$ has dimension zero and is therefore trivial. A numerically trivial line bundle has torsion image in $\mathrm{Pic}(X)/\mathrm{Pic}^0(X)$. Since $\mathrm{Pic}(X)$ is torsion-free, every numerically trivial line bundle is trivial. This proves the last assertion.
\end{proof}

We also need the following descent observation for rationally connected fibers.

\begin{lemma}\label{lem: free action on base}
Let $p: Y\to B$ be a surjective morphism whose fibers are smooth projective rationally connected varieties. Suppose that a finite group $G$ acts freely on $Y$ and acts on $B$ so that $p$ is $G$-equivariant. Then the action of $G$ on $B$ is free.
\end{lemma}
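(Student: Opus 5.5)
Suppose $g\in G$ fixes a point $b\in B$; we will show $g=1$. Since $p$ is $G$-equivariant, $g$ maps the fiber $Y_b:=p^{-1}(b)$ into itself. Because $g$ acts on $Y$ by an automorphism, its restriction is an automorphism of the scheme-theoretic fiber $Y_b$. By hypothesis $Y_b$ is a smooth projective rationally connected variety. The plan is to show that every automorphism of such a variety has a fixed point. Applied to $g|_{Y_b}$, this gives a fixed point of $g$ on $Y$, so freeness of the action on $Y$ forces $g=1$.

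For the fixed-point statement we use the holomorphic Lefschetz fixed point formula, working on $Y_b^{\mathrm{an}}$. Suppose $\sigma$ is an automorphism of a compact complex manifold $M$ with no fixed points. Then the Atiyah--Bott holomorphic Lefschetz number vanishes:
\[
L(\sigma,\mathcal O_M):=\sum_{j\ge 0}(-1)^j\operatorname{tr}\bigl(\sigma^*\mid H^j(M,\mathcal O_M)\bigr)=0.
\]
When $M=Y_b$, Lemma~\ref{lem: rc-basic} gives $H^j(Y_b,\mathcal O_{Y_b})=0$ for all $j>0$. Since $H^0(Y_b,\mathcal O_{Y_b})=\mathbb C$ and $\sigma$ acts on it trivially, we get $L(\sigma,\mathcal O_{Y_b})=1\neq 0$. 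Hence $g|_{Y_b}$ has a fixed point.

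An alternative route avoids the holomorphic Lefschetz formula, and we would use it if we prefer topological tools. Since $g$ has finite order, the action of $\langle g\rangle$ on $Y_b$ would be free. The quotient map $Y_b\to Y_b/\langle g\rangle$ would then be a nontrivial connected finite étale cover. This contradicts the simple connectivity of $Y_b^{\mathrm{an}}$ from Lemma~\ref{lem: rc-basic}. Here we use that $Y_b$ is connected, which holds because it is a rationally connected variety.

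We do not expect a serious obstacle. The one point to check is that the hypothesis says the fibers are smooth rationally connected varieties, so in particular each fiber is connected and reduced. This is needed both for $g|_{Y_b}$ to be an automorphism of a variety and for the quotient-cover argument to apply.
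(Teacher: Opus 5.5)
Your main argument is correct, but it takes a different route from the paper's.

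\textbf{Comparison with the paper.} The paper forms the quotient $u\colon Y_b\to Y_b/\langle\gamma\rangle$. This map is finite \'{e}tale of degree $m=\operatorname{ord}(\gamma)>1$ because the action is free. Using $T_{Y_b}\cong u^*T_{Y_b/\langle\gamma\rangle}$ and Hirzebruch--Riemann--Roch, the paper gets $1=\chi(\mathcal O_{Y_b})=m\,\chi(\mathcal O_{Y_b/\langle\gamma\rangle})$, which is impossible. You instead apply the Atiyah--Bott holomorphic Lefschetz formula directly to $g|_{Y_b}$. A fixed-point-free holomorphic self-map of a compact complex manifold has holomorphic Lefschetz number $0$, since the fixed-point sum is empty. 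The vanishing of $H^j(Y_b,\mathcal O_{Y_b})$ for $j>0$ forces this number to be $1$.

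Both proofs rest on the same input, namely the vanishing of the higher cohomology of $\mathcal O$ on the fiber. Your version proves more: every automorphism of the fiber, of any order, has a fixed point. The cost is importing an analytic index-theoretic theorem. The paper's version stays within algebraic Riemann--Roch. It needs $\gamma$ to have finite order so that the quotient exists, which is automatic because $G$ is finite.

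\textbf{The alternative route fails as written.} Simple connectivity of $Y_b^{\mathrm{an}}$ does not rule out $Y_b\to Y_b/\langle g\rangle$ being a nontrivial connected finite \'{e}tale cover. The total space of a cover can perfectly well be simply connected; it is then the universal cover, and $\pi_1$ of the base is $\mathbb Z/m$. What you need is simple connectivity of the \emph{base} $Y_b/\langle g\rangle$, and this does hold. The quotient is smooth and projective because the action is free. It is rationally connected, being the image of the rationally connected variety $Y_b$. So Lemma~\ref{lem: rc-basic} applies to it, and a connected \'{e}tale cover of degree $m>1$ cannot exist. With that correction the alternative also gives a valid proof.
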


\begin{proof}
Suppose that a nonidentity element $\gamma\in G$ fixes a point $b\in B$. If $m$ is the order of $\gamma$, then the cyclic group $\langle\gamma\rangle$ preserves $Y_b$ and acts freely on it. Thus $Y_b/\langle\gamma\rangle$ is smooth, and the quotient map $u:Y_b\to Y_b/\langle\gamma\rangle$ is finite \'{e}tale of degree $m$. Rational connectedness gives $H^i(Y_b,\mathcal O_{Y_b})=0$ for $i > 0$, and hence $\chi(Y_b,\mathcal O_{Y_b})=1$. Since $u$ is \'{e}tale, $T_{Y_b}\cong u^*T_{Y_b/\langle\gamma\rangle}$. Hirzebruch--Riemann--Roch and the projection formula therefore give
$$1= \chi(Y_b,\mathcal O_{Y_b})= m\chi(Y_b/\langle\gamma\rangle,\mathcal O_{Y_b/\langle\gamma\rangle}).$$
This equality is impossible because $m>1$. Thus no nonidentity element of $G$ fixes a point of $B$.
\end{proof}

\medskip

We next discuss Albanese morphisms and special MRC fibrations for varieties admitting int-amplified endomorphisms. We begin with the definition. Recall that every surjective endomorphism of a projective variety is finite.

\begin{definition}\label{def: int-amplified}
A surjective endomorphism $f:X\to X$ of a projective variety is \emph{int-amplified} if $f^*H-H$ is ample for some ample Cartier divisor $H$.
\end{definition}

Every $q$-polarized endomorphism is int-amplified: if $f^*H\sim qH$ with $H$ ample and $q>1$, then $f^*H-H\sim(q-1)H$ is ample.

By \cite[Theorem~1.1]{Meng20}, a surjective endomorphism $f$ of a projective variety $X$ is int-amplified if and only if every eigenvalue of $f^*$ on $N^1(X)_{\mathbb R}$ has absolute value greater than one. In particular, $(f^*)^{-n}\to0$ for an int-amplified endomorphism. Int-amplification descends along equivariant surjective morphisms \cite[Lemma~3.4]{Meng20}.

By the special MRC fibration, we mean the Chow-reduction model defined in \cite[Definition~6.4]{Meng20}. It is unique up to an isomorphism of bases. Meng's special maximal rationally connected fibration for a variety admitting an int-amplified endomorphism has a $Q$-abelian base, is equidimensional, and has irreducible rationally connected fibers \cite[Theorem~1.7]{Meng20}. Together with the Albanese properties in \cite[Theorem~1.8]{Meng20}, this gives the following lemma.

\begin{lemma}\label{lem: maximal irregularity mrc}
Let $X$ be a smooth projective variety admitting an int-amplified endomorphism. If $q(X)= \widetilde q(X)$, then the Albanese morphism of $X$ is, up to an isomorphism of bases, its special MRC fibration. In particular, the Albanese morphism is equidimensional, and all its fibers are irreducible and rationally connected.
\end{lemma}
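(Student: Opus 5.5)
The lemma follows by combining Meng's two structure theorems \cite[Theorems~1.7 and 1.8]{Meng20}, using the hypothesis $q(X)=\widetilde q(X)$ to identify the two fibrations. Let $\phi:X\to Z$ be the special MRC fibration of \cite[Definition~6.4]{Meng20}. By \cite[Theorem~1.7]{Meng20}, $Z$ is $Q$-abelian, $\phi$ is equidimensional, and its fibers are irreducible and rationally connected. By \cite[Theorem~1.8(1)]{Meng20}, $\mathrm{alb}_X:X\to\mathrm{Alb}(X)$ is surjective and equidimensional with irreducible fibers. The task is to construct an isomorphism $Z\cong \mathrm{Alb}(X)$ compatible with the two morphisms.

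\emph{Step 1: $\mathrm{alb}_X$ factors through $\phi$.} Rationally connected varieties admit no nonconstant maps to abelian varieties. Hence $\mathrm{alb}_X$ contracts every fiber of $\phi$. Since $\phi$ has connected fibers and $Z$ is normal, the rigidity lemma (or Zariski's main theorem, applied to the graph) gives a morphism $\beta:Z\to\mathrm{Alb}(X)$ with $\mathrm{alb}_X=\beta\circ\phi$.

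\emph{Step 2: $\beta$ is an isomorphism.} First, $\dim Z=\widetilde q(X)$. Indeed, choose a finite quasi-\'etale cover $A\to Z$ by an abelian variety. The normalized base change $X'=X\times_Z A$ is quasi-\'etale over $X$, and its MRC quotient is $A$, so $q(X')\ge\dim A=\dim Z$. Conversely, $q$ of any quasi-\'etale cover is bounded by the dimension of its MRC base, which is a quasi-\'etale cover of $Z$, since rationally connected fibers contribute no $1$-forms. This gives $\widetilde q(X)=\dim Z$. The hypothesis then gives $\dim\mathrm{Alb}(X)=q(X)=\dim Z$. As $\mathrm{alb}_X$ is surjective, $\beta$ is surjective and generically finite, hence finite.

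Next, $\beta$ has degree one. The fibers of $\mathrm{alb}_X$ are irreducible and are unions of $\phi$-fibers over $\beta^{-1}(a)$. Since $\phi$ is equidimensional and surjective, irreducibility of $\mathrm{alb}_X^{-1}(a)$ forces $\beta^{-1}(a)$ to be a single point set-theoretically. So $\beta$ is finite and bijective. Generically, $\mathrm{alb}_X$ has connected fibers, being the Stein factorization picture of Meng's theorem, so $\beta$ is birational. A finite birational morphism onto the normal variety $\mathrm{Alb}(X)$ is an isomorphism.

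Everything then transfers: $\mathrm{alb}_X$ is identified with $\phi$ up to the base isomorphism $\beta$, so it is equidimensional and its fibers are irreducible and rationally connected.

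The main obstacle is Step 2, specifically the equality $\dim Z=\widetilde q(X)$ and the birationality of $\beta$ (ruling out purely inseparable-like or non-reduced generic behavior). For birationality I would try first to invoke directly that $\mathrm{alb}_X$ has irreducible fibers and is equidimensional. Its general fiber is then reduced by generic smoothness in characteristic zero, so the general fiber is connected and $\deg\beta=1$.
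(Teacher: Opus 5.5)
Your route is the paper's: factor $\mathrm{alb}_X$ through the special MRC fibration $\phi:X\to Z$ by rigidity, pull back a quasi-\'etale abelian cover $A\to Z$ to compare $\dim Z$ with $\widetilde q(X)$, use the connected and irreducible Albanese fibers to get $\deg\beta=1$, and conclude with Zariski's main theorem. However, two steps fail as written.

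\emph{Finiteness of $\beta$.} The inference ``$\beta$ is surjective and generically finite, hence finite'' is not valid; a blow-up is a counterexample. Your later bijectivity argument does not repair this. Irreducibility of $\mathrm{alb}_X^{-1}(a)=\phi^{-1}(\beta^{-1}(a))$ only rules out $\beta^{-1}(a)$ consisting of several points. It does not by itself exclude a positive-dimensional irreducible $\beta$-fiber.

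The paper gets finiteness from the composite $A\to Z\xrightarrow{\beta}\mathrm{Alb}(X)$. This is a surjection of abelian varieties of equal dimension, hence an isogeny after translation. So every fiber of $\beta$ is finite, and $\beta$, being projective, is finite. Alternatively, equidimensionality of $\mathrm{alb}_X$ from \cite[Theorem~1.8(1)]{Meng20}, together with $\dim Z=q(X)$, excludes positive-dimensional $\beta$-fibers by a dimension count. Only once finiteness is known does your disjoint-union argument give $\deg\beta=1$.

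\emph{The converse inequality $\widetilde q(X)\le\dim Z$.} You justify it by claiming that $q$ is bounded by the dimension of the MRC base. This is false in general: a curve of genus $g\ge 2$ is its own MRC base but has $q=g$. Fortunately you do not need this inequality. Surjectivity of $\beta$ already gives $q(X)\le\dim Z$, so
$\dim Z=q(A)\le q(X')\le\widetilde q(X)=q(X)\le\dim Z$.
This is exactly the paper's chain.

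\emph{The cover $X'\to X$.} You should also say why $X'\to X$ is quasi-\'etale. The branch locus of $A\to Z$ has codimension at least two, and equidimensionality of $\phi$ keeps its preimage in $X$ of codimension at least two. Purity then shows that a dominating component of the normalized fiber product is \'etale over $X$, hence smooth. Finally, $q(X')\ge q(A)$ because pullback of holomorphic one-forms under the surjection $X'\to A$ is injective.
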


\begin{proof}
Let $\pi:X\to Y$ be the special MRC fibration. By \cite[Theorem~1.7(1)--(4)]{Meng20}, the variety $Y$ is $Q$-abelian, the morphism $\pi$ is equidimensional, and every fiber of $\pi$ is irreducible and rationally connected. Write $\mathrm{alb}_X:X\to\mathrm{Alb}(X)$ for the Albanese morphism.

The morphism $\mathrm{alb}_X$ is constant on every rationally connected fiber of $\pi$. Since $\pi$ has connected fibers and $Y$ is normal, the rigidity lemma \cite[Lemma~1.15]{Deba01} gives a morphism $h:Y\to\mathrm{Alb}(X)$ such that $\mathrm{alb}_X= h \circ\pi$. By \cite[Theorem~1.8(1)]{Meng20}, the morphism $\mathrm{alb}_X$ is surjective. Hence $h$ is surjective and $q(X)\leq\dim Y$.

Choose a finite quasi-\'{e}tale cover $\beta:B\to Y$ with $B$ an abelian variety, and let $X_B$ be the normalization of an irreducible component of $X\times_YB$ that dominates $X$. The induced morphism $X_B\to B$ is surjective. These morphisms form a commutative diagram:
$$
\begin{tikzcd}[column sep=large]
X_B \arrow[r] \arrow[d] & X \arrow[r, "\mathrm{alb}_X"] \arrow[d, "\pi"'] & \mathrm{Alb}(X)\\
B \arrow[r, "\beta"'] & Y \arrow[ur, "h"'] &
\end{tikzcd}
$$

The morphism $X_B\to X$ is quasi-\'{e}tale. Indeed, the branch locus $Z\subseteq Y$ of $\beta$ has codimension at least two in $Y$. The map $X_B\to X$ is \'{e}tale over $X\setminus\pi^{-1}(Z)$, and equidimensionality of $\pi$ gives $\mathrm{codim}_X(\pi^{-1}(Z))\geq2$. Purity of the branch locus therefore shows that $X_B\to X$ is \'{e}tale. Since $X_B\to B$ is surjective, we obtain
$$\dim Y=q(B)\leq q(X_B)\leq\widetilde q(X)=q(X)\leq\dim Y.$$
Here $X_B$ is smooth because it is \'{e}tale over $X$, and the first inequality follows because pullback along the surjective morphism $X_B\to B$ is injective on holomorphic one-forms. Thus $q(X)= \dim Y$.

The composite $h\circ\beta:B\to\mathrm{Alb}(X)$ is a surjective morphism between abelian varieties of the same dimension. After translating its target, it becomes a surjective homomorphism and hence an isogeny. In particular, it is finite. Since $\beta$ is surjective, every fiber of $h$ is finite. Thus $h$ is quasi-finite. Since it is projective, it is finite.

The fibers of $\mathrm{alb}_X$ are connected by \cite[Theorem~1.8(1)]{Meng20}. If $\deg h>1$, the fiber of $\mathrm{alb}_X$ over a general point would be the disjoint union of the nonempty $\pi$-fibers over the distinct points of an $h$-fiber. Hence $\deg h=1$. Since $h$ is finite and $\mathrm{Alb}(X)$ is normal, $h$ is an isomorphism.
\end{proof}

The following lemma constructs a finite \'{e}tale cover whose irregularity equals its augmented irregularity and to which the given endomorphism lifts.

\begin{lemma}\label{lem: invariant cover}
Let $X$ be a smooth complex projective variety and $f:X\to X$ an int-amplified endomorphism. Then the supremum defining $\widetilde q(X)$ is a maximum and satisfies $\widetilde q(X)\leq\dim X$. There is a connected finite \'{e}tale Galois cover $\tau: \widetilde X\to X$ satisfying
$$q(\widetilde X)= \widetilde q(\widetilde X)= \widetilde q(X),$$
such that $f$ lifts to an endomorphism $\widetilde f:\widetilde X\to\widetilde X$ making the following square Cartesian:
\begin{equation}\label{eq: cartesian-lift}
\begin{tikzcd}
\widetilde X \arrow[r, "\widetilde f"] \arrow[d, "\tau"'] & \widetilde X \arrow[d, "\tau"]\\
X \arrow[r, "f"'] & X
\end{tikzcd}
\end{equation}
The endomorphism $\widetilde f$ is int-amplified. If $f$ is $q$-polarized, then $\widetilde f$ is $q$-polarized. If $f$ is Galois, then $\widetilde f$ is Galois.
\end{lemma}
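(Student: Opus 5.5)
The plan is to build $\widetilde X$ from a characteristic-type finite-index subgroup of $\pi_1(X)$ that is stable under $f_*$. Then I check the irregularity equalities, and finally transport the three dynamical properties through the Cartesian square.

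\emph{Step 1: bounding and attaining $\widetilde q(X)$.} Let $Y\to X$ be any connected finite \'etale cover; every finite quasi-\'etale cover of $X$ is of this kind by purity. Let $\pi:X\to Z$ be Meng's special MRC fibration, so $Z$ is $Q$-abelian and $\pi$ is equidimensional with irreducible rationally connected fibers \cite[Theorem~1.7]{Meng20}.
\begin{itemize}
\item The Albanese morphism of $Y$ is constant on the (rationally connected) fibers of the induced fibration of $Y$. So it factors through a normal variety $Z_Y$ that is finite over $Z$.
\item Since $Z$ is $Q$-abelian, $Z_Y$ is dominated by an abelian variety $B$ with $\dim B=\dim Z_Y$.
\item Hence $q(Y)\le\dim Z_Y\le\dim X$.
\end{itemize}
The supremum defining $\widetilde q(X)$ is therefore over a bounded set of integers, so it is a maximum, attained by some connected cover $Y_0\to X$ of degree $d_0$.

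\emph{Step 2: an $f$-stable cover.} Put $G=\pi_1(X)$, which is finitely generated, and let $\phi=f_*:G\to G$. Its image has finite index because $f$ is finite surjective. For a subgroup $H\le G$ of index $\le N$,
$$[G:\phi^{-1}(H)]=[\phi(G):\phi(G)\cap H]\le N.$$
Fix $N\ge d_0$ and let $K$ be the intersection of all subgroups of index $\le N$. There are finitely many such subgroups, so $K$ is normal of finite index, and the previous display gives $\phi^{-1}(K)\supseteq K$, i.e. $\phi(K)\subseteq K$.

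Let $\tau:\widetilde X\to X$ be the Galois cover attached to $K$. The inclusion $\phi(K)\subseteq K$ shows that $f\circ\tau$ lifts to $\widetilde f:\widetilde X\to\widetilde X$. The square \eqref{eq: cartesian-lift} is Cartesian precisely when $X\times_{f,X}\widetilde X$ is connected, i.e. when $\phi(G)K=G$; in that case $\phi^{-1}(K)=K$.

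\emph{This connectedness is the step I expect to be the main obstacle.}
\begin{itemize}
\item First attempt: the map $H\mapsto\phi^{-1}(H)$ sends the finite set $\mathcal S_N$ of subgroups of index $\le N$ into itself, and it never increases index. Pulling back a connected cover along $f$ and taking components, one should show that every $H\in\mathcal S_N$ is of the form $\phi^{-1}(H')$ with $[G:H']=[G:H]$. This would make the map a bijection on $\mathcal S_N$ preserving indices, which forces $\phi(G)H=G$ for all $H\in\mathcal S_N$ and hence $\phi(G)K=G$.
\item Fallback, if surjectivity of the pullback map is hard: use the virtual structure of $\pi_1$ for int-amplified endomorphisms (via Step 1, the MRC fibers are simply connected and the base is $Q$-abelian). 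Then $G$ is virtually abelian up to a finite group, and $\phi$ acts on the abelian part by an isogeny-type map, for which the index computations are explicit.
\end{itemize}

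\emph{Step 3: the irregularity equalities.} Since $K\subseteq\pi_1(Y_0)$, the cover $\widetilde X\to X$ factors through $Y_0$. Pulling back one-forms along the surjection $\widetilde X\to Y_0$ is injective, so
$$\widetilde q(X)=q(Y_0)\le q(\widetilde X)\le\widetilde q(\widetilde X)\le\widetilde q(X).$$
The last inequality holds because a composite of finite \'etale covers is a finite \'etale cover of $X$. Thus all these quantities are equal.

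\emph{Step 4: transporting the dynamical properties.} Let $H$ be ample with $f^*H-H$ ample. Then $\tau^*H$ is ample since $\tau$ is finite, and
$$\widetilde f^*\tau^*H-\tau^*H=\tau^*(f^*H-H)$$
is ample, so $\widetilde f$ is int-amplified. The same identity with $f^*H\sim qH$ gives $\widetilde f^*\tau^*H\sim q\,\tau^*H$, so $q$-polarization is preserved.

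If $f$ is Galois with group $\Gamma$, then $\Gamma$ acts on $X\times_{f,X}\widetilde X\cong\widetilde X$ through the first factor, and this action is over the target $\widetilde X$ of $\widetilde f$. The action is faithful, and $|\Gamma|=\deg f=\deg\widetilde f$ by the Cartesian square. Hence $\mathbb C(\widetilde X)/\widetilde f^*\mathbb C(\widetilde X)$ is Galois with group $\Gamma$, so $\widetilde f$ is Galois.
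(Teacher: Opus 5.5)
There is a genuine gap in Step~2. The problem is the choice of $K$ itself, not just the proof of connectedness.

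\textbf{Why your $K$ fails.} Intersecting over all subgroups of index $\le N$ only gives $f_*^{-1}(K)\supseteq K$. That yields a lift $\widetilde f$, but Cartesianity needs $f_*(G)K=G$, equivalently $f_*^{-1}(K)=K$. Your ``first attempt'' (that $H\mapsto f_*^{-1}(H)$ is an index-preserving bijection of $\mathcal S_N$) is false. Whenever $f_*(G)$ lies in some proper subgroup $H$ of index $\le N$, we get $K\subseteq H$ and hence $f_*(G)K\subseteq H\neq G$.

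Here is a concrete example. Let $S=(E'\times F)/\langle\iota\rangle$ be the bielliptic surface with $\iota(x,y)=(x+a,-y)$, where $0\neq a\in E'[2]$. Let $f_S$ be induced by $[3]$, which commutes with $\iota$. Take $X=E\times S$ with the int-amplified endomorphism $f=[2]\times f_S$.
\begin{itemize}
\item We have $q(X)=2<3=\widetilde q(X)$, so $d_0\ge 2$ and hence $N\ge 2$.
\item Put $G=\mathbb Z^2\times\pi_1(S)$. The index-two subgroup $H=(2\mathbb Z\oplus\mathbb Z)\times\pi_1(S)$ contains $f_*(G)\subseteq 2\mathbb Z^2\times\pi_1(S)$.
\item Hence $K\subseteq H$, so $X\times_{f,X}\widetilde X$ has at least two components. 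The square is not Cartesian, for every admissible $N$.
\end{itemize}
The virtual-abelian fallback cannot repair this, since the defect lies in $K$.

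\textbf{The missing idea: backward iteration.} This is the paper's construction. Start from the normal core $N_0$ of $\pi_1(Y_0)$ and set $N_{i+1}=f_*^{-1}(N_i)$.
\begin{itemize}
\item Indices are non-increasing, and there are only finitely many subgroups of bounded index. So $N_{i+s}=N_i$ for all $i\ge m$.
\item Then $N_\infty=\bigcap_{j<s}N_{m+j}$ satisfies $f_*^{-1}(N_\infty)=N_\infty$.
\item The base-point component of $X\times_{f,X}\widetilde X$ corresponds to $f_*^{-1}(N_\infty)$, so it has full degree and the fibre product is connected.
\end{itemize}

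\textbf{Consequence for Step~3.} $N_\infty$ need not lie in $\pi_1(Y_0)$, and in general no $f_*^{-1}$-stable subgroup does. For instance, take $X=E$, $f=[2]$ and $\pi_1(Y_0)=2\mathbb Z^2$. Any $K\subseteq 2\mathbb Z^2$ has $\mathbb Z^2/K$ of even order, while $f_*^{-1}(K)=K$ forces odd order. Instead, use the finite surjections $X_{i+1}\to X_i$ coming from $X\times_{f,X}X_i\to X_i$. Along these $q$ does not drop, so $q(X_\infty)\ge q(Y_0)$.

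\textbf{Consequence for Step~1.} The same tower gives the bound cleanly. $X_\infty$ carries an int-amplified lift of $f$, so its Albanese map is surjective by Meng's theorem, and $q(Y)\le q(X_\infty)\le\dim X$.

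As written, Step~1 is not justified. A finite cover $Z_Y$ of a $Q$-abelian $Z$ is dominated by an abelian variety of the same dimension only if $Z_Y\to Z$ is quasi-\'etale. For example, a genus-two curve over an elliptic curve is not. You give no argument excluding ramification of the Stein factorization, that is, multiple fibres of $\pi$ in codimension one.

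Step~4 is fine.
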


\begin{proof}
Choose an ample Cartier divisor $H$ such that $f^*H-H$ is ample. Choose also a base point of $X^{\mathrm{an}}$ and a path from it to its image under $f$, and put $\Pi= \pi_1(X^{\mathrm{an}})$. These choices define $f_*:\Pi\to\Pi$ up to an inner automorphism. This ambiguity is harmless because all subgroups considered below are normal. By the Riemann existence theorem, we identify finite \'{e}tale covers of $X$ with finite topological covers of $X^{\mathrm{an}}$.

\noindent\textbf{Step 1.} We prove that the supremum defining $\widetilde q(X)$ is finite and attained.

We begin with boundedness, without assuming that the supremum is attained. Let $Y\to X$ be a connected finite \'{e}tale cover. Passing to the connected Galois closure can only increase irregularity, because pullback on $H^1(\mathcal O)$ is injective for a finite surjective morphism by the trace. We may therefore replace $Y$ by this closure and assume that $Y\to X$ corresponds to a normal finite-index subgroup $N_0\subseteq\Pi$. Define
$$N_{i+1}=f_*^{-1}(N_i).$$
The map $\Pi/N_{i+1}\to\Pi/N_i$ induced by $f_*$ is injective, so the indices $[\Pi:N_i]$ are nonincreasing and eventually constant. Since $\Pi$ is finitely generated, it has only finitely many subgroups of any fixed finite index. The sequence is obtained by iterating the single operation $N\mapsto f_*^{-1}(N)$, so it is eventually periodic. Thus there are integers $m\geq 0$ and $s>0$ such that $N_{i+s}=N_i$ for every $i\geq m$. Put
$$N_\infty= \bigcap_{j=0}^{s-1}N_{m+j}.$$
Taking inverse images cyclically permutes these $s$ subgroups. Therefore $N_\infty$ is normal and $f_*^{-1}(N_\infty)=N_\infty$.

Let $X_i\to X$ and $X_\infty\to X$ be the covers corresponding to $N_i$ and $N_\infty$. Since $N_0$ corresponds to $Y$, we identify $X_0$ with $Y$. For every $i$, the cover $X_{i+1}$ is the connected component of $X\times_{f,X}X_i$ determined by the chosen base point. Projection to $X_i$ gives a finite surjective lift $X_{i+1}\to X_i$. Pullback on $H^1(\mathcal O)$ is injective by the trace, so $q(X_{i+1})\geq q(X_i)$ and hence $q(X_i)\geq q(Y)$ for every $i$. Since $X_\infty$ dominates $X_m,\ldots,X_{m+s-1}$, it follows that $q(X_\infty)\geq q(Y)$.

The relation $f_*^{-1}(N_\infty)=N_\infty$ lifts $f$ to an endomorphism of $X_\infty$. This lift is int-amplified because the pullbacks of $H$ and $f^*H-H$ are ample. Its Albanese morphism is therefore surjective by \cite[Theorem~1.8(1)]{Meng20}. Thus
$$q(Y)\leq q(X_\infty)= \dim\mathrm{Alb}(X_\infty)\leq\dim X.$$
This holds for every finite \'{e}tale cover $Y\to X$. The set of possible irregularities is a nonempty bounded set of integers, so its supremum is a maximum.

\medskip

\noindent\textbf{Step 2.} We construct a Galois cover realizing $\widetilde q(X)$ and a lift of $f$ for which the square \eqref{eq: cartesian-lift} is Cartesian.

Choose a cover realizing the maximum and replace it by its connected Galois closure. Apply the inverse-image construction above to the corresponding normal subgroup $N_0$. Irregularity cannot decrease along the resulting finite maps and cannot exceed $\widetilde q(X)$. Hence every cover in the tower, including the cover associated with $N_\infty$, has irregularity $\widetilde q(X)$. Denote the cover associated with $N_\infty$ by $\tau:\widetilde X\to X$.

Let $Z=X\times_{f,X,\tau}\widetilde X$ be the pullback of $\tau$ along $f$. The connected component of $Z$ determined by the chosen base point corresponds to $f_*^{-1}(N_\infty)=N_\infty$. Its degree over $X$ is therefore $[\Pi:N_\infty]$, which is also the total degree of $Z\to X$. Hence $Z$ is connected and is the cover associated with $N_\infty$. Identifying $Z$ with $\widetilde X$ gives the Cartesian square \eqref{eq: cartesian-lift}. The endomorphism $\widetilde f$ is a base change of $f$, so it is int-amplified. If $f$ is $q$-polarized, then $\widetilde f$ is $q$-polarized by the pullback of a polarizing divisor. If $f$ is Galois, then the connected base change $\widetilde f$ is Galois.

\medskip

\noindent\textbf{Step 3.} We prove that $\widetilde q(\widetilde X)=q(\widetilde X)$.

Every finite quasi-\'{e}tale cover of the smooth variety $\widetilde X$ is \'{e}tale by purity, and its composite with $\widetilde X\to X$ is a finite \'{e}tale cover of $X$. Thus $\widetilde q(\widetilde X)\leq\widetilde q(X)=q(\widetilde X)$, while the reverse inequality follows from the identity cover.
\end{proof}

\begin{lemma}\label{lem: cartesian galois actions}
Let $\tau:Y\to X$ be a finite \'{e}tale Galois cover of connected normal varieties, let $f:X\to X$ be a finite Galois endomorphism, and let $\widetilde f:Y\to Y$ be an endomorphism lifting $f$. Suppose that the following square is Cartesian:
$$
\begin{tikzcd}
Y \arrow[r, "\widetilde f"] \arrow[d, "\tau"'] & Y \arrow[d, "\tau"]\\
X \arrow[r, "f"'] & X
\end{tikzcd}
$$
Then $\widetilde f$ is finite and Galois. There is a unique automorphism
$$\alpha:\mathrm{Gal}(\tau)\to\mathrm{Gal}(\tau)$$
such that
$$\widetilde f\circ\gamma= \alpha(\gamma)\circ\widetilde f$$
for every $\gamma\in\mathrm{Gal}(\tau)$. Moreover, every element of $\mathrm{Gal}(\tau)$ commutes with every element of $\mathrm{Gal}(\widetilde f)$.
\end{lemma}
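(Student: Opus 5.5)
Since the square is Cartesian, $\widetilde f$ is the base change of the finite morphism $f$ along $\tau$, so it is finite. For the Galois property I will work with function fields. Write $K=\mathbb C(X)$ and let $L=\mathbb C(Y)$, a Galois extension of $K$ with group $G=\mathrm{Gal}(\tau)$. Consider the source copy of $X$ with function field $K$ and the target copy with subfield $f^*K\subseteq K$, which has Galois closure data $H=\mathrm{Gal}(f)$.

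Because $Y\cong X\times_{f,X,\tau}Y$ is connected (it is the variety $Y$), the generic fibre is the spectrum of a field. This field is the compositum $L = K\cdot \widetilde f^*L$, and the tensor product $K\otimes_{f^*K}\widetilde f^*L$ is a field; equivalently, $K$ and $\widetilde f^*L$ are linearly disjoint over $f^*K$. Both $K/f^*K$ and $\widetilde f^*L/f^*K$ are Galois. Standard Galois theory then gives that the compositum $L/f^*K$ is Galois with group
$$\mathrm{Gal}(L/f^*K)\cong \mathrm{Gal}(K/f^*K)\times\mathrm{Gal}(\widetilde f^*L/f^*K).$$
Hence $\widetilde f$ is Galois, since $\mathrm{Gal}(L/\widetilde f^*L)$ is a subgroup of a Galois group of $L$ over a subfield.

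More concretely, via $\widetilde f^*$ the factor $\mathrm{Gal}(\widetilde f^*L/f^*K)$ is the image of $G$. Meanwhile $\mathrm{Gal}(L/\widetilde f^*L)$ restricts isomorphically onto $\mathrm{Gal}(K/f^*K)=H$ and acts trivially on $\widetilde f^*L$. The direct product structure shows that elements of the two factors commute. The first factor, viewed in $\mathrm{Aut}(L/f^*K)$, is exactly $\mathrm{Gal}(L/K)=G$: these elements fix $K$, and they act on $\widetilde f^*L$ through $G$ because $L/K$ restricts isomorphically onto $\widetilde f^*L/f^*K$ by linear disjointness. The second factor is $\mathrm{Gal}(\widetilde f)$. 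This gives the commutation claim at the level of birational automorphisms, and by normality these are biregular automorphisms of $Y$ (Definition~\ref{def: galois}), so the commutation holds on $Y$.

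For $\alpha$: given $\gamma\in G$, I want $\gamma'$ with $\widetilde f\circ\gamma=\gamma'\circ\widetilde f$. I will get it directly from the Cartesian square. The automorphism $(\mathrm{id}_X,\gamma)$ of $X\times_X Y$ commutes with the projections, and the projection to $Y$ is $\widetilde f$. So I set $\alpha(\gamma)=\gamma$ via the identification. Equivariance is then the statement that $\widetilde f$ is the second projection and $\gamma$ acts on the second factor of the fibre product; more precisely, the deck group of the base change is identified with $G$ compatibly. Uniqueness follows because $\widetilde f$ is surjective, and $\alpha$ is a homomorphism and bijective because $\widetilde f$ being dominant makes $\gamma\mapsto\alpha(\gamma)$ injective on a finite group. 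The main obstacle I expect is bookkeeping: identifying $Y$ with the fibre product so that the deck action on it matches $G$ only up to the chosen isomorphism. This is why $\alpha$ is merely an automorphism rather than the identity. I will handle it by transporting the action through the given Cartesian isomorphism.
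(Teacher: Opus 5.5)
Your proof is correct. For the Galois and commutation assertions it takes a genuinely different route from the paper.

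The paper works entirely on the fibre product $X\times_{f,X,\tau}Y\cong Y$. There $\mathrm{Gal}(\widetilde f)\cong\mathrm{Gal}(f)$ acts on the first coordinate, since $\widetilde f$ is a base change of $f$. A count shows that the maps $(x,y)\mapsto(x,\delta(y))$, $\delta\in\mathrm{Gal}(\tau)$, are all the automorphisms of the first projection: they are distinct because the second projection is surjective, and there are $\deg\tau$ of them. Commutation is then immediate, since the two actions touch different coordinates.

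You instead extract from the integrality of $Y$ that, in your notation, $K=\mathbb C(X)$ and $\widetilde f^*L$ are linearly disjoint over $f^*K$ with compositum $L=\mathbb C(Y)$. The Galois theory of a compositum of linearly disjoint Galois extensions then gives in one stroke that $L/\widetilde f^*L$ is Galois. It also shows that $\mathrm{Gal}(L/K)$ and $\mathrm{Gal}(L/\widetilde f^*L)$ are the two commuting direct factors of $\mathrm{Gal}(L/f^*K)$. Normality and separatedness transfer this to biregular automorphisms. Your ``first'' and ``second'' factor labels are swapped relative to your displayed product, which is harmless.

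Your route makes transparent that connectedness of $Y$ enters exactly as linear disjointness. The paper's keeps everything in one geometric picture, from which $\alpha$ falls out by the same count.

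Your paragraph on $\alpha$ is essentially the paper's argument, but the phrase ``set $\alpha(\gamma)=\gamma$ via the identification'' is misleading, and you should make the direction of the construction explicit. The map $(\mathrm{id}_X,\delta)$ commutes only with the first projection. Transported through $z\mapsto(\tau(z),\widetilde f(z))$, it assigns to $\delta\in\mathrm{Gal}(\tau)$ acting on the \emph{target} an element $\gamma_\delta\in\mathrm{Gal}(\tau)$ acting on the \emph{source}, with $\widetilde f\circ\gamma_\delta=\delta\circ\widetilde f$.

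So what you actually construct is $\delta\mapsto\gamma_\delta$, that is, $\alpha^{-1}$. This map is injective because $\widetilde f$ is surjective, hence bijective on the finite group. Only this bijectivity guarantees that $\alpha(\gamma)$ exists for every $\gamma$, and $\alpha$ is its inverse.

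That $\alpha$ need not be the identity is already visible for $\tau=[3]$ and $f=\widetilde f=[2]$ on an elliptic curve $E$. There the square is Cartesian and $\alpha(t_a)=t_{2a}$ for translations $t_a$, $a\in E[3]$.

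Alternatively, your own field-theoretic paragraph already yields $\alpha$ directly. By linear disjointness, restriction $\mathrm{Gal}(L/K)\to\mathrm{Gal}(\widetilde f^*L/f^*K)$ is an isomorphism, so one can set $\alpha(\gamma)^*=(\widetilde f^*)^{-1}\circ\gamma^*\circ\widetilde f^*$. The identity $\widetilde f\circ\gamma=\alpha(\gamma)\circ\widetilde f$ then holds birationally, hence everywhere.
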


\begin{proof}
The Cartesian square gives the identification
$$Y\cong X\times_{f,X,\tau}Y,\qquad z\mapsto \bigl(\tau(z),\widetilde f(z)\bigr).$$
The first projection is $\tau$ and the second is $\widetilde f$. Since $\widetilde f$ is obtained from $f$ by base change and its source is connected, it is finite and Galois, with $\mathrm{Gal}(\widetilde f)\cong \mathrm{Gal}(f)$ acting on the first coordinate.

For each $\delta\in\mathrm{Gal}(\tau)$, the map
$$(x,y)\mapsto(x,\delta(y))$$
preserves the first projection. These maps are distinct because the second projection is surjective. Since their number is $\deg\tau$, they give all automorphisms of the first projection $\tau$. Hence each $\gamma\in\mathrm{Gal}(\tau)$ acting on the source $Y$ has a unique expression
$$(x,y)\mapsto(x,\alpha(\gamma)(y)),\qquad \alpha(\gamma)\in\mathrm{Gal}(\tau).$$
This correspondence is bijective and respects composition, so $\alpha$ is a group automorphism. Now we consider the second projection $\widetilde{f}\circ\gamma(z)$ of $\gamma(z)$ for $z\in Y$, we have $\widetilde{f}\circ\gamma(z)= \alpha(\gamma)\circ\widetilde f(z)$, and the surjectivity of $\widetilde f$ makes $\alpha$ uniquely determined by this relation.

The two coordinate actions commute, proving the last assertion.
\end{proof}

\medskip

\subsection{Isogenies}

We next record the facts about isogenies used below.

\begin{lemma}\label{lem: int-amplified-abelian-fixed-point}
Let $g:A\to A$ be an int-amplified endomorphism of an abelian variety. Then $g$ has a fixed point. With this point as the origin, $g$ is an isogeny.
\end{lemma}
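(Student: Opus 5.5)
Every morphism between abelian varieties is a homomorphism followed by a translation, so we start from that decomposition. Fix the origin $0\in A$ and write $g(x)=\phi(x)+c$, where $\phi:A\to A$ is a homomorphism and $c=g(0)$. Since $g$ is surjective, $\phi$ is surjective, and because source and target have the same dimension, $\phi$ is an isogeny.

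\textbf{Step 1: $\phi$ is int-amplified.} Translations act trivially on $N^1(A)_{\mathbb R}$, since $t_c^*L$ is algebraically equivalent to $L$. Hence $g^*=\phi^*$ on $N^1(A)_{\mathbb R}$. By Meng's criterion \cite[Theorem~1.1]{Meng20}, every eigenvalue of $\phi^*$ on $N^1(A)_{\mathbb R}$ has absolute value greater than one.

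\textbf{Step 2: $\phi-\mathrm{id}_A$ is surjective.} This is the key step. Suppose it fails. Then $B:=\ker(\phi-\mathrm{id}_A)^0$ is a nonzero abelian subvariety on which $\phi$ restricts to the identity. Choose an ample $H$ on $A$. The restriction map $N^1(A)_{\mathbb R}\to N^1(B)_{\mathbb R}$ is $\phi^*$-equivariant, since $\phi|_B=\mathrm{id}_B$ and $\phi(B)=B$. Its image is nonzero, because it contains $H|_B$, which is ample. On this image $\phi^*$ acts as the identity, because it is induced by $(\phi|_B)^*=\mathrm{id}$. Thus $1$ is an eigenvalue of $\phi^*$ on a nonzero quotient of $N^1(A)_{\mathbb R}$, hence also an eigenvalue of $\phi^*$ on $N^1(A)_{\mathbb R}$ itself. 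This contradicts Step 1.

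Equivalently, and more directly, one can use that int-amplification descends and restricts: an int-amplified endomorphism cannot restrict to the identity on a positive-dimensional invariant subvariety. Indeed, if $f^*H-H$ is ample and $f|_B=\mathrm{id}_B$, then $(f^*H-H)|_B\equiv 0$, which is impossible for an ample divisor on a positive-dimensional $B$. This second argument is the cleanest, and it is the one I would write. So $\phi-\mathrm{id}_A$ has finite kernel, and as an endomorphism of $A$ it is therefore surjective.

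\textbf{Step 3: fixed point and change of origin.} By Step 2 there exists $a\in A$ with $\phi(a)-a=-c$. Then $g(a)=\phi(a)+c=a$, so $a$ is a fixed point of $g$. Taking $a$ as the new origin, we compute
$$g(x+a)-a=\phi(x)+\phi(a)+c-a=\phi(x).$$
So in the new group structure, $g$ is the isogeny $\phi$.

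The only nontrivial point is Step 2, namely ruling out a positive-dimensional subvariety of $A$ fixed pointwise by $\phi$. The restriction argument in Step 2 handles it.
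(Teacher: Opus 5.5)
Your proof is correct. In the version you say you would write, where you restrict $\phi^*H-H\equiv g^*H-H$ to $B=\ker(\phi-\mathrm{id}_A)^0$, it is exactly the paper's argument. The appeal to Meng's eigenvalue criterion in Step 1 is not needed: all that is used is that translations act trivially on $N^1(A)_{\mathbb R}$, so $\phi^*H-H$ is ample.
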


\begin{proof}
We can write $g(x)=u(x)+c$, where $u:A\to A$ is a group homomorphism and $c\in A$. Since $g$ is surjective, $u$ is an isogeny. Choose an ample divisor $H$ such that $g^*H-H$ is ample. Translations act trivially on $N^1(A)_{\mathbb{R}}$, so $u^*H-H$ is also ample.

We claim that $\mathrm{id}_A-u$ is an isogeny. Otherwise, the identity component of $\ker(\mathrm{id}_A-u)$ is a positive-dimensional abelian subvariety $B$ on which $u$ is the identity. The restriction of $u^*H-H$ to $B$ is then numerically trivial, contradicting its ampleness. Thus $\mathrm{id}_A-u$ is surjective. Choose $x\in A$ with $(\mathrm{id}_A-u)(x)=c$. Then $g(x)=x$, and using $x$ as the origin identifies $g$ with the isogeny $u$.
\end{proof}

\begin{lemma}\label{lem: periodic points isogeny}
The periodic points of an isogeny $g:A\to A$ of a complex abelian variety are Zariski dense.
\end{lemma}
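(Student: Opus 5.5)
The plan is to identify the periodic points of $g$ explicitly and then show that this explicit set is dense in the analytic topology, which implies Zariski density. Here $g$ is an isogeny, so $g(0)=0$ and $g$ is a surjective homomorphism with finite kernel. The first thing I will prove is that every torsion point of $A$ is periodic. Since torsion points are dense in $A^{\mathrm{an}}\cong\mathbb C^d/\Lambda$, this is enough.

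Fix an integer $N\geq1$ and let $A[N]$ be the $N$-torsion subgroup. It is a finite group, isomorphic to $(\mathbb Z/N)^{2d}$. Because $g$ is a homomorphism, it maps $A[N]$ into itself. The difficulty is that the restriction of $g$ to $A[N]$ need not be injective: if $N$ shares prime factors with $\deg g$, the kernel of $g$ may meet $A[N]$. So I cannot simply say that $g$ permutes $A[N]$.

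I avoid this as follows. A self-map of a finite set always has an eventually periodic orbit, but I want every point itself to be periodic. For this I restrict to $N$ coprime to $\deg g$. Then $\ker g\cap A[N]=0$, because $\ker g$ is killed by $\deg g$ (its order divides $\deg g$). Hence $g|_{A[N]}$ is an injective self-map of a finite set, so it is a bijection. A bijection of a finite set has finite order, so every point of $A[N]$ is periodic.

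It remains to see that the union of the $A[N]$ over $N$ coprime to $m:=\deg g$ is dense. Take for instance $N=\ell^k$ for a fixed prime $\ell\nmid m$. Under the uniformization, $A[\ell^k]=\ell^{-k}\Lambda/\Lambda$. The union over $k$ is $\mathbb Z[1/\ell]\Lambda/\Lambda$, which is dense in $\Lambda\otimes\mathbb R/\Lambda=A^{\mathrm{an}}$.

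A set that is dense in the analytic topology is Zariski dense, because a proper Zariski closed subset is nowhere dense in the analytic topology. Therefore the periodic points of $g$ are Zariski dense.

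The main obstacle is exactly the non-injectivity of $g$ on general torsion. Choosing $N$ coprime to $\deg g$ handles it, so I expect no further difficulty.
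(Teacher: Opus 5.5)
Your argument is correct and is essentially the paper's proof: for $N$ coprime to $\deg g$ the isogeny permutes $A[N]$, so those torsion points are periodic, and their union is dense in the analytic topology and hence Zariski dense. One small slip is your opening claim that every torsion point is periodic, which is false in general (a nonzero $2$-torsion point under $[2]$ is only preperiodic); the body of your proof uses only torsion of order prime to $\deg g$, so nothing is lost.
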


\begin{proof}
If $m$ is coprime to $\deg g$, then $g$ permutes the finite subgroup $A[m]$, so every point of $A[m]$ is periodic. In an analytic uniformization of $A$, the union of the subgroups $A[m]$ over all such $m$ is dense in the analytic topology and hence Zariski dense.
\end{proof}

\begin{lemma}\label{lem: no backward invariant set}
Let $g:A\to A$ be an int-amplified endomorphism of an abelian variety. If a proper closed subset $Z\subsetneq A$ satisfies $g^{-1}(Z)\subseteq Z$, then $Z$ is empty.
\end{lemma}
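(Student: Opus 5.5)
By Lemma~\ref{lem: int-amplified-abelian-fixed-point}, after translating we may assume that $g$ is an isogeny. The first step is to upgrade the inclusion $g^{-1}(Z)\subseteq Z$ to an equality. Since $g$ is finite and surjective, $g(g^{-1}(Z))=Z$ would follow if we knew $g^{-1}(Z)=Z$. We prove this by comparing irreducible components. Let $d=\dim Z$ and let $Z_1,\dots,Z_r$ be the irreducible components of $Z$ of dimension $d$. Because $g$ is finite and surjective (hence flat, being an isogeny), every irreducible component of $g^{-1}(Z_i)$ has dimension $d$ and maps onto $Z_i$. These components lie in $Z$, so each is one of the $Z_j$. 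Thus $g^{-1}$ induces a relation on the finite set $\{Z_1,\dots,Z_r\}$ in which every $Z_i$ has at least one preimage. Also, distinct $Z_i$ have disjoint sets of preimage components, since each preimage component maps onto a unique $Z_i$. A counting argument on this finite set then shows that each $Z_i$ has exactly one preimage component, and that $g^{-1}$ permutes the top-dimensional components. Replacing $g$ by an iterate $g^k$, which is still int-amplified and satisfies $g^{-k}(Z)\subseteq Z$, we may assume $g^{-1}(Z_1)=Z_1$ as sets, with $Z_1$ irreducible of dimension $d<\dim A$. Hence $g^{-1}(Z_1)=Z_1=g(Z_1)$.

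Next we show that such a totally invariant irreducible proper subvariety must be empty. Consider $V:=Z_1$, with $g|_V:V\to V$ finite surjective and $g^{-1}(V)=V$. Compare degrees: the set-theoretic fiber of $g$ over a general point of $V$ has $\deg g$ points, all lying in $V$ (since $g$ is étale). Therefore $g|_V$ has degree $\deg g$. Now use intersection numbers with an ample $H$ such that $g^*H-H$ is ample:
$$\deg(g|_V)\,(H^d\cdot V)=(g^*H)^d\cdot V.$$
We also have $\deg g\,(H^{n}\cdot A)=(g^*H)^n$. Since $g^*H-H$ is ample, $g^*H$ dominates $H$ strictly.

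The cleaner route is to use the eigenvalues on $N^1(A)_{\mathbb R}$, all of which have absolute value greater than one. Writing $g^*$ in terms of these eigenvalues, $\deg g=\prod|\lambda_i|^2$ over the analytic eigenvalues $\lambda_1,\dots,\lambda_n$ of $g$ on the tangent space, where $n=\dim A$. On the other hand, $(g^*H)^d\cdot V/(H^d\cdot V)$ grows under iteration at most like the product of the $d$ largest $|\lambda_i|^2$ (up to constants). So comparing $g^k$ for large $k$ gives
$$\prod_{i=1}^n|\lambda_i|^{2k}\le C\prod_{i\le d}|\lambda_{(i)}|^{2k},$$
which is impossible because the remaining $n-d\ge1$ factors exceed one.

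The main obstacle is making this growth bound rigorous. I would try the following: $(g^k)^*H$ is numerically $\le C\cdot\|(g^*)^k\|\,H$, and more precisely one estimates $((g^k)^*H)^d\cdot V$ by expanding the operator $(g^*)^k$ on the cohomology classes of type $(d,d)$ restricted to $V$. Alternatively, one can avoid the spectral estimate by reducing to $d=0$. Since the Albanese quotient $A\to A/\mathrm{Stab}(V)$ is $g$-equivariant, the image of $V$ is totally invariant, and one can pass to the case where $V$ generates a subtorus. Then one applies Meng's result \cite[Lemma~4.1]{Meng20}, which this lemma is modeled on. For finite $Z$ the statement is immediate: $g^{-1}(Z)\subseteq Z$ forces $\deg g\cdot|Z|\le|Z|$, since $g$ is étale, so either $\deg g=1$ or $Z=\emptyset$. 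The case $\deg g=1$ is excluded because $g$ is int-amplified on a positive-dimensional $A$, where $\deg g=\prod|\lambda_i|^2>1$.
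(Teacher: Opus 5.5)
\textbf{Verdict.} Your reduction to a totally invariant subvariety is correct. The step that carries the content is left unproved in your proposal, and the first estimate you suggest for it fails. It can be closed either by citing \cite[Lemma~4.1]{Meng20}, as the paper does, or by a one-step intersection bound given below.

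\textbf{The reduction.} Because $g$ is \'{e}tale, every component of $g^{-1}(Z_i)$ is $d$-dimensional and maps onto $Z_i$. Hence $g^{-1}$ permutes the top-dimensional components. After an iterate you get an irreducible nonempty proper $V$ with $g^{-1}(V)=V$ and $\deg(g|_V)=\deg g$. The paper reaches total invariance more cheaply. The chain $Z\supseteq g^{-1}(Z)\supseteq g^{-2}(Z)\supseteq\cdots$ stabilizes at a nonempty proper $W=g^{-m}(Z)$ with $g^{-1}(W)=W$, and the paper then simply invokes \cite[Lemma~4.1]{Meng20}. So if you cite Meng, your proof is complete right after the reduction (irreducibility is not even needed) and agrees with the paper's.

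\textbf{The $A/\mathrm{Stab}(V)$ route.} This detour is then superfluous. As written it is also unjustified: quotienting by the stabilizer does not force $d=0$ unless you add two facts. The first is Ueno's theorem (a positive-dimensional subvariety with finite stabilizer is of general type). The second is that a variety of general type has no surjective endomorphism of degree greater than one.

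\textbf{The gap in the self-contained route.} The bound you try first, $(g^k)^*H\le C\|(g^*)^k\|H$, only yields $\deg g\le\max_i|\lambda_i|^{2d}$. This is no contradiction: take $u=\mathrm{diag}(10,2,2)$ on $E^3$ with $d=2$, where $\deg g=1600<10^4$. You need the product of the $d$ largest $|\lambda_i|^2$, and you can get it in one step without iterating.

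Let $u=dg$ on $T_0A$, let $h$ be the Hermitian form of $c_1(H)$, and let $\omega$ be its invariant K\"ahler form. Since $g^*H$ has form $h(u\cdot,u\cdot)$, ampleness of $g^*H-H$ says that the singular values $\sigma_1\ge\cdots\ge\sigma_n$ of $u$ with respect to $h$ all exceed $1$. Applied to eigenvectors, this also supplies the justification, missing from your sketch, that every analytic eigenvalue satisfies $|\lambda_i|>1$; the spectrum on $N^1(A)_{\mathbb R}$ does not give this directly.

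For a $d$-dimensional subspace $W$ one has $(g^*\omega)^d|_W=\det\bigl((u|_W)^*u|_W\bigr)\,\omega^d|_W\le\sigma_1^2\cdots\sigma_d^2\,\omega^d|_W$. Integrating over $V_{\mathrm{reg}}$ gives $\deg(g)\,(H^d\cdot V)=(g^*H)^d\cdot V\le\sigma_1^2\cdots\sigma_d^2\,(H^d\cdot V)$. Since $\deg g=|\det u|^2=\sigma_1^2\cdots\sigma_n^2$ and $H^d\cdot V>0$, this forces $\sigma_{d+1}\cdots\sigma_n\le 1$, which is impossible when $d<n$.

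Your cohomological variant also works, because $g^*$ has spectral radius $\prod_{i\le d}|\lambda_{(i)}|^2$ on $H^{2d}(A,\mathbb C)=\wedge^{2d}H^1(A,\mathbb C)$. However, that bound holds only up to a polynomial factor in $k$, not a constant, since $u$ need not be semisimple. With either fix in place, your route proves the needed abelian case of Meng's lemma directly instead of citing it.
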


\begin{proof}
The descending chain
$$Z\supseteq g^{-1}(Z)\supseteq g^{-2}(Z)\supseteq\cdots$$
stabilizes. Thus $W=g^{-m}(Z)$ satisfies $g^{-1}(W)=W$ for some $m\geq 0$. If $Z$ were nonempty, the surjectivity of $g$ would imply that $W$ is also nonempty and proper, contrary to \cite[Lemma~4.1]{Meng20}.
\end{proof}

In the proof of Theorem~\ref{thm: main}, we will make base changes by multiplication isogenies to construct a toric boundary on a finite cover, and then descend this boundary to $X$. The following lemma ensures that composing a cover by an abelian variety with a multiplication isogeny preserves the Galois property and the compatibility with the endomorphism needed for this descent.

\begin{lemma}\label{lem: multiplication-galois-cover}
Let a finite group $H$ act freely on an abelian variety $A$, and let $\pi_0:A\to C:=A/H$ be the quotient. Then for every positive integer $m$, the composite $p= \pi_0\circ[m]:A\to C$ is a finite \'{e}tale Galois cover.

Suppose moreover that an isogeny $g:A\to A$ and an automorphism $\alpha$ of $H$ satisfy $g\circ\gamma= \alpha(\gamma)\circ g$ for every $\gamma\in H$. Then there is a group homomorphism $\beta:\mathrm{Gal}(p)\to\mathrm{Gal}(p)$ such that
$$g\circ\delta = \beta(\delta)\circ g$$
for every $\delta\in\mathrm{Gal}(p)$.
\end{lemma}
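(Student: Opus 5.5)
We describe $\mathrm{Gal}(p)$ explicitly and then exhibit $\beta$. Translations $t_a$ commute with $[m]$, so the natural candidate is the group
$$G=\{\,x\mapsto \gamma'(x)+a \;:\; \gamma'\in\widetilde H,\ a\in A[m]\,\},$$
where $\widetilde H$ is a set of lifts of the elements of $H$ through $[m]$. Each $\gamma\in H$ is an affine automorphism $\gamma(x)=u_\gamma(x)+c_\gamma$ with $u_\gamma$ a group automorphism. Since $[m]$ is surjective, we choose $c'_\gamma$ with $mc'_\gamma=c_\gamma$ and set $\gamma'(x)=u_\gamma(x)+c'_\gamma$. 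Then $[m]\circ\gamma'=\gamma\circ[m]$, so $p\circ\gamma'=p$. Likewise $p\circ t_a=p$ for $a\in A[m]$.

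The set of all such affine maps, for all choices of lifts and all $a\in A[m]$, is closed under composition and inverses, so $G$ is a group of automorphisms of $A$ over $C$. Its order is $|H|\cdot m^{2\dim A}=\deg\pi_0\cdot\deg[m]=\deg p$. Distinct elements of $G$ are distinct maps, and the number of lifts of each $\gamma$ is exactly $|A[m]|$. Since $A$ is connected and $p$ is finite étale of degree equal to $|\mathrm{Aut}$-group$|$, the cover $p$ is Galois with $\mathrm{Gal}(p)=G$. Étaleness holds because $\pi_0$ is étale, the action being free, and $[m]$ is étale in characteristic zero.

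For the second part, write $g(x)=v(x)$, an isogeny, so that $g\circ\gamma=\alpha(\gamma)\circ g$. Comparing linear and constant parts gives $v u_\gamma=u_{\alpha(\gamma)}v$ and $v(c_\gamma)=c_{\alpha(\gamma)}$. For $\delta=t_a\circ\gamma'\in G$ we compute
$$g\circ\delta(x)=u_{\alpha(\gamma)}v(x)+v(c'_\gamma)+v(a).$$
Here $m\,v(c'_\gamma)=v(c_\gamma)=c_{\alpha(\gamma)}$ and $v(a)\in A[m]$, so this map is of the form $\delta_1\circ g$ with $\delta_1\in G$ a lift of $\alpha(\gamma)$. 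Since $g$ is surjective, $\delta_1$ is uniquely determined by the equation $g\circ\delta=\delta_1\circ g$. Set $\beta(\delta)=\delta_1$. Uniqueness gives $\beta(\delta\delta')\circ g=g\circ\delta\delta'=\beta(\delta)\beta(\delta')\circ g$, and surjectivity of $g$ then shows $\beta$ is a homomorphism.

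The main obstacle is the bookkeeping in the first part: checking that the affine lifts really form a group of order $\deg p$. This comes down to the observation that the lifts of a fixed $\gamma$ form an $A[m]$-coset, which makes the order count exact and closure automatic.
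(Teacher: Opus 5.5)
Your proposal is correct and follows the paper's proof essentially verbatim. Like the paper, you parametrize $\mathrm{Gal}(p)$ by affine lifts $x\mapsto u_\gamma(x)+b$ with $[m]b=c_\gamma$, count $|H|\cdot|A[m]|=\deg p$ of them, and set $\beta(\gamma,b)=(\alpha(\gamma),g(b))$, with the homomorphism property coming from surjectivity of $g$. The one step worth stating explicitly in the count is that lifts of distinct $\gamma$ are distinct maps: the relation $[m]\circ\delta=\gamma\circ[m]$, together with surjectivity of $[m]$, determines $\gamma$.
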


\begin{proof}
The morphism $p$ is finite and \'{e}tale, and its source is connected. To describe its automorphisms, write $\gamma(x)= \ell_{\gamma}(x)+c_\gamma$ for $\gamma\in H$, where $\ell_\gamma$ is a group automorphism and $c_\gamma= \gamma(0)$. Choose $b\in A$ with $[m]b=c_\gamma$ and put $\delta(x)= \ell_{\gamma}(x)+b$. Then $[m]\circ\delta= \gamma\circ[m]$, so $p\circ\delta=p$. Different pairs $(\gamma,b)$ give distinct automorphisms: the equality $[m]\circ\delta= \gamma\circ[m]$ determines $\gamma$, and $b= \delta(0)$. There are $|H|\cdot |A[m]|= \deg p$ such automorphisms. Thus $p$ is Galois, and we identify $\mathrm{Gal}(p)$ with these pairs.

The relation $g\circ\gamma= \alpha(\gamma)\circ g$ gives
$$g\circ\ell_\gamma= \ell_{\alpha(\gamma)}\circ g,\qquad g(c_\gamma)=c_{\alpha(\gamma)}.$$
Since $[m]g(b)=g(c_{\gamma})=c_{\alpha(\gamma)}$, the formula
$$\beta(\gamma,b)=(\alpha(\gamma),g(b))$$
defines a map from $\mathrm{Gal}(p)$ to itself. The first equality above gives $g\circ\delta= \beta(\delta)\circ g$. For $\delta_1,\delta_2\in\mathrm{Gal}(p)$, it follows that
$$\beta(\delta_1\delta_2)\circ g=g\circ\delta_1\circ\delta_2= \beta(\delta_1)\circ \beta(\delta_2)\circ g.$$
The surjectivity of $g$ now shows that $\beta$ is a homomorphism.
\end{proof}

\medskip

\subsection{Albanese morphisms and relative Galois structure}

We first strengthen the known structure of Albanese morphisms in the normal and smooth int-amplified settings.

\begin{proposition}\label{prop: smooth albanese}
    Let $X$ be a normal projective variety that admits an int-amplified endomorphism. Let $\mathrm{alb}_X:X\to A$ be its Albanese. Then $\mathrm{alb}_X$ is surjective and flat, and every geometric fiber is normal and integral. Moreover, if $X$ is smooth, then $\mathrm{alb}_X$ is a smooth morphism.
\end{proposition}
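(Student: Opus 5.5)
We start from Meng's results \cite[Theorem~1.8(1)]{Meng20}: $\mathrm{alb}_X:X\to A$ is surjective and equidimensional, and its fibers are irreducible. We also use the equivariance of the Albanese morphism. Replacing $f$ by an iterate if needed, there is an endomorphism $g:A\to A$ with $\mathrm{alb}_X\circ f=g\circ\mathrm{alb}_X$. This $g$ is int-amplified by \cite[Lemma~3.4]{Meng20}. By Lemma~\ref{lem: int-amplified-abelian-fixed-point}, after choosing a fixed point as origin, $g$ is an isogeny, hence finite and \'etale.

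\textbf{Flatness.} $X$ is normal, hence Cohen--Macaulay in codimension one. This alone does not give flatness, so I plan to use the dynamical trick instead. Let $U\subseteq A$ be the flat locus of $\mathrm{alb}_X$; it is open and nonempty by generic flatness, and I claim $g^{-1}(A\setminus U)\subseteq A\setminus U$. Take $a\in A$ with $g(a)\in U$. Consider the base change $X\times_{A,g}A\to A$. It is flat over $a$, since flatness is stable under base change. The map $X\times_{A,g}A\to X$ is \'etale, because it is a base change of the \'etale $g$. The induced morphism $X\to X\times_{A,g}A$ is finite and surjective between varieties of the same dimension. Over a small neighbourhood of $a$, the fibers $X_a$ map finitely onto $X_{g(a)}$. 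I would rather phrase the argument via fiber dimension and local structure. Since $\mathrm{alb}_X$ is equidimensional onto the smooth $A$, flatness at a point $x$ is equivalent to $X$ being Cohen--Macaulay at $x$ (miracle flatness). The CM locus of $X$ is open, and its complement $W$ maps to a closed set $Z=\mathrm{alb}_X(W)$. Showing $g^{-1}(Z)\subseteq Z$ is the main obstacle. Equivalently, we need: if $X$ fails to be CM at $x$, then $f^{-1}$ of that point contains a non-CM point, or dually, CM descends along $f$. Descent of CM along a finite surjective map is false in general, so I would instead work with fibers.

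\textbf{Fiberwise approach.} I would define $Z\subseteq A$ as the locus of $a$ where the fiber $X_a$ (scheme-theoretic) fails to be normal and integral of the expected dimension, or where $\mathrm{alb}_X$ is not flat. Here is how I would get the inclusion $g^{-1}(Z)\subseteq Z$ in practice. Over the open set $V=A\setminus Z$, let $V'=g^{-1}(V)$. The pair $f$, $g$ induces a finite surjective morphism $X_{V'}\to X_V\times_{V,g}V'$ over $V'$. The target is flat with normal integral fibers. A finite birational morphism onto a normal variety is an isomorphism, and $f$ and its base change have the same degree $\deg f/\deg g$... This shows only the wrong direction, so I instead apply Lemma~\ref{lem: no backward invariant set} to $Z$ directly. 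Suppose $a\in Z$ and $g(a)=b$. The fiber $X_a$ maps finitely onto $X_b$ via $f$. If $X_b$ were good, then semicontinuity of fiber invariants near $a$ (normal, integral, flat) would need to be transferred back. To avoid this, define $Z$ instead as the complement of the largest open set over which $\mathrm{alb}_X$ is flat with geometrically normal integral fibers. This set is open by EGA IV 12.2.4. For $a\notin Z$ we get $g(a)$... The cleanest route is to show $g(A\setminus Z)\subseteq A\setminus Z$, which is the forward invariance of the good locus. This does give $g^{-1}(Z)\subseteq Z$. To prove it, use the \'etaleness of $g$: the base change $X'=X\times_{A,g}A$ is \'etale over $X$, and $f$ factors as $X\xrightarrow{\phi}X'\to X$ with $\phi$ finite surjective over $A$. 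For $a$ good, the fiber $X'_a=X_{g(a)}$ receives a finite surjective map from the normal integral $X_a$. Deducing that $X_{g(a)}$ is good is the crux. I would handle it using that $X$ is normal and the general fiber is good, combined with trace splitting $\mathcal O_{X'}\to\phi_*\mathcal O_X\to\mathcal O_{X'}$ over a neighbourhood of $g(a)$. The splitting exists in characteristic zero because $X'$ is normal. It shows that $\mathcal O_{X'}$ is a direct summand of the flat module $\phi_*\mathcal O_X$ near $a$, hence flat; and that the fiber ring is a summand of a normal domain-like ring, hence reduced and normal. Integrality then follows from Meng's irreducibility. Lemma~\ref{lem: no backward invariant set} then gives $Z=\emptyset$.

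\textbf{Smoothness.} If $X$ is smooth, let $Z$ be the critical-value locus of $\mathrm{alb}_X$, which is closed and proper by generic smoothness. The same forward-invariance argument shows $g(A\setminus Z)\subseteq A\setminus Z$: locally over $g(a)$, $X'$ is \'etale over smooth $X$, hence smooth. A flat morphism with smooth total space is smooth at $x$ iff the fiber is smooth at $x$. A summand argument on fibers, or the fact that $\phi$ is \'etale over the smooth locus of a finite map from a smooth fiber, then shows $X_{g(a)}$ is smooth. Lemma~\ref{lem: no backward invariant set} then yields smoothness everywhere. The main obstacle throughout is the forward transfer of fiber properties along $\phi$, handled by trace splitting.
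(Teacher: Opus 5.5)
Your flatness and normality argument, once the abandoned attempts (miracle flatness, CM loci, the degree count) are set aside, is the paper's argument. You factor $f$ through $\phi:X\to X'=X\times_{A,g}A$ and split $\phi$ by the normalized trace. Flatness of $X'$ over the good locus follows because $\mathcal O_{X'}$ is a direct summand of $\phi_*\mathcal O_X$. Normal fibers follow because the split base change makes each fiber ring of $X'$ a direct summand of a normal domain. Lemma~\ref{lem: no backward invariant set} then finishes. One check you skip: the trace splitting needs $X'$ to be a normal \emph{variety} and $\phi$ to be surjective. Normality comes from $X'\to X$ being \'{e}tale. Irreducibility needs $(a_g)_*\mathcal O_{X'}\cong g^*(\mathrm{alb}_X)_*\mathcal O_X\cong\mathcal O_A$, which gives connectedness; then $\phi$, finite with $\dim X=\dim X'$, is surjective.

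The smoothness step has a genuine gap: neither mechanism you offer transfers smoothness from $X_a$ to $X_{g(a)}$. A direct summand of a regular ring need not be regular. For example, $\mathbb C[x^2,xy,y^2]\subseteq\mathbb C[x,y]$ is split by the Reynolds operator for $\pm1$ but is singular. So the splitting of $X_a\to X_{g(a)}$ gives normality, not smoothness. Nor is $\phi$ \'{e}tale in general: the fiber maps are typically ramified (e.g.\ $[x:y]\mapsto[x^2:y^2]$ on a $\mathbf P^1$-fiber). A finite surjection from a smooth variety can also have a singular target.

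The missing idea is to use smoothness of the total space together with $\mathrm{alb}_X\circ f=g\circ\mathrm{alb}_X$ at the level of differentials. Given $y\in X_{g(a)}$, choose $x\in X_a$ with $f(x)=y$; this is possible because $f|_{X_a}:X_a\to X_{g(a)}$ is surjective, by equidimensionality and irreducibility of the fibers. Then
$$d(\mathrm{alb}_X)_y\circ df_x=dg_a\circ d(\mathrm{alb}_X)_x$$
is surjective, since $\mathrm{alb}_X$ is smooth at $x$ and $g$ is \'{e}tale. Hence $d(\mathrm{alb}_X)_y$ is surjective, and since $X$ and $A$ are smooth, $\mathrm{alb}_X$ is smooth at $y$. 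No control of $df_x$ is needed, even though it may drop rank. This proves $g(A\setminus Z)\subseteq A\setminus Z$ for the critical-value locus $Z$, and Lemma~\ref{lem: no backward invariant set} gives $Z=\emptyset$.
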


To prove this proposition,
we need some preparations from commutative algebra.

Let $\pi:X\to Y$ be a finite surjective morphism between schemes. We say $\pi$ \emph{splits} if there exists a $\mathcal O_Y$-linear morphism 
$$r: \pi_*\mathcal O_X\to \mathcal O_Y$$
such that $r\circ \pi^* = id_{\mathcal O_Y}$. We call such a morphism a \emph{retraction}.

\begin{lemma}\label{normal-split}
    Let $\pi:X\to Y$ be a finite morphism from a variety $X$ to a normal variety $Y$. Then $\pi$ splits.
\end{lemma}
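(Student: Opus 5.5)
The plan is to build the retraction from the trace, in characteristic zero. Since $X$ is a variety, it is integral. Since $\pi$ is finite and surjective, $X$ and $Y$ have the same dimension. So $\mathbb C(X)/\pi^*\mathbb C(Y)$ is a finite field extension, say of degree $d$. The field trace
$$\mathrm{Tr}:\mathbb C(X)\to\mathbb C(Y)$$
is $\mathbb C(Y)$-linear and satisfies $\mathrm{Tr}(\pi^*a)=d\,a$. I will take
$$r:=\tfrac1d\,\mathrm{Tr}.$$
This is possible because we work over $\mathbb C$, so $d$ is invertible. The identity $r\circ\pi^*=\mathrm{id}$ then holds at the level of function fields. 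What remains is to show that $r$ maps $\pi_*\mathcal O_X$ into $\mathcal O_Y$.

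The question is local on $Y$, since $\pi$ is affine. So let $V=\operatorname{Spec}R\subseteq Y$ be an affine open subset and $\pi^{-1}(V)=\operatorname{Spec}S$. Here $R$ is a normal domain, and $S$ is a domain that is a finite $R$-module. Take $s\in S$. Then $s$ is integral over $R$. The field trace of $s$ is a multiple of a coefficient of the minimal polynomial of $s$ over $K=\operatorname{Frac}(R)$. More precisely, it is the sum of the Galois conjugates of $s$ in a normal closure, each counted with multiplicity $[L:K(s)]$, where $L=\operatorname{Frac}(S)$.

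Each conjugate of $s$ is integral over $R$, so $\mathrm{Tr}(s)$ is an element of $K$ that is integral over $R$. Since $R$ is integrally closed, $\mathrm{Tr}(s)\in R$. Hence $\mathrm{Tr}(S)\subseteq R$ and $r(S)\subseteq R$.

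These local trace maps are compatible with localization, because they all come from the single field trace. They therefore glue to an $\mathcal O_Y$-linear morphism $r:\pi_*\mathcal O_X\to\mathcal O_Y$. This morphism satisfies $r\circ\pi^*=\mathrm{id}_{\mathcal O_Y}$, since this identity already holds in $K$ and $\mathcal O_Y\subseteq K$.

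The only real point is the integrality of the trace, that is, showing $\mathrm{Tr}(S)\subseteq R$ by normality of $R$. This is the standard fact that the minimal polynomial over $K$ of an element integral over a normal domain $R$ has coefficients in $R$. I would cite it, for instance as Matsumura's result or \cite[Expos\'e~I]{SGA1}, rather than reprove it. Characteristic zero enters only through dividing by $d$.
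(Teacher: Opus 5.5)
Your proof is correct and follows the paper's own argument: the retraction is the normalized trace $\deg(\pi)^{-1}\mathrm{Tr}$, which maps $\pi_*\mathcal O_X$ into $\mathcal O_Y$ because $Y$ is normal. You were right to assume that $\pi$ is surjective, since the paper defines splitting only for finite surjective morphisms and that hypothesis is implicit in the lemma.
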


\begin{proof}
    Since $Y$ is normal, the trace map
    $$
    \operatorname{tr}_{X/Y}: \pi_*\mathcal O_X\to \mathcal O_Y$$
is well-defined on regular functions. Then $r := \deg(\pi)^{-1}\operatorname{tr}_{X/Y}$ is a retraction as desired.
\end{proof}

The key point is that split morphisms behave well under (not necessarily flat) base changes.

\begin{lemma}\label{splitting}
    Let $\pi:X\to Y$ be a  surjective finite morphism between schemes of finite type over $\mathbb C$. Assume $\pi$ splits and let $r$ be a retraction. Then
    \begin{enumerate}
        \item Every base change of $\pi$ also splits;
        \item If $X$ is a normal variety, so is $Y$;
        \item Let $f: Y\to Z$ be a morphism between schemes of finite type over $\mathbb C$. If $\pi\circ f$ is flat, so is $f$.
    \end{enumerate}
\end{lemma}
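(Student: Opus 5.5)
All three parts rest on the same observation: a retraction $r:\pi_*\mathcal O_X\to\mathcal O_Y$ with $r\circ\pi^*=\mathrm{id}$ exhibits $\mathcal O_Y$ as a direct summand of $\pi_*\mathcal O_X$. The statements are local on $Y$, so we may take $Y=\operatorname{Spec}R$ and $X=\operatorname{Spec}S$ with $S$ finite over $R$. Then $r$ is an $R$-linear map $S\to R$ restricting to the identity on $R$, and $S=R\oplus\ker r$ as $R$-modules.

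\textbf{Part (1).} For a base change $Y'=\operatorname{Spec}R'\to Y$ we have $X'=\operatorname{Spec}(S\otimes_RR')$. Set $r'=r\otimes\mathrm{id}_{R'}:S\otimes_RR'\to R'$. This map is $R'$-linear and satisfies $r'(1\otimes a)=r(1)a=a$, so $r'$ is a retraction. Since a retraction is a morphism of quasi-coherent sheaves, the local constructions glue. No flatness is needed, because tensoring preserves the splitting of $R\to S$.

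\textbf{Part (2).} Assume $X$ is a normal variety. Surjectivity of $\pi$ makes $Y$ irreducible. $R\to S$ is injective because it has a left inverse, so $R$ is a domain and $Y$ is reduced. Integrally closed domains are not preserved by passing to direct summands for free, so we argue through the total ring of fractions. Let $K=\operatorname{Frac}(R)$ and let $x\in K$ be integral over $R$; write $x=a/b$ with $a,b\in R$. The element $x$ lies in the total ring of fractions of $S$ and is integral over $S$. Since $X$ is normal (a normal variety, hence integral), the domain $S$ is integrally closed, so $a=bs$ for some $s\in S$. Apply $r$: $a=r(a)=r(bs)=b\,r(s)$, hence $x=r(s)\in R$. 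Thus $R$ is integrally closed. Here $S$ is a domain containing $R$, so $x$ lies in $\operatorname{Frac}(S)$, and $b$ is a nonzerodivisor in $S$ because $S$ is a domain. Finite type over $\mathbb C$ is automatic, so $Y$ is a normal variety.

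\textbf{Part (3).} I read the composite as $f\circ\pi:X\to Z$; the hypothesis is that it is flat, and we must show $f$ is flat. Work locally: $Z=\operatorname{Spec}A$ and $A\to R\to S$ with $S$ flat over $A$. Flatness of $f$ means $\operatorname{Tor}^A_1(R,M)=0$ for every $A$-module $M$. The splitting $S=R\oplus\ker r$ as $R$-modules is in particular a splitting as $A$-modules, so $R$ is an $A$-direct summand of $S$. Then $\operatorname{Tor}^A_1(R,M)$ is a direct summand of $\operatorname{Tor}^A_1(S,M)=0$, so $R$ is $A$-flat. Flatness is local, so $f$ is flat.

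The main obstacle is the first sentence of Part (2), namely checking that an integral element of $\operatorname{Frac}(R)$ can be written through $S$. The key point is that $\operatorname{Frac}(R)\subseteq\operatorname{Frac}(S)$ because $R\subseteq S$ are domains. Integral closedness of $S$ then places $x$ in $S$, and the retraction argument above brings it back into $R$. The other two parts are formal.
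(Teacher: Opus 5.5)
Your proposal is correct and follows the paper's proof: you base-change the retraction as $r\otimes\mathrm{id}$, use integral closedness of $S$ to place an integral element $a/b$ of $\operatorname{Frac}(R)$ in $S$ and bring it back into $R$ via $a=r(bs)=b\,r(s)$, and get flatness because $R$ is an $A$-module direct summand of the flat module $S$. Your reading of the composite in (3) as $f\circ\pi$ is also the right one, matching how the paper applies the lemma to $a_g\circ\pi=a$.
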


\begin{proof}
    We may assume that $X,Y$ are affine schemes and $\pi$ is induced by a ring map $A\subseteq B$. 
    
    (1) If $r:B\to A$ is a retraction, then $r\otimes id_{A'}:B\otimes A'\to A\otimes A'$ is also a retraction.

    (2) Assume $B$ is integrally closed. For $a = x/y\in \operatorname{Frac}(A)$ which is integral over $A$, we have $x\in B$ because $B$ is integrally closed. Then we have
    \[
         x = r(x) = r(ya) = yr(a) 
    \]
    which implies that $r(a) = x/y = a$ and hence $a\in A$.

    (3) We assume $Z = \operatorname{Spec}(R)$. Since $A\subseteq B$ splits, we have $B = A\oplus C$ for some $C$ as $A$-modules, and hence as $R$-modules. Consequently, $R\to A$ is flat because $A$ is a direct summand of a flat algebra $R\to B$.
\end{proof}

Now, we are ready to prove the proposition.

\begin{proof}[Proof of Proposition \ref{prop: smooth albanese}]
    Let $f:X\to X$ be an int-amplified endomorphism and $g:A\to A$ be the endomorphism induced by $f$. By \cite[Theorem~1.8(1)]{Meng20}, the morphism $\mathrm{alb}_X$ is surjective and equidimensional, all its fibers are irreducible, and $g$ is int-amplified. A surjective endomorphism of an abelian variety is a translation followed by an isogeny, so $g$ is \'{e}tale. We could assume $\dim A>0$.

    Write $a = \mathrm{Alb}_X$. Then, there exists a commuative diagram
    \[
        \begin{tikzcd}
            X\arrow{r}{\pi}\arrow{dr}{a} & X_g\arrow{r}{g_X}\arrow{d}{a_g} & X\arrow{d}{a}\\
              & A\arrow{r}{g} & A
        \end{tikzcd}
    \]
    where the square is Cartesian and $\pi=(f,a)$ is induced by the universal property.

    We first show that $\pi$ splits.  In fact,  since $g$ is \'etale, $g_X$ is \'etale, and hence $X_g$ is normal. Since $a$ is a proper contraction and $g$ is flat, by \cite[III.9.3]{Har77}, we calculate that
   $$(a_g)_*\mathcal O_{X_g} \cong g^*a_*\mathcal O_X \cong \mathcal O_A,$$
    which implies that $a_g$ is a proper contraction as well. In particular,  $X_g$ is a normal variety.

    We also have $\pi$ is finite and surjective. In fact, since (the restriction of) $\pi$ decomposes as closed immersion followed by a finite morphism, i.e., $(f,a) = (f,id)\circ (id,a)$, it is finite. Since $\dim(X) = \dim(X_g)$, it follows that $\pi$ is surjective. As a result, by Lemma \ref{normal-split}, we have $\pi$ is split.

    Let $U\subseteq A$ be the largest open subset such that $X_U\to U$ is flat and has geometric integral normal fibers. Then by generic flatness and openness of normal fibers, we have $U\neq \emptyset$. We write $\Delta = A \setminus U$.

    Since $X_U\to U$ is flat, by applying Lemma \ref{splitting} to $a_g\circ\pi = a$, we conclude that $(X_g)_U\to U$ is flat. For a geometric point $b\to U$, since $\pi:X_g\to X$ is split, by Lemma \ref{splitting}, we have $X_{\bar b}\to (X_g)_{\bar b}$ is split. Since $X_{\bar b}$ is normal and integral, by Lemma \ref{splitting}, it follows that $(X_g)_{\bar b}$ is also normal and integral. We conclude that $X_g\to A$ is flat and has integral and geometric normal fibers over $U$. Since $g:A\to A$ is \'etale, we have $g(U)\subseteq A$ is open and $X_{g(U)}\to g(U)$ is flat and has integral and geometric normal fibers over $U$. By our construction of $U$, this implies $g^{-1}(\Delta)\subseteq \Delta$, and consequently, by  Lemma~\ref{lem: no backward invariant set}, $\Delta = \emptyset$.

    Next, we assume $X$ is smooth. The same strategy works with minor changes. Let $U$ be the largest open subset such that $X_U\to U$ is smooth and write $\Delta =  A\setminus U$.  By generic smoothness and properness, $\Delta$ is a proper closed subset. We claim that $g^{-1}(\Delta)\subseteq\Delta$. For $b\in U$, the finite morphism $f$ maps $X_b$ onto $X_{g(b)}$: its image is closed and, by the equidimensionality of $\mathrm{alb}_X$, has the same dimension as the irreducible fiber $X_{g(b)}$. Given $y\in X_{g(b)}$, choose $x\in X_b$ with $f(x)=y$. Differentiating $\mathrm{alb}_X\circ f = g\circ \mathrm{alb}_X$ gives
$$d(\mathrm{alb}_X)_y\circ d f_x=d g_b\circ d(\mathrm{alb}_X)_x.$$
The right-hand side is surjective because $\mathrm{alb}_X$ is smooth along $X_b$ and $g$ is \'{e}tale. Hence $d(\mathrm{alb}_X)_y$ is surjective. Since both $X$ and $A$ are smooth, the tangent-space criterion for smoothness shows that $\mathrm{alb}_X$ is smooth at $y$ \cite[Proposition~III.10.4]{Har77}. Thus $g(b)\notin\Delta$, proving the claim.  Lemma~\ref{lem: no backward invariant set} now gives $\Delta= \emptyset$.
\end{proof}

\begin{remark}\label{rem: klt case}
    By the same argument and \cite[Theorem~1.1]{Zhuang24}, one can prove the following:

    Let $X$ be a klt type variety that admits an int-amplified endomorphism. Then every geometric fiber of $\mathrm{alb}_X$ is of klt type.
\end{remark}

\medskip

We next record the relative Galois structure when $q(X)= \widetilde q(X)$.

\begin{proposition}\label{prop: albanese-structure}
Let $X$ be a smooth complex projective variety admitting an int-amplified Galois endomorphism $f:X\to X$, and suppose that $q(X)= \widetilde q(X)$. Let $\mathrm{alb}_X:X\to A:= \mathrm{Alb}(X)$ be an Albanese morphism. After translating $\mathrm{alb}_X$ if necessary, the following hold.
\begin{enumerate}
\item The endomorphism induced by $f$ is an int-amplified isogeny $g:A\to A$ satisfying $\mathrm{alb}_X\circ f=g\circ \mathrm{alb}_X$. The morphism $\mathrm{alb}_X$ is smooth with irreducible rationally connected fibers.

\item Set
$$K:= \{\sigma\in\mathrm{Gal}(f):\mathrm{alb}_X\circ\sigma= \mathrm{alb}_X\}.$$
The induced action on $A$ identifies $\mathrm{Gal}(f)/K$ with the group of translations by points of $\ker(g)$.

\item The morphism $(f,\mathrm{alb}_X)$ induces an isomorphism over $A$
\begin{equation}\label{eq: relative quotient}
X/K\cong X\times_{A,g}A.
\end{equation}
\end{enumerate}
\end{proposition}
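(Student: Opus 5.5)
For part (1) I would use the universal property of the Albanese. It gives an endomorphism $g:A\to A$ with $\mathrm{alb}_X\circ f=g\circ\mathrm{alb}_X$, and $g$ is int-amplified by \cite[Theorem~1.8(1)]{Meng20} (equivalently, by descent of int-amplification, \cite[Lemma~3.4]{Meng20}). Lemma~\ref{lem: int-amplified-abelian-fixed-point} gives a fixed point of $g$. Translating $\mathrm{alb}_X$ so that this point is the origin makes $g$ an isogeny. Smoothness of $\mathrm{alb}_X$ is Proposition~\ref{prop: smooth albanese}. Irreducibility and rational connectedness of the fibers come from Lemma~\ref{lem: maximal irregularity mrc}, which applies because $q(X)=\widetilde q(X)$.

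For parts (2) and (3), I would consider the finite map $\pi=(f,\mathrm{alb}_X):X\to X_g:=X\times_{A,g}A$ constructed in the proof of Proposition~\ref{prop: smooth albanese}. That proof shows $X_g$ is a normal variety, in fact smooth since $g$ is étale. We have $\deg(X_g\to X)=\deg g$, so $\deg\pi=\deg f/\deg g$. Each $\sigma\in\mathrm{Gal}(f)$ satisfies $f\circ\sigma=f$. Applying $g$ to $\mathrm{alb}_X\circ\sigma$ gives
$$g\circ\mathrm{alb}_X\circ\sigma=\mathrm{alb}_X\circ f\circ\sigma=g\circ\mathrm{alb}_X,$$
so the map $x\mapsto \mathrm{alb}_X(\sigma x)-\mathrm{alb}_X(x)$ lands in the finite set $\ker g$. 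Since $X$ is connected, it is a constant $t_\sigma$. Hence $\sigma$ covers translation by $t_\sigma\in\ker g$, and $\sigma\mapsto t_\sigma$ is a homomorphism with kernel $K$. Thus $\mathrm{Gal}(f)/K$ injects into $\ker(g)$, acting by translations. Moreover, $K$ is exactly the subgroup of $\mathrm{Gal}(f)$ preserving $\pi$. Because $f$ is Galois, $\mathbb C(X)/\mathbb C(X_g)$ is an intermediate subextension of a Galois extension, so it is Galois with group $K$. Here I use that $\mathbb C(X_g)\supseteq f^*\mathbb C(X)$, since $X_g$ is irreducible (its fibers over $A$ are the irreducible fibers of $\mathrm{alb}_X$). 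By Galois theory,
$$|K|=[\mathbb C(X):\mathbb C(X_g)]=\deg f/\deg g,$$
so $|\mathrm{Gal}(f)/K|=\deg g=|\ker g|$. The injection is therefore onto, which proves (2).

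For (3), $\pi$ is a finite surjective map onto the normal variety $X_g$, and it factors through $X/K$. The induced map $X/K\to X_g$ is finite and birational, since both function fields equal $\mathbb C(X)^K$. It is therefore an isomorphism by normality of $X_g$ (Zariski's main theorem). This morphism is compatible with the maps to $A$, because $\mathrm{alb}_X$ is $K$-invariant.

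The step needing the most care is the identification $\mathbb C(X_g)=\mathbb C(X)^K$. This requires $\pi$ to be finite surjective onto an irreducible normal $X_g$, and the subgroup of $\mathrm{Gal}(f)$ fixing $\mathbb C(X_g)$ to be exactly $K$. The point is that $\mathbb C(X_g)$ is generated over $f^*\mathbb C(X)$ by $\mathrm{alb}_X^*\mathbb C(A)$. An element fixes all of these functions iff it preserves $\mathrm{alb}_X$, which is precisely the condition defining $K$.
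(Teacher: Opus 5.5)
Your proof is correct. For (2) and (3) it takes a different route from the paper.

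\textbf{The paper's route.} It proves surjectivity of $\mathrm{Gal}(f)/K\to\ker(g)$ geometrically. For general $b\in A$, the irreducible components of $f^{-1}(X_b)$ are the fibers $X_c$ with $c\in g^{-1}(b)$. Since $f$ is the quotient by $\mathrm{Gal}(f)$, this group permutes those components transitively. Hence the translations it induces act transitively on the $\ker(g)$-torsor $g^{-1}(b)$. For (3), the paper notes that $\mathrm{Gal}(f)/K$ acts freely on $X/K$, because a nonzero translation of $A$ has no fixed points. So $X/K\to X$ is a finite \'{e}tale cover of degree $|\ker(g)|$. It is then compared with the \'{e}tale cover $X\times_{A,g}A\to X$ of the same degree by checking injectivity on fibers.

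\textbf{Your route.} You handle (2) and (3) together by a function-field degree count. The inputs are:
\begin{itemize}
\item $\pi=(f,\mathrm{alb}_X)$ is finite onto the normal variety $X_g$, as shown in the proof of Proposition~\ref{prop: smooth albanese};
\item $\pi^*\mathbb C(X_g)$ is the compositum of $f^*\mathbb C(X)$ and $\mathrm{alb}_X^*\mathbb C(A)$, so its stabilizer in $\mathrm{Gal}(f)$ is exactly $K$;
\item $\deg\pi=\deg f/\deg g$.
\end{itemize}
Galois theory then gives $|K|=\deg f/\deg g$. This forces the injection $\mathrm{Gal}(f)/K\hookrightarrow\ker(g)$ to be onto, and $X/K\to X_g$ becomes a finite birational morphism onto a normal variety, hence an isomorphism.

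\textbf{Comparison.} Your argument is more algebraic and reuses the map $\pi$ and the normality of $X_g$ already established. The paper's argument avoids describing the function field of the fiber product. It also gives directly that $X/K\to X$ is a free $\ker(g)$-quotient and that $X/K$ is smooth. Both facts also follow from your isomorphism, since $X_g\to X$ is an \'{e}tale $\ker(g)$-torsor.

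One small point: irreducibility of $X_g$ should not rest only on irreducibility of the fibers of $X_g\to A$. Either add that this morphism is smooth, hence open, or use connectedness from $(a_g)_*\mathcal O_{X_g}\cong\mathcal O_A$ together with smoothness of $X_g$, as in the earlier proof.
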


\begin{proof}
By \cite[Theorem~1.8(1)]{Meng20}, the endomorphism $f$ induces an int-amplified endomorphism of $A$. By Lemma~\ref{lem: int-amplified-abelian-fixed-point}, this endomorphism has a fixed point. Translating $\mathrm{alb}_X$ and taking this point as the origin of $A$, we obtain an int-amplified isogeny $g:A\to A$ satisfying $\mathrm{alb}_X\circ f=g\circ \mathrm{alb}_X$.

Lemma~\ref{lem: maximal irregularity mrc} identifies $\mathrm{alb}_X$ with the special MRC fibration and shows that its fibers are connected, irreducible, and rationally connected. Proposition~\ref{prop: smooth albanese} shows that $\mathrm{alb}_X$ is smooth.

For $\sigma\in\mathrm{Gal}(f)$, we have $f\circ\sigma=f$. Since $\mathrm{alb}_X\circ f=g\circ \mathrm{alb}_X$, it follows that $g\circ \mathrm{alb}_X\circ\sigma=g\circ \mathrm{alb}_X$. Hence the morphism $x\mapsto \mathrm{alb}_X(\sigma(x))-\mathrm{alb}_X(x)$ takes values in the finite group $\ker(g)$ and is constant because $X$ is connected. Thus $\mathrm{Gal}(f)$ acts on $A$ through translations by points of $\ker(g)$, and the kernel of this action is $K$. In particular, $K$ is normal.

For a general point $b\in A$, the irreducible components of $f^{-1}(X_b)$ are the fibers $X_c$ with $c\in g^{-1}(b)$. Since $f$ is the quotient by $\mathrm{Gal}(f)$, the group $\mathrm{Gal}(f)$ acts transitively on these components. Consequently, the induced translation action is transitive on $g^{-1}(b)$. Since $g^{-1}(b)$ is a torsor under $\ker(g)$, every translation by a point of $\ker(g)$ occurs. This identifies $\mathrm{Gal}(f)/K$ with $\ker(g)$.

Put $Y=X/K$. Since $K$ preserves every Albanese fiber, the morphism $\mathrm{alb}_X$ descends to $Y$. The map $f$ factors through a finite morphism $\overline f:Y\to X$, with $\mathrm{Gal}(\overline f)= \mathrm{Gal}(f)/K\cong \ker(g)$. Every nonidentity element of this group induces a nonzero translation of $A$ and hence has no fixed point on $Y$. Thus the action of $\mathrm{Gal}(f)/K$ on $Y$ is free, and $\overline f$ is a finite \'{e}tale $\mathrm{Gal}(f)/K$-torsor. In particular, $Y$ is smooth.

Consider the induced morphism
$$Y\to X\times_{A,g}A.$$
Both the source and the target are finite \'{e}tale covers of $X$ of degree $|\ker(g)|$. If two closed points $[x_1], [x_2]\in Y$ have the same image, then $f(x_1)=f(x_2)$. Since $f$ is the quotient by $\mathrm{Gal}(f)$, there is some $\sigma\in\mathrm{Gal}(f)$ such that $x_2= \sigma(x_1)$. The translation induced by $\sigma$ fixes their common image in $A$, so it is the identity. Hence $\sigma\in K$ and $[x_1]= [x_2]$. The induced map is therefore injective on every fiber. Since the two finite \'{e}tale covers have the same degree, it is an isomorphism. This proves \eqref{eq: relative quotient}.
\end{proof}

\medskip

\subsection{Toric fibrations, Cox rings, and reflection groups}

\begin{definition}\label{def: toric fibration}
Let $B$ be a connected smooth complex projective variety. A \emph{torus over $B$} is a group scheme $\mathcal T\to B$ that is \'{e}tale locally isomorphic to $\mathbf{G}_{m,B}^r$ for some $r\geq 0$. A smooth projective morphism $h:Y\to B$ is a \emph{toric fibration} if $Y$ admits an action of a torus $\mathcal T$ over $B$ such that, \'{e}tale locally on $B$, there are equivariant isomorphisms
$$\mathcal T|_U\cong\mathbf{G}_{m,U}^r,\qquad Y\times_B U\cong X(\Sigma)\times U$$
for a smooth complete fan $\Sigma$. The complement of the relative dense orbit is the \emph{relative toric boundary}.

The toric fibration is \emph{split} if $\mathcal T\cong T\times B$ for an algebraic torus $T$. In this case, there are a smooth projective toric variety $F$ with dense torus $T$, a right principal $T$-bundle $P\to B$, and an isomorphism $Y\cong P\times^T F$ over $B$ \cite[Lemma~2.1.2]{AWZ21}. Here $P\times^T F=(P\times F)/T$, where $t\in T$ acts by $t\cdot(p,z)=(pt^{-1},tz)$.
\end{definition}

Definition~\ref{def: toric fibration} is the smooth projective case of \cite[Definition~2.1.1]{AWZ21}. If $D$ denotes the relative toric boundary, then $(Y,D)\to B$ is a toric fibration in the sense of Meng and Zhong \cite[Definition~2.8]{MZ24}. In the smooth projective setting, the converse follows from \cite[Proposition~4.3.1]{AWZ21}.

If $h$ is split, its boundary components are $P\times^T E$, where $E$ runs through the torus-invariant prime divisors of $F$. Each component restricts to an irreducible divisor on every fiber. Thus $(Y,D)\to B$ is a splitting toric fibration in the sense of Meng and Zhong.

\medskip

We next present the Cox-ring and reflection-group facts needed to recognize toric fibers, starting with varieties of Fano type.

\begin{definition}\label{def: fano-type}
A normal projective variety $X$ is \emph{of Fano type} if there is an effective $\mathbb{Q}$-divisor $\Delta$ such that $(X,\Delta)$ is klt and $-(K_X+\Delta)$ is ample.
\end{definition}

Every projective toric variety is of Fano type \cite[Example~11.4.26]{CLS11}. Every smooth projective variety of Fano type is rationally connected and hence has free Picard group by Lemma~\ref{lem: rc-basic}, and its Cox ring is finitely generated by \cite[Corollary~1.3.2]{BCHM10}.

\medskip

We now recall the Cox construction and fix the conventions used below. Let $X$ be a smooth connected projective variety whose Picard group $\Lambda:= \mathrm{Pic}(X)$ is free of finite rank $\rho$. Choose a basis $\lambda_1,\ldots,\lambda_\rho$ of $\Lambda$ and line bundles $\mathcal L_1,\ldots, \mathcal L_\rho$ representing these classes. For $\lambda= \sum_{i} m_i\lambda_i$, set $\mathcal L_\lambda= \bigotimes_i\mathcal L_i^{\otimes m_i}$. These choices make $\bigoplus_{\lambda\in\Lambda}\mathcal L_\lambda$ a $\Lambda$-graded $\mathcal O_X$-algebra. Its algebra of global sections is the Cox ring
$$\mathrm{Cox}(X):= \bigoplus_{\lambda\in\Lambda}H^0(X,\mathcal L_\lambda).$$
A nonzero section in the summand indexed by $\lambda$ is said to have Cox degree $\lambda$.

The characteristic torus is $S_X:= \mathrm{Hom}(\Lambda,\mathbf{G}_m)$, whose character lattice is $\Lambda$. The characteristic space is
\begin{equation}\label{eq: characteristic space}
U_X:= \mathrm{Spec}_X\left(\bigoplus_{\lambda\in\Lambda}\mathcal L_\lambda\right).
\end{equation}
On an open subset $W\subseteq X$ where $\mathcal L_1,\ldots,\mathcal L_\rho$ are trivial, the defining algebra is a Laurent polynomial algebra over $\mathcal O_W$ in $\rho$ variables. Hence $U_X|_W\cong W\times S_X$.

We use the right $S_X$-action characterized by
$$u(xs)= \lambda(s)^{-1}u(x),$$
where $u\in H^0(W,\mathcal L_\lambda)$ is viewed as a regular function on $U_X|_W$, $x\in U_X|_W$, and $s\in S_X$. With this action, $U_X\to X$ is a right $S_X$-torsor. For $\lambda\in\Lambda$, let $\mathbf A^1_\lambda$ denote the one-dimensional representation on which $s\in S_X$ acts by multiplication by $\lambda(s)$. With the associated-bundle convention of Definition~\ref{def: toric fibration}, the transformation rule above gives
$$U_X\times^{S_X}\mathbf A^1_\lambda\cong\mathcal L_\lambda.$$
The Cox ring is independent of the chosen basis and representatives up to noncanonical graded isomorphism, and the $S_X$-torsor $U_X\to X$ is independent up to noncanonical $S_X$-equivariant isomorphism.

If $\mathrm{Cox}(X)$ is finitely generated, let $\mathfrak m_X$ be the homogeneous maximal ideal generated by the pieces $\mathrm{Cox}(X)_\lambda$ with $\lambda\neq 0$. The finite-dimensional graded quotient $\mathfrak m_X/\mathfrak m_X^2$ records homogeneous elements modulo sums of products of elements of nonzero Cox degree. We call it the indecomposable quotient of the Cox ring. By the graded Nakayama lemma, any homogeneous basis of $\mathfrak m_X/\mathfrak m_X^2$ can be lifted to a minimal set of homogeneous algebra generators of $\mathrm{Cox}(X)$ \cite[Proposition~2.10.1 and the subsequent discussion]{CW11}.

\begin{lemma}\label{lem: cox-characteristic-space}
Let $X$ be a smooth projective variety of Fano type. Then $\mathrm{Cox}(X)$ is a factorial domain whose only units are the nonzero constants. The standard Cox construction gives an $S_X$-equivariant open immersion $U_X\hookrightarrow\mathrm{Spec}\mathrm{Cox}(X)$ whose complement has codimension at least two. Moreover, $\Gamma(U_X,\mathcal O_{U_X})= \mathrm{Cox}(X)$ and $U_X^{\mathrm{an}}$ is simply connected.
\end{lemma}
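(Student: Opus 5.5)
We will assemble Lemma~\ref{lem: cox-characteristic-space} from standard facts about Cox rings of Mori dream spaces, applied to $X$ via Lemma~\ref{lem: rc-basic}. A smooth projective variety of Fano type is rationally connected, so $\Lambda=\mathrm{Pic}(X)$ is free, $H^1(X,\mathcal O_X)=0$, and $\mathrm{Pic}(X)\cong N^1(X)$. By \cite[Corollary~1.3.2]{BCHM10}, $\mathrm{Cox}(X)$ is finitely generated. We prove the four assertions in order: the sections of $\mathcal O_{U_X}$, the algebraic properties of $\mathrm{Cox}(X)$, the open immersion, and simple connectivity.

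\textbf{Step 1: global functions.} Since $U_X\to X$ is affine, $\Gamma(U_X,\mathcal O_{U_X})=\Gamma(X,\bigoplus_\lambda\mathcal L_\lambda)=\bigoplus_\lambda H^0(X,\mathcal L_\lambda)=\mathrm{Cox}(X)$. Here global sections commute with the direct sum because $X$ is quasi-compact and noetherian.

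\textbf{Step 2: factoriality and units.} $U_X$ is a torsor over the smooth variety $X$, so it is smooth and irreducible. Hence $\mathrm{Cox}(X)$ is a normal domain. For units, take a unit $u$, decompose it into homogeneous pieces, and use the grading: a unit in a $\Lambda$-graded domain with torsion-free $\Lambda$ is homogeneous. If $u$ has degree $\lambda$ and $u^{-1}$ has degree $-\lambda$, then $\mathcal L_\lambda$ has a nowhere-vanishing section, so $\lambda=0$ and $u\in H^0(\mathcal O_X)=\mathbb C^*$.

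For factoriality, we use Arzhantsev--Derenthal--Hausen--Laface. Because $\mathrm{Pic}(X)=\mathrm{Cl}(X)$ is free, $\mathrm{Cox}(X)$ is factorial. Concretely, $\mathrm{Cl}(U_X)=0$, since $\mathrm{Pic}$ of the torsor is $\mathrm{Pic}(X)$ modulo the classes $\mathcal L_\lambda$, all of which become trivial on $U_X$. Codimension-two extension then transfers this to $\mathrm{Spec}\,\mathrm{Cox}(X)$.

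\textbf{Step 3: the open immersion.} The natural morphism $U_X\to\mathrm{Spec}\,\Gamma(U_X,\mathcal O)=\mathrm{Spec}\,\mathrm{Cox}(X)$ is equivariant. It is an open immersion because $U_X$ is quasi-affine. To see this, cover $X$ by nonvanishing loci $X_s$ of sections $s$ of ample bundles $\mathcal L_\lambda$. The preimage of $X_s$ in $U_X$ is affine and coincides with the principal open set $\{s\neq0\}$ of $\mathrm{Spec}\,\mathrm{Cox}(X)$, because the localized ring $\mathrm{Cox}(X)_s$ computes $\Gamma$ over $X_s$ for a finitely generated Cox ring. The complement has codimension at least two by the standard argument via the irrelevant ideal \cite{ADHL15}. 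Alternatively: the complement is $\mathrm{Spec}$ minus a normal open subset with the same ring of global functions. If the complement contained a divisor, that divisor would be defined by some $f$ (using factoriality). Then $1/f$ would be regular on $U_X$, hence lie in $\mathrm{Cox}(X)$, which is impossible unless $f$ is a unit.

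\textbf{Step 4: simple connectivity.} This is the step we expect to be the main obstacle. Consider the homotopy sequence of the $(\mathbb C^*)^\rho$-bundle $U_X^{\mathrm{an}}\to X^{\mathrm{an}}$:
$$\pi_2(X)\to\pi_1(S_X)\cong\mathbb Z^\rho\to\pi_1(U_X)\to\pi_1(X)=1.$$
The boundary map $\pi_2(X)\to\mathbb Z^\rho=\mathrm{Hom}(\Lambda,\mathbb Z)$ sends a class $\beta$ to $\lambda\mapsto\deg(c_1(\mathcal L_\lambda)\cap\beta)$. By Hurewicz, $\pi_2(X)\cong H_2(X,\mathbb Z)$, since $X$ is simply connected. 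Because $H^2(X,\mathbb Z)\cong\mathrm{Pic}(X)$ is free, universal coefficients give $H^2(X,\mathbb Z)=\mathrm{Hom}(H_2(X,\mathbb Z),\mathbb Z)$. The pairing $\Lambda\times H_2(X,\mathbb Z)\to\mathbb Z$ is therefore perfect modulo torsion in $H_2$, so the boundary map is surjective. Hence $\pi_1(U_X^{\mathrm{an}})=1$. The delicate point is keeping track of the orientation and sign of the boundary map, but surjectivity is insensitive to sign.
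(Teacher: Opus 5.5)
Your proof is correct and follows essentially the paper's route. The algebraic assertions rest on the standard Cox-ring results of Arzhantsev and Arzhantsev--Derenthal--Hausen--Laface; you reprove the global sections, the units and the open immersion directly, which is fine. Simple connectivity comes from the same homotopy sequence of the torus bundle, with the boundary map identified with evaluation of Chern classes and shown surjective by the perfect pairing.

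One caution: your ``alternative'' codimension-two argument in Step~3 uses factoriality, but Step~2 deduced factoriality from that same codimension-two statement. Either keep the citation, or argue as follows. For a prime divisor $D$ in the complement, the reflexive sheaf $\mathcal O(D)$ on the normal affine variety $\mathrm{Spec}\,\mathrm{Cox}(X)$ is globally generated, so it has a section with a pole along $D$. That section is regular on $U_X$ but does not lie in $\mathrm{Cox}(X)$, which contradicts $\Gamma(U_X,\mathcal O_{U_X})=\mathrm{Cox}(X)$.
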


\begin{proof}
Factoriality and the assertion about units follow from \cite[Theorem~1.1 and Corollary~2.2]{Arzh09}. The open immersion, its complement, and the equality of global functions are given by \cite[Constructions~I.6.1.3 and I.6.3.1, Proposition~I.6.1.6, and Corollary~I.6.2.4]{ADHL15}.

It remains to prove simple connectivity. Since $X$ is of Fano type, it is rationally connected, and Lemma~\ref{lem: rc-basic} shows that $X^{\mathrm{an}}$ is simply connected. The base and the torus fiber are connected, so $U_X^{\mathrm{an}}$ is connected. The homotopy sequence of the principal torus bundle contains
$$\pi_2(X^{\mathrm{an}})\xrightarrow{\partial}\pi_1(S_X^{\mathrm{an}})\to\pi_1(U_X^{\mathrm{an}})\to\pi_1(X^{\mathrm{an}})=1.$$
The Hurewicz theorem identifies $\pi_2(X^{\mathrm{an}})$ with $H_2(X,\mathbb{Z})$. There is also a natural identification
$$\pi_1(S_X^{\mathrm{an}})\cong\mathrm{Hom}(\Lambda,\mathbb{Z}).$$
Under these identifications, the boundary map is the evaluation map
$$\partial(\beta)(\lambda)= \langle c_1(\mathcal L_\lambda),\beta\rangle.$$
By Lemma~\ref{lem: rc-basic} and the universal coefficient theorem, evaluation gives a perfect pairing between $\Lambda= \mathrm{Pic}(X)$ and $H_2(X,\mathbb{Z})/H_2(X,\mathbb{Z})_{\mathrm{tors}}$. Hence $\partial$ is surjective. Exactness and $\pi_1(X^{\mathrm{an}})=1$ now give
$$\pi_1(U_X^{\mathrm{an}})\cong\pi_1(S_X^{\mathrm{an}})/\mathrm{im}(\partial)=1.$$
\end{proof}

\medskip

We next recall the Chevalley--Shephard--Todd theorem and the basic facts about finite complex reflection groups.

\begin{definition}[{\cite[p.~274]{ST54}}]\label{def: pseudoreflection}
Let $V$ be a finite-dimensional complex vector space. A nontrivial element $\gamma\in\mathrm{GL}(V)$ of finite order is a \emph{pseudoreflection} if $\mathrm{rank}(\gamma-\mathrm{id}) = 1$, or equivalently, if $\gamma$ fixes a hyperplane pointwise. The one-dimensional space $\mathrm{im}(\gamma-\mathrm{id})$ is its \emph{normal line}. A finite subgroup of $\mathrm{GL}(V)$ generated by pseudoreflections is a \emph{complex reflection group}.
\end{definition}

For a representation $V$ of a finite group $\Gamma, \mathrm{Sym}(V)$ denotes the symmetric algebra generated by $V$, with its induced group action. Equivalently, it is the coordinate ring of the dual representation $V^*$. We use the natural grading $\mathrm{Sym}(V)= \bigoplus_{d\geq 0} \mathrm{Sym}^d(V)$. When $\mathrm{Sym}(V)^\Gamma$ is a polynomial ring, its homogeneous algebraically independent generators are called basic invariants, and their degrees in this grading are called the basic degrees.

\begin{theorem}[Chevalley--Shephard--Todd]\label{thm: reflection-invariants}
Let $\Gamma\subseteq\mathrm{GL}(V)$ be finite and let $e= \dim V$. The invariant ring $\mathrm{Sym}(V)^\Gamma$ is a polynomial ring if and only if $\Gamma$ is generated by pseudoreflections. When these conditions hold, there are homogeneous basic invariants of degrees $d_1,\ldots,d_e$, and
$$\prod_{j=1}^e d_j=|\Gamma|.$$

The local form is as follows. If a finite group $\Gamma$ acts faithfully on a smooth complex variety $W$, then $W/\Gamma$ is smooth at the image of $x\in W$ if and only if the stabilizer $\Gamma_x$ is generated by pseudoreflections on $T_xW$.
\end{theorem}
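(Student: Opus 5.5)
The plan is to follow the classical route. Chevalley's argument handles ``reflection group $\Rightarrow$ polynomial'', Molien's formula gives the degree identity, Serre's ramification argument gives the converse, and Cartan linearization gives the local form. Throughout, put $S=\mathrm{Sym}(V)$, $R=S^\Gamma$, and let $\mathcal R:S\to R$ be the Reynolds operator $h\mapsto|\Gamma|^{-1}\sum_\gamma\gamma h$. Let $J=R_+S$ be the Hilbert ideal.

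\textbf{Step 1: reflection groups give polynomial invariants.} Let $f_1,\dots,f_m$ be a minimal set of homogeneous generators of $R$. The key is Chevalley's lemma: if $\sum h_if_i=0$ with $h_i\in S$ homogeneous and $f_1\notin(f_2,\dots,f_m)R$, then $h_1\in J$. I would prove it by induction on $\deg h_1$. Applying $\mathcal R$ shows $\mathcal R(h_1)\in(f_2,\dots,f_m)R$ would contradict minimality unless the $R_+$-part handles it. For a pseudoreflection $s$ with normal linear form $\ell_s$, the divided difference $\Delta_s h=(h-sh)/\ell_s$ lies in $S$ and lowers degree. Applying $\Delta_s$ to the relation gives a new relation, so by induction $h_1-sh_1\in J$ for every generator $s$. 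Hence $h_1\equiv\gamma h_1\pmod J$ for all $\gamma\in\Gamma$, which gives $h_1\equiv\mathcal R(h_1)\in J$.

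Algebraic independence then follows from the Euler-derivation argument. Take a minimal-degree polynomial relation $P(f_1,\dots,f_m)=0$ and differentiate in each variable $x_k$. Apply the lemma to the partials $P_i=\partial P/\partial y_i(f)$, choosing a minimal subfamily generating the ideal they span. Finally, use $\sum_k x_k\,\partial f_i/\partial x_k=\deg(f_i)f_i$ to contradict the minimality of the generating set. Independence forces $m=e$, since $R\subset S$ is a finite extension.

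\textbf{Step 2: the degree product.} Molien's formula gives
$$\sum_d\dim R_d\,t^d=|\Gamma|^{-1}\sum_\gamma\det(1-t\gamma)^{-1}.$$
The left side equals $\prod_j(1-t^{d_j})^{-1}$. Comparing the coefficients of $(1-t)^{-e}$ at $t=1$, only $\gamma=\mathrm{id}$ contributes a pole of order $e$ on the right. This yields $1/\prod d_j=1/|\Gamma|$.

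\textbf{Step 3: the converse.} This is the step I expect to be the main obstacle. Let $\Gamma'\subseteq\Gamma$ be the normal subgroup generated by pseudoreflections. By Step 1, $R'=S^{\Gamma'}$ is polynomial, and $\Gamma/\Gamma'$ acts on $\mathrm{Spec}\,R'$. I would show this action contains no pseudoreflections in codimension one, so that $\mathrm{Spec}R'\to\mathrm{Spec}R$ is étale in codimension one. If $R$ is also polynomial, this is a finite cover of affine space that is unramified in codimension one. Purity of the branch locus, via the Jacobian determinant of the basic invariants being a unit, then makes it étale. Since $\mathbb A^e$ is simply connected and the cover is connected, it must be an isomorphism, so $\Gamma=\Gamma'$. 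The delicate point is checking that the inertia of $\Gamma/\Gamma'$ along divisors is trivial. For this I would compare the ramification divisors, using that inertia groups at height-one primes of $S$ are cyclic groups generated by pseudoreflections.

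\textbf{Step 4: the local form.} Choose a $\Gamma_x$-invariant neighborhood of $x$ that is disjoint from its other translates. There, $W/\Gamma$ is locally $W/\Gamma_x$. By Cartan's linearization (averaging a local chart), $\Gamma_x$ acts linearly on $T_xW$ near $0$, so the germ of $W/\Gamma$ at the image of $x$ is the germ of $T_xW/\Gamma_x$ at the image of $0$. That cone is smooth at its vertex if and only if its graded coordinate ring is polynomial. Indeed, if the vertex is smooth, graded Nakayama lets us lift a basis of $\mathfrak m/\mathfrak m^2$ to homogeneous generators, and there are exactly $e$ of them. Steps 1 and 3 then finish the equivalence.
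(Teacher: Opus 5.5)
Your proposal is correct in outline, but it takes a different route from the paper.

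\textbf{What the paper does.} It does not reprove the global statement. The equivalence between polynomial invariants and generation by pseudoreflections is cited to Chevalley and Shephard--Todd, and $\prod_j d_j=|\Gamma|$ is cited to Lehrer--Taylor. The only argument written out is for the local form, and it is formal rather than analytic. Completing at the orbit gives $\widehat{\mathcal O}_{W/\Gamma,\overline x}\cong\widehat{\mathcal O}_{W,x}^{\Gamma_x}$, so no slice neighbourhood is needed. Averaging a linear section of $\mathfrak m_x\to\mathfrak m_x/\mathfrak m_x^2$ then linearizes the action on $\widehat{\mathcal O}_{W,x}$. Invariants commute with completion, and regularity of the completed invariant ring is turned into polynomiality by the same graded Nakayama argument you use.

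\textbf{What your route buys.} Your Steps 1--3 (Chevalley's lemma via divided differences, Molien's formula, and the purity plus simple-connectedness argument) give a self-contained proof of the classical theorem. Your Step 3 is the same mechanism the paper uses in Step 3 of Lemma~\ref{lem: fiber-cycle} to show that the groups $\Gamma_i$ are generated by inertia.

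\textbf{What your Step 4 costs.} It is the analytic analogue of the paper's completion argument. The slice must be taken in the classical topology, since a nonempty Zariski open subset of the irreducible $W$ meets each of its translates. You then need the standard comparison $(W/\Gamma)^{\mathrm{an}}\cong W^{\mathrm{an}}/\Gamma$, and the fact that smoothness can be read off the analytic germ. The completion argument stays algebraic and avoids both.

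\textbf{Details to tighten.}
\begin{enumerate}
\item In Chevalley's lemma, treat $\deg h_1=0$ separately. Averaging shows $f_1\notin(f_2,\dots,f_m)S$, which forces a constant $h_1$ to vanish. Your concluding step, which replaces $h_1$ by its Reynolds average, only lands in $J$ when $\deg h_1>0$.
\item In Step 3, ``compare the ramification divisors'' should be made precise. The inertia group in $\Gamma/\Gamma'$ of a height-one prime of $S^{\Gamma'}$ is the image of the inertia group in $\Gamma$ of a height-one prime of $S$ above it. The latter consists of elements $\gamma$ with $\mathrm{rank}(\gamma-\mathrm{id})\le 1$, hence lies in $\Gamma'$.
\item In Step 4, apply Steps 1 and 3 to $\Gamma_x$ acting on $T_x^*W$, whose symmetric algebra is the coordinate ring of $T_xW$. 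Note that $\gamma$ is a pseudoreflection on $T_xW$ if and only if it is one on $T_x^*W$.
\end{enumerate}
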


\begin{proof}
The polynomial characterization is due to Chevalley and Shephard--Todd \cite{Chev55,ST54}. The formula for the product of the basic degrees follows from \cite[Theorem~4.14(i)]{LT09}.

We prove the local assertion. Put $G= \Gamma_x$, let $\overline x$ be the image of $x$, and set $R= \mathrm{Sym}(T_x^*W)$. Completion of the finite quotient at the orbit of $x$ gives
$$\widehat{\mathcal O}_{W/\Gamma,\overline x}\cong\widehat{\mathcal O}_{W,x}^{G}.$$
Since $W$ is smooth, averaging a linear section of $\mathfrak m_x\to\mathfrak m_x/\mathfrak m_x^2=T_x^*W$ over $G$ gives a $G$-equivariant regular system of parameters. Hence
$$\widehat{\mathcal O}_{W,x}\cong\widehat R$$
as complete $G$-algebras, where $R$ is completed at its ideal of positive-degree elements. Since $R$ is finite over $R^G$ and $G$ is linearly reductive, taking invariants commutes with these completions. Therefore
$$\widehat{\mathcal O}_{W/\Gamma,\overline x}\cong\widehat{R^G}.$$

The quotient $W/\Gamma$ is smooth at $\overline x$ if and only if $\widehat{R^G}$ is regular. Since $R^G$ is a connected positively graded domain, this is equivalent to $R^G$ being a polynomial ring. Indeed, if the local ring of $R^G$ at its homogeneous maximal ideal is regular, a minimal set of homogeneous algebra generators has cardinality $\dim R^G$. The resulting surjection from a polynomial ring of that dimension has a height-zero prime kernel and is therefore an isomorphism. The polynomial characterization now shows that $R^G$ is polynomial if and only if $G$ is generated by pseudoreflections on $T_x^*W$. Since an element is a pseudoreflection on $T_x^*W$ if and only if it is one on $T_xW$, the local assertion follows.
\end{proof}

\begin{lemma}[{cf. \cite[Theorem~1.27 and its proof]{LT09}}]\label{lem: reflection-blocks}
Let a finite complex reflection group $\Gamma$ act faithfully on $V$, and suppose that it preserves every summand of a decomposition $V= \bigoplus_{i=1}^r V_i$. For each $i\in\{1,\ldots,r\}$, let $\Gamma_i$ be the subgroup of $\Gamma$ generated by the pseudoreflections $\gamma\in\Gamma$ whose normal lines lie in $V_i$. Then multiplication induces an isomorphism
$$\prod_{i=1}^r\Gamma_i\cong\Gamma,$$
and the invariant ring decomposes as
$$\mathrm{Sym}(V)^\Gamma= \bigotimes_{i=1}^r\mathrm{Sym}(V_i)^{\Gamma_i}.$$
\end{lemma}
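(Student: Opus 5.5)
We prove the decomposition of $\Gamma$ first; the tensor decomposition of invariants then follows formally. The main tool is the standard fact that a finite complex reflection group acting on $V$ is generated by the pseudoreflections it contains, together with the fact that each pseudoreflection preserving the decomposition has its normal line inside a single summand.

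\textbf{Step 1: location of normal lines and fixed hyperplanes.} Let $\gamma\in\Gamma$ be a pseudoreflection. Since $\gamma$ preserves each $V_i$, we have $\gamma-\mathrm{id}=\bigoplus_i(\gamma-\mathrm{id})|_{V_i}$. Its rank is the sum of the ranks of the restrictions. As this total rank is $1$, there is exactly one index $i$ with $\gamma|_{V_i}\neq\mathrm{id}$, and $\gamma$ acts trivially on $V_j$ for $j\neq i$. Hence the normal line lies in $V_i$, every pseudoreflection of $\Gamma$ belongs to exactly one $\Gamma_i$, and $\Gamma_i$ acts trivially on $V_j$ for $j\ne i$.

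\textbf{Step 2: the product map is an isomorphism.} Elements of $\Gamma_i$ and $\Gamma_j$ with $i\neq j$ act on disjoint sets of summands, so they commute as elements of $\mathrm{GL}(V)$. Thus multiplication $\prod_i\Gamma_i\to\Gamma$ is a homomorphism. It is surjective because $\Gamma$ is generated by its pseudoreflections, each of which lies in some $\Gamma_i$ by Step 1. It is injective because $\prod_i g_i$ restricts to $g_i|_{V_i}$ on $V_i$, and $g_i$ is determined by $g_i|_{V_i}$ since $g_i$ is trivial on the other summands. Faithfulness of the action is used exactly here.

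\textbf{Step 3: invariant rings.} We have $\mathrm{Sym}(V)=\bigotimes_i\mathrm{Sym}(V_i)$ as algebras, and $\Gamma=\prod\Gamma_i$ with $\Gamma_i$ acting only on the $i$-th factor. Taking invariants under a product of groups acting factorwise on a tensor product of complex vector spaces gives the tensor product of the factorwise invariants. Concretely, the Reynolds operator of $\Gamma$ is the tensor product of the Reynolds operators of the $\Gamma_i$, applied to graded pieces, which are finite-dimensional. Hence
$$\mathrm{Sym}(V)^\Gamma=\bigotimes_i\mathrm{Sym}(V_i)^{\Gamma_i}.$$

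Finally, by Theorem~\ref{thm: reflection-invariants} each $\Gamma_i$, being generated by pseudoreflections on $V_i$, has a polynomial invariant ring. This is consistent with the conclusion, though it is not needed for the statement.

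\textbf{Main obstacle.} The only subtle point is Step 1: the rank-additivity argument requires that $\gamma$ preserve every summand, which is given by hypothesis. With that in place, the remaining steps are formal.
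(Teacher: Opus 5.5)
Your proposal is correct and follows essentially the same route as the paper. Both arguments use rank additivity of $\gamma-\mathrm{id}$ across the summands, commutation of pseudoreflections supported on different summands, surjectivity from generation by pseudoreflections, and injectivity by restricting to each $V_i$ together with faithfulness; your Reynolds-operator argument for the tensor decomposition of invariants just spells out a step the paper states in one line.
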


\begin{proof}
For every pseudoreflection $\gamma\in\Gamma$, preservation of the decomposition gives $\mathrm{rank}(\gamma-\mathrm{id})= \sum_{i=1}^r\mathrm{rank}((\gamma-\mathrm{id})|_{V_i})=1$. Thus $\gamma$ acts nontrivially on exactly one summand and fixes all the others, and its normal line lies in that summand. Pseudoreflections supported on different summands commute. Since the pseudoreflections generate $\Gamma$, the multiplication map $\prod_{i=1}^r\Gamma_i\to\Gamma$ is surjective.

For each $i\in  \{1,\ldots,r\}$, let $\gamma_i\in\Gamma_i$, and suppose that $\prod_{i=1}^r\gamma_i$ acts trivially on $V$. Restricting to $V_i$ shows that $\gamma_i$ acts trivially on $V_i$, while by construction it fixes every other summand. Faithfulness of the action on $V$ gives $\gamma_i = 1$ for every $i$. Thus the multiplication map is injective. The product decomposition of $\Gamma$ gives the stated tensor-product decomposition of invariant rings.
\end{proof}

Recall that a finite complex reflection group on $V$ has a canonical decomposition \cite[Theorem~1.27 and its proof]{LT09}
$$V=V^\Gamma\oplus W_1\oplus\cdots\oplus W_s,$$
where the subgroup generated by the pseudoreflections with normal lines in $W_i$ acts irreducibly on $W_i$ and fixes the other summands.

To descend a toric boundary through a finite quotient, we will need the group to permute the coordinate hyperplanes. The following lemma gives the required linear-algebra statement.

\begin{lemma}\label{lem: reflection-centralizer}
Let $\Gamma\subseteq\mathrm{GL}(V)$ be a finite complex reflection group with $V^\Gamma=0$. Let $J\subseteq\mathrm{GL}(V)$ be a subgroup whose image centralizes the image of $\Gamma$ in $\mathrm{PGL}(V)$. Then $V$ is a direct sum of lines permuted by $J$. In particular, choosing a nonzero vector in each of these lines gives polynomial coordinates on $\mathrm{Spec}\mathrm{Sym}(V)$ whose coordinate hyperplanes are permuted by $J$.
\end{lemma}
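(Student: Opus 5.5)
The plan is to show first that each $j\in J$ rescales the pseudoreflections of $\Gamma$ by a scalar, and that this scalar is $1$ outside a small two-dimensional case. From this, $J$ either permutes the canonical summands of $V$ or has a finite image in $\mathrm{PGL}(V)$ that can be diagonalized by hand. Throughout I use the canonical decomposition $V=W_1\oplus\cdots\oplus W_s$ recalled before the lemma, with $V^\Gamma=0$. Each $W_i$ is the span of the normal lines of the pseudoreflections supported on it, and $\Gamma_i$ acts irreducibly on $W_i$ (Lemma~\ref{lem: reflection-blocks}).

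\textbf{Step 1: the twisting scalar.} The centralizing hypothesis says that for $j\in J$ and $\gamma\in\Gamma$ we have $j\gamma j^{-1}=\chi_j(\gamma)\gamma$ for some $\chi_j(\gamma)\in\mathbb C^*$. Take $\gamma$ a pseudoreflection with eigenvalues $(1,\dots,1,\zeta)$. The conjugate $j\gamma j^{-1}$ has the same eigenvalue multiset, while $\chi\gamma$ has $(\chi,\dots,\chi,\chi\zeta)$.
\begin{itemize}
\item If $\dim V\geq 3$, then $1$ occurs with multiplicity at least two, which forces $\chi_j(\gamma)=1$.
\item If $\dim V=2$ and $\chi_j(\gamma)\ne 1$, then $\{\chi,\chi\zeta\}=\{1,\zeta\}$ forces $\chi=\zeta=-1$, so $\gamma$ is an order-two reflection.
\end{itemize}
The case $\dim V=1$ is trivial.

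\textbf{Step 2: the main case $\dim V\geq 3$.} Here each $j$ commutes with every pseudoreflection. Since these generate $\Gamma$, $j$ centralizes $\Gamma$ in $\mathrm{GL}(V)$. Conjugation by $j$ fixes each pseudoreflection, so $j$ maps normal lines to normal lines. Because $j$ is linear and commutes with $\Gamma$, it carries each $W_i$ isomorphically onto some $W_k$, intertwining the irreducible $\Gamma$-actions. Hence $J$ permutes the summands $W_i$.

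Fix a $J$-orbit of summands and a representative $W$, with stabilizer $J_W$. By Schur's lemma, every element of $J_W$ acts on the irreducible $\Gamma$-module $W$ by a scalar. Choose any decomposition of $W$ into lines $\ell_1,\dots,\ell_d$. For $W'=jW$, define the lines of $W'$ to be $j\ell_1,\dots,j\ell_d$. These are independent of the choice of $j$, because two choices differ by an element of $J_W$, which acts by a scalar. Doing this for every orbit gives a decomposition of $V$ into lines permuted by $J$.

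\textbf{Step 3: the case $\dim V=2$.} Here the map $j\mapsto\chi_j$ is a homomorphism to characters of $\Gamma$ with values $\pm1$. If $j,j'$ have the same $\chi$, then $j^{-1}j'$ centralizes $\Gamma$.
\begin{itemize}
\item If $\Gamma$ is irreducible, centralizing elements are scalars, so the image of $J$ in $\mathrm{PGL}(V)$ is a finite elementary $2$-group. If this image is trivial, any basis works. Otherwise, the nonscalar elements are diagonalizable with two eigenlines, and commuting ones share eigenlines. When two distinct commuting involutions occur, I expect to use instead the pair of lines swapped by both. This bookkeeping, namely the Klein four-group inside $\mathrm{PGL}_2$, is where I expect the main care to be needed.
\item If $\Gamma$ is reducible, $V=W_1\oplus W_2$ with $\Gamma$ abelian. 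Conjugation sends a pseudoreflection $\gamma$ to $\pm\gamma$, and $-\gamma$ is again a pseudoreflection with the complementary normal line. So $J$ permutes the set $\{W_1,W_2\}$ of normal lines, and these lines themselves are the required decomposition.
\end{itemize}

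\textbf{Step 4: coordinates.} Finally, a nonzero vector in each line gives linear coordinates on $\mathrm{Spec}\,\mathrm{Sym}(V)$. Since $J$ permutes the lines, it permutes the coordinate hyperplanes, which gives the last assertion of the lemma.
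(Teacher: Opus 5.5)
Your Steps 1, 2 and 4 and the reducible sub-case of Step 3 are correct. The irreducible two-dimensional sub-case of Step 3, however, is not proved.

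The claim that elements of $J$ with commuting images in $\mathrm{PGL}(V)$ share eigenlines is false. Commuting in $\mathrm{PGL}(V)$ only gives $jj'=\pm j'j$, and when the sign is $-1$ each element swaps the other's eigenlines. This case really occurs. Take $\Gamma=J$ to be the group generated by $\mathrm{diag}(-1,1)$ and the coordinate swap $(z_1,z_2)\mapsto(z_2,z_1)$. This is an irreducible reflection group of order $8$ with $V^\Gamma=0$. Its image in $\mathrm{PGL}_2$ is a Klein four-group, and the two generators anticommute and have different eigenlines. Your remedy, ``the pair of lines swapped by both'', is only stated as an expectation. Your case analysis also silently assumes that the elementary abelian $2$-group image has rank at most two, which you do not justify.

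The missing observation is that in dimension two you never need to analyse the whole image of $J$. Let $x\in\mathrm{GL}(V)$ be nonscalar and diagonalizable with eigenlines $L_1,L_2$, and suppose $jxj^{-1}=cx$. The eigenlines of $jxj^{-1}$ are $jL_1,jL_2$. The eigenlines of $cx$ are still $L_1,L_2$, since its eigenvalues remain distinct. Hence $\{jL_1,jL_2\}=\{L_1,L_2\}$. Take $x$ to be a single pseudoreflection $\gamma\in\Gamma$, which exists because $V^\Gamma=0$. This is exactly the paper's argument. It settles the whole case $\dim V=2$ at once, irreducible or not, without the character map $j\mapsto\chi_j$, Schur's lemma, or any classification of $2$-subgroups of $\mathrm{PGL}_2$.

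For $\dim V\ge 3$, your Schur-plus-transport argument is valid but longer than needed. Once $j$ commutes with every pseudoreflection, it preserves every normal line. The normal lines span $V$, because their common orthogonal complement for a $\Gamma$-invariant Hermitian form is $V^\Gamma=0$. So a basis chosen on normal lines already gives lines fixed by all of $J$, which is the paper's route.
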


\begin{proof}
The assertion is immediate if $\dim V\leq 1$. For $j\in J$ and a pseudoreflection $\gamma\in\Gamma$, their images commute in $\mathrm{PGL}(V)$. Hence $j\gamma j^{-1}=c \gamma$ for some $c\in\mathbb C^*$.

Suppose that $\dim V=2$, and fix a pseudoreflection $\gamma\in\Gamma$. Since $\gamma$ has finite order, it is diagonalizable, with eigenvalues $1$ and $\zeta\neq 1$. Let $L_1,L_2$ be the corresponding eigenlines, so $V=L_1\oplus L_2$. For $j\in J$, write $j\gamma j^{-1}=c\gamma$ as above. The operator $c\gamma$ acts on $L_1$ and $L_2$ by the distinct scalars $c$ and $c\zeta$, so its two eigenlines are still $L_1$ and $L_2$. On the other hand, if $\gamma v= \lambda v$, then
$$(j\gamma j^{-1})(jv)=j(\gamma v)= \lambda jv.$$
Thus the two eigenlines of $j\gamma j^{-1}$ are $j(L_1)$ and $j(L_2)$. Since $j\gamma j^{-1}=c\gamma$, we obtain
$$\{j(L_1),j(L_2)\}= \{L_1,L_2\}.$$
Therefore every element of $J$ either preserves both lines or exchanges them.

Suppose that $\dim V\geq 3$. The unique eigenvalue of multiplicity $\dim V-1$ is $1$ for $j\gamma j^{-1}$ and $c$ for $c\gamma$, so $c=1$. Thus $J$ commutes with every pseudoreflection and preserves each normal line. Choose a $\Gamma$-invariant positive definite Hermitian form on $V$. The orthogonal complement of the span of these normal lines is the common fixed space of all pseudoreflections, which is $V^\Gamma=0$. Hence the normal lines span $V$. Choosing a basis of $V$ consisting of vectors on these lines gives a direct sum decomposition into lines, each preserved by $J$.

The corresponding linear generators of $\mathrm{Sym}(V)$ are permuted up to nonzero scalar multiples. The hyperplanes defined by these linear generators are therefore permuted by $J$, proving the last assertion.
\end{proof}

We pass from these invariant-theoretic statements to toric geometry through the following Cox-ring characterization, due to Hu and Keel.

\begin{proposition}[{\cite[Corollary~2.10]{HK00}}]\label{prop: polynomial-cox-toric}
Let $X$ be a smooth projective variety such that $\mathrm{Pic}(X)$ is free of finite rank and $\mathrm{Pic}(X)\otimes_{\mathbb{Z}}\mathbb{R}=N^1(X)_{\mathbb{R}}$. Then $X$ is toric if and only if $\mathrm{Cox}(X)$ is a polynomial ring.
\end{proposition}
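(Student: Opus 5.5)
The forward implication is classical and I would simply cite it. The substantive direction is polynomial $\Rightarrow$ toric, for which I would write $X$ as a $\mathrm{Proj}$ of a monomial subring of $\mathrm{Cox}(X)$ and let the big torus of the polynomial ring act on it.

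\emph{Toric $\Rightarrow$ polynomial.} Let $X=X(\Sigma)$ be a smooth complete toric variety. Then $\mathrm{Pic}(X)$ is free and equals $\mathrm{Cl}(X)$. Cox's theorem \cite[Theorem~5.1.11 and Proposition~5.3.7]{CLS11} identifies $\mathrm{Cox}(X)$ with $\mathbb C[x_\rho:\rho\in\Sigma(1)]$. The grading is given by the surjection $\mathbb Z^{\Sigma(1)}\to\mathrm{Pic}(X)$, $e_\rho\mapsto[D_\rho]$. This ring is polynomial.

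\emph{Polynomial $\Rightarrow$ toric.} Suppose $\mathrm{Cox}(X)$ is a polynomial ring. By the graded Nakayama remark preceding Lemma~\ref{lem: cox-characteristic-space}, we may choose homogeneous algebra generators $x_1,\dots,x_n$ lifting a homogeneous basis of $\mathfrak m_X/\mathfrak m_X^2$. These are algebraically independent (a surjection between polynomial rings of the same dimension is an isomorphism), so $\mathrm{Cox}(X)=\mathbb C[x_1,\dots,x_n]$ with $\deg x_i=\lambda_i\in\Lambda$. Every graded piece $\mathrm{Cox}(X)_\lambda$ is therefore spanned by monomials.

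Let $\mathbb T=\mathbf G_m^n$ act on $\mathrm{Cox}(X)$ by rescaling the $x_i$. This action preserves every graded piece and commutes with $S_X$. Fix an ample line bundle $\mathcal L_H$ with $H\in\Lambda$. Since $\mathrm{Pic}(X)$ is free, the Cox ring is graded by all of $\mathrm{Pic}(X)$, so the section ring
$$R(X,H)=\bigoplus_{k\ge0}\mathrm{Cox}(X)_{kH}$$
is a graded $\mathbb T$-stable subalgebra, and $X\cong\mathrm{Proj}\,R(X,H)$. Hence $\mathbb T$ acts algebraically on $X$. The kernel of this action contains the image of $S_X\to\mathbb T$. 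Let $T$ denote the image of $\mathbb T$ in $\mathrm{Aut}(X)$, which is an algebraic torus.

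It remains to show that $T$ has a dense orbit. For this I would use the characteristic space: by the construction in \eqref{eq: characteristic space} and \cite[Proposition~I.6.1.6]{ADHL15}, $U_X$ is an open subset of $\mathrm{Spec}\,\mathrm{Cox}(X)=\mathbf A^n$ and $X=U_X/S_X$. Alternatively, a point of $\mathrm{Proj}$ with all coordinates $x_i\neq0$ exists, since the product $x_1\cdots x_n$ divides some monomial of a nonzero section. Either way, the open $\mathbb T$-orbit $(\mathbf G_m)^n\subset\mathbf A^n$ meets the domain of the quotient map, and its image is a dense $T$-orbit in $X$. Since $X$ is normal, it is toric with torus $T$. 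I would remark that the hypothesis $\mathrm{Pic}(X)\otimes\mathbb R=N^1(X)_{\mathbb R}$ is not needed for this direction; it only enters to match the intrinsic Cox ring with Hu and Keel's setup.

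\emph{Main obstacle.} The delicate step is checking that the $\mathbb T$-action on $\mathrm{Proj}\,R(X,H)$ really has a dense orbit, that is, that the generic point of $\mathbf A^n$ lies in the semistable locus for $H$. I would handle it via $U_X\subseteq\mathbf A^n$: the complement of $U_X$ has codimension at least two by \cite[Corollary~I.6.2.4]{ADHL15}, and in particular $U_X$ is nonempty and open, so it contains points with all $x_i\ne0$.
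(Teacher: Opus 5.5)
Your proposal is correct. The paper gives no proof of its own and simply cites Hu--Keel, whose argument is essentially the one you give: the forward direction is Cox's theorem, and when $\mathrm{Cox}(X)\cong\mathbb C[x_1,\dots,x_n]$ one realizes $X$ as the quotient of the $H$-semistable open set $U_X\subseteq\mathbf A^n$ by the characteristic torus $S_X$, which acts through the diagonal torus $\mathbb T$, so the big torus descends to $X$ with a dense orbit. Two facts you use only implicitly are that $U_X=\bigcup_{k>0,\,s\in\mathrm{Cox}(X)_{kH}}D(s)$ is $\mathbb T$-stable and that $U_X\to X$ is equivariant for $\mathbb T\to T$; both follow from $\mathbb T$ preserving each $\mathrm{Cox}(X)_{kH}$, by working with monomial $s$.
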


\section{Toric fibers and isotriviality}
\label{sec: toric-fibers}

\subsection{Galois cycles and toric fibers}

In this subsection, we prove that if a cycle of Galois covers between smooth projective rationally connected varieties has an int-amplified composite, then every variety in the cycle has a polynomial Cox ring and is therefore toric. Our approach is motivated by the work of Arzhantsev and Gaifullin on Cox realizations of finite quotients \cite[Theorem~3.1]{AG10}. It also draws inspiration from Luo and Zhang's use of a linear lift and reflection-group invariant theory to study Galois self-covers of projective space \cite{LZ26}.

\begin{lemma}\label{lem: fiber-cycle}
Let $V_0,\ldots,V_{s-1}$ be smooth projective rationally connected varieties and set $V_s=V_0$. For $0\leq i < s$, let $\varphi_i:V_{i+1}\to V_i$ be a finite Galois morphism. These maps form a cycle
$$V_0\xleftarrow{\varphi_0}V_1\leftarrow\cdots\xleftarrow{\varphi_{s-1}}V_s=V_0.$$ Put $\Psi_0:= \varphi_0\circ\cdots\circ\varphi_{s-1}:V_0\to V_0$. If $\Psi_0$ is int-amplified, then every $V_i$ has a polynomial Cox ring and is toric.
\end{lemma}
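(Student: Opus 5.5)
Our plan is to lift each Galois morphism $\varphi_i:V_{i+1}\to V_i$ to the characteristic spaces and compare Cox rings, following \cite{AG10} and \cite{LZ26}.

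\textbf{Step 1: Cox rings as invariant rings.} Each $V_i$ is smooth and rationally connected, so by Lemma~\ref{lem: rc-basic} it is simply connected and $\mathrm{Pic}(V_i)\cong N^1(V_i)$ is free. Fix $i$ and put $G_i=\mathrm{Gal}(\varphi_i)$. Pull back the characteristic torsor $U_{V_i}\to V_i$ along $\varphi_i$. This gives an $S_{V_i}$-torsor on $V_{i+1}$. We intend to show that it is the pushout of $U_{V_{i+1}}$ along the homomorphism of characteristic tori dual to $\varphi_i^*:\mathrm{Pic}(V_i)\to\mathrm{Pic}(V_{i+1})$, which is injective after tensoring with $\mathbb Q$. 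Taking a suitable finite cover, the action of $G_i$ then extends to an action of a finite extension $\widehat G_i$ on a torus-cover of $U_{V_{i+1}}$, and $\mathrm{Cox}(V_i)$ becomes, up to a Veronese-type subring indexed by $\varphi_i^*\mathrm{Pic}(V_i)$, the invariant ring of $\mathrm{Cox}(V_{i+1})$. This is the Arzhantsev--Gaifullin description. We use Lemma~\ref{lem: cox-characteristic-space}, namely that global functions on $U$ equal Cox and $U$ is simply connected with no nonconstant units, to lift the group action.

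\textbf{Step 2: relations pull back to relations.} Do the same for the whole composite $\Psi_0$. Present $\mathrm{Cox}(V_0)=\mathbb C[x_1,\dots,x_N]/I$ with minimal homogeneous generators, so that $I\subseteq\mathfrak m^2$. Assume $I\neq0$. We define an integer grading by fixing an ample class $H$ with $\Psi_0^*H-H$ ample, and set $\deg x_j=\deg_H(x_j)$. For a relation, take the least total degree of a monomial occurring in it, and choose a relation $r\in I$ minimizing this quantity.

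By Step 1, $\mathrm{Cox}(V_0)$ is obtained from $\mathrm{Cox}(V_0)$, viewed as the top of the cycle, by taking invariants under a finite group. The key use of reflection groups is the following. Since $V_i$ is smooth, the local form of Theorem~\ref{thm: reflection-invariants} shows that stabilizers act by groups generated by pseudoreflections. Using Lemma~\ref{lem: reflection-blocks} on the linear span of the generators, each generator of the quotient's Cox ring is expressed as a basic invariant, that is, a polynomial in the generators upstairs. Consequently every relation among the generators downstairs comes from relations upstairs, substituted into basic invariants of positive degree. Going around the cycle once, we get that every relation of $\mathrm{Cox}(V_0)$ is a combination of pullbacks $\Psi_0^*$ of relations, with monomials whose degrees have been scaled up by $\Psi_0^*$.

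\textbf{Step 3: contradiction via int-amplification.} After $k$ iterations we replace $\Psi_0$ by $\Psi_0^k$. Since $(\Psi_0^*)^{-k}\to0$ by \cite[Theorem~1.1]{Meng20}, the least monomial degree of a relation written via pullback strictly exceeds that of $r$, which contradicts minimality. Hence $I=0$, so $\mathrm{Cox}(V_0)$ is a polynomial ring, and Proposition~\ref{prop: polynomial-cox-toric} shows that $V_0$ is toric. Rotating the cycle gives the same conclusion for each $V_i$, since each cyclic rotation has an int-amplified composite, being conjugate through the finite maps (cf. \cite[Lemma~3.4]{Meng20}).

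The main obstacle is Step 2. We must control precisely how relations descend through invariant rings, checking that the group acting on the relevant degree pieces is a reflection group, which comes from smoothness via Theorem~\ref{thm: reflection-invariants}. We also need the degree bookkeeping to be strictly increasing rather than merely non-decreasing. Our first approach is to work with the indecomposable quotient $\mathfrak m/\mathfrak m^2$ and the associated graded ring.
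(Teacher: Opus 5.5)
Your overall strategy (lift to characteristic spaces, use reflection-group invariant theory, pull relations around the cycle, let int-amplification raise the minimal degree) is the paper's. However, Step~2 has two genuine gaps.

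\textbf{The reflection property is established in the wrong place.} The local form of Theorem~\ref{thm: reflection-invariants} can be applied to the smooth quotient $V_i=V_{i+1}/\mathrm{Gal}(\varphi_i)$ or to $U_i\cong U_{i+1}/\Gamma_i$. Either way, it only says that point stabilizers act as reflection groups on tangent spaces at points of $V_{i+1}$ or $U_{i+1}$. The invariant-theory step needs more: the whole group $\Gamma_i$ must act as a reflection group on the generator space $E_{i+1}=\mathfrak m_{i+1}/\mathfrak m_{i+1}^2$. This is the cotangent space at the vertex of $\mathrm{Spec}\,\mathrm{Cox}(V_{i+1})$, and that vertex is not in $U_{i+1}$. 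Invoking the local form at the vertex would require $\mathrm{Spec}\,\mathrm{Cox}(V_i)$ to be smooth there, which is the conclusion.

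The paper argues globally instead. The subgroup generated by the divisorial inertia groups is all of $\Gamma_i$, because the intermediate cover is \'{e}tale over $U_i$ by purity and $U_i^{\mathrm{an}}$ is simply connected. A nontrivial inertia element $\gamma$ of a ramification divisor satisfies $(\gamma-\mathrm{id})\mathrm{Cox}(V_{i+1})\subseteq(z)$, where $z$ is a homogeneous equation of that divisor; this uses factoriality and constant units. Combined with faithfulness on $E_{i+1}$ (graded Nakayama), $\gamma$ is a pseudoreflection on $E_{i+1}$. Factoriality and finite generation in turn require $V_i$ to be of Fano type, which you never establish. The paper takes this from \cite[Corollary~1.4]{Yosh21b}, since every cyclic return map is int-amplified.

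\textbf{Your relation comparison runs in the wrong direction.} Write $P_i=\mathrm{Sym}(E_i)$ and $I_i=\ker(P_i\to\mathrm{Cox}(V_i))$. Let $\theta_i:P_i\to P_{i+1}$ be substitution of basic invariants, and $\Theta$ the composite around the cycle. Substituting basic invariants into a relation of $\mathrm{Cox}(V_i)$ gives a relation of $\mathrm{Cox}(V_{i+1})$, so you only obtain $\theta_i(I_i)P_{i+1}\subseteq I_{i+1}$. Iterating this manufactures relations of larger degree and contradicts nothing about a minimal $r$. Your Step~3 needs the reverse inclusion $I_0\subseteq\Theta^N(I_0)P_0$, that is, $I_{i+1}=\theta_i(I_i)P_{i+1}$. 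Two ingredients are missing.
\begin{itemize}
\item[(a)] Since $\mathrm{Sym}(E_{i+1})^{\Gamma_i}$ is polynomial in $\dim E_{i+1}$ variables and surjects onto $\mathrm{Cox}(V_i)$, one only gets $\dim E_i\le\dim E_{i+1}$. Equality is forced by going around the cycle. Without it, some combination of basic invariants could land in $\mathfrak m_i^2$. Then the basic invariants would not correspond bijectively to generators of $\mathrm{Cox}(V_i)$, and $P_i\cong P_{i+1}^{\Gamma_i}$ would fail.
\item[(b)] $P_{i+1}$ is free of rank $|\Gamma_i|$ over $P_{i+1}^{\Gamma_i}$, and $\mathrm{Cox}(V_{i+1})$ has rank $|\Gamma_i|$ over $\mathrm{Cox}(V_i)$. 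Hence the surjection $P_{i+1}/\theta_i(I_i)P_{i+1}\to\mathrm{Cox}(V_{i+1})$ has torsion kernel inside a free module over a domain, so it is an isomorphism.
\end{itemize}

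Once $I_0=\Theta(I_0)P_0$ is available, the degree step is cleanest with total polynomial degree. Let $D_j$ be the Cox degree of the variable $x_j$. Because $(\Psi_0^*)^{-N}\to0$, for large $N$ we have $(\Psi_0^*)^N D_j\notin\{D_1,\ldots,D_e\}$. So no $\Theta^N(x_j)$ has a constant or linear term, and the minimal monomial degree $d$ of a relation is pushed to at least $2d$, contradicting minimality.
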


\begin{proof}
Put $r= \dim V_0$. We may assume that $r>0$.

We divide the proof into five steps.

\noindent\textbf{Step 1.} We transport int-amplification around the cycle and record the Picard and Cox-ring properties that will be used below.

Since every $\varphi_i$ is finite, $\dim V_i =r$ for every $i$. For $0<i<s$, define the cyclic return map $\Psi_i:V_i\to V_i$ by
$$\Psi_i= \varphi_i\circ\cdots\circ\varphi_{s-1}\circ\varphi_0\circ\cdots\circ\varphi_{i-1}.$$
The composite $\varphi_0\circ\cdots\circ\varphi_{i-1}$ satisfies the following:
$$(\varphi_0\circ\cdots\circ\varphi_{i-1})\circ\Psi_i= \Psi_0\circ(\varphi_0\circ\cdots\circ\varphi_{i-1}).$$
Choose an ample divisor $H_0$ such that $\Psi_0^*H_0-H_0$ is ample, and, for $i>0$, set
$$H_i=(\varphi_0\circ\cdots\circ\varphi_{i-1})^*H_0.$$
Then $H_i$ and $$\Psi_i^*H_i-H_i=(\varphi_0\circ\cdots\circ\varphi_{i-1})^*(\Psi_0^*H_0-H_0)$$ are ample. Thus every $\Psi_i$ is int-amplified.

Pullback by a finite morphism is injective on the Picard group modulo torsion. Applying this around the cycle shows that all Picard ranks are equal. Denote their common value by $\rho$. Put $\Lambda_i= \mathrm{Pic}(V_i)$ for $0\leq i < s$. Since $V_i$ is rationally connected, Lemma~\ref{lem: rc-basic} shows that $\Lambda_i$ is a free lattice of rank $\rho$ and that the natural map
$$\Lambda_i\xrightarrow{\sim}N^1(V_i)$$
is an isomorphism.
We take the indices of the $V_i$ and their associated data modulo $s$, using exactly the same choices at indices $0$ and $s$. In particular, put
$$c_i=[\Lambda_{i+1}:\varphi_i^*\Lambda_i]$$
for $0\leq i<s$.

Since $\Psi_i$ is int-amplified, $V_i$ is of Fano type by \cite[Corollary~1.4]{Yosh21b}. The preceding discussion and Lemma~\ref{lem: cox-characteristic-space} show that $\mathrm{Cox}(V_i)$ is a finitely generated factorial domain whose only units are the nonzero constants.

\medskip

\noindent\textbf{Step 2.} We lift the morphisms to Cox characteristic spaces and identify the resulting finite quotients.

For each $0\leq i<s$, choose line bundles representing a basis of $\Lambda_i$ and form $\mathcal L_{i,\lambda}$ for $\lambda\in\Lambda_i$ by tensor products, as in the construction preceding \eqref{eq: characteristic space}. Set
$$S_i:= \mathrm{Hom}(\Lambda_i,\mathbf{G}_m),\qquad U_i:= \mathrm{Spec}_{V_i}\left(\bigoplus_{\lambda\in\Lambda_i}\mathcal L_{i,\lambda}\right).$$
With the right action specified after \eqref{eq: characteristic space}, the projection $U_i\to V_i$ is an $S_i$-torsor. Lemma~\ref{lem: cox-characteristic-space} gives
$$H^0(U_i,\mathcal O_{U_i})= \mathrm{Cox}(V_i),$$
and $U_i^{\mathrm{an}}$ is simply connected.

Let $\alpha_i:S_{i+1}\to S_i$ be the homomorphism whose map on character lattices is $\varphi_i^*:\Lambda_i\to\Lambda_{i+1}$. Since $\varphi_i^*$ is injective with cokernel of order $c_i$, the map $\alpha_i$ is an isogeny with $|\ker(\alpha_i)|=c_i$. For every $\lambda\in\Lambda_i$, the line bundles associated with $\lambda$ by the pushout of $U_{i+1}$ along $\alpha_i$ and by the pullback $\varphi_i^*U_i$ both have class $\varphi_i^*\lambda$. The two $S_i$-torsors over $V_{i+1}$ are therefore isomorphic. Choose such an isomorphism. The pushout is $U_{i+1}/\ker(\alpha_i)$, so the composite
$$U_{i+1}\to U_{i+1}/\ker(\alpha_i)\cong\varphi_i^*U_i\to U_i$$
defines an $\alpha_i$-equivariant lift $\Phi_i$ of $\varphi_i$.
In this composition, the first map is the quotient by $\ker(\alpha_i)$ and has degree $c_i$, while the last map is the base change of $\varphi_i$. Hence $\Phi_i$ is finite and surjective, with degree $(\deg\varphi_i) c_i$.

For $\gamma\in\mathrm{Gal}(\varphi_i)$, the equality $\gamma^*\varphi_i^*= \varphi_i^*$ shows that $\gamma^*$ fixes the finite-index sublattice $\varphi_i^*\Lambda_i\subseteq\Lambda_{i+1}$. Since $\Lambda_{i+1}$ is torsion-free, $\gamma^*$ acts trivially on $\Lambda_{i+1}$. It follows from the description of split-torus torsors above that $\gamma$ admits an $S_{i+1}$-equivariant lift to $U_{i+1}$.

Let $\Gamma_i$ be the group of all $S_{i+1}$-equivariant automorphisms $\widetilde\gamma$ of $U_{i+1}$ that lift elements of $\mathrm{Gal}(\varphi_i)$ and satisfy
$$\Phi_i\circ\widetilde\gamma= \Phi_i.$$
Every element of $\mathrm{Gal}(\varphi_i)$ has such a lift. Indeed, start with any $S_{i+1}$-equivariant lift $\widetilde\gamma$. The maps $\Phi_i\circ\widetilde\gamma$ and $\Phi_i$ cover the same morphism to $V_i$ and are both $\alpha_i$-equivariant. Their ratio descends to an $S_i$-valued morphism on $V_{i+1}$, which is constant because $V_{i+1}$ is projective and connected. Thus there is a $t\in S_i$ such that
$$\Phi_i(\widetilde\gamma(u))= \Phi_i(u)t$$
for every $u\in U_{i+1}$. Choose $a\in S_{i+1}$ with $\alpha_i(a)=t^{-1}$. Then $u\mapsto\widetilde\gamma(u)a$ is a lift of $\gamma$ that belongs to $\Gamma_i$.

The kernel of the natural homomorphism $\Gamma_i\to\mathrm{Gal}(\varphi_i)$ consists of right translations by elements of $\ker(\alpha_i)$. Indeed, an $S_{i+1}$-equivariant automorphism $\widetilde\gamma$ over $V_{i+1}$ is right translation by an $S_{i+1}$-valued morphism on $V_{i+1}$. This morphism is constant, and the condition $\Phi_i\circ\widetilde\gamma= \Phi_i$ forces its value to lie in $\ker(\alpha_i)$. Hence there is a central exact sequence
$$1\to\ker(\alpha_i)\to\Gamma_i\to\mathrm{Gal}(\varphi_i)\to1.$$
Centrality follows because every element of $\Gamma_i$ is $S_{i+1}$-equivariant and therefore commutes with right translations. In particular, $\Gamma_i$ is finite and acts faithfully on $U_{i+1}$. Moreover,
\begin{equation}\label{eq: cycle-degree}
|\Gamma_i|=(\deg\varphi_i)c_i= \deg\Phi_i.
\end{equation}

The quasi-affine variety $U_{i+1}$ is quasi-projective, so its quotient by $\Gamma_i$ exists. The morphism $\Phi_i$ induces a finite morphism
$$U_{i+1}/\Gamma_i\to U_i.$$
Indeed, fix an affine open subset $W\subseteq U_i$. Since $\Phi_i$ is finite, $\Phi_i^{-1}(W)$ is affine and its coordinate ring is finite over $\mathcal O_{U_i}(W)$. The quotient $\Phi_i^{-1}(W)/\Gamma_i$ is defined by the invariant subalgebra, which is also finite over $\mathcal O_{U_i}(W)$. Since the action is faithful, \eqref{eq: cycle-degree} gives
$$[\mathbb{C}(U_{i+1}):\mathbb{C}(U_{i+1})^{\Gamma_i}]=|\Gamma_i|= \deg\Phi_i.$$
Hence $\mathbb{C}(U_{i+1})^{\Gamma_i}= \Phi_i^*\mathbb{C}(U_i)$, so the induced morphism is birational. It is therefore an isomorphism because $U_i$ is normal. Taking global functions gives
\begin{equation}\label{eq: cycle-invariants}
\mathrm{Cox}(V_{i+1})^{\Gamma_i}= \Phi_i^*\mathrm{Cox}(V_i)\cong \mathrm{Cox}(V_i).
\end{equation}

\medskip

\noindent\textbf{Step 3.} We show that the groups $\Gamma_i$ are reflection groups and compare the numbers of Cox-ring generators.

For each $i$, let $\mathfrak m_i$ be the homogeneous maximal ideal of $\mathrm{Cox}(V_i)$ and put
$$E_i= \mathfrak m_i/\mathfrak m_i^2.$$
Fix $i$. For each prime divisor in the ramification locus of $\Phi_i$, its divisorial inertia group is the subgroup of its stabilizer in $\Gamma_i$ that acts trivially on the residue field of the divisor. Let $I$ be the normal subgroup of $\Gamma_i$ generated by these inertia groups.

The connected normal variety $U_{i+1}/I$ admits a finite morphism
$$U_{i+1}/I\to U_{i+1}/\Gamma_i\cong U_i.$$
Every divisorial inertia group of $U_{i+1}\to U_i$ lies in $I$, so its image in $\Gamma_i/I$ is trivial. Thus the intermediate morphism is unramified in codimension one. Since $U_i$ is smooth, Zariski--Nagata purity \cite[Expos\'{e}~X, Th\'{e}or\`{e}me~3.1]{SGA1} shows that it is \'{e}tale. By the Riemann existence theorem \cite[Expos\'{e}~XII, Th\'{e}or\`{e}me~5.1]{SGA1}, its analytification is a connected finite topological cover of $U_i^{\mathrm{an}}$. This cover has degree one because $U_i^{\mathrm{an}}$ is simply connected. Hence $I= \Gamma_i$.

It remains to prove that every nonidentity element of a divisorial inertia group is a pseudoreflection. Fix such an element $\gamma$ in the inertia group of a ramification prime divisor. The action of $S_{i+1}$ preserves the ramification locus and hence permutes its finitely many prime divisors. Since $S_{i+1}$ is connected, it preserves the chosen divisor. Its closure in $\mathrm{Spec}\mathrm{Cox}(V_{i+1})$ is a prime divisor. Since $\mathrm{Cox}(V_{i+1})$ is factorial, this closure has an equation $z$. The $S_{i+1}$-invariance of this divisor and the fact that every unit of the Cox ring is constant allow us to choose $z$ homogeneous. As $\gamma$ acts trivially on the residue field of the divisor,
$$(\gamma-\mathrm{id})\mathrm{Cox}(V_{i+1})\subseteq(z).$$

We use $H_{i+1}$ to define a positive integer grading: an element of Cox degree $\lambda\in\Lambda_{i+1}$ is assigned the integer degree $\lambda\cdot H_{i+1}^{r-1}$. Every nonzero element of nonzero Cox degree has positive degree, since its section defines a nonzero effective divisor. This gives a connected positive grading whose homogeneous maximal ideal is $\mathfrak m_{i+1}$.

Fix a $\Gamma_i$-equivariant section of $\mathfrak m_{i+1}\to E_{i+1}$ preserving the Cox grading, by averaging. Its image generates $\mathrm{Cox}(V_{i+1})$ by the graded Nakayama lemma. Any element of $\Gamma_i$ acting trivially on $E_{i+1}$ therefore acts trivially on the Cox ring and hence on $U_{i+1}$. Since the action on $U_{i+1}$ is faithful, the action on $E_{i+1}$ is faithful.

The homogeneous equation $z$ is nonconstant, so $z\in\mathfrak m_{i+1}$. The image of $(z)$ in $E_{i+1}$ is contained in the line spanned by $[z]$. We therefore have
$$\mathrm{im}((\gamma-\mathrm{id})|_{E_{i+1}})\subseteq\mathbb{C}\cdot [z].$$
The faithfulness just proved and the assumption $\gamma\neq 1$ show that this image is nonzero. Thus $\gamma$ is a pseudoreflection on $E_{i+1}$. Since $I= \Gamma_i$, the group $\Gamma_i$ is generated by pseudoreflections.

For each $i$, the chosen section gives a $\Gamma_i$-equivariant surjection
$$\mathrm{Sym}(E_{i+1})\twoheadrightarrow\mathrm{Cox}(V_{i+1}).$$
Taking invariants is exact in characteristic zero, so \eqref{eq: cycle-invariants} gives a surjection
$$\mathrm{Sym}(E_{i+1})^{\Gamma_i}\twoheadrightarrow\mathrm{Cox}(V_i).$$
By Theorem~\ref{thm: reflection-invariants}, the source is a polynomial ring of dimension $\dim E_{i+1}$. Its homogeneous maximal ideal modulo its square therefore also has dimension $\dim E_{i+1}$. The last surjection induces a surjection from this quotient onto $E_i$, and hence
$$\dim E_i\leq\dim E_{i+1}.$$
Around the cycle all these dimensions are equal. Denote their common value by $e$. Consequently, the induced map on indecomposable quotients is an isomorphism for every $i$.

\medskip

\noindent\textbf{Step 4.} We compare the relation ideals in compatible polynomial presentations.

The sections chosen in Step~3 give presentations
$$P_i:= \mathrm{Sym}(E_i)\twoheadrightarrow\mathrm{Cox}(V_i),\qquad I_i:= \ker(P_i\to\mathrm{Cox}(V_i)).$$
The surjection $P_{i+1}^{\Gamma_i}\twoheadrightarrow\mathrm{Cox}(V_i)$ induces an isomorphism on indecomposable quotients. Lift the images of the homogeneous variables of $P_i$ homogeneously to $P_{i+1}^{\Gamma_i}$. Their classes form a basis of its indecomposable quotient, so they generate this invariant ring by graded Nakayama. The resulting surjection $P_i\to P_{i+1}^{\Gamma_i}$ is an isomorphism, since both are polynomial rings of dimension $e$. We obtain compatible inclusions
$$\theta_i:P_i\xrightarrow{\sim}P_{i+1}^{\Gamma_i}\hookrightarrow P_{i+1},$$
whose degree maps are $\varphi_i^*:\Lambda_i\to\Lambda_{i+1}$.

The ring $P_{i+1}$ is free of rank $|\Gamma_i|$ over $P_{i+1}^{\Gamma_i}$. Indeed, the homogeneous basic invariants, for the ordinary polynomial grading, form a homogeneous system of parameters of $P_{i+1}$ and hence a regular sequence. Lifting a homogeneous $\mathbb C$-basis of the quotient of $P_{i+1}$ by the ideal generated by these basic invariants gives a free module over $P_{i+1}^{\Gamma_i}$ surjecting onto $P_{i+1}$ by graded Nakayama. The Hilbert series formula for the quotient by this homogeneous regular sequence shows that the source and target of this surjection have the same Hilbert series. Hence the surjection is an isomorphism, and the rank is the product of the basic degrees, namely $|\Gamma_i|$.

Consequently, the natural surjection
$$P_{i+1}/\theta_i(I_i)P_{i+1}\twoheadrightarrow\mathrm{Cox}(V_{i+1})$$
has source free of rank $|\Gamma_i|$ over the domain $\mathrm{Cox}(V_i)$. Its target has the same rank by \eqref{eq: cycle-invariants} and the faithfulness of the $\Gamma_i$-action. Its kernel is therefore a torsion submodule of a free module over a domain, and is zero. Thus
\begin{equation}\label{eq: cycle-relation-pullback}
I_{i+1}= \theta_i(I_i)P_{i+1}.
\end{equation}
Composing around the cycle gives an endomorphism $\Theta:= \theta_{s-1}\circ\cdots\circ\theta_0:P_0\to P_0$ whose degree map is $\Psi_0^*$ and such that
\begin{equation}\label{eq: cycle-return-ideal}
I_0= \Theta(I_0)P_0.
\end{equation}

\medskip

\noindent\textbf{Step 5.} We use int-amplification to show that the relation ideals vanish.

Let $x_1,\ldots,x_e$ be the homogeneous variables of $P_0$, with Cox degrees $D_1,\ldots,D_e$. These classes are nonzero because $\mathrm{Cox}(V_0)_0= \mathbb C$, and Step~1 identifies $\Lambda_0\otimes_{\mathbb Z}\mathbb R$ with $N^1(V_0)_{\mathbb R}$. Since $\Psi_0$ is int-amplified, \cite[Theorem~1.1]{Meng20} gives $(\Psi_0^*)^{-N}\to 0$. Hence, for all sufficiently large $N$,
$$(\Psi_0^*)^ND_j\notin\{D_1,\ldots,D_e\}\qquad(1\leq j\leq e).$$
Otherwise, since there are only finitely many pairs $(j,k)$, there would be fixed indices $j,k$ such that $(\Psi_0^*)^ND_j=D_k$ for arbitrarily large $N$. Along these values of $N$, we would have $D_j=(\Psi_0^*)^{-N}D_k\to0$, a contradiction. Homogeneity and $(\Psi_0^*)^ND_j\neq 0$ now show that $\Theta^N(x_j)$ has neither constant nor linear terms.

Every element of $I_0$ has zero constant term. If $I_0\neq 0$, let $d\geq 1$ be the smallest total degree of a monomial with nonzero coefficient in an element of $I_0$. After substituting $\Theta^N(x_j)$ for each variable $x_j$ and expanding, every monomial occurring in the image of a relation has total degree at least $2d$. Thus every element of $\Theta^N(I_0)P_0$ has no terms of degree less than $2d$. This contradicts the identity $I_0= \Theta^N(I_0)P_0$, obtained by iterating \eqref{eq: cycle-return-ideal}. Hence $I_0=0$, and \eqref{eq: cycle-relation-pullback} gives $I_i=0$ for every $i$. Thus every $\mathrm{Cox}(V_i)$ is a polynomial ring, and Proposition~\ref{prop: polynomial-cox-toric} shows that every $V_i$ is toric.
\end{proof}

\medskip

\subsection{Extending local triviality}

We first extend local triviality from a nonempty open subset to the entire base, and then give a criterion in terms of fibers over a dense subset.

\begin{lemma}\label{lem: extending-local-triviality}
Let $B$ be a smooth irreducible complex variety and let $g:B\to B$ be a finite surjective morphism such that
$$\text{if}\ \ Z\subsetneq B\text{ is closed and}\ g^{-1}(Z)\subseteq Z,\ \text{then} \ Z=\emptyset.$$
Let $p:X\to B$ be a smooth projective surjective morphism with connected fibers. Suppose that there is a finite surjective morphism $f:X\to X$ satisfying $p\circ f=g\circ p$. If $p$ is \'{e}tale locally trivial over a nonempty Zariski open subset of $B$, then it is \'{e}tale locally trivial over $B$, and all its fibers are isomorphic.
\end{lemma}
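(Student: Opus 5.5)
Let $U\subseteq B$ be the largest Zariski open subset over which $p$ is \'{e}tale locally trivial; it is nonempty by hypothesis. Put $Z:=B\setminus U$. I will show that $g^{-1}(Z)\subseteq Z$. Equivalently, if $b\in B$ satisfies $g(b)\in U$, then $p$ is \'{e}tale locally trivial near $b$. The hypothesis on $g$ then forces $Z=\emptyset$. Since $B$ is irreducible, hence connected, and an \'{e}tale locally trivial family has locally constant fiber isomorphism type, all fibers are then isomorphic.

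The first step is to make local triviality pointwise checkable. I plan to use the standard criterion that a smooth projective family with connected fibers is \'{e}tale (even analytically) locally trivial near $b$ if and only if the fibers over a neighborhood of $b$ are all isomorphic to $X_b$. This follows from the representability of the Isom scheme $\mathrm{Isom}_B(X_b\times B, X)$, which is a scheme locally of finite type over $B$. If all fibers are isomorphic, the Isom scheme surjects onto a neighborhood of $b$. To upgrade this to an \'{e}tale local section, I would use that it is a torsor under the smooth group scheme $\mathrm{Aut}$, which is smooth over a neighborhood after passing to a reduced open piece (generic smoothness, then spreading). More simply, I would use the Fischer--Grauert theorem analytically together with Artin approximation to return to the \'{e}tale topology. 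So $U$ is exactly the locus of points near which the fiber isomorphism type is constant.

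The key step, which I expect to be the main obstacle, is transporting triviality backwards along $g$. Fix $b$ with $g(b)\in U$, and let $V\ni g(b)$ be a neighborhood in $U$ over which all fibers are isomorphic to a fixed $F$. Then $g^{-1}(V)$ is an open neighborhood of $b$, and for $c\in g^{-1}(V)$ the map $f$ restricts to a finite surjective morphism $X_c\to X_{g(c)}\cong F$. This alone does not give an isomorphism type for $X_c$. My plan is instead to use the Cartesian-type comparison $\pi=(f,p):X\to X_g:=X\times_{B,g}B$ as in the proof of Proposition~\ref{prop: smooth albanese}. The base change $X_g\to B$ is \'{e}tale locally trivial over $g^{-1}(V)$ with fiber $F$. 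Moreover, $\pi$ is a finite surjective $B$-morphism between smooth projective families over $g^{-1}(V)$, and $X_g$ is locally a product $F\times W$. I would then argue by deformation theory that $X|_{g^{-1}(V)}$ is locally trivial. Concretely, $X_c$ is a finite cover of $F$ fitting into the family $\pi_c:X_c\to F$, and finite covers of a fixed smooth projective target with smooth total space are rigid in families. Indeed, they correspond to the branch data, which vary in a discrete set once the degree and monodromy over the complement are fixed. A cleaner alternative, if this rigidity is delicate, is to use the analytic criterion: by Ehresmann, the fibers $X_c$ are diffeomorphic, and the covers $X_c\to F$ are determined topologically up to the complex structure pulled back from $F$. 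A finite branched cover of a fixed complex manifold with locally constant topological data is locally constant as a complex manifold. This last assertion is the point I would need to verify carefully, and it is where I expect the real difficulty to lie.

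Granting that step, $X$ is locally trivial near every point of $g^{-1}(U)$. Hence $g^{-1}(U)\subseteq U$, that is, $g^{-1}(Z)\subseteq Z$. By hypothesis this gives $Z=\emptyset$, and connectedness of $B$ gives a single fiber isomorphism type, completing the proof.
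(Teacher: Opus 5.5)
There is a genuine gap, and it starts with the direction of the invariance. For $Z=B\setminus U$, the condition $g^{-1}(Z)\subseteq Z$ means $g(b)\notin U\Rightarrow b\notin U$. Equivalently, $b\in U\Rightarrow g(b)\in U$, i.e.\ $g(U)\subseteq U$. You instead propose to prove $g(b)\in U\Rightarrow b\in U$, i.e.\ $g^{-1}(U)\subseteq U$. Your closing ``that is, $g^{-1}(Z)\subseteq Z$'' is wrong: $g^{-1}(U)\subseteq U$ is equivalent to $Z\subseteq g^{-1}(Z)$, i.e.\ $g(Z)\subseteq Z$. Forward-invariant proper closed subsets are abundant; for an isogeny, the origin or any periodic orbit is one. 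So the hypothesis on $g$ gives nothing from what you would prove.

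Moreover, the backward-transport step you single out as the main difficulty is false as a local statement. A smooth projective family admitting a finite surjective morphism onto a trivial family $F\times W$ need not be locally trivial. For example, the Legendre family $y^2=x(x-1)(x-\lambda)$ over a disc $W$ in $\lambda$ (avoiding $0,1$) maps finitely onto $\mathbf{P}^1\times W$ via $x$, yet its $j$-invariant varies. Branch loci move continuously, so neither ``rigidity of finite covers'' nor the Ehresmann/topological argument can work.

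What is needed is \emph{forward} transport. Suppose $b\in U$, and choose an \'{e}tale neighborhood $v:V\to B$ of $b$ with $X\times_BV\cong F\times V$. Pulling back the finite surjective $B$-morphism $(f,p):X\to X\times_{B,g}B$ gives a finite surjective $V$-morphism $F\times V\to X\times_{B,g\circ v}V$.

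The missing key lemma is that a smooth projective family dominated by a product family via a proper surjective morphism is \'{e}tale locally trivial. The paper proves this by lifting the coordinate vector fields of the base. Let $\tau$ be the normalized trace of the Stein factorization. Then the derivations $\xi_i(u)=\tau(\partial_i\varphi^*u)$ on the target satisfy $\xi_i(t_j)=\delta_{ij}$. Their truncated exponentials give formal triviality, and \cite[Proposition~6.3]{Pocz25} upgrades this to \'{e}tale local triviality.

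You also need a descent step. The map $g\circ v$ is only flat (finite surjective onto the smooth $B$), not \'{e}tale, so triviality over $V$ gives triviality near $g(b)$ only after an fppf cover. The paper handles this with the scheme $\mathrm{Isom}_W(F'\times W,X\times_BW)$, where $W=(g\circ v)(V)$. Its pullback to $V$ is $\mathrm{Aut}(F')\times V$, so it is smooth and surjective over $W$ and therefore has \'{e}tale local sections. This yields $g(U)\subseteq U$, which is exactly what the hypothesis on $g$ requires.
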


\begin{proof}
\noindent\textbf{Step 1.} We prove that a smooth projective family admitting a proper surjective morphism from a product family is \'{e}tale locally trivial.

Let $V$ be a smooth connected complex variety, let $r:Y\to V$ be a smooth projective morphism with connected fibers, and suppose that there is a proper surjective $V$-morphism $\varphi:F\times V\to Y$, where $F$ is a smooth connected projective variety. Write $F\times V\to Y'\xrightarrow{\nu} Y$ for its Stein factorization, and put $e= \deg \nu$. Since $Y$ is normal, the normalized field trace of $\nu$ restricts to an $\mathcal O_Y$-linear map
$$\tau:\varphi_*\mathcal O_{F\times V}= \nu_*\mathcal O_{Y'}\to\mathcal O_Y,\qquad \tau(1)=1.$$
Indeed, a local section of the finite algebra $\nu_*\mathcal O_{Y'}$ is integral over $\mathcal O_Y$, so its field trace is integral over $\mathcal O_Y$ and belongs to $\mathcal O_Y$ by normality. Here the normalization is division by $e$, which is nonzero over $\mathbb C$.

Fix $v_0\in V$. After shrinking $V$, choose \'{e}tale coordinates $t_1,\ldots,t_n$ centered at $v_0$. On $F\times V$, let $\partial_i$ be the derivation differentiating in the $t_i$-direction and fixing the $F$-coordinate. For a local regular function $u$ on $Y$, define
$$\xi_i(u)= \tau\bigl(\partial_i(\varphi^*u)\bigr).$$
The $\mathcal O_Y$-linearity of $\tau$ and the Leibniz rule for $\partial_i$ give the Leibniz rule for $\xi_i$. These derivations commute with restriction and therefore define algebraic vector fields on $Y$. Since $\tau(1)=1$, they satisfy $\xi_i(r^*t_j)= \delta_{ij}$.

Write $t_j$ also for $r^*t_j$, and put $Y_0=Y_{v_0}$, with inclusion $\iota:Y_0\hookrightarrow Y$. For $m\geq0$, let $A_m= \mathbb C[T_1,\ldots,T_n]/(T_1,\ldots,T_n)^{m+1}$. The chosen \'{e}tale coordinates identify $\mathcal O_{V,v_0}/\mathfrak m_{v_0}^{m+1}$ with $A_m$ by sending $t_j$ to $T_j$. Set $Y_m=Y\times_V\operatorname{Spec}A_m$.

On $Y_0$, consider the sheaf $\mathcal B_m=(\iota^{-1}\mathcal O_Y)\otimes A_m$. Here the $t_j$ lie in the first factor and the auxiliary parameters $T_j$ in the second. Extend each $\xi_j$ $A_m$-linearly to $\mathcal B_m$, so that it fixes the $T_k$, and put
$$D_m= \sum_{j=1}^nT_j\xi_j,\qquad E_m= \exp(D_m)= \sum_{k=0}^{m}\frac{D_m^k}{k!}.$$
The derivation $D_m$ raises the $(T_1,\ldots,T_n)$-adic filtration, so $D_m^{m+1}=0$. The Leibniz rule shows that $E_m$ is an $A_m$-algebra automorphism with inverse $\exp(-D_m)$. Since $D_m(t_j)=T_j$ and $D_m(T_j)=0$, we have $E_m(t_j-T_j)=t_j$. Thus $E_m$ induces an isomorphism of $A_m$-algebras
$$\mathcal B_m/(t_1-T_1,\ldots,t_n-T_n)\xrightarrow{\sim}\mathcal B_m/(t_1,\ldots,t_n).$$
On the left, substituting $T_j=t_j$ identifies the quotient with $\iota^{-1}\mathcal O_Y/(t_1,\ldots,t_n)^{m+1}= \mathcal O_{Y_m}$. On the right, the quotient is $\mathcal O_{Y_0}\otimes A_m$. We therefore obtain an isomorphism $Y_m\cong Y_0\times_{\mathbb C}\operatorname{Spec}A_m$ over $\operatorname{Spec}A_m$.

The exponentials $E_m$ commute with truncation along $A_{m+1}\to A_m$, so these isomorphisms are compatible as $m$ varies. Thus $Y\to V$ is formally trivial at $v_0$. By \cite[Proposition~6.3]{Pocz25}, it is \'{e}tale locally trivial there. Since $v_0$ was arbitrary, this proves the claim.

\medskip
\noindent\textbf{Step 2.} We extend local triviality to the entire base.

Let $U\subseteq B$ be the locus where $p$ admits an \'{e}tale local product presentation, without fixing the fiber in advance. This is open because \'{e}tale morphisms are open, and it is nonempty by assumption. We prove that $g(U)\subseteq U$.

The relative morphism
$$(f,p):X\to X\times_{B,g}B$$
is proper because $p$ is proper, and its fibers are finite because $f$ is finite. Hence it is finite. The fibers of $p$ are smooth and connected, so they are irreducible, and they have a common dimension. Thus the finite morphism $f|_{X_b}:X_b\to X_{g(b)}$ is surjective for every $b\in B$: its image is closed and has the same dimension as $X_{g(b)}$. It follows that $(f,p)$ is surjective.

Fix $b\in U$. Choose a connected \'{e}tale neighborhood $v:V\to B$ with a point $v_0$ over $b$ and a product presentation $X\times_B V\cong F\times V$. Pulling back $(f,p)$ gives a finite surjective $V$-morphism
$$F\times V\to X\times_{B,g\circ v}V.$$
After replacing $V$ by an \'{e}tale neighborhood of $v_0$, Step~1 gives an isomorphism
$$X\times_{B,g\circ v}V\cong F'\times V$$
for a smooth projective variety $F'$.

Since $g$ need not be \'{e}tale, it remains to prove \'{e}tale local triviality near $g(b)$. The finite surjective morphism $g$ is flat because $B$ is smooth. Consequently, $W=(g\circ v)(V)$ is open, and $g\circ v:V\to W$ is an fppf cover. The relative Isom scheme
$$\mathcal I= \mathrm{Isom}_W(F'\times W,X\times_B W)$$
exists by \cite[Proposition~2.5]{Pocz25}, and its pullback to $V$ is $\mathrm{Aut}(F')\times V$. The group scheme $\mathrm{Aut}(F')$ is smooth in characteristic zero. Smoothness and surjectivity descend under fppf covers, so $\mathcal I\to W$ is smooth and surjective. It therefore has \'{e}tale local sections \cite[Chapter~I, Proposition~3.26]{Miln80}. These sections give \'{e}tale local product presentations of $p$ over $W$. In particular, $g(b)\in U$.

We have proved $g(U)\subseteq U$, or equivalently $g^{-1}(B\setminus U)\subseteq B\setminus U$. The hypothesis on $g$ gives $U=B$. Finally, local triviality makes the fiber isomorphism class locally constant, and connectedness of $B$ shows that all fibers are isomorphic.
\end{proof}

\begin{remark}
By Lemma~\ref{lem: no backward invariant set}, the condition on closed subsets in Lemma~\ref{lem: extending-local-triviality} holds whenever $B$ is an abelian variety and $g:B\to B$ is an int-amplified isogeny.
\end{remark}

We next give a criterion for local triviality in terms of fibers over a Zariski dense subset.

\begin{proposition}\label{prop: local-triviality-criterion}
Let $B$ be a smooth irreducible complex variety and let $g:B\to B$ be a finite surjective morphism such that no nonempty proper closed subset $Z\subseteq B$ satisfies $g^{-1}(Z)\subseteq Z$. Let $p:X\to B$ be a smooth projective surjective morphism with connected fibers. Suppose that a finite surjective morphism $f:X\to X$ satisfies $p\circ f=g\circ p$. Fix a relatively ample line bundle $\mathcal H$ and a Zariski dense subset of closed points $S\subseteq B$. For $b\in B$, put $X_b=p^{-1}(b)$ and $\mathcal H_b= \mathcal H|_{X_b}$. Then the following conditions are equivalent:
\begin{enumerate}
\item The morphism $p$ is \'{e}tale locally trivial over $B$.
\item There are finitely many smooth connected projective varieties $F_i$ with ample line bundles $\mathcal L_i$ such that, for every $b\in S$, there is an isomorphism $u_b:F_i\to X_b$ for some $i$ satisfying $u_b^*\mathcal H_b\equiv\mathcal L_i$.
\item There are a smooth connected projective variety $F$ of dimension $d\ge1$, an ample line bundle $\mathcal L$ on $F$, and a constant $M>0$ such that, for every $b\in S$, there is a surjective morphism $u_b:F\to X_b$ satisfying
$$(u_b^*\mathcal H_b\cdot\mathcal L^{d-1})\le M.$$
\end{enumerate}
When these conditions hold, all fibers of $p$ are isomorphic.
\end{proposition}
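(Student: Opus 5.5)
The plan is to prove the cycle of implications (1)$\Rightarrow$(2)$\Rightarrow$(3)$\Rightarrow$(1). The final assertion then follows from Lemma~\ref{lem: extending-local-triviality}.

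\emph{(1)$\Rightarrow$(2).} Local triviality makes all fibers isomorphic to a fixed $F$, since $B$ is connected. Over an étale chart $V\to B$ with $X\times_BV\cong F\times V$, the restriction of $\mathcal H$ gives a family of line bundles on $F$ parametrized by the connected $V$. Their numerical classes are therefore constant on $V$, because $N^1$ classes are locally constant in smooth families. Finitely many charts cover $B$ by quasi-compactness, so only finitely many pairs $(F,\mathcal L_i)$ occur. For each chart we take $\mathcal L_i$ to be the restriction of $\mathcal H$ to one fiber.

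\emph{(2)$\Rightarrow$(3).} We take $F$ to be the disjoint-union-free choice obtained by reducing to a single index: since $S$ is Zariski dense and there are finitely many indices, some index $i$ has $S_i=\{b\in S: X_b\cong F_i \text{ via } u_b \text{ with } u_b^*\mathcal H_b\equiv \mathcal L_i\}$ Zariski dense. Replacing $S$ by $S_i$, condition (3) holds with $F=F_i$, $\mathcal L=\mathcal L_i$ and $M=(\mathcal L_i^{d})$. The case $d=0$ is trivial: $p$ is finite étale of degree one and hence an isomorphism. In proving (3)$\Rightarrow$(1) we only use density of the relevant subset of $S$, so this replacement is harmless.

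\emph{(3)$\Rightarrow$(1).} This is the main step. The morphisms $u_b$ have graphs $\Gamma_b\subseteq F\times X_b\subseteq F\times X$. With respect to the ample class $\mathcal L\boxtimes\mathcal H'$, where $\mathcal H'$ is a relatively ample bundle twisted by an ample bundle pulled back from a compactification, the degree of $\Gamma_b$ is a sum of mixed intersection numbers $(u_b^*\mathcal H_b^{k}\cdot\mathcal L^{d-k})$. These are bounded using $(u_b^*\mathcal H_b\cdot\mathcal L^{d-1})\le M$ together with the Khovanskii--Teissier inequalities, which gives $(u_b^*\mathcal H_b^k\cdot \mathcal L^{d-k})\le (u_b^*\mathcal H_b\cdot\mathcal L^{d-1})^k/(\mathcal L^d)^{k-1}$. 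Hence the graphs $\Gamma_b$ lie in finitely many components of the relative Hilbert scheme $\mathrm{Hilb}(F\times X/B)$, or equivalently of $\mathrm{Mor}_B(F\times B,X)$, each of finite type over $B$. Some component $\mathcal M$ contains points over a Zariski dense subset of $S$. The open locus in $\mathcal M$ of surjective morphisms then dominates $B$. After shrinking, we take an irreducible locally closed subvariety $\mathcal M'\subseteq\mathcal M$ that is finite étale or at least dominant and generically smooth over an open $B^\circ\subseteq B$; one can cut by general hyperplanes to make it generically quasi-finite. Over $\mathcal M'$ the universal morphism gives a surjective morphism $F\times\mathcal M'\to X\times_B\mathcal M'$ over $\mathcal M'$. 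Step~1 of the proof of Lemma~\ref{lem: extending-local-triviality} applies with $V$ a smooth open subset of $\mathcal M'$ étale over $B$. It shows that $X\times_B V\to V$ is étale locally trivial, so $p$ is étale locally trivial over the nonempty open image of $V$ in $B$. Lemma~\ref{lem: extending-local-triviality} then extends this to all of $B$ and shows that all fibers are isomorphic.

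The anticipated obstacle is the boundedness and dominance argument. Concretely, one must confirm that the degree bound controls the Hilbert polynomials of the graphs uniformly; this uses that $\mathcal H$ is only relatively ample, so one works over affine opens of $B$. One must also arrange the parameter space $V$ to be smooth and étale over an open subset of $B$, using generic smoothness in characteristic zero and an étale multisection.
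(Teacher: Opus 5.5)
Your route to (3)$\Rightarrow$(1) is essentially the paper's. You bound the Hilbert polynomials of the graphs, pick a dominant stratum of surjective morphisms in the relative Hilbert scheme, and obtain an \'etale $V\to B$ with a proper surjective $V$-morphism $F\times V\to X\times_B V$. You then apply Step~1 of Lemma~\ref{lem: extending-local-triviality} and the lemma itself.

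The only real difference is the boundedness input. You bound the graph degree $((\mathcal L+u_b^*\mathcal H_b)^d)$ by Khovanskii--Teissier and then invoke boundedness of subvarieties of bounded degree. The paper instead uses the Hodge index theorem: $D\mapsto(D\cdot\mathcal L^{d-1})$ is strictly positive on nonzero nef classes, so only finitely many numerical classes $u_b^*\mathcal H_b$ occur, and Riemann--Roch then gives finitely many Hilbert polynomials. Both arguments work; the paper's is more elementary and yields the sharper finiteness of classes.

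Two small inaccuracies do not affect the argument. First, because the fibers $X_b$ are irreducible of constant dimension, the surjective locus in a family of morphisms is \emph{closed}, by upper semicontinuity of fiber dimension, not open; only its constructibility is needed. Second, you never need a finite \'etale stratum. A dominant, generically smooth stratum with \'etale local sections suffices, as in the paper, or a generically quasi-finite one.

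The genuine gap is in (2)$\Rightarrow$(3). Condition (3) requires a single $F$ and a single $\mathcal L$ with a surjection $u_b:F\to X_b$ of bounded degree for \emph{every} $b\in S$. Replacing $S$ by a dense subset $S_i$ proves only a weakened version of (3). Your remark that this is ``harmless'' justifies (2)$\Rightarrow$(1), but nothing in the proposal then shows that (1) or (2) implies (3) as stated. For $b\in S\setminus S_i$ you have not exhibited any surjection $F_i\to X_b$. So you have proved (3)$\Rightarrow$(1)$\Leftrightarrow$(2), but not the converse into (3).

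There are two easy repairs:
\begin{enumerate}
\item Follow the paper: take $F=\prod_iF_i$, $\mathcal L=\bigotimes_i\mathrm{pr}_i^*\mathcal L_i$ and $u_b=(\text{isomorphism})\circ\mathrm{pr}_i$. Then $u_b^*\mathcal H_b\equiv\mathrm{pr}_i^*\mathcal L_i$, and the intersection numbers take only finitely many values.
\item Close the cycle through (1). As you already note in (1)$\Rightarrow$(2), condition (1) makes every fiber isomorphic to one fixed $F$, carrying finitely many classes $\mathcal L_i$. Hence (3) holds for all $b\in B$ with $\mathcal L=\mathcal L_1$ and $M=\max_i(\mathcal L_i\cdot\mathcal L_1^{d-1})$.
\end{enumerate}
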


\begin{proof}
If the fibers have dimension zero, then $p$ is an isomorphism, and all three conditions hold; in (3), take $F= \mathbf P^1$ and $\mathcal L= \mathcal O_{\mathbf P^1}(1)$. We therefore assume that the fibers have positive dimension.

\medskip

\noindent\textbf{Step 1.} We prove $(1)\Rightarrow(2)\Rightarrow(3)$.

Assume (1). Since $B$ is quasi-compact, we can choose finitely many \'{e}tale morphisms $V_i\to B$ from connected varieties whose images cover $B$, together with trivializations $X\times_B V_i\cong F_i\times V_i$ over $V_i$. The pullback of $\mathcal H$ to $F_i\times V_i$ has a constant numerical class on the fibers. Let $\mathcal L_i$ be its restriction to one fiber. These finitely many pairs satisfy (2), in fact for every $b\in B$.

Now assume (2), and let $F= \prod_iF_i$ with projections $\mathrm{pr}_i$. Put $\mathcal L= \bigotimes_i\mathrm{pr}_i^*\mathcal L_i$, which is ample. Composing $\mathrm{pr}_i$ with the isomorphism in (2) gives a surjective morphism from $F$ to $X_b$, whose pullback of $\mathcal H_b$ is numerically equivalent to $\mathrm{pr}_i^*\mathcal L_i$. Thus the relevant intersection numbers belong to the finite set
$$\bigl\{(\mathrm{pr}_i^*\mathcal L_i\cdot\mathcal L^{\dim F-1})\bigr\}_i.$$
Taking their maximum as $M$ proves (3).

\medskip

\noindent\textbf{Step 2.} We use the bound in (3) to obtain finitely many Hilbert polynomials for the graphs of the morphisms $u_b$.

Assume (3). The classes $u_b^*\mathcal H_b$ are nef. The linear functional $D\mapsto(D\cdot\mathcal L^{d-1})$ is strictly positive on every nonzero class in $\mathrm{Nef}(F)$. For $d=1$ this is immediate. For $d\geq 2$, if a nonzero nef class $D$ had $(D\cdot\mathcal L^{d-1})=0$, the Hodge index theorem would give $(D^2\cdot\mathcal L^{d-2})<0$, contradicting nefness. Consequently,
$$\{D\in\mathrm{Nef}(F):(D\cdot\mathcal L^{d-1})\leq M\}$$
is compact. The integral numerical classes form a lattice in $N^1(F)_{\mathbb R}$, so only finitely many numerical classes of $u_b^*\mathcal H_b$ occur.

The graph of $u_b$ is isomorphic to $F$, and its Hilbert polynomial with respect to $\mathrm{pr}_F^*\mathcal L\otimes\mathrm{pr}_X^*\mathcal H$ is
$$k\mapsto\chi\bigl(F,(\mathcal L\otimes u_b^*\mathcal H_b)^{\otimes k}\bigr).$$
By Riemann--Roch, this polynomial depends only on the numerical class of $u_b^*\mathcal H_b$. Hence only finitely many such polynomials occur. Taking a fixed sufficiently large power of the relatively ample line bundle $\mathrm{pr}_F^*\mathcal L\otimes\mathrm{pr}_X^*\mathcal H$ on $F\times X\to B$ gives a relatively very ample line bundle, with respect to which the Hilbert polynomials of the graphs still belong to a finite set.

\medskip

\noindent\textbf{Step 3.} We obtain local triviality on a nonempty open subset and then extend it to $B$.

For each of these Hilbert polynomials, graphs of morphisms form an open subscheme of the corresponding relative Hilbert scheme \cite[Proposition~2.5]{Pocz25}. The disjoint union of these open subschemes is a scheme of finite type over $B$ parametrizing morphisms $F\to X_b$ and containing all the chosen $u_b$. The locus parametrizing surjective morphisms is constructible. Indeed, the universal morphism is proper, so its image is closed in the target family. The complement has constructible image in the parameter space by Chevalley's theorem. This image consists precisely of the parameters for which the morphism is not surjective.

The image of the surjective locus contains $S$ and is therefore dense in $B$. Decomposing this locus into finitely many locally closed integral strata, we obtain one stratum $T$ whose morphism to $B$ is dominant. By generic smoothness, a nonempty open subset of $T$ is smooth over $B$. Its image is a nonempty open subset of $B$, and the resulting smooth surjection has \'{e}tale local sections \cite[Chapter~I, Proposition~3.26]{Miln80}. Pulling back the universal morphism along one such section gives an \'{e}tale morphism $V\to B$ with nonempty image and a proper surjective $V$-morphism
$$F\times V\to X\times_B V.$$
We may take $V$ connected. Step~1 of the proof of Lemma~\ref{lem: extending-local-triviality} shows that $X\times_B V\to V$ is \'{e}tale locally trivial. Composing the \'{e}tale neighborhoods with $V\to B$ shows that $p$ is \'{e}tale locally trivial over the nonempty open image of $V\to B$. Lemma~\ref{lem: extending-local-triviality} now gives (1) and shows that all fibers are isomorphic.
\end{proof}

We apply this criterion to periodic fibers over an abelian variety.

\begin{corollary}\label{cor: periodic-fiber-local-triviality}
Let $A$ be a complex abelian variety, let $g:A\to A$ be an int-amplified isogeny, and let $p:X\to A$ be a smooth projective surjective morphism with connected fibers. Suppose that a finite surjective morphism $f:X\to X$ satisfies $p\circ f=g\circ p$, and fix a relatively ample line bundle $\mathcal H$. Write $X_b=p^{-1}(b)$ for $b\in A$.

Then $p$ is \'{e}tale locally trivial over $A$ if and only if there are finitely many smooth connected projective varieties $F_i$ with ample line bundles $\mathcal L_i$ such that, for every periodic point $b$ of $g$, there is an isomorphism $u_b:F_i\to X_b$ for some $i$ satisfying
$$u_b^*(\mathcal H|_{X_b})\equiv\mathcal L_i.$$
In this case, all fibers of $p$ are isomorphic.
\end{corollary}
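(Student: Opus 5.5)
This is a direct specialization of Proposition~\ref{prop: local-triviality-criterion}, taking $B=A$ and $S$ to be the set of periodic points of $g$. The plan is to check the three hypotheses of that proposition and then read off the equivalence $(1)\Leftrightarrow(2)$ together with the final assertion.

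\textbf{Hypotheses on the base.} An abelian variety is a smooth irreducible complex variety. The isogeny $g$ is finite and surjective.

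\textbf{Condition on backward-invariant subsets.} We need that no nonempty proper closed $Z\subsetneq A$ satisfies $g^{-1}(Z)\subseteq Z$. This is exactly Lemma~\ref{lem: no backward invariant set}, as recorded in the remark following Lemma~\ref{lem: extending-local-triviality}. The lemma applies because $g$ is int-amplified.

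\textbf{Density of $S$.} Here $S$ is the set of periodic points of $g$, and it must be a Zariski dense set of closed points. Lemma~\ref{lem: periodic points isogeny} gives this directly, since $g$ is an isogeny.

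\textbf{Remaining hypotheses.} The morphism $p$ is assumed smooth, projective and surjective with connected fibers. The morphism $f$ is finite and surjective with $p\circ f=g\circ p$, and $\mathcal H$ is relatively ample.

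With all hypotheses in place, Proposition~\ref{prop: local-triviality-criterion} applies with this choice of $S$. Its condition (2) is, word for word, the condition stated in the corollary. Its condition (1) is \'{e}tale local triviality of $p$ over $A$. The equivalence $(1)\Leftrightarrow(2)$ of the proposition therefore gives the stated equivalence, and its last sentence gives that all fibers of $p$ are isomorphic.

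No step is a real obstacle here. The only point needing care is that the hypotheses line up exactly: $g$ must be an isogeny for Lemma~\ref{lem: periodic points isogeny} to apply, and int-amplified for Lemma~\ref{lem: no backward invariant set} to apply. Both are assumed in the statement.
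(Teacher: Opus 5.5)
Your proposal is correct and matches the paper's proof: it applies the equivalence $(1)\Leftrightarrow(2)$ of Proposition~\ref{prop: local-triviality-criterion} with $S$ the set of periodic points of $g$. As in the paper, the backward-invariance hypothesis comes from Lemma~\ref{lem: no backward invariant set} and the density of $S$ from Lemma~\ref{lem: periodic points isogeny}.
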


\begin{proof}
By Lemma~\ref{lem: no backward invariant set}, the isogeny $g$ satisfies the condition on closed subsets in Proposition~\ref{prop: local-triviality-criterion}. Its periodic points are Zariski dense by Lemma~\ref{lem: periodic points isogeny}. Applying the equivalence of (1) and (2) in that proposition to the set of periodic points proves the assertion.
\end{proof}

\medskip

\subsection{Isotriviality of the Albanese morphism}

We now apply the preceding results to the Albanese morphism.

Throughout this subsection, let $X$ be a smooth complex projective variety, let $f:X\to X$ be an int-amplified Galois endomorphism, and fix an ample Cartier divisor $H$ such that $f^*H-H$ is ample. Assume that $q(X)= \widetilde q(X)$, and fix the Albanese morphism $\mathrm{alb}_X:X\to A:= \mathrm{Alb}(X)$, the induced isogeny $g:A\to A$, and the subgroup $K\subseteq\mathrm{Gal}(f)$ preserving every fiber of $\mathrm{alb}_X$, as in Proposition~\ref{prop: albanese-structure}.

\begin{lemma}\label{lem: periodic-toric-fibers}
For every periodic point $b\in A$ of $g$, the fiber $X_b:= \mathrm{alb}_X^{-1}(b)$ is a smooth projective toric variety.
\end{lemma}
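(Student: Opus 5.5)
The plan is to build, for a periodic point $b$ with $g^s(b)=b$, a cycle of finite Galois covers between the fibers $X_{b_i}$, where $b_i:=g^{s-i}(b)$ for $0\le i\le s$. Then Lemma~\ref{lem: fiber-cycle} applies directly. Since $b$ is periodic, every $b_i$ lies on its orbit. Put $V_i:=X_{b_i}$, so that $V_s=X_b=V_0$. By Proposition~\ref{prop: albanese-structure}(1), each $V_i$ is a smooth projective irreducible rationally connected variety.

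The maps are the restrictions $\varphi_i:=f|_{V_{i+1}}:V_{i+1}\to V_i$, which make sense because $g(b_{i+1})=b_i$. As in the proof of Proposition~\ref{prop: smooth albanese}, $\varphi_i$ is finite and surjective. To see that it is Galois, use Proposition~\ref{prop: albanese-structure}(3): there $X/K\cong X\times_{A,g}A$ over $A$, with the second factor recording $\mathrm{alb}_X$. Take the fiber over the point $b_{i+1}$ of the second factor. The fiber of $X\times_{A,g}A$ is $X_{g(b_{i+1})}=V_i$. The fiber of $X/K$ is $V_{i+1}/K$, because $K$ preserves $V_{i+1}$ and the quotient commutes with taking this fiber. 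The justification is that $X\to X/K$ is finite, and the quotient $X/K\to A$ is smooth, since it is identified with the base change $X\times_{A,g}A\to A$ of the smooth morphism $\mathrm{alb}_X$.

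This produces an isomorphism $V_{i+1}/K\cong V_i$ compatible with $\varphi_i$, where $K$ acts on $V_{i+1}$ through its restriction image $K_{i+1}$. Then $\mathbb C(V_{i+1})^{K_{i+1}}=\varphi_i^*\mathbb C(V_i)$, so $\varphi_i$ is Galois with group $K_{i+1}$. I expect this to be the main obstacle. One must check that the scheme-theoretic fiber of $X/K$ over $b_{i+1}$ really is the geometric quotient $V_{i+1}/K_{i+1}$, and not merely something homeomorphic to it. I would handle this by working affine-locally and using linear reductivity: taking $K$-invariants commutes with the base change $\otimes\,\mathcal O_A/\mathfrak m_{b_{i+1}}$, because that base change is a right-exact functor of $K$-modules and the Reynolds operator splits the invariant part. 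Normality of $V_i$ then makes the bijective birational comparison an isomorphism.

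Finally, the composite $\Psi_0=\varphi_0\circ\cdots\circ\varphi_{s-1}$ equals $f^s|_{X_b}$. It is int-amplified because $f^s$ is: take $H$ with $f^*H-H$ ample, so that $(f^s)^*H-H=\sum_{k<s}(f^k)^*(f^*H-H)$ is ample. Restricting to the invariant fiber $X_b$ keeps both $H|_{X_b}$ and $\bigl((f^s)^*H-H\bigr)|_{X_b}$ ample. Lemma~\ref{lem: fiber-cycle} now gives that $X_b=V_0$ has polynomial Cox ring and is toric. Smoothness of $X_b$ comes from the smoothness of $\mathrm{alb}_X$.
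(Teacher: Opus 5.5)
Your proof is correct and follows essentially the same route as the paper: you restrict $f$ along the orbit of $b$, use the relative quotient $X/K\cong X\times_{A,g}A$ to see that each restriction is Galois, show that $f^s|_{X_b}$ is int-amplified via $(f^s)^*H-H=\sum_{k<s}(f^k)^*(f^*H-H)$, and then invoke Lemma~\ref{lem: fiber-cycle}. Your Reynolds-operator check that taking $K$-invariants commutes with passing to the fiber is a useful detail that the paper leaves implicit; it already gives $V_{i+1}/K\cong V_i$ directly, so the appeal to smoothness of $X/K\to A$ and to a birational comparison is unnecessary.
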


\begin{proof}
By Proposition~\ref{prop: albanese-structure}(1), all fibers of $\mathrm{alb}_X$ are smooth, projective, and rationally connected. For $c\in A$, Equation~\eqref{eq: relative quotient} identifies $X_c/K$ with $X_{g(c)}$, and under this identification the quotient map is $f|_{X_c}:X_c\to X_{g(c)}$. Thus this restriction is the quotient by the image of $K$ in $\mathrm{Aut}(X_c)$ and is Galois.

Choose $s>0$ such that $g^s(b)=b$. The restrictions of $f$ along the orbit of $b$ form a cycle of finite Galois morphisms. Since
$$(f^s)^*H-H= \sum_{j=0}^{s-1}(f^j)^*(f^*H-H)$$
is ample, its restriction to $X_b$ shows that the return map $f^s|_{X_b}$ is int-amplified. Lemma~\ref{lem: fiber-cycle} therefore shows that $X_b$ is toric.
\end{proof}

\begin{proposition}\label{prop: isotrivial-toric}
The smooth morphism $\mathrm{alb}_X:X\to A$ is \'{e}tale locally trivial with a fixed smooth projective toric fiber $F$.
\end{proposition}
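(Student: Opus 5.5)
We will deduce the statement from Corollary~\ref{cor: periodic-fiber-local-triviality}, applied to $p=\mathrm{alb}_X$, the int-amplified isogeny $g$, the endomorphism $f$, and the relatively ample line bundle $\mathcal H=\mathcal O_X(H)$. By Proposition~\ref{prop: albanese-structure}(1), $\mathrm{alb}_X$ is smooth, projective and surjective with connected fibers, and it satisfies $\mathrm{alb}_X\circ f=g\circ\mathrm{alb}_X$. So the hypotheses of the corollary hold.

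What remains is its finiteness condition. We must show that the polarized fibers $(X_b,H|_{X_b})$ over periodic points $b$ of $g$ fall into finitely many isomorphism classes up to numerical equivalence of the polarization. By Lemma~\ref{lem: periodic-toric-fibers}, each such $X_b$ is a smooth projective toric variety, and $H|_{X_b}$ is an ample line bundle on it. Since $\mathrm{alb}_X$ is smooth and projective, the Hilbert polynomial $\chi(X_b,H|_{X_b}^{\otimes k})$ is independent of $b$ by flatness. In particular the volume $(H|_{X_b})^{n}$, where $n=\dim X_b$, is a fixed integer $v$.

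The key step, and the main obstacle, is to show that polarized smooth projective toric varieties of fixed dimension $n$ and fixed degree $v$ form finitely many isomorphism classes. For this we pass to polytopes. An ample line bundle $L$ on a smooth toric variety $Y$ may be linearized so that it corresponds to a lattice polytope $P\subset M_{\mathbb R}\cong\mathbb R^n$ with $(L^n)=n!\,\mathrm{vol}(P)$. The pair $(Y,L)$ is recovered from $P$ up to isomorphism, and translating $P$ or applying $\mathrm{GL}_n(\mathbb Z)$ does not change the isomorphism class. Lattice polytopes of fixed dimension and bounded volume are finite in number up to affine unimodular equivalence; this is Lagarias--Ziegler, and it also follows from the fact that a full-dimensional lattice polytope of volume $V$ is unimodularly equivalent to one inside a box of size bounded in terms of $n$ and $V$. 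Hence only finitely many pairs $(F_i,\mathcal L_i)$ occur, and each periodic fiber satisfies $(X_b,H|_{X_b})\cong(F_i,\mathcal L_i)$ for some $i$. This is an actual isomorphism, so in particular the numerical condition holds.

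Corollary~\ref{cor: periodic-fiber-local-triviality} then shows that $\mathrm{alb}_X$ is étale locally trivial and that all its fibers are isomorphic to a single fiber $F$. Choosing $F=X_b$ for a periodic point $b$, Lemma~\ref{lem: periodic-toric-fibers} shows that $F$ is a smooth projective toric variety. If the citation for the polytope finiteness is to be avoided, a fallback is condition~(3) of Proposition~\ref{prop: local-triviality-criterion}: take $F$ to be a common toric resolution, which is harder. We therefore plan to rely on the standard finiteness of lattice polytopes of bounded volume.
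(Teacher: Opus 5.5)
Your proposal is correct and follows the paper's proof essentially step for step. Like the paper, you get constant fiber dimension and degree from flatness, make the periodic fibers toric via Lemma~\ref{lem: periodic-toric-fibers}, use the Lagarias--Ziegler finiteness for lattice polytopes of bounded volume to get finitely many polarized isomorphism types, and conclude with Corollary~\ref{cor: periodic-fiber-local-triviality}; the fallback via condition~(3) is not needed.
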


\begin{proof}
We show that the periodic fibers have only finitely many polarized isomorphism types and apply Corollary~\ref{cor: periodic-fiber-local-triviality}.

Flatness of $\mathrm{alb}_X$ and relative ampleness of $H$ show that the Hilbert polynomial of $(X_b,H|_{X_b})$ is independent of $b$. In particular, the dimension of the fibers and the top self-intersection number of $H|_{X_b}$ are constant.

Let $b$ be periodic. The fiber $X_b$ is toric by Lemma~\ref{lem: periodic-toric-fibers}. Every line bundle on a toric variety admits a torus linearization, so $\mathcal O_{X_b}(H)$ determines a full-dimensional lattice polytope. Its normalized lattice volume is $(H|_{X_b})^{\dim X_b}$. The dimension and this volume are independent of $b$. By \cite[Theorem~2]{LZ91}, only finitely many such polytopes occur up to integral affine equivalence. Integral affine equivalence identifies the corresponding polarized toric varieties. Hence only finitely many isomorphism classes of polarized toric varieties occur among the periodic fibers.

Since $g$ is int-amplified, Corollary~\ref{cor: periodic-fiber-local-triviality}, applied with $\mathcal H= \mathcal O_X(H)$, shows that $\mathrm{alb}_X$ is \'{e}tale locally trivial over $A$ and that all its fibers are isomorphic. Their common fiber is toric by Lemma~\ref{lem: periodic-toric-fibers}.
\end{proof}

\medskip

\subsection{Proofs of Theorems~\ref{thm: rationally connected} and~\ref{thm: projective-space}}

In this subsection, we prove the folklore conjecture \cite[Conjecture~1.1]{HN11} under the Galois assumption.

\begin{proof}[Proof of Theorem~\ref{thm: rationally connected}]
If $\dim X=0$, then $X$ is a point and hence toric. Assume that $\dim X>0$. Lemma~\ref{lem: fiber-cycle} with $s=1$ shows that $X$ is toric. Its Cox ring is the polynomial ring with one variable for each torus-invariant prime divisor \cite[Section~1]{Cox95}.
\end{proof}

\begin{proof}[Proof of Theorem~\ref{thm: projective-space}]
Since $\rho(X)=1$ and the given endomorphism has degree greater than one, it is int-amplified. The variety $X$ is rationally connected \cite{Camp92,KMM92a}, so Theorem~\ref{thm: rationally connected} shows that $X$ is a smooth complete toric variety. Let $\Sigma$ be its fan. The toric divisor sequence \cite[Section~3.4]{Fult93} gives
$$\rho(X)=|\Sigma(1)|-n.$$
Thus $\Sigma$ has exactly $n+1$ rays. Choose a maximal cone with primitive ray generators $v_1,\ldots,v_n$. Smoothness makes these vectors a basis of the lattice, and there is a unique remaining primitive ray generator $v_0$. For every $i$, the facet generated by the $v_j$ with $j\neq i$ lies in exactly two maximal cones. The second one is generated by $v_0$ and these $n-1$ vectors. Write
$$v_0= \sum_{i=1}^n b_i v_i.$$
The two cones lie on opposite sides of their common facet, so $b_i<0$. Their smoothness gives $|b_i|=1$. Hence $v_0=-v_1-\cdots-v_n$. The chosen cone and its $n$ adjacent cones are all the possible maximal cones generated by $n$ of the $n+1$ rays. Thus $\Sigma$ is the fan of $\mathbf{P}^n$, and $X\cong\mathbf{P}^n$.
\end{proof}

\section{Relative Cox coordinates}
\label{sec: relative-cox}

Throughout this section, let $X$ be a smooth complex projective variety, let $f:X\to X$ be an int-amplified Galois endomorphism, and fix an ample Cartier divisor $H$ such that $f^*H-H$ is ample. Assume that $q(X)= \widetilde q(X)$. Let $\mathrm{alb}_X:X\to A:= \mathrm{Alb}(X)$ be the Albanese morphism, let $g:A\to A$ be the induced isogeny, and let $K\subseteq\mathrm{Gal}(f)$ be the subgroup that preserves every fiber of $\mathrm{alb}_X$. Thus
$$\mathrm{alb}_X\circ f = g\circ \mathrm{alb}_X\ \ \ \text{and}\ \ \  X/K\cong X\times_{A,g}A$$
by Proposition~\ref{prop: albanese-structure}. The projective morphism $\mathrm{alb}_X$ is smooth by Proposition~\ref{prop: smooth albanese}. By Proposition~\ref{prop: isotrivial-toric}, it is \'{e}tale locally trivial with a fixed smooth projective toric fiber $F$.

We show that the family becomes a split toric fibration after base change by an isogeny of $A$. We first construct the relative Cox algebra and use the reflection group action to find homogeneous line bundles that freely generate it. The resulting diagonal changes of Cox coordinates give the split toric structure. We then choose these line bundles compatibly with the finite group actions so that the toric boundary descends to the corresponding quotient. If $\dim F=0$, the Albanese morphism is an isomorphism, so we assume $\dim F>0$ for the remainder of this section.

\medskip

\subsection{The relative Cox algebra}

Every isogeny $A'\to A$ is dominated by a multiplication isogeny: there is an isogeny $A\to A'$ such that the composition $A\to A'\to A$ is $[m]$ for some $m>0$ \cite[p.~115]{Miln86}. Whenever a base change by an isogeny is needed below, we may instead pull back along a multiplication isogeny factoring through it. Multiplication isogenies commute with the induced isogeny $g:A\to A$. For a base change by $[m]:A\to A$, put $X'=X\times_{A,[m]}A$ and denote the projections by $r$ and $p'$, as in the Cartesian square
$$
\begin{tikzcd}
X' \arrow[r, "r"] \arrow[d, "p'"']   &   X \arrow[d, "\mathrm{alb}_X"]\\
A \arrow[r, "{[m]}"']   & A
\end{tikzcd}
$$
The projection $r$ is finite \'{e}tale, and $f$ lifts to $f':X'\to X'$ given by $f'(x,a)=(f(x),g(a))$. The morphism $p'$ remains smooth and \'{e}tale locally trivial with connected fibers, and flat base change gives $(p')_*\mathcal O_{X'}= \mathcal O_A$. The identity $r\circ f'=f\circ r$ shows that $f'$ is finite and surjective, and
$$
f'^*r^*H-r^*H=r^*(f^*H-H) 
$$
is ample. The relative quotient by $K$ also commutes with this flat base change, so $X'/K\cong X'\times_{A,g}A$. These are the properties used in the constructions below. We reuse the notation $X,f,H$ and the notation for the other pullback objects after each replacement. Only finitely many replacements occur. We keep track of the composite isogeny from the final base to the base at the start of this section and use it in the proof of Theorem~\ref{thm: main}.

Recall that $F$ denotes the fixed smooth projective toric fiber of $\mathrm{alb}_X:X\to A$. Put $\Lambda= \mathrm{Pic}(F)$ and $S= \mathrm{Hom}(\Lambda,\mathbf{G}_m)$. Choose line bundles representing a basis of $\Lambda$ and form $\mathcal L_{F,\lambda}$ by tensor products. Set
$$
U_F:= \mathrm{Spec}_F\left(\bigoplus_{\lambda\in\Lambda}\mathcal L_{F,\lambda}\right)\to F.
$$
With the right action described after \eqref{eq: characteristic space}, this projection is an $S$-torsor. Lemma~\ref{lem: cox-characteristic-space} gives $\Gamma(U_F,\mathcal O_{U_F})= \mathrm{Cox}(F)$.

\begin{proposition}\label{prop: relative-cox-construction}
After a base change by a multiplication isogeny, there are line bundles $\mathcal L_\lambda$ on $X$, indexed by $\lambda\in\Lambda$, whose restrictions identify $\Lambda$ with the Picard group of every fiber, together with compatible isomorphisms $\mathcal L_\lambda\otimes\mathcal L_{\mu}\cong \mathcal L_{\lambda + \mu}$. The relative Cox algebra
$$
\mathcal R:= \bigoplus_{\lambda\in\Lambda}(\mathrm{alb}_X)_*\mathcal L_\lambda 
$$
is \'{e}tale locally isomorphic to $\mathrm{Cox}(F)\otimes \mathcal O_A$ as a $\Lambda$-graded algebra. Moreover, the characteristic space
$$
\widehat{\mathcal X}:= \mathrm{Spec}_X\left(\bigoplus_{\lambda\in\Lambda}\mathcal L_\lambda\right) 
$$
is an $S$-torsor over $X$ and admits a natural open immersion $\widehat{\mathcal X}\hookrightarrow\mathrm{Spec}_A\mathcal R$. The same local trivializations identify this torsor and this open immersion with the products of $U_F\to F$ and $U_F\hookrightarrow\mathrm{Spec}\mathrm{Cox}(F)$ with the base.
\end{proposition}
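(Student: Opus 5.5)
We first make the fiberwise Picard groups into a constant local system and then extend them to global line bundles. Since $\mathrm{alb}_X$ is smooth and \'{e}tale locally trivial with fiber $F$, and $F$ is rationally connected, Lemma~\ref{lem: rc-basic} shows that the relative Picard scheme $\mathrm{Pic}_{X/A}$ is \'{e}tale over $A$. Its fibers are $\mathrm{Pic}(F)\cong\Lambda\cong H^2(F,\mathbb Z)$. Thus $R^2(\mathrm{alb}_X)_*\mathbb Z$ is a local system with fiber $\Lambda$. Its monodromy $\pi_1(A)\to\mathrm{GL}(\Lambda)$ preserves the nef cone and the ample classes. It should factor through the finite group of automorphisms of the fan of $F$, because a locally trivial family with fiber $F$ has monodromy in the image of $\pi_0\mathrm{Aut}(F)$. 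This image is finite, since $\mathrm{Aut}(F)^0$ acts trivially on $\mathrm{Pic}(F)$ and $\pi_0$ of the automorphism group of a toric variety is finite. Passing to the finite-index kernel gives a connected finite \'{e}tale cover of $A$, which is again an abelian variety. By the remark at the start of the section it is dominated by some $[m]$, so after this base change the local system is constant.

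Next we realize each class by a line bundle. For a basis $\lambda_1,\ldots,\lambda_\rho$ of $\Lambda$, the obstruction to finding a line bundle on $X$ restricting to $\lambda_i$ on every fiber lies in the Brauer group $\mathrm{Br}(A)$. More concretely, the component of $\mathrm{Pic}_{X/A}$ containing $\lambda_i$ is isomorphic to $A$, so we have a section, and the Leray spectral sequence obstruction $d_2(\lambda_i)\in H^2(A,\mathbf G_m)$ is torsion. Pullback by $[m]$ acts on $H^2(A,\mathbf G_m)_{\mathrm{tors}}$. Since $\mathrm{Br}(A)$ has finite $n$-torsion for each $n$, and $[m]^*$ acts on the torsion through $H^2(A,\mu_n)$ and on $H^3(A,\mathbb Z)$ by $m^2$, a suitable $[m]$ kills the finitely many obstruction classes. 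This is the step I expect to be the main obstacle, and the first thing I would try is exactly this argument: $[m]^*=m^2$ on $H^2(A,\mathbb Z/n)$, combined with the exact sequence $\mathrm{NS}\otimes\mathbb Z/n\to H^2(\mu_n)\to \mathrm{Br}[n]$. We then put $\mathcal L_\lambda=\bigotimes\mathcal L_{\lambda_i}^{m_i}$. The isomorphisms $\mathcal L_\lambda\otimes\mathcal L_\mu\cong\mathcal L_{\lambda+\mu}$ are then canonical by construction, hence compatible. Restriction to each fiber is an isomorphism $\Lambda\to\mathrm{Pic}(X_b)$ because the local system is constant and $\mathrm{Pic}=H^2$ on the fibers.

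The remaining assertions follow by working \'{e}tale locally. Over an \'{e}tale $V\to A$ with $X_V\cong F\times V$, the bundle $\mathcal L_\lambda|_{X_V}$ and $\mathrm{pr}_F^*\mathcal L_{F,\lambda}$ agree on every fiber. Since $H^1(F,\mathcal O_F)=0$, they differ by the pullback of a line bundle on $V$, which is trivial after shrinking $V$. Choosing such trivializations for the basis elements and extending multiplicatively gives a graded isomorphism $\bigoplus_\lambda\mathcal L_\lambda|_{X_V}\cong\mathrm{pr}_F^*\bigoplus_\lambda\mathcal L_{F,\lambda}$. Pushing forward and using flat base change (K\"unneth with $\mathcal O_V$) gives $\mathcal R|_V\cong\mathrm{Cox}(F)\otimes\mathcal O_V$ as $\Lambda$-graded algebras. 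The same isomorphism identifies $\widehat{\mathcal X}|_V$ with $U_F\times V$, so $\widehat{\mathcal X}$ is an $S$-torsor. The natural map $\widehat{\mathcal X}\to\mathrm{Spec}_A\mathcal R$ comes from the adjunction $(\mathrm{alb}_X)^*\mathcal R\to\bigoplus\mathcal L_\lambda$ and is defined globally. Locally it becomes $U_F\times V\to\mathrm{Spec}\mathrm{Cox}(F)\times V$, which is an open immersion by Lemma~\ref{lem: cox-characteristic-space}. Being an open immersion is \'{e}tale local on the target, so the global map is an open immersion, and it is compatible with the product descriptions.
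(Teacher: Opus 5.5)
Your proof is correct, but it runs in the opposite direction from the paper's.

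\textbf{The paper's route.} The paper first kills the monodromy of the finite $\mathrm{Aut}(F)/\mathrm{Aut}^0(F)$-torsor, so that $X\cong\mathcal P\times^{\mathrm{Aut}^0(F)}F$. It then lifts $\mathcal P$ through Cox's central extension $1\to S\to\widetilde{\mathrm{Aut}}^0(F)\to\mathrm{Aut}^0(F)\to 1$. The obstruction lies in $H^2(A,S)\cong H^2(A,\mathbf G_m)^{\oplus\rho}$ and is removed by an isogeny using Olsson's Lemma~8.1. Only then is $\widehat{\mathcal X}=\widetilde{\mathcal P}\times^{\widetilde{\mathrm{Aut}}^0(F)}U_F$ built, and the $\mathcal L_\lambda$ are read off from characters of $S$.

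\textbf{Your route.} You kill only the monodromy on $\Lambda$. You realize the basis classes by global line bundles through the Leray sequence for $\mathbf G_m$. You then recover the torsor and the local Cox trivializations afterwards by a seesaw argument.

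\textbf{Comparison.} The obstructions are essentially the same classes: the components of $\partial(\mathcal P)$ correspond to your $d_2(\lambda_i)$. Your route needs neither Cox's description of $\widetilde{\mathrm{Aut}}^0(F)$ nor the external lemma. The Kummer map $H^2(A,\mu_n)\twoheadrightarrow\mathrm{Br}(A)[n]$ is surjective and functorial. Moreover, $[n]^*$ acts on $H^2(A,\mu_n)\cong\wedge^2H^1(A,\mathbb Z/n)$ by $n^2=0$, so $[n]^*$ kills $\mathrm{Br}(A)[n]$. Since $d_2$ is compatible with base change, one multiplication isogeny kills all the $d_2(\lambda_i)$. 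In exchange, the paper's route yields a structure group inside $\mathrm{Aut}^0(F)$. Its charts therefore automatically respect the fixed identification $\Lambda\cong\mathrm{Pic}(F)$, and its transition functions are graded Cox automorphisms.

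\textbf{Two small repairs.}

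First, your aside about $H^3(A,\mathbb Z)$ is off: $[m]^*$ acts there by $m^3$, not $m^2$. It plays no role, though, since $H^3(A,\mathbb Z)$ is torsion-free and the argument only uses $H^2$.

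Second, and more substantively, your claim that $\mathcal L_\lambda|_{X_V}$ and $\mathrm{pr}_F^*\mathcal L_{F,\lambda}$ agree on every fiber is not automatic for an arbitrary chart $X_V\cong F\times V$. On connected $V$, the chart's identification of $\mathrm{Pic}(X_v)$ with $\Lambda$ and the one given by your global line bundles differ by a constant element of the image of $\mathrm{Aut}(F)$ in $\mathrm{GL}(\Lambda)$. This is because parallel transport is computed through chart isomorphisms. You must therefore precompose the chart with the corresponding automorphism of $F$ before applying seesaw. With that adjustment, the graded identification $\mathcal R|_V\cong\mathrm{Cox}(F)\otimes\mathcal O_V$ and the rest of your argument go through.
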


\begin{proof}
Since $X\to A$ is \'{e}tale locally trivial with fiber $F$, its local trivializations form an $\mathrm{Aut}(F)$-torsor over $A$. Since $\mathrm{Aut}(F)/\mathrm{Aut}^0(F)$ is finite \cite[Corollary~4.7]{Cox95}, the quotient of this torsor by $\mathrm{Aut}^0(F)$ is a finite \'{e}tale $\mathrm{Aut}(F)/\mathrm{Aut}^0(F)$-torsor over $A$.

Choosing a point of this finite \'{e}tale torsor over $0\in A$ gives a monodromy homomorphism
$$
\rho:\pi_1^{\mathrm{\acute{e}t}}(A,0)\to\mathrm{Aut}(F)/\mathrm{Aut}^0(F).
$$
Its image is finite and abelian, since the target is finite and $\pi_1^{\mathrm{\acute{e}t}}(A,0)$ is abelian. Choose a positive integer $m$ divisible by the order of every element of $\mathrm{im}(\rho)$. Writing the fundamental group additively, the multiplication isogeny $[m]:A\to A$ induces $[m]_*(\gamma)=m\gamma$. The monodromy of the torsor pulled back along $[m]$ is the composite $\rho\circ[m]_*$. Hence, for every $\gamma\in\pi_1^{\mathrm{\acute{e}t}}(A,0)$,
$$
(\rho\circ[m]_*)(\gamma)= \rho(m\gamma)= \rho(\gamma)^m=1,
$$
where the last equality follows from the choice of $m$. Thus the pullback torsor has trivial monodromy and is therefore a trivial finite \'{e}tale torsor. Choosing a section of this quotient torsor reduces the structure group of the pullback $\mathrm{Aut}(F)$-torsor to $\mathrm{Aut}^0(F)$. After this base change, we can therefore choose local trivializations of $X\to A$ so that all transition maps take values in $\mathrm{Aut}^0(F)$.

If we denote this principal $\mathrm{Aut}^0(F)$-torsor by $\mathcal P\to A$, then
$$
X\cong\mathcal P\times^{\mathrm{Aut}^0(F)}F. 
$$
The connected group $\mathrm{Aut}^0(F)$ acts trivially on the discrete lattice $\Lambda$. Let $\widetilde{\mathrm{Aut}}^0(F)$ be the group of $\Lambda$-graded Cox automorphisms inducing elements of $\mathrm{Aut}^0(F)$ on $F$. Every element of $\widetilde{\mathrm{Aut}}^0(F)$ preserves the characteristic space $U_F$. Then Cox's construction \cite[Theorem~4.2 and Corollary~4.7]{Cox95}, with the correction in \cite{Cox14}, gives a central exact sequence
$$
1\to S\to\widetilde{\mathrm{Aut}}^0(F)\to\mathrm{Aut}^0(F)\to 1.
$$

To construct the relative Cox algebra, it suffices to lift $\mathcal P$ through this extension. Such a lift glues the local copies of $U_F\to F$ into an $S$-torsor over $X$. The line bundles $\mathcal L_\lambda$ associated with the characters of $S$ then carry compatible isomorphisms $\mathcal L_\lambda\otimes\mathcal L_\mu\cong\mathcal L_{\lambda+\mu}$.

Since $S$ is smooth, the quotient map $\widetilde{\mathrm{Aut}}^0(F)\to\mathrm{Aut}^0(F)$ is a smooth $S$-torsor and admits \'{e}tale local sections. The central extension above therefore defines an obstruction to lifting $\mathcal P$ to a principal $\widetilde{\mathrm{Aut}}^0(F)$-torsor:
$$
\partial(\mathcal P)\in H^2_{\mathrm{\acute{e}t}}(A,S).
$$

To describe this class, choose an \'{e}tale cover $\{U_i\to A\}$ trivializing $\mathcal P$, with transition functions $g_{ij}$ satisfying $g_{ij}g_{jk}=g_{ik}$. After further \'{e}tale localization on the overlaps, choose lifts $\widetilde g_{ij}$ to $\widetilde{\mathrm{Aut}}^0(F)$. On common refinements of the triple overlaps, put
$$ c_{ijk}:=\widetilde g_{ij}\widetilde g_{jk}\widetilde g_{ik}^{-1}. $$
The image of $c_{ijk}$ in $\mathrm{Aut}^0(F)$ is the identity, so $c_{ijk}$ lies in $S$. Since $S$ is central, the elements $c_{ijk}$ satisfy the $2$-cocycle condition on common refinements of the quadruple overlaps and represent $\partial(\mathcal P)$.

If $\partial(\mathcal P)=0$, then after refining the \'{e}tale cover we can choose $S$-valued functions $s_{ij}$ such that the lifts $h_{ij}:=s_{ij}\widetilde g_{ij}$ satisfy
$$h_{ij}h_{jk}=h_{ik}.$$
These transition functions define a principal $\widetilde{\mathrm{Aut}}^0(F)$-torsor lifting $\mathcal P$.

After choosing a basis of $\Lambda$, the obstruction is a list of classes in $H^2_{\mathrm{\acute{e}t}}(A,\mathbf{G}_m)$. By \cite[Lemma~8.1]{Olss25}, these finitely many classes all vanish after pullback along a suitable isogeny of $A$. Replacing this isogeny by a dominating multiplication isogeny as above, we may assume that the obstruction vanishes. The modified transition functions then define a principal $\widetilde{\mathrm{Aut}}^0(F)$-torsor $\widetilde{\mathcal P}\to A$ lifting $\mathcal P$. Put
$$
\widehat{\mathcal X}:= \widetilde{\mathcal P}\times^{\widetilde{\mathrm{Aut}}^0(F)}U_F.
$$
The natural map $\widehat{\mathcal X}\to\mathcal P\times^{\mathrm{Aut}^0(F)}F\cong X$ is an $S$-torsor. For each $\lambda\in\Lambda$, let $\mathcal L_\lambda$ be the line bundle on $X$ associated with this $S$-torsor and the character $\lambda$. Its restriction to every fiber represents $\lambda$. These line bundles carry compatible isomorphisms $\mathcal L_\lambda\otimes\mathcal L_\mu \cong \mathcal L_{\lambda + \mu}$, and identify the torsor with the relative spectrum in the statement.

An \'{e}tale local trivialization of $\widetilde{\mathcal P}$ identifies these line bundles and their multiplication with the pullbacks of those defining $\mathrm{Cox}(F)$. It therefore identifies $\mathcal R$ with $\mathrm{Cox}(F)\otimes \mathcal O_A$ and the torsor $\widehat{\mathcal X}\to X$ with the product of $U_F\to F$ and the base. The natural map $\widehat{\mathcal X}\to\mathrm{Spec}_A\mathcal R$ is then the product of the standard open immersion $U_F\hookrightarrow\mathrm{Spec}\mathrm{Cox}(F)$ and the base. This proves the proposition.
\end{proof}

\medskip

We fix this Cox construction for the remainder of the section. Its degree-zero piece is $\mathcal R_0= \mathcal O_A$. By the compatible local product presentations above and flat base change \cite[Proposition~III.9.3]{Har77}, each graded piece $\mathcal R_\lambda=(\mathrm{alb}_X)_*\mathcal L_\lambda$ is locally free, and taking this direct image commutes with base change. In particular, its fiber at a closed point $y\to A$ is naturally $H^0(X_y,\mathcal L_\lambda|_{X_y})$. Let $\mathfrak m\subseteq\mathcal R$ be the homogeneous ideal generated by the pieces $\mathcal R_\lambda$ with $\lambda\neq 0$, and put
$\mathcal V= \mathfrak m/\mathfrak m^2.$

Let $e$ be the number of torus-invariant prime divisors of $F$, which is also the number of Cox variables. Then $e = \dim F+ \rho(F)$. The bundle $\mathcal V$ has rank $e$, and its nonzero homogeneous pieces have precisely the degrees of the Cox variables of $F$. Its sections represent homogeneous generators modulo products. Our aim is to lift these classes to line subbundles of $\mathfrak m$ that freely generate $\mathcal R$.

Under the fiberwise Picard identifications above, the restrictions of $H$ determine a single ample class $H_F$ on $F$.

We next describe how $f$ pulls back the line bundles defining the relative Cox algebra. We first determine their classes on the fibers, and then compare the resulting line bundles on $X$.

Let $X_g=X\times_{A,g}A$, with projections $p:X_g\to X$ and $p_g:X_g\to A$. The morphism $u=(f,\mathrm{alb}_X):X\to X_g$ is the relative quotient by $K$, by Proposition~\ref{prop: albanese-structure}(3) and the preceding base change argument. On the fiber over $y\in A$, it restricts to the finite morphism
$$u_y:X_y\to X_{g(y)}.$$
The identity $p\circ u=f$ gives $u^*p^*=f^*$.

\begin{proposition}\label{prop: relative-cox-pullback}
Under the fixed identifications $\mathrm{Pic}(X_y)\cong\Lambda$, the pullbacks $u_y^*$ are independent of $y$. Denote their common value by $B:\Lambda\to\Lambda$. Thus, for every $\mu\in\Lambda$ and $y\in A$,
$$u_y^*(\mathcal L_\mu|_{X_{g(y)}})\cong\mathcal L_{B\mu}|_{X_y}.$$
The homomorphism $B$ is injective and has finite-index image.

Globally, the line bundles $f^*\mathcal L_\mu$ and $\mathcal L_{B\mu}$ differ by a line bundle pulled back from $A$: there is a line bundle $\mathcal L_{A,\mu}$ on $A$ such that
\begin{equation}\label{eq: relative-twist}
f^*\mathcal L_\mu\cong\mathcal L_{B\mu}\otimes\mathrm{alb}_X^*\mathcal L_{A,\mu}.
\end{equation}
These isomorphisms can be chosen compatibly with tensor products, with
$$\mathcal L_{A,\mu+\nu}\cong\mathcal L_{A,\mu}\otimes\mathcal L_{A,\nu}.$$
Moreover, every eigenvalue of $B$ on $\Lambda\otimes_{\mathbb Z}\mathbb R$ has absolute value greater than one.
\end{proposition}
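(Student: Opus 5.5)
The plan is to first settle the fiberwise statement by a discreteness argument, then globalize with the seesaw principle, and finally read off the eigenvalue bound from int-amplification of $f$.

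\textbf{Fiberwise pullback.} For each $y\in A$ the finite morphism $u_y:X_y\to X_{g(y)}$ induces $u_y^*:\mathrm{Pic}(X_{g(y)})\to\mathrm{Pic}(X_y)$. Through the identifications $\mathrm{Pic}(X_z)\cong\Lambda$ given by $\mathcal L_\lambda|_{X_z}$, this becomes an endomorphism $B_y$ of $\Lambda$. Fix $\mu\in\Lambda$. The line bundle $f^*\mathcal L_\mu$ on $X$ restricts on $X_y$ to $u_y^*(\mathcal L_\mu|_{X_{g(y)}})$, because $f|_{X_y}=p\circ u|_{X_y}$ and $p$ identifies $(X_g)_y$ with $X_{g(y)}$. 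Hence $B_y\mu$ is the class of $f^*\mathcal L_\mu|_{X_y}$ in $\mathrm{Pic}(X_y)\cong\Lambda$. By Lemma~\ref{lem: rc-basic}, $\mathrm{Pic}$ of each fiber is discrete and equal to $N^1$. The class of a global line bundle restricted to the fibers of the smooth, \'etale locally trivial family is therefore locally constant in the \'etale topology. Concretely, in a local trivialization $X\times_AV\cong F\times V$ the restriction is a line bundle on $F\times V$ with $V$ connected, and its fiber classes in the discrete group $\mathrm{Pic}(F)$ are constant. Since $A$ is connected, $B_y=:B$ is independent of $y$.

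\textbf{Properties of $B$.} Injectivity and finite index come from the fact that $u_y$ is finite surjective: pullback is injective on $N^1$, and ranks agree. For the eigenvalues, I would use that $f^*H-H$ is ample. Restricting to a fiber gives $B(H_F)-H_F$ ample on $F$, and similarly for every ample $H'$ on $X$ with $f^*H'-H'$ ample. To run Meng's criterion I would instead note that $u_y$ is a surjective endomorphism only along periodic cycles. So I would take a periodic point $b$ with $g^s(b)=b$. Then $(f^s)|_{X_b}$ is an int-amplified endomorphism of $X_b\cong F$, as in the proof of Lemma~\ref{lem: periodic-toric-fibers}, and its pullback on $\Lambda\otimes\mathbb R=N^1(F)_{\mathbb R}$ is $B^s$. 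By \cite[Theorem~1.1]{Meng20}, every eigenvalue of $B^s$ has absolute value greater than one, hence so does every eigenvalue of $B$.

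\textbf{Global twist.} Consider $\mathcal M_\mu:=f^*\mathcal L_\mu\otimes\mathcal L_{B\mu}^{-1}$. It is trivial on every fiber, because its fiber class is $0$ in $\Lambda$, fiber $\mathrm{Pic}$ is discrete, and $\mathrm{Pic}^0$ of the fibers vanishes by Lemma~\ref{lem: rc-basic}. Since $\mathrm{alb}_X$ is flat and proper with $(\mathrm{alb}_X)_*\mathcal O_X=\mathcal O_A$, the seesaw principle and cohomology and base change show that $\mathcal L_{A,\mu}:=(\mathrm{alb}_X)_*\mathcal M_\mu$ is a line bundle and that $\mathrm{alb}_X^*\mathcal L_{A,\mu}\to\mathcal M_\mu$ is an isomorphism. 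This gives \eqref{eq: relative-twist}.

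\textbf{Compatibility with tensor products.} Take $\mathcal L_{A,\mu}$ to be the canonical object $(\mathrm{alb}_X)_*\mathcal M_\mu$. The compatible isomorphisms $\mathcal L_\mu\otimes\mathcal L_\nu\cong\mathcal L_{\mu+\nu}$ from Proposition~\ref{prop: relative-cox-construction}, together with $f^*$ commuting with tensor products, give canonical isomorphisms $\mathcal M_\mu\otimes\mathcal M_\nu\cong\mathcal M_{\mu+\nu}$. Pushing these forward gives $\mathcal L_{A,\mu+\nu}\cong\mathcal L_{A,\mu}\otimes\mathcal L_{A,\nu}$, using that the pushforward of a tensor product of pullbacks from $A$ is the tensor product. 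Associativity and commutativity are inherited from those of the $\mathcal L_\lambda$. Equivalently, one can define everything on a basis of $\Lambda$ and extend multiplicatively.

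\textbf{Main obstacle.} I expect the only delicate point to be the eigenvalue statement, since $u_y$ is not itself an endomorphism of a single fiber. The periodic-point reduction above handles this.
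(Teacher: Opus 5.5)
Your proposal is correct. For the local constancy of $u_y^*$, for injectivity and finite index, and for the cohomology-and-base-change construction of $\mathcal L_{A,\mu}$, it matches the paper. Your canonical choice $\mathcal L_{A,\mu}=(\mathrm{alb}_X)_*\bigl(f^*\mathcal L_\mu\otimes\mathcal L_{B\mu}^{-1}\bigr)$ yields the same multiplicativity as the paper's ``choose on a basis and extend by tensor products and duals.''

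The one genuine difference is how you apply Meng's criterion to get the eigenvalue bound.

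\emph{The paper's route.} It identifies both $X_y$ and $X_{g(y)}$ with $F$ compatibly with the fixed Picard identifications, so that $u_y$ becomes a finite endomorphism of $F$ with pullback $B$. It then checks int-amplification from the ampleness of $BH_F-H_F$. This needs the compatible identifications, which exist because the transition functions lie in $\mathrm{Aut}^0(F)$ and the $\mathcal L_\lambda$ are built from $\widetilde{\mathcal P}$. The compatibility is essential: arbitrary isomorphisms $X_y\cong F\cong X_{g(y)}$ would replace $B$ by some $\sigma B\tau$ with $\sigma,\tau\in\mathrm{Aut}(\Lambda)$, whose eigenvalues need not be those of $B$.

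\emph{Your route.} You restrict $f^s$ to a periodic fiber instead. This gives an honest int-amplified endomorphism of a single variety $X_b$, whose pullback under the fixed identification $\mathrm{Pic}(X_b)\cong\Lambda$ is $B^s$ because $B$ is independent of $y$. Taking $s$-th roots then bounds the eigenvalues of $B$. This avoids comparing two different fibers with $F$ at all.

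Moreover, you do not need density of periodic points. In this section $g$ is an isogeny, so $g(0)=0$, and you may take $b=0$ and $s=1$. Then $f|_{X_0}$ is an endomorphism of $X_0$ with pullback exactly $B$, which makes your argument as short as the paper's.
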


\begin{proof}
Under the fixed fiberwise Picard identifications, the pullbacks $u_y^*$ vary locally constantly. Since $A$ is connected, they give a single homomorphism $B:\Lambda\to\Lambda$. Each $u_y$ is finite and surjective, so its pullback is injective on the torsion-free Picard group. Since the source and target lattices have the same rank, the image has finite index.

The fiberwise identity above shows that
$$f^*\mathcal L_\mu\otimes\mathcal L_{B\mu}^{-1}$$
is trivial on every fiber of $\mathrm{alb}_X$. We show that this line bundle is pulled back from $A$. Each fiber is rationally connected, so $h^0(\mathcal O)=1$ and $h^1(\mathcal O)=0$. The cohomology and base change theorem therefore shows that
$$\mathcal L_{A,\mu}:=(\mathrm{alb}_X)_*\bigl(f^*\mathcal L_\mu\otimes\mathcal L_{B\mu}^{-1}\bigr)$$
is a line bundle, and the evaluation map gives an isomorphism
$$\mathrm{alb}_X^*\mathcal L_{A,\mu}\cong f^*\mathcal L_\mu\otimes\mathcal L_{B\mu}^{-1}.$$
Choose these isomorphisms for a basis of the free lattice $\Lambda$, and extend them using tensor products and duals. This gives the compatible isomorphisms in \eqref{eq: relative-twist}.

Finally, identify both fibers with $F$ compatibly with the fixed Picard identifications. The morphism $u_y$ then becomes a finite endomorphism of $F$ whose Picard pullback is $B$. The restriction of $f^*H-H$ has class $BH_F-H_F$, which is ample. Thus this endomorphism is int-amplified, and the assertion about eigenvalues follows from \cite[Theorem~1.1]{Meng20}.
\end{proof}

\medskip

For the induction on Cox generators, we need an additive function $\deg_B:\Lambda\to\mathbb Z$ that is positive on nonzero effective classes and satisfies
$$\deg_B(B\lambda)>\deg_B(\lambda)$$
for every such class $\lambda$. We will induct on $\deg_B(\lambda)$ in Proposition~\ref{prop: relative-generators}. The displayed inequality ensures that $\deg_B(\mu)<\deg_B(\lambda)$ whenever $\lambda=B\mu$ and $\mu$ is nonzero and effective. The construction uses the fact that every eigenvalue of $B$ has absolute value greater than one.

Recall that, under the fixed identifications of the fiberwise Picard groups with $\Lambda$, the pullback $u_y^*$ is represented by $B$. The pushforward on divisor classes satisfies
$$(u_y)_*u_y^*=(\deg u_y)\mathrm{id}.$$
Since $B$ is invertible on $\Lambda\otimes_{\mathbb Z}\mathbb R$, this gives
$$B^{-1}=(\deg u_y)^{-1}(u_y)_*.$$
Hence $B^{-1}$ preserves effective classes. By Proposition~\ref{prop: relative-cox-pullback}, all its eigenvalues have absolute value less than one, so the geometric series converges and gives
$$(B-\mathrm{id})^{-1}= \sum_{j\geq 1}B^{-j}.$$

\begin{definition}\label{def: relative degree}
Fix a sufficiently divisible positive integer $N$ such that
$N(B-\mathrm{id})^{-1}\Lambda\subseteq\Lambda$. For $\lambda\in\Lambda$, define
$$\deg_B(\lambda):=N\bigl((B-\mathrm{id})^{-1}\lambda\cdot H_F^{\dim F-1}\bigr).$$
Assigning degree $\deg_B(\lambda)$ to each homogeneous section of Cox degree $\lambda$ defines a $\mathbb Z$-grading in addition to the Cox grading.
\end{definition}

\medskip

This function is additive and $\Z$-valued. The series expression and the ampleness of $H_F$ show that it is positive on every nonzero effective class. Moreover,
\begin{equation}\label{eq: relative-degree-increase}
\deg_B(B\lambda)-\deg_B(\lambda)=N(\lambda\cdot H_F^{\dim F-1})>0
\end{equation}
for every such class $\lambda$, as required.

\medskip

\subsection{The Galois lift and pseudoreflections}

Recall that $F$ is the fixed smooth projective toric fiber of the Albanese morphism $\mathrm{alb}_X:X\to A= \mathrm{Alb}(X)$. Its Picard lattice and characteristic torus are
$$\Lambda= \mathrm{Pic}(F),\qquad S= \mathrm{Hom}(\Lambda,\mathbf{G}_m).$$
The line bundles $\mathcal L_\lambda$ constructed in Proposition~\ref{prop: relative-cox-construction} represent the classes $\lambda\in\Lambda$ on every fiber. They define the relative Cox algebra
$$\mathcal R= \bigoplus_{\lambda\in\Lambda}\mathcal R_\lambda,\qquad \mathcal R_\lambda=(\mathrm{alb}_X)_*\mathcal L_\lambda,$$
and the characteristic $S$-torsor
$$\widehat{\mathcal X}= \mathrm{Spec}_X\left(\bigoplus_{\lambda\in\Lambda}\mathcal L_\lambda\right)\to X.$$

Recall also that $g:A\to A$ is the isogeny induced by $f$. The finite group $K$ acts along the Albanese fibers, and its quotient map is
$$u=(f,\mathrm{alb}_X):X\to X_g:=X\times_{A,g}A.$$
Write $X_y= \mathrm{alb}_X^{-1}(y)$. The restriction of $u$ over $y\in A$ is $u_y:X_y\to X_{g(y)}$. Under the fixed identifications of the fiberwise Picard groups with $\Lambda$, all pullbacks $u_y^*$ are represented by the same injective homomorphism $B:\Lambda\to\Lambda$, whose image has finite index. Globally, Proposition~\ref{prop: relative-cox-pullback} gives line bundles $\mathcal L_{A,\mu}$ on $A$ and multiplicatively compatible isomorphisms
$$f^*\mathcal L_\mu\cong\mathcal L_{B\mu}\otimes\mathrm{alb}_X^*\mathcal L_{A,\mu},\qquad \mu\in\Lambda.$$
Thus $B$ records the pullback of fiberwise divisor classes, while $\mathcal L_{A,\mu}$ records the remaining factor pulled back from the base.

In this subsection, we construct a corresponding $S$-torsor $\widehat{\mathcal X}_g\to X_g$, together with a finite Galois lift
$$\widehat{\mathcal X}\to\widehat{\mathcal X}_g$$
of $u$. Its Galois group acts on $\mathcal R$.

On each fiber, we show that the induced action on generator classes modulo products is faithful and generated by pseudoreflections. These reflections will be used in the next subsection to construct the generating line bundles.

\begin{lemma}\label{lem: relative-theta}
Let $\widehat B:S\to S$ be the isogeny dual to $B:\Lambda\to\Lambda$, and put
$$K_B:= \ker(\widehat B)\cong\mathrm{Hom}(\Lambda/B\Lambda,\mathbf{G}_m).$$
We also write $K_B$ for the finite abelian group $\mathrm{Hom}(\Lambda/B\Lambda,\mathbb C^*)$ of its closed points. After a further base change by a multiplication isogeny, there is a finite group $\Gamma$ fitting into a central exact sequence
$$1\to K_B\to\Gamma\to K\to 1.$$

There is an $S$-torsor $\widehat{\mathcal X}_g\to X_g$ and a $\widehat B$-equivariant finite Galois lift $$\Phi:\widehat{\mathcal X}\to\widehat{\mathcal X}_g$$ of $u$ with $\mathrm{Gal}(\Phi)= \Gamma$. The induced action of $\Gamma$ on $\mathcal R$ is by $\mathcal O_A$-linear, $\Lambda$-graded algebra automorphisms. For $\mu\in\Lambda$, there are isomorphisms
\begin{equation}\label{eq: invariant-cox-sheaf}
(\mathcal R^\Gamma)_{B\mu} \cong g^*\mathcal R_\mu\otimes\mathcal L_{A,\mu}^{-1}.
\end{equation}
These isomorphisms are compatible with multiplication and with base change to closed points.
\end{lemma}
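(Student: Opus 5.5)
The plan is to globalize Step~2 of the proof of Lemma~\ref{lem: fiber-cycle} over $A$.

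\textbf{Construction of $\widehat{\mathcal X}_g$.} On $X_g=X\times_{A,g}A$ we take the $S$-torsor obtained by pulling back $\widehat{\mathcal X}\to X$ along $p:X_g\to X$ and then twisting by the base line bundles. Concretely, set
$$\widehat{\mathcal X}_g:=\mathrm{Spec}_{X_g}\Bigl(\bigoplus_{\mu\in\Lambda}p^*\mathcal L_\mu\otimes p_g^*\mathcal L_{A,\mu}^{-1}\Bigr).$$
The multiplication comes from the multiplicatively compatible isomorphisms $\mathcal L_{A,\mu+\nu}\cong\mathcal L_{A,\mu}\otimes\mathcal L_{A,\nu}$ of Proposition~\ref{prop: relative-cox-pullback}. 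Using $u^*p^*=f^*$ and \eqref{eq: relative-twist}, we get
$$u^*\bigl(p^*\mathcal L_\mu\otimes p_g^*\mathcal L_{A,\mu}^{-1}\bigr)\cong\mathcal L_{B\mu}.$$
Hence the graded algebra map $\mu\mapsto B\mu$ defines a $\widehat B$-equivariant morphism $\Phi:\widehat{\mathcal X}\to\widehat{\mathcal X}_g$ lifting $u$. As in Lemma~\ref{lem: fiber-cycle}, $\Phi$ factors as the quotient by $K_B$ followed by the base change of $u$. Therefore $\Phi$ is finite of degree $|K|\cdot|K_B|$.

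\textbf{Lifting $K$.} Define $\Gamma$ as the group of $S$-equivariant automorphisms of $\widehat{\mathcal X}$ that lift elements of $K$ and satisfy $\Phi\circ\widetilde\sigma=\Phi$. For $\sigma\in K$, we have $\sigma^*$ trivial on the fiberwise Picard lattice, by the argument of Lemma~\ref{lem: fiber-cycle} applied fiberwise. So $\sigma^*\mathcal L_\lambda\otimes\mathcal L_\lambda^{-1}$ is trivial on fibers, and by cohomology and base change it is $\mathrm{alb}_X^*$ of a line bundle $\mathcal M_{\sigma,\lambda}$ on $A$. This line bundle lies in $\mathrm{Pic}^0(A)$ by continuity of $K$ acting on the family. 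A lift of $\sigma$ exists exactly when the $\mathcal M_{\sigma,\lambda}$ are trivial.

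This is the step I expect to be the main obstacle. I would first try to kill these classes by a further multiplication isogeny. Pulling back along $[m]$ multiplies a class in $\mathrm{Pic}^0(A)$ by $m$. The $\mathcal M_{\sigma,\lambda}$ are torsion, because $\sigma$ has finite order $n$ and the cocycle relation gives $\bigotimes_k(\sigma^k)^*\mathcal M\cong\mathcal O$, with $K$ acting trivially on $A$. So they die after $[m]$ for $m$ divisible by $|K|$ times their orders.

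Once the classes are trivial, choose trivializations. For each $\sigma$, the two maps $\Phi\circ\widetilde\sigma$ and $\Phi$ differ by an $S$-valued function on $X$ that is pulled back from $A$, hence constant because $A$ is proper. Adjusting by an element of $\widehat B^{-1}(t^{-1})$ puts the lift in $\Gamma$. The kernel of $\Gamma\to K$ consists of translations by constant elements of $K_B$, and it is central by $S$-equivariance. This gives the exact sequence. Then $|\Gamma|=\deg\Phi$, and $\Gamma$ acts faithfully, so $\Phi$ is Galois with group $\Gamma$. Here $\Phi$ is finite onto the normal $\widehat{\mathcal X}_g$ and birational on the quotient, so $\widehat{\mathcal X}/\Gamma\cong\widehat{\mathcal X}_g$.

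\textbf{Action on $\mathcal R$ and the invariants.} Since $\Gamma$ acts over $A$ and commutes with $S$, it acts on $\mathcal R=(\mathrm{alb}_X)_*\mathcal O_{\widehat{\mathcal X}}$ by $\mathcal O_A$-linear graded automorphisms. Taking $\Gamma$-invariants of the pushforward and using $\widehat{\mathcal X}/\Gamma\cong\widehat{\mathcal X}_g$, the degree-$B\mu$ invariants equal the pushforward of the degree-$\mu$ piece of $\widehat{\mathcal X}_g$. By flat base change along $g$, this is
$$(p_g)_*\bigl(p^*\mathcal L_\mu\otimes p_g^*\mathcal L_{A,\mu}^{-1}\bigr)\cong g^*\mathcal R_\mu\otimes\mathcal L_{A,\mu}^{-1},$$
which is \eqref{eq: invariant-cox-sheaf}. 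In degrees outside $B\Lambda$, the $K_B$-invariants vanish.

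Finally, taking invariants is exact in characteristic zero and the graded pieces commute with base change. Together with \eqref{eq: cycle-invariants} fiberwise, this gives compatibility with multiplication and with restriction to closed points.
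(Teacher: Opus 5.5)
Your proposal is correct and follows the paper's proof essentially step for step. It uses the same twisted algebra to define $\widehat{\mathcal X}_g$, the same factorization of $\Phi$ through the $K_B$-quotient, the same killing of the obstruction bundles by a multiplication isogeny, the same normalization of lifts via $\widehat B^{-1}$, the same degree count and normality argument, and the same flat-base-change and Reynolds computation of $\mathcal R^\Gamma$. The one local difference is your torsion argument: you telescope $\bigotimes_{j=0}^{n-1}(\sigma^j)^*$ using $\mathrm{alb}_X\circ\sigma=\mathrm{alb}_X$ and the injectivity of $\mathrm{alb}_X^*$ on $\mathrm{Pic}$. This is valid and replaces the paper's use of $n\lambda=B\mu$ together with the $K$-invariance of $\mathcal L_{B\mu}$ as a pullback by $u$. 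Your appeal to ``continuity'' for membership in $\mathrm{Pic}^0(A)$ is unnecessary, since torsion classes lie there automatically because $\mathrm{NS}(A)$ is torsion-free.
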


\begin{proof}
\noindent\textbf{Step 1.} We lift $u:X\to X_g$ to the characteristic spaces.

Let $p:X_g\to X$ and $p_g: X_g\to A$ be the two projections. To lift $u$, we first cancel the base factors $\mathcal L_{A,\mu}$ in the pullback formula for $f^*\mathcal L_{\mu}$. Define the target $S$-torsor by
$$\widehat{\mathcal X}_g= \mathrm{Spec}_{X_g}\left(\bigoplus_{\mu\in\Lambda}p^*\mathcal L_{\mu}\otimes p_g^*\mathcal L_{A,\mu}^{-1}\right).$$
Since $p\circ u=f$ and $p_g\circ u= \mathrm{alb}_X$, the pullback formula gives multiplicatively compatible isomorphisms
$$u^*\bigl(p^*\mathcal L_\mu\otimes p_g^*\mathcal L_{A,\mu}^{-1}\bigr)\cong \mathcal L_{B\mu}.$$
Taking the direct sum of these isomorphisms and then including the summands with degrees in $B\Lambda$ gives an $\mathcal O_X$-algebra homomorphism
$$\bigoplus_{\mu\in\Lambda}u^*\bigl(p^*\mathcal L_\mu\otimes p_g^*\mathcal L_{A,\mu}^{-1}\bigr)\xrightarrow{\sim}\bigoplus_{\mu\in\Lambda}\mathcal L_{B\mu}\hookrightarrow\bigoplus_{\lambda\in\Lambda}\mathcal L_\lambda.$$
The algebra on the left is the pullback by $u$ of the algebra defining $\widehat{\mathcal X}_g$. Taking relative spectra reverses the arrows and gives an $X$-morphism
$$\widehat{\mathcal X}\to X\times_{X_g}\widehat{\mathcal X}_g.$$
Composing with the projection to $\widehat{\mathcal X}_g$ defines
$$\Phi: \widehat{\mathcal X} \to \widehat{\mathcal X}_g.$$
Since a section of target Cox degree $\mu$ pulls back to one of source Cox degree $B\mu$, the morphism $\Phi$ is $\widehat B$-equivariant.

A decomposition $\Lambda/B\Lambda \cong \bigoplus_j\mathbb Z/n_j\mathbb Z$ identifies $K_B$ with $\prod_j\mu_{n_j}$, where $\mu_n$ is the group scheme of $n$th roots of unity. These group schemes are reduced over $\mathbb C$, so their actions can be viewed as actions of their groups of closed points. On a homogeneous section of Cox degree $\lambda$, the subgroup $K_B$ acts through the inverse of the character determined by the class of $\lambda$ in $\Lambda/B\Lambda$. The $K_B$-invariant subalgebra of $\bigoplus_{\lambda\in\Lambda}\mathcal L_\lambda$ is therefore $\bigoplus_{\mu\in\Lambda}\mathcal L_{B\mu}$, so
$$\widehat{\mathcal X}/K_B\cong\mathrm{Spec}_X\left(\bigoplus_{\mu\in\Lambda}\mathcal L_{B\mu}\right).$$
The relative spectrum on the right is naturally isomorphic to $X\times_{X_g}\widehat{\mathcal X}_g$. Thus $\Phi$ factors as the $K_B$-torsor $\widehat{\mathcal X}\to\widehat{\mathcal X}/K_B$, followed by the base change of the finite $K$-Galois morphism $u$. In particular, $\Phi$ is finite of degree $|K_B|\cdot |K|$.

\medskip

\noindent\textbf{Step 2.} We lift $K$ to a finite group preserving $\Phi$.

We first make the line bundles defining $\widehat{\mathcal X}$ invariant under $K$, and then choose lifts that preserve $\Phi$.

For $k\in K$ and $\mu\in\Lambda$, the class $B\mu$ on every fiber is represented by the restriction of $u^*p^*\mathcal L_\mu$. Then $u\circ k=u$  gives $k^*(B\mu)=B\mu$ on the fiberwise Picard lattice. Thus $k^*$ fixes the finite-index sublattice $B\Lambda\subseteq\Lambda$ and, since $\Lambda$ is torsion-free, acts trivially on $\Lambda$. Hence, for every $\lambda\in\Lambda$, there is a line bundle $\xi_{k,\lambda}$ on $A$ such that
$$k^*\mathcal L_\lambda\otimes\mathcal L_\lambda^{-1}\cong \mathrm{alb}_X^*\xi_{k,\lambda}.$$
Indeed, $k^*\mathcal L_\lambda\otimes\mathcal L_\lambda^{-1}$ is trivial on every rationally connected fiber. The cohomology and base change theorem, together with the evaluation map, shows that it is pulled back from $A$.

Fix $k\in K$ and $\lambda\in\Lambda$. Since $B\Lambda$ has finite index, choose $n>0$ and $\mu\in\Lambda$ with $n\lambda=B\mu$. Step~1 expresses $\mathcal L_{B\mu}$ as the pullback of a line bundle on $X_g$. It is therefore $K$-invariant, since $u\circ k=u$. Taking the $n$th tensor power gives
$$\mathrm{alb}_X^*\xi_{k,\lambda}^{\otimes n}\cong k^*\mathcal L_{B\mu}\otimes\mathcal L_{B\mu}^{-1}\cong\mathcal O_X.$$
The equality $(\mathrm{alb}_X)_*\mathcal O_X= \mathcal O_A$ and the projection formula show that $\mathrm{alb}_X^*:\mathrm{Pic}(A)\to\mathrm{Pic}(X)$ is injective. Hence $\xi_{k,\lambda}^{\otimes n}\cong\mathcal O_A$. The N\'{e}ron--Severi group of an abelian variety is torsion-free, so these torsion line bundles lie in $\mathrm{Pic}^0(A)$.

Choose a positive integer $m$ divisible by the orders of the finitely many line bundles $\xi_{k,\lambda}$ with $k\in K$ and $\lambda$ in the chosen basis of $\Lambda$. Since $[m]^*\xi\cong\xi^{\otimes m}$ for $\xi\in\mathrm{Pic}^0(A)$, pullback by $[m]$ makes all these line bundles trivial. Choose trivializations of the pullback line bundles for these pairs and extend them using tensor products and duals. Every element of $K$ then admits a graded lift to $\widehat{\mathcal X}$.

Let $\Theta$ be the group of all graded lifts of elements of $K$ to $\widehat{\mathcal X}$; equivalently, these are the $S$-equivariant lifts. An element of the kernel of $\Theta\to K$ is an $S$-equivariant automorphism of $\widehat{\mathcal X}$ over $X$, and hence is given by a morphism $X\to S$. Since $X$ is projective and connected, this morphism is constant. Consequently, there is an exact sequence
$$1\to S\to\Theta\to K\to 1.$$
This sequence is central because every element of $K$ acts trivially on $\Lambda$.

We now determine the subgroup of $\Theta$ consisting of lifts that preserve $\Phi$. For $\theta\in\Theta$, the maps $\Phi\circ\theta$ and $\Phi$ cover the same morphism $u:X\to X_g$. Since $\widehat{\mathcal X}_g\to X_g$ is an $S$-torsor, there is a unique regular map $r_\theta:\widehat{\mathcal X}\to S$ such that
$$\Phi(\theta z)= \Phi(z)r_\theta(z).$$
Both maps are $\widehat B$-equivariant, so $r_\theta$ is invariant under the source $S$-action. Zariski-local trivializations of the $S$-torsor $\widehat{\mathcal X}\to X$ therefore show that $r_\theta$ descends to a morphism $X\to S$. This morphism is constant because $X$ is projective and connected and $S$ is affine. Denote its value by $c(\theta)$. The defining equation and the centrality of $S$ show that $c:\Theta\to S$ is a homomorphism and that $c|_S= \widehat B$.

The lifts preserving $\Phi$ form the subgroup $\Gamma= \ker(c)$. Since $\widehat B$ is an isogeny, every lift of an element of $K$ can be multiplied by an element of $S$ to obtain a lift in $\Gamma$. Hence we obtain a central exact sequence
$$1\to K_B\to\Gamma\to K\to 1.$$
In particular,
$$|\Gamma|=|K_B|\cdot|K|= \deg\Phi.$$

\medskip

\noindent\textbf{Step 3.} We identify the quotient by $\Gamma$ and its algebra of relative functions.

We claim that the action of $\Gamma$ on $\widehat{\mathcal X}$ is faithful. Indeed, if $\gamma\in\Gamma$ acts trivially, then its image in $K$ acts trivially on $X$ and is therefore the identity. Hence $\gamma\in K_B$. Since $K_B\subseteq S$ acts freely on the $S$-torsor $\widehat{\mathcal X}\to X$, we have $\gamma = 1$. By construction, $\Phi$ is $\Gamma$-invariant. Thus
$$[\mathbb{C}(\widehat{\mathcal X}):\mathbb{C}(\widehat{\mathcal X})^\Gamma]=|\Gamma|= \deg\Phi.$$

Since $S$ is a split torus and $X$ is projective, the $S$-torsor $\widehat{\mathcal X}\to X$ has quasi-projective total space. Its quotient by the finite group $\Gamma$ therefore exists, and the quotient map is finite. The morphism $\Phi$ factors through a finite morphism $\overline\Phi:\widehat{\mathcal X}/\Gamma\to\widehat{\mathcal X}_g$. The equality of degrees above shows that $\overline\Phi$ is birational. Since $\widehat{\mathcal X}_g$ is smooth, it is normal, and hence $\overline\Phi$ is an isomorphism.

Since $\Gamma$ acts over $A$ and commutes with the characteristic torus $S$, it acts on the relative functions $\mathcal R$ by $\mathcal O_A$-linear automorphisms preserving the Cox grading. To compute the invariant algebra, we use the isomorphism $\overline\Phi:\widehat{\mathcal X}/\Gamma\cong\widehat{\mathcal X}_g$ over $A$ to identify the direct images of the two structure sheaves under the structure morphisms to $A$. On the quotient side, finite group invariants identify the relative functions with $\mathcal R^\Gamma$. On the target side, we push forward the line bundles defining $\widehat{\mathcal X}_g$ along $p_g:X_g\to A$. Flat base change and the projection formula give
$$\mathcal R^\Gamma\cong\bigoplus_{\mu\in\Lambda}g^*\mathcal R_\mu\otimes\mathcal L_{A,\mu}^{-1}.$$
The summand indexed by $\mu$ has degree $B\mu$, since pullback by $\Phi$ sends target degree $\mu$ to source degree $B\mu$. These identifications are compatible with base change to closed points. Indeed, the Reynold operator
$$e_\Gamma:= \frac{1}{|\Gamma|}\sum_{\gamma\in\Gamma}\gamma:\mathcal R\to\mathcal R$$
is an $\mathcal O_A$-linear idempotent with image $\mathcal R^\Gamma$. Thus $\mathcal R= \mathcal R^\Gamma\oplus\ker(e_\Gamma)$ as $\mathcal O_A$-modules. This decomposition is preserved by base change, and the pullback operator is the Reynolds operator on the pullback algebra. Consequently, for every closed point $y\to A$, the natural map
$$(\mathcal R^\Gamma)_y\to(\mathcal R_y)^\Gamma$$
is an isomorphism. On the target side, compatibility with base change follows from that of the graded pieces $\mathcal R_\mu$ and pullback of the line bundles $\mathcal L_{A,\mu}$. This proves \eqref{eq: invariant-cox-sheaf}.
\end{proof}

\medskip

We now study the action of $\Gamma= \mathrm{Gal}(\Phi)$ (constructed in Lemma~\ref{lem: relative-theta}) on the Cox generators of each fiber. Recall that $\mathfrak m\subseteq\mathcal R$ is the ideal generated by the pieces of nonzero Cox degree, and that $\mathcal V= \mathfrak m/\mathfrak m^2$. For a closed point $y\to A$, the algebra $\mathcal R_y$ is the Cox ring of $X_y$, and a homogeneous basis of $\mathcal V_y$ lifts to a minimal set of homogeneous algebra generators of $\mathcal R_y$. We write $\mathcal V_{y,\nu}$ for the homogeneous piece of $\mathcal V_y$ of Cox degree $\nu\in\Lambda= \mathrm{Pic}(F)$.

\begin{lemma}\label{lem: relative-reflection}
For every closed point $y\to A$, there is a $\Gamma$-equivariant $\Lambda$-graded isomorphism
$$\mathrm{Sym}(\mathcal V_y)\cong\mathcal R_y.$$
The action of $\Gamma$ on $\mathcal V_y$ is faithful and is generated by pseudoreflections (Definition~\ref{def: pseudoreflection}). For each Cox degree $\nu$ occurring in $\mathcal V_y$, let $\Gamma_\nu$ be the subgroup of $\Gamma$ generated by the pseudoreflections $\gamma\in\Gamma$ whose normal lines $\mathrm{Im}(\gamma-\mathrm{id})$ lie in $\mathcal V_{y,\nu}$. Then multiplication induces an isomorphism
$$\prod_\nu\Gamma_\nu\cong\Gamma,$$
and the invariant ring decomposes as
$$\mathrm{Sym}(\mathcal V_y)^\Gamma= \bigotimes_\nu\mathrm{Sym}(\mathcal V_{y,\nu})^{\Gamma_\nu}.$$
Here the product and tensor product range over the Cox degrees occurring in $\mathcal V_y$. Consequently, the basic invariants may be chosen block by block. A basic invariant of polynomial degree $d$ in the block of Cox degree $\nu$ has Cox degree $d\nu$.
\end{lemma}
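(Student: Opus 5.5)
The plan is to reduce everything to a single fiber and then reuse Step~3 of Lemma~\ref{lem: fiber-cycle} together with Lemma~\ref{lem: reflection-blocks}. Fix a closed point $y\to A$. By Proposition~\ref{prop: relative-cox-construction}, $\mathcal R_y$ is the Cox ring of $X_y\cong F$. Since $F$ is toric, $\mathcal R_y$ is a polynomial ring by Proposition~\ref{prop: polynomial-cox-toric}, and its number of variables is $e=\dim\mathcal V_y$.

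First, average to obtain a $\Gamma$-equivariant, $\Lambda$-graded section $\mathcal V_y\to\mathfrak m_y$ of the quotient map. This uses that $\Gamma$ is finite and acts by graded automorphisms (Lemma~\ref{lem: relative-theta}). By graded Nakayama, the induced map $\mathrm{Sym}(\mathcal V_y)\to\mathcal R_y$ is a surjection. It is then an equivariant graded isomorphism, because both sides are polynomial rings of the same dimension $e$. For Nakayama to apply, a connected positive grading is needed; I will use $\deg_B$ from Definition~\ref{def: relative degree}, or the $H_F$-degree as in Step~3 of Lemma~\ref{lem: fiber-cycle}.

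Second, faithfulness. Let $\gamma\in\Gamma$ act trivially on $\mathcal V_y$. Then, through the isomorphism just constructed, $\gamma$ acts trivially on $\mathcal R_y$, and hence on the fiber of $\widehat{\mathcal X}$ over $y$. That fiber is $U_F$, and it is open in $\mathrm{Spec}\,\mathcal R_y$. Its image in $K$ fixes $X_y$ pointwise. Here I use that $K$ acts faithfully on each fiber: $u_y$ is the quotient by the image of $K$, and by Proposition~\ref{prop: albanese-structure}(3) its degree is $|K|$, because $u$ is flat of degree $|K|$ over $X_g$ in a neighborhood of each fiber. So the image of $\gamma$ in $K$ is trivial, and $\gamma$ lies in $K_B\subseteq S$. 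But $S$ acts freely on $U_F$, so $\gamma=1$.

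Third, the generation by pseudoreflections; this is the step I expect to be the main obstacle. The fiber $\Phi_y\colon U_F\to U_F$ is a finite map with group $\Gamma$, and its quotient is the smooth $U_F$. Since $\overline\Phi$ is an isomorphism compatible with base change, I plan to apply Theorem~\ref{thm: reflection-invariants} in its global form. The quotient $\mathrm{Spec}\,\mathcal R_y^\Gamma$ is identified via \eqref{eq: invariant-cox-sheaf} with $\mathrm{Spec}$ of a twist of $\mathcal R_{g(y)}$ by one-dimensional base factors. That ring is again a polynomial ring, so $\mathrm{Sym}(\mathcal V_y)^\Gamma$ is polynomial. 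Chevalley--Shephard--Todd then says that $\Gamma$, acting faithfully on $\mathcal V_y$, is generated by pseudoreflections. The one point needing care is the compatibility: the isomorphism of \eqref{eq: invariant-cox-sheaf} at $y$ must be checked to be an isomorphism of rings $\mathcal R_y^\Gamma\cong\mathcal R_{g(y)}$ up to regrading by $B$. This follows from the Reynolds-operator argument at the end of Lemma~\ref{lem: relative-theta}. If this route fails, I would fall back on the inertia and simple-connectivity argument of Step~3 of Lemma~\ref{lem: fiber-cycle}, applied to $U_F\to U_F$; that argument needs only Lemma~\ref{lem: cox-characteristic-space}.

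Finally, $\Gamma$ preserves the $\Lambda$-grading, so it preserves the decomposition $\mathcal V_y=\bigoplus_\nu\mathcal V_{y,\nu}$. Lemma~\ref{lem: reflection-blocks} then gives $\prod_\nu\Gamma_\nu\cong\Gamma$ and the tensor decomposition of invariants. Each $\mathrm{Sym}(\mathcal V_{y,\nu})^{\Gamma_\nu}$ is polynomial, since $\Gamma_\nu$ is a reflection group. So basic invariants can be chosen homogeneous inside each block. A polynomial of degree $d$ in elements of Cox degree $\nu$ has Cox degree $d\nu$.
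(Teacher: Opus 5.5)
Your proposal is correct and follows essentially the paper's route. Like the paper, you use an averaged graded section, graded Nakayama and a dimension count to get $\mathrm{Sym}(\mathcal V_y)\cong\mathcal R_y$, then polynomiality of $\mathcal R_y^\Gamma$ via \eqref{eq: invariant-cox-sheaf} with Chevalley--Shephard--Todd, and finally Lemma~\ref{lem: reflection-blocks}. The only variation is in faithfulness. You use flatness of $u$, so that $K$ acts faithfully on $X_y$, together with the free action of $K_B\subseteq S$; the paper instead applies miracle flatness directly to $\Phi$, so that $\Phi_y$ has degree $|\Gamma|$. Both are the same degree count at different levels.
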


\begin{proof}
Fix a closed point $y\to A$. We first check that the action on the fiber remains faithful. Since finite group invariants commute with base change in characteristic zero, the morphism $\Phi_y:\widehat{\mathcal X}_y\to(\widehat{\mathcal X}_g)_y$ is the quotient by $\Gamma$. Here $(\widehat{\mathcal X}_g)_y$ denotes the fiber over $y$ of the composite $\widehat{\mathcal X}_g\to X_g\to A$.

The morphism $\Phi:\widehat{\mathcal X}\to\widehat{\mathcal X}_g$ is finite between smooth schemes of the same dimension and hence is flat by miracle flatness. Its base change $\Phi_y$ over $y$ is therefore finite flat of degree $|\Gamma|$. If a nontrivial subgroup of $\Gamma$ acted trivially on $\widehat{\mathcal X}_y$, then $\Phi_y$ would have generic degree smaller than $|\Gamma|$. Hence the action on every fiberwise characteristic space is faithful.

For each Cox degree $\nu\in\Lambda$, averaging a linear section over the finite group $\Gamma$ gives a $\Gamma$-equivariant splitting of
$$0\to(\mathfrak m_y^2)_\nu\to(\mathfrak m_y)_\nu\to\mathcal V_{y,\nu}\to 0.$$
The function $\deg_B$ defined in Definition~\ref{def: relative degree} is positive on all nonzero Cox degrees occurring in $\mathcal R_y$. Thus products involve factors of smaller degree with respect to $\deg_B$, and induction shows that the chosen lifts generate $\mathcal R_y$. Multiplication gives a $\Gamma$-equivariant $\Lambda$-graded surjection $\mathrm{Sym}(\mathcal V_y)\to\mathcal R_y$. Since $X_y$ is toric, its Cox ring $\mathcal R_y$ is a polynomial ring with $\dim\mathcal V_y$ variables. The two rings therefore have the same dimension, so the kernel is a height-zero prime and hence is zero. This isomorphism also proves that the action on $\mathcal V_y$ is faithful: an element acting trivially on $\mathcal V_y$ acts trivially on $\mathcal R_y$ and hence on the characteristic space $\widehat{\mathcal X}_y$, which is a $\Gamma$-invariant open subset of $\mathrm{Spec}\mathcal R_y$.

To prove the reflection assertion, we first show that $\mathcal R_y^\Gamma$ is a polynomial ring. Lemma~\ref{lem: relative-theta} gives an isomorphism of algebras
$$\mathcal R_y^\Gamma\cong
\bigoplus_{\mu\in\Lambda}(\mathcal R_\mu)_{g(y)}
\otimes(\mathcal L_{A,\mu}^{-1})_y.$$
Choose a nonzero vector in $(\mathcal L_{A,\mu}^{-1})_y$ for each $\mu$ in a basis of $\Lambda$, and extend these choices using tensor products and duals. These choices identify the algebra on the right with $\mathcal R_{g(y)}$, which is a polynomial ring. Hence $\mathcal R_y^\Gamma$ is also a polynomial ring.

Thus $\mathrm{Sym}(\mathcal V_y)^\Gamma$ is a polynomial ring. The polynomial form of Theorem~\ref{thm: reflection-invariants}, applied over the algebraically closed residue field of $y$, shows that $\Gamma$ is generated by pseudoreflections on $\mathcal V_y$.

The characteristic torus commutes with $\Gamma$, so the Cox degree decomposition is preserved. Lemma~\ref{lem: reflection-blocks} gives the product decomposition of $\Gamma$ and the tensor-product decomposition of $\mathrm{Sym}(\mathcal V_y)^\Gamma$. Applying Theorem~\ref{thm: reflection-invariants} to each block gives basic invariants for that block. Their Cox degrees have the asserted form because every variable in a block has the same Cox degree.
\end{proof}

\medskip

The preceding lemma gives pseudoreflections on each fiber. To obtain line bundles from their normal lines, we need to use the same group elements throughout the family. The following lemma makes this possible.

\begin{lemma}\label{lem: uniform-reflections}
Fix a closed point $y_0\to A$. There is a generating set of $\Gamma$ consisting of pseudoreflections on $\mathcal V_{y_0}$ such that every chosen element remains a pseudoreflection on every fiber, with its normal line in the same Cox degree.
\end{lemma}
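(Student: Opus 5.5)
The plan is to show that, for each fixed $\gamma\in\Gamma$, the relevant spectral data of the action on $\mathcal V_y$ do not depend on $y$. Then any pseudoreflection generating set at $y_0$ (which exists by Lemma~\ref{lem: relative-reflection}) will have the required uniform behaviour. The key observation is that $\Gamma$ is a fixed finite group acting $\mathcal O_A$-linearly on the graded sheaf $\mathcal V=\mathfrak m/\mathfrak m^2$. Each graded piece $\mathcal V_\nu$ is locally free, and by the local product description in Proposition~\ref{prop: relative-cox-construction} its formation commutes with base change to closed points. So for each Cox degree $\nu$ and each $\gamma\in\Gamma$ we get an automorphism $\gamma_\nu$ of the vector bundle $\mathcal V_\nu$ over the connected base $A$, and its fiber at $y$ is $\gamma|_{\mathcal V_{y,\nu}}$.

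The main step is to show that the conjugacy class, in particular the eigenvalue multiset, of $\gamma_\nu$ on $\mathcal V_{y,\nu}$ is locally constant in $y$. Since $\gamma$ has finite order $n$, $\gamma_\nu$ satisfies $T^n-1$, which has distinct roots. So $\mathcal V_\nu$ splits as $\bigoplus_\zeta\mathcal V_\nu^{(\zeta)}$ via the projectors $\frac1n\sum_j\zeta^{-j}\gamma_\nu^j$. These projectors are $\mathcal O_A$-linear idempotents, so each eigensheaf is a direct summand of a locally free sheaf, hence locally free, and its formation commutes with base change. On connected $A$ its rank is therefore constant. In particular, $\mathrm{rank}(\gamma-\mathrm{id})|_{\mathcal V_{y,\nu}}=\mathrm{rank}\mathcal V_\nu-\mathrm{rank}\mathcal V_\nu^{(1)}$ is independent of $y$. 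The same holds for the total rank of $\gamma-\mathrm{id}$ on $\mathcal V_y$, summed over the finitely many degrees $\nu$ occurring.

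With this in hand, the construction goes as follows. By Lemma~\ref{lem: relative-reflection} at $y_0$, $\Gamma$ is generated by elements $\gamma_1,\dots,\gamma_k$ that are pseudoreflections on $\mathcal V_{y_0}$, each with normal line in a single degree $\nu_i$. Equivalently, $\mathrm{rank}(\gamma_i-\mathrm{id})$ on $\mathcal V_{y_0,\nu}$ is $1$ for $\nu=\nu_i$ and $0$ otherwise. By the constancy above, the same rank pattern holds at every $y$. Hence $\gamma_i$ is a pseudoreflection on $\mathcal V_y$ with normal line in $\mathcal V_{y,\nu_i}$. Nontriviality and finite order are automatic, since $\gamma_i$ is nontrivial on $\mathcal V_y$. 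The generating property is a statement about the abstract group $\Gamma$ and so does not depend on $y$.

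The only point needing care, and the main obstacle, is verifying that $\mathcal V_\nu$ is locally free with base-change-compatible fibers and that the $\Gamma$-action on it is $\mathcal O_A$-linear. I would get both from the étale-local isomorphism $\mathcal R\cong\mathrm{Cox}(F)\otimes\mathcal O_A$. Under that isomorphism, $\mathfrak m$ and $\mathfrak m^2$ become $\mathfrak m_F\otimes\mathcal O_A$ and $\mathfrak m_F^2\otimes\mathcal O_A$ in each degree. The action's linearity comes from Lemma~\ref{lem: relative-theta}.
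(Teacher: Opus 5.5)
Your proposal is correct and follows essentially the same route as the paper. Both use the eigenspace idempotents $\frac1n\sum_j\zeta^{-j}\gamma^j$ on each homogeneous piece of $\mathcal V$, which give direct summands of constant rank over the connected base $A$, so the rank pattern of $\gamma-\mathrm{id}$ observed at $y_0$ persists on every fiber. Your extra justification of local freeness and base change via the \'{e}tale-local identification $\mathcal R\cong\mathrm{Cox}(F)\otimes\mathcal O_A$ only makes explicit what the paper takes from the surrounding setup.
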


\begin{proof}
Choose a generating set of pseudoreflections on $\mathcal V_{y_0}$, which exists by Lemma~\ref{lem: relative-reflection}. Let $\gamma$ be one of these elements and let $n$ be its order. For every $n$-th root of unity $\xi$, define the idempotent
$$e_\xi= \frac{1}{n}\sum_{j=0}^{n-1}\xi^{-j}\gamma^j.$$
Its image is the $\xi$-eigenspace of $\gamma$. It preserves every homogeneous piece of $\mathcal V$, and its image is a direct summand. The rank of this image on each homogeneous piece is therefore locally constant. Since $A$ is connected, all eigenvalue multiplicities agree with those on $\mathcal V_{y_0}$. On that fiber, there is exactly one eigenvalue different from $1$, and its eigenspace is one-dimensional. Thus $\gamma$ remains a pseudoreflection on every fiber. The same rank argument on the homogeneous pieces shows that its normal line has the same Cox degree on every fiber.
\end{proof}

\medskip

\subsection{Cox generators and toric reconstruction}

Recall that $\mathcal R= \bigoplus_{\lambda\in\Lambda}\mathcal R_\lambda$ is the relative Cox algebra, graded by $\Lambda= \mathrm{Pic}(F)$, and that the finite group $\Gamma$ constructed in Lemma~\ref{lem: relative-theta} acts by graded $\mathcal O_A$-algebra automorphisms. The ideal $\mathfrak m$ is generated by the pieces of nonzero Cox degree, and $\mathcal V= \mathfrak m/\mathfrak m^2$ records homogeneous generators modulo products. We want to find homogeneous line subbundles of $\mathfrak m$ whose images decompose $\mathcal V$ and which freely generate $\mathcal R$. Pseudoreflections will give lifts of the nontrivial isotypic components. To lift the fixed part, we first compare the generators of $\mathcal R^\Gamma$ with those of $\mathcal R$.

In this comparison, $g:A\to A$ is the isogeny induced by $f$, and $B:\Lambda\to\Lambda$ represents pullback by the fiber maps $X_y\to X_{g(y)}$. The line bundles $\mathcal L_{A,\mu}$ on $A$ are the base factors in the multiplicatively compatible isomorphisms
$$f^*\mathcal L_\mu\cong\mathcal L_{B\mu}\otimes\mathrm{alb}_X^*\mathcal L_{A,\mu}.$$
Here $\mathcal L_\mu$ represents $\mu$ on each fiber and $\mathcal R_\mu=(\mathrm{alb}_X)_*\mathcal L_\mu$.

\begin{lemma}\label{lem: invariant-indecomposables}
Let $\mathfrak m^\Gamma$ be the ideal of invariant sections in $\mathfrak m$, equivalently the ideal of nonzero Cox degrees in $\mathcal R^\Gamma$. The indecomposable quotient of $\mathcal R^\Gamma$ is $\mathfrak m^\Gamma/(\mathfrak m^\Gamma)^2$. For every $\mu\in\Lambda$, there is an isomorphism
\begin{equation}\label{eq: invariant-indecomposables}
\bigl(\mathfrak m^\Gamma/(\mathfrak m^\Gamma)^2\bigr)_{B\mu}\cong g^*\mathcal V_\mu\otimes\mathcal L_{A,\mu}^{-1}.
\end{equation}

The natural map $\mathfrak m^\Gamma/(\mathfrak m^\Gamma)^2\to\mathcal V$ induced by $\mathcal R^\Gamma\subseteq\mathcal R$ has image $\mathcal V^\Gamma$. Both assertions are compatible with base change to closed points.
\end{lemma}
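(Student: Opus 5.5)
The plan is to derive both assertions from the isomorphism \eqref{eq: invariant-cox-sheaf} of Lemma~\ref{lem: relative-theta}, together with the Reynolds operator $e_\Gamma$.

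\textbf{Indecomposable quotient of $\mathcal R^\Gamma$.} First, $\mathcal R^\Gamma$ is a $\Lambda$-graded subalgebra with degree-zero piece $(\mathcal R_0)^\Gamma=\mathcal O_A$. Its homogeneous ideal of nonzero Cox degrees is therefore $\mathfrak m\cap\mathcal R^\Gamma=\mathfrak m^\Gamma$, and its indecomposable quotient is $\mathfrak m^\Gamma/(\mathfrak m^\Gamma)^2$ by definition.

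\textbf{The isomorphism \eqref{eq: invariant-indecomposables}.} The isomorphism \eqref{eq: invariant-cox-sheaf} is an isomorphism of graded algebras
\[
\mathcal R^\Gamma\cong\bigoplus_{\mu}g^*\mathcal R_\mu\otimes\mathcal L_{A,\mu}^{-1},
\]
compatible with multiplication. Here the multiplicative compatibility of the $\mathcal L_{A,\mu}$ makes the right side the algebra $g^*\mathcal R$ twisted by a "character" of $\Lambda$ valued in line bundles. This twisting is harmless: it carries the ideal of nonzero degrees to $g^*\mathfrak m$ twisted, and its square to $g^*\mathfrak m^2$ twisted. To see this, note that $g$ is flat, so $g^*$ commutes with forming $\mathfrak m$, $\mathfrak m^2$ and quotients. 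Moreover, locally on $A$, trivializing the $\mathcal L_{A,\mu}$ multiplicatively for a basis of $\Lambda$ identifies the twisted algebra with $g^*\mathcal R$ as graded algebras. Taking the degree-$\mu$ piece of the indecomposable quotient, which sits in Cox degree $B\mu$ of $\mathcal R^\Gamma$, then gives $g^*\mathcal V_\mu\otimes\mathcal L_{A,\mu}^{-1}$.

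\textbf{The image is $\mathcal V^\Gamma$.} The map $\mathfrak m^\Gamma\to\mathfrak m\to\mathcal V$ is $\Gamma$-equivariant with $\Gamma$ acting trivially on the source, so its image lies in $\mathcal V^\Gamma$. For the reverse inclusion, $e_\Gamma$ is $\mathcal O_A$-linear, preserves $\mathfrak m$ and $\mathfrak m^2$, and commutes with the projection $\mathfrak m\to\mathcal V$. Given a local section $v$ of $\mathcal V^\Gamma$, lift it to $x\in\mathfrak m$. Then $e_\Gamma x\in\mathfrak m^\Gamma$ maps to $e_\Gamma v=v$. Since $e_\Gamma$ is an idempotent splitting, this works on local sections and shows that the image is exactly $\mathcal V^\Gamma$. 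The map need not be injective, because invariant decomposables of $\mathcal R$ may be indecomposable in $\mathcal R^\Gamma$; the lemma does not claim injectivity.

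\textbf{Base change.} Compatibility with base change to a closed point $y$ follows from three facts:
\begin{enumerate}
\item $e_\Gamma$ commutes with base change, so $(\mathcal R^\Gamma)_y=(\mathcal R_y)^\Gamma$, as already shown in Lemma~\ref{lem: relative-theta};
\item each graded piece of $\mathcal R$, $\mathfrak m$, $\mathfrak m^2$ and $\mathcal V$ is locally free, hence commutes with base change, by the local product description in Proposition~\ref{prop: relative-cox-construction};
\item for $\mathcal R^\Gamma$, which is locally of the form $g^*\mathcal R$, the same local freeness holds.
\end{enumerate}
The surjectivity argument via $e_\Gamma$ is pointwise identical.

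\textbf{Main obstacle.} The main subtlety is the local freeness of $\mathfrak m^2$ and of $\mathcal V$ in each degree, so that the quotient formation commutes with base change. I would handle this through the étale-local isomorphism $\mathcal R\cong\mathrm{Cox}(F)\otimes\mathcal O_A$, under which every sheaf in question is a constant vector space tensored with $\mathcal O_A$.
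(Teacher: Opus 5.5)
Your proposal is correct and follows essentially the same route as the paper: you transport $\mathfrak m^\Gamma$ and $(\mathfrak m^\Gamma)^2$ through the isomorphism \eqref{eq: invariant-cox-sheaf} using flatness of $g$ and the multiplicativity of the $\mathcal L_{A,\mu}$, you obtain surjectivity onto $\mathcal V^\Gamma$ by averaging over $\Gamma$, and you deduce base-change compatibility from local freeness and the Reynolds projector. The one point worth stating explicitly is that injectivity of $B$ is what makes the ideal of nonzero degrees of $\mathcal R^\Gamma$ correspond exactly to the summands with $\mu\neq 0$.
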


\begin{proof}
By Lemma~\ref{lem: relative-theta}, the graded algebra $\mathcal R^\Gamma$ is identified with $\bigoplus_{\mu\in\Lambda}g^*\mathcal R_\mu\otimes\mathcal L_{A,\mu}^{-1}$, where the summand indexed by $\mu$ has degree $B\mu$. Under this identification, the degree $B\mu$ part of the homogeneous ideal $\mathfrak m^\Gamma$ is $g^*\mathfrak m_{\mu}\otimes\mathcal L_{A,\mu}^{-1}$. The corresponding part of its square is $g^*(\mathfrak m^2)_\mu\otimes\mathcal L_{A,\mu}^{-1}$. Here we use the flatness of $g$ and the multiplicativity of the $\mathcal L_{A,\mu}$. Taking the quotient proves \eqref{eq: invariant-indecomposables}.

Since $(\mathfrak m^\Gamma)^2\subseteq\mathfrak m^2$, the inclusion $\mathfrak m^\Gamma\subseteq\mathfrak m$ induces a map $\mathfrak m^\Gamma/(\mathfrak m^\Gamma)^2\to\mathcal V$. Taking invariants in $0\to\mathfrak m^2\to\mathfrak m\to\mathcal V\to 0$ is exact by averaging over $\Gamma$, so $\mathfrak m^\Gamma\to\mathcal V^\Gamma$ is surjective. Thus the induced map has image $\mathcal V^\Gamma$. Local freeness and Reynolds projectors show that these constructions are compatible with base change to closed points.
\end{proof}

On a fiber, the map $\bigl(\mathfrak m^\Gamma/(\mathfrak m^\Gamma)^2\bigr)_y\to(\mathcal V_y)^\Gamma$ need not be injective: in the polynomial presentation $\mathcal R_y\cong\mathrm{Sym}(\mathcal V_y)$, a basic invariant of polynomial degree at least two has zero image in $\mathcal V_y$. By the blockwise description in Lemma~\ref{lem: relative-reflection}, a basic invariant of polynomial degree $d$ in a block of Cox degree $\nu$ has Cox degree $d\nu$. Recall that a nonzero element of $\Lambda$ is \emph{primitive} if it is not divisible by any integer greater than one. The following primitivity property will exclude these nonlinear invariants in the degrees occurring in $\mathcal V$.

\begin{lemma}\label{lem: primitive-cox-degree}
Every Cox degree occurring in $\mathcal V$ is primitive in $\Lambda$.
\end{lemma}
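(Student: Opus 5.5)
The statement is fiberwise: the Cox degrees occurring in $\mathcal V$ are, by the local product presentations of Proposition~\ref{prop: relative-cox-construction}, exactly the Cox degrees of the Cox variables of the fixed smooth projective toric variety $F$. So I will reduce to the claim that, for a smooth complete toric variety $F$ with fan $\Sigma$, the class $[D_\rho]\in\mathrm{Pic}(F)=\Lambda$ of every torus-invariant prime divisor $D_\rho$ is primitive. The rest of the plan is purely toric.

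First I check that $[D_\rho]\neq 0$. This holds because $D_\rho$ is a nonzero effective divisor on a projective variety, which is also consistent with $\mathrm{Cox}(F)_0=\mathbb C$.

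For primitivity I will use the exact sequence $0\to M\to\mathbb Z^{\Sigma(1)}\to\Lambda\to0$ of \cite[Section~3.4]{Fult93}. Fix a ray $\rho$ and choose a maximal cone $\sigma\in\Sigma$ containing $\rho$. Smoothness makes the primitive generators $v_{\rho}=v_1,v_2,\ldots,v_n$ of $\sigma$ a lattice basis of $N$. Let $m_1,\ldots,m_n$ be the dual basis of $M$. Using the relations $\mathrm{div}(\chi^{m_i})=\sum_{\rho'}\langle m_i,v_{\rho'}\rangle D_{\rho'}$, the classes $[D_{\rho'}]$ for rays $\rho'\not\subseteq\sigma$ form a $\mathbb Z$-basis of $\Lambda$, and each $[D_{v_i}]$ is expressed as $-\sum_{\rho'\not\subseteq\sigma}\langle m_i,v_{\rho'}\rangle[D_{\rho'}]$. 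So primitivity of $[D_\rho]$ amounts to the coefficients $\langle m_1,v_{\rho'}\rangle$, over the rays $\rho'\not\subseteq\sigma$, having greatest common divisor $1$.

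To get this, I will pass to the adjacent maximal cone across the facet $\tau$ of $\sigma$ opposite to $v_1$. By completeness, this cone is generated by $v_2,\ldots,v_n$ and some $v_{\rho'}$ with $\rho'\not\subseteq\sigma$. Its smoothness forces $v_{\rho'}\equiv \pm v_1$ modulo $\langle v_2,\ldots,v_n\rangle$, so $\langle m_1,v_{\rho'}\rangle=\pm1$. Because the two cones lie on opposite sides of $\tau$, the sign is in fact $-1$, exactly as in the proof of Theorem~\ref{thm: projective-space}. That coefficient $\pm1$ already gives gcd $1$, so $[D_\rho]$ is primitive.

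The main obstacle is bookkeeping rather than depth. I have to make sure the basis description of $\Lambda$ is correct, namely that $\mathbb Z^{\Sigma(1)\setminus\sigma(1)}\to\Lambda$ is an isomorphism, which follows because $M\to\mathbb Z^{\sigma(1)}$ is an isomorphism for a smooth cone. I also have to confirm that the degrees appearing in $\mathcal V$ are precisely the $[D_\rho]$, which holds since $\mathrm{Cox}(F)$ is the polynomial ring on the $x_\rho$ with $\deg x_\rho=[D_\rho]$ \cite[Section~1]{Cox95}, so $\mathcal V_y$ has a basis given by the classes of these variables.
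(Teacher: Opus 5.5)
Your proof is correct and is essentially the paper's argument written in coordinates. The paper takes the same maximal cone $\sigma\ni\rho$ and the facet $\tau$ of $\sigma$ opposite $\rho$, and notes that the homomorphism $\Lambda\to\mathbb Z$, $\lambda\mapsto\lambda\cdot V(\tau)$, takes the value $D_\rho\cdot V(\tau)=1$ by smoothness, which rules out divisibility of $[D_\rho]$. In your basis $\{[D_{\rho'}]\}_{\rho'\not\subseteq\sigma}$ this functional is exactly the coordinate along $[D_{v'}]$, where $v'$ is the extra ray of the adjacent cone, so your coefficient $-\langle m_1,v'\rangle=1$ and the paper's unit intersection number are the same fact; the paper's phrasing simply avoids the basis bookkeeping.
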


\begin{proof}
The degrees occurring in $\mathcal V$ are the classes of the torus-invariant prime divisors of $F$. Choose such a divisor $D_\rho$ with $[D_\rho]= \lambda$, where $\rho$ is a ray of the fan of $F$. Choose a maximal cone $\sigma$ containing $\rho$, and let $\tau$ be the facet of $\sigma$ generated by its remaining rays. The invariant curve $V(\tau)$ meets $D_\rho$ transversely at the torus fixed point corresponding to $\sigma$. Smoothness gives
$$D_\rho\cdot V(\tau)= 1.$$
If $[D_\rho]$ were divisible by an integer $d>1$ in $\mathrm{Pic}(F)$, then the intersection number $D_\rho\cdot V(\tau)$ would be divisible by $d$, a contradiction.
\end{proof}

\medskip

We now construct the generating line bundles. The preceding lemmas will identify the fixed generator classes with pullbacks of classes of smaller degree with respect to the grading in Definition~\ref{def: relative degree}.

\begin{proposition}\label{prop: relative-generators}
There is a homogeneous vector subbundle $\mathcal M\subseteq\mathfrak m$ which is a direct sum of line bundles and whose natural projection $\mathcal M\to\mathcal V= \mathfrak m/\mathfrak m^2$ is an isomorphism. Moreover, there is an isomorphism of $\Lambda$-graded algebras
\begin{equation}\label{eq: relative polynomial cox}
\mathcal R\cong\mathrm{Sym}_{\mathcal O_A}(\mathcal M).
\end{equation}
\end{proposition}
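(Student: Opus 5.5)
We build $\mathcal M$ degree by degree, by induction on $\deg_B$ (Definition~\ref{def: relative degree}). For each Cox degree $\nu$ occurring in $\mathcal V$, we lift the locally free piece $\mathcal V_\nu$ to a direct sum of line subbundles of $\mathfrak m_\nu$. We treat the non-invariant and the invariant parts of $\mathcal V_\nu$ separately.

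First we split $\mathcal V_\nu$ using the action of $\Gamma$. By Lemma~\ref{lem: uniform-reflections}, fix a generating set of pseudoreflections that stay pseudoreflections on every fiber, each with normal line in a fixed Cox degree. For such a $\gamma$ with normal line in degree $\nu$, the eigen-idempotent $e_\xi$ for its nontrivial eigenvalue $\xi$ acts on $\mathfrak m_\nu$ as well as on $\mathcal V_\nu$. On $\mathcal V_\nu$ its image is a line subbundle. On $\mathfrak m_\nu$ its image is a direct summand whose fiber at $y$ maps isomorphically onto the normal line. Indeed, $(\mathfrak m^2)_\nu$ consists of products of lower degrees, and under $\mathcal R_y\cong \mathrm{Sym}(\mathcal V_y)$ of Lemma~\ref{lem: relative-reflection} a product in degree $\nu$ whose factors are all $\Gamma_\nu$-invariant is killed by $e_\xi$. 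Because $\gamma$ lies in $\Gamma_\nu$, it fixes every other block, which gives this vanishing. So these images are line subbundles of $\mathfrak m_\nu$ lifting the normal lines. The normal lines of the chosen reflections in degree $\nu$ span the non-fixed part of $\mathcal V_{y,\nu}$. Picking a basis among them is locally constant in $y$, since ranks of sums of these line subbundles are locally constant on the connected base $A$. This lifts $\mathcal V_\nu$ modulo $\mathcal V_\nu^{\Gamma}$.

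Next we lift the fixed part $\mathcal V_\nu^\Gamma$. By Lemma~\ref{lem: invariant-indecomposables}, it is the image of $(\mathfrak m^\Gamma/(\mathfrak m^\Gamma)^2)_\nu$. By Lemma~\ref{lem: primitive-cox-degree}, $\nu$ is primitive, so among the basic invariants only the linear ones, of polynomial degree $1$, can have Cox degree $\nu$. Hence, fiberwise, the map $(\mathfrak m^\Gamma/(\mathfrak m^\Gamma)^2)_\nu\to \mathcal V_\nu^\Gamma$ is an isomorphism. If $\nu\notin B\Lambda$, this piece vanishes. If $\nu=B\mu$, then \eqref{eq: invariant-indecomposables} identifies it with $g^*\mathcal V_\mu\otimes\mathcal L_{A,\mu}^{-1}$. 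Here $\mu$ is a nonzero effective class, so \eqref{eq: relative-degree-increase} gives $\deg_B(\mu)<\deg_B(\nu)$. By induction, $\mathcal V_\mu$ already has a splitting $\mathcal M_\mu\subseteq\mathfrak m_\mu$ as a sum of line bundles. Pulling back through the identification $\mathcal R^\Gamma_{B\mu}\cong g^*\mathcal R_\mu\otimes\mathcal L_{A,\mu}^{-1}$ of Lemma~\ref{lem: relative-theta} gives a line-bundle-decomposed subbundle $g^*\mathcal M_\mu\otimes \mathcal L_{A,\mu}^{-1}\subseteq\mathfrak m^\Gamma_\nu\subseteq \mathfrak m_\nu$ lifting $\mathcal V_\nu^\Gamma$. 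The induction starts at the minimal values of $\deg_B$, which are positive on nonzero effective classes. The main point to check is this step: the compatibility of the identification with base change, so that the lift really is a subbundle mapping isomorphically onto $\mathcal V^\Gamma_\nu$ on every fiber. I expect this bookkeeping, rather than any new idea, to be the obstacle. A separate check is that the two parts do not interfere: the invariant lift lies in $\mathfrak m^\Gamma$, while the reflection eigenlines lie in nontrivial isotypic pieces, so their sum is direct and surjects onto $\mathcal V_\nu$.

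Finally, let $\mathcal M=\bigoplus_\nu\mathcal M_\nu$. Then $\mathcal M\to\mathcal V$ is a fiberwise isomorphism of bundles of equal rank, hence an isomorphism. The induced graded map $\mathrm{Sym}_{\mathcal O_A}(\mathcal M)\to\mathcal R$ is, on each fiber over a closed point $y$, the presentation $\mathrm{Sym}(\mathcal V_y)\cong\mathcal R_y$. This uses graded Nakayama and Lemma~\ref{lem: relative-reflection}. Each graded piece on either side is locally free of finite rank, with base change compatible by the discussion after Proposition~\ref{prop: relative-cox-construction}. A map of locally free sheaves that is an isomorphism on all fibers is an isomorphism, which gives \eqref{eq: relative polynomial cox}.
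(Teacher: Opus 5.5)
Your proposal is correct and follows the paper's proof essentially step for step. You lift the nontrivial isotypic part through eigen-idempotents of the uniform pseudoreflections, lift the fixed part $\mathcal V_\nu^\Gamma$ by induction on $\deg_B$ using primitivity and $g^*\mathcal M_\mu\otimes\mathcal L_{A,\mu}^{-1}\subseteq(\mathcal R^\Gamma)_{B\mu}$, and finish with a fiberwise polynomial-ring comparison of locally free graded pieces. The one step you state too quickly is that ranks of sums of the normal line subbundles are locally constant: this is false for arbitrary line subbundles, but it holds here because the fiberwise $\Gamma$-module type of $\mathcal V_\nu$ is constant, which the paper makes explicit by writing each nontrivial isotypic piece as $\mathcal A_W\otimes W$ with $\mathcal A_W$ a line bundle, so that each normal line is $\mathcal A_W\otimes\mathbb C w_\gamma$ with $w_\gamma\in W$ independent of the fiber.
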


\begin{proof}
Only finitely many Cox degrees occur in $\mathcal V$. Fix a closed point $y_0\to A$ and a generating set of $\Gamma$ as in Lemma~\ref{lem: uniform-reflections}. Then each chosen element is a pseudoreflection on every fiber, with its normal line in a fixed Cox degree. Recall that the function $\deg_B$ defined in Definition~\ref{def: relative degree} is positive on nonzero effective classes and satisfies $\deg_B(B\mu) > \deg_B(\mu)$ for every such class $\mu$. We proceed by induction on $\deg_B(\lambda)$, keeping the homogeneous pieces indexed by their Cox degrees. Fix a Cox degree $\lambda$ at the current stage and write
$$\mathcal V_\lambda= \mathcal E_\lambda\oplus\mathcal W_\lambda,\qquad \mathcal E_\lambda= \mathcal V_\lambda^\Gamma.$$
Here $\mathcal W_\lambda$ is the kernel of the Reynolds projector $|\Gamma|^{-1}\sum_{\gamma\in\Gamma}\gamma$, hence the sum of the nontrivial isotypic components.

\noindent\textbf{Step 1.} We lift $\mathcal W_\lambda$ to a direct sum of line bundles in $\mathcal R_\lambda$.

We first examine the multiplicities of the nontrivial representations. For every nontrivial irreducible $\Gamma$-module $W$, put $\mathcal A_W= \mathcal{H}\!om_\Gamma(W\otimes \mathcal O_A,\mathcal V_\lambda)$. Its rank is the multiplicity of $W$ in a fiber of $\mathcal V_\lambda$. Reynolds projectors show that each $\mathcal A_W$ is locally free and that the evaluation maps give the isotypic decomposition
$$\mathcal W_\lambda= \bigoplus_W\mathcal A_W\otimes W.$$
Suppose that $\mathcal A_W\neq 0$. Since the chosen pseudoreflections generate $\Gamma$, some chosen pseudoreflection $\gamma$ acts nontrivially on $W$. By Lemma~\ref{lem: uniform-reflections}, on every fiber $y$ we have
$$1= \mathrm{rank}((\gamma-\mathrm{id})|_{\mathcal V_y})\geq\mathrm{rank}(\mathcal A_W)\,\mathrm{rank}((\gamma-\mathrm{id})|_W) > 0.$$
Both positive factors on the right are one. Thus every nonzero $\mathcal A_W$ is a line bundle, and $\gamma$ acts trivially on all other isotypic summands of $\mathcal V$.

Next, we lift the normal lines of the reflections from $\mathcal V_\lambda$ to $\mathcal R_\lambda$. For a pseudoreflection $\gamma$ in the fixed generating set whose normal line has Cox degree $\lambda$, let $\zeta\neq 1$ be its nontrivial eigenvalue. Let $\mathcal H_\gamma\subseteq\mathcal R_\lambda$ be the image of the projector onto the $\zeta$-eigenspace. This idempotent makes $\mathcal H_\gamma$ a direct summand and commutes with base change to closed points.

On a fiber over $y$, the equivariant polynomial presentation in Lemma~\ref{lem: relative-reflection} shows that $\gamma$ multiplies one variable $x_y$ of Cox degree $\lambda$ by $\zeta$ and fixes all other variables. A monomial on which $\gamma$ acts by multiplication by $\zeta$ must contain $x_y$. If it is nonlinear, its degree with respect to $\deg_B$ is strictly larger than $\deg_B(\lambda)$ because $\deg_B$ is positive on the Cox degrees of all variables. It therefore cannot have Cox degree $\lambda$. Thus the $\zeta$-eigenspace in $(\mathcal R_y)_\lambda$ is one-dimensional, spanned by $x_y$.
It follows that $\mathcal H_\gamma$ is a line bundle and maps isomorphically onto the normal eigenline in $\mathcal V_\lambda$ on every fiber.

We now lift an entire isotypic summand. Fix a nontrivial irreducible $\Gamma$-module $W$ with $\mathcal A_W\neq 0$. For every pseudoreflection $\gamma$ in the fixed generating set that acts nontrivially on $W$, choose a nonzero vector $w_\gamma\in W$ spanning $\mathrm{im}((\gamma-\mathrm{id})|_W)$. The corresponding normal line subbundle of $\mathcal V_\lambda$ is $\mathcal A_W\otimes \mathbb{C}w_\gamma$.

Choose a $\Gamma$-invariant Hermitian form on $W$. If the normal vectors $w_\gamma$ did not span $W$, their orthogonal complement would be fixed by every generating pseudoreflection: those acting nontrivially on $W$ fix this complement, and the others fix all of $W$. The nonzero complement would therefore lie in $W^\Gamma$, contradicting $W^\Gamma = 0$.

Among these pseudoreflections, choose $\gamma_1,\ldots,\gamma_{\dim W}$ such that $w_{\gamma_1},\ldots,w_{\gamma_{\dim W}}$ form a basis of $W$. Then the direct sum of the corresponding line bundles $\mathcal H_{\gamma_j}$ maps isomorphically onto $\mathcal A_W\otimes_{\mathbb{C}}W$. Repeating this for every nontrivial isotypic summand gives a direct sum of line bundles in $\mathcal R_\lambda$ mapping isomorphically onto $\mathcal W_\lambda$.

\medskip

\noindent\textbf{Step 2.} We lift the invariant subbundle $\mathcal E_\lambda$ by induction on $\deg_B(\lambda)$.

If $\mathcal E_\lambda=0$, we are done. We may assume that $\mathcal E_\lambda\neq 0$. Recall that the central subgroup $K_B\subseteq\Gamma$ is $\mathrm{Hom}(\Lambda/B\Lambda,\mathbb C^*)$. It acts on $\mathcal V_\lambda$ through the inverse of the character given by the class of $\lambda$ in $\Lambda/B\Lambda$. Since $\mathcal E_\lambda$ consists of fixed vectors, this character is trivial. Hence $\lambda\in B\Lambda$. Because $B$ is injective, there is a unique $\mu\in\Lambda$ such that $\lambda = B\mu$.

Recall that the quotient $\mathfrak m^\Gamma/(\mathfrak m^\Gamma)^2$ records the generator classes of the invariant algebra. The degree $\lambda$ part of the natural map $\mathfrak m^\Gamma/(\mathfrak m^\Gamma)^2\to\mathcal V$ has image $\mathcal E_\lambda$, so $\bigl(\mathfrak m^\Gamma/(\mathfrak m^\Gamma)^2\bigr)_\lambda\neq 0$. Equation~\eqref{eq: invariant-indecomposables} then gives $\mathcal V_\mu\neq 0$. Thus $\mu$ is a nonzero effective degree occurring in $\mathcal V$, and \eqref{eq: relative-degree-increase} gives
$$0 < \deg_B(\mu) < \deg_B(B\mu) = \deg_B(\lambda).$$

To recover all of $\mathcal E_\lambda$ from the smaller degree $\mu$, we show that the surjection $\bigl(\mathfrak m^\Gamma/(\mathfrak m^\Gamma)^2\bigr)_\lambda\to\mathcal E_\lambda$ is an isomorphism. On a fiber, Lemma~\ref{lem: relative-reflection} identifies $\mathcal R_y$ with $\mathrm{Sym}(\mathcal V_y)$ and provides blockwise basic invariants. Their classes form a basis of the indecomposable quotient of $\mathcal R_y^\Gamma$.

A basic invariant of polynomial degree one maps to its class in $\mathcal V_y^\Gamma$. One of polynomial degree $d\geq2$ lies in $\mathfrak m_y^2$ and therefore maps to zero in $\mathcal V_y$. If its block has Cox degree $\nu$, its Cox degree is $d\nu$. Consequently, on each fiber, the kernel of $\bigl(\mathfrak m^\Gamma/(\mathfrak m^\Gamma)^2\bigr)_\lambda\to\mathcal E_\lambda$ is spanned by the classes of basic invariants satisfying $\lambda=d\nu$ with $d\geq 2$.

Since $\mathcal E_\lambda\neq 0$, we have $\mathcal V_\lambda\neq 0$. Lemma~\ref{lem: primitive-cox-degree} says that $\lambda$ is primitive, so an equality $\lambda=d\nu$ with $d\geq 2$ is impossible. Thus $\bigl(\mathfrak m^\Gamma/(\mathfrak m^\Gamma)^2\bigr)_\lambda\to\mathcal E_\lambda$ is an isomorphism on every fiber. Both sheaves are locally free: this follows for $\mathcal E_\lambda$ from the Reynolds projector and for $\bigl(\mathfrak m^\Gamma/(\mathfrak m^\Gamma)^2\bigr)_\lambda$ from \eqref{eq: invariant-indecomposables}. The map commutes with base change to closed points, so it is an isomorphism of vector bundles. Combining it with \eqref{eq: invariant-indecomposables} gives
$$g^*\mathcal V_\mu\otimes\mathcal L_{A,\mu}^{-1}\cong\mathcal E_{B\mu}.$$

If $\deg_B(\lambda)$ is minimal among the values $\deg_B(\nu)$ with $\mathcal V_\nu\neq 0$, then $\mathcal E_\lambda$ must be zero. Otherwise, the preceding argument would produce a class $\mu$ with $\mathcal V_\mu\neq 0$ and $\deg_B(\mu)<\deg_B(\lambda)$, a contradiction. The construction in Step~1 therefore gives the base case of the induction.

At a later stage of the induction, suppose that $\mathcal E_\lambda\neq 0$. Since $\deg_B(\mu)<\deg_B(\lambda)$, a direct sum of line bundles $\mathcal M_\mu\subseteq\mathcal R_\mu$ has already been constructed and maps isomorphically to $\mathcal V_\mu$.

The invariant algebra isomorphism \eqref{eq: invariant-cox-sheaf} gives the inclusions
$$g^*\mathcal M_\mu\otimes\mathcal L_{A,\mu}^{-1}\hookrightarrow g^*\mathcal R_\mu\otimes\mathcal L_{A,\mu}^{-1}\cong(\mathcal R^\Gamma)_{B\mu}\hookrightarrow\mathcal R_{B\mu}.$$
By the identification $g^*\mathcal V_\mu\otimes\mathcal L_{A,\mu}^{-1}\cong\mathcal E_{B\mu}$, the image maps isomorphically onto $\mathcal E_{B\mu}$. It is a direct sum of line bundles and gives the required lift of the fixed part.

\medskip

\noindent\textbf{Step 3.} We define $\mathcal M$ as the direct sum
of these lifts and prove that it freely generates $\mathcal R$.

Taking the direct sum over all degrees gives a direct sum of line bundles $\mathcal M\subseteq\mathfrak m$ which maps isomorphically to $\mathcal V$. In fact, the image of any homogeneous splitting of $\mathfrak m\to\mathcal V$ freely generates $\mathcal R$. To prove this, consider the graded multiplication homomorphism
$$\mathrm{Sym}_{\mathcal O_A}(\mathcal M)\to\mathcal R.$$
The map is surjective by induction on $\deg_B(\lambda)$ for sections of Cox degree $\lambda$, with $\deg_B$ as in Definition~\ref{def: relative degree}. Indeed, subtracting the chosen lift of the indecomposable class of a homogeneous local section leaves an element of $\mathfrak m^2$. This remainder is a sum of products, and every factor has Cox degree $\nu$ with $0 < \deg_B(\nu) < \deg_B(\lambda)$.

To check that this surjection is an isomorphism globally, fix $d\geq 0$ and consider the sum of the Cox graded pieces with $\deg_B(\lambda) = d$ on each side. Since there are finitely many Cox variable degrees and $\deg_B$ is positive on each of them, only finitely many monomials contribute to these pieces. Surjectivity shows that only the Cox degrees of these monomials can occur in $\bigoplus_{\deg_B(\lambda)=d}\mathcal R_\lambda$. Thus the pieces on both sides are finite direct sums of vector bundles.

On every fiber, the algebra map is a surjection between polynomial rings of the same dimension. Its kernel is a prime ideal of height zero and is therefore zero. Hence, for each $d$, the induced map on the pieces of degree $d$ with respect to $\deg_B$ is an isomorphism on every fiber. Since these pieces are vector bundles, this map is an isomorphism over $A$. Taking the direct sum over $d$ proves \eqref{eq: relative polynomial cox}.
\end{proof}

We now recover the toric fibration from these line bundles. Local frames give diagonal changes of Cox coordinates, which descend to automorphisms of $F$ induced by its dense torus.

\begin{proposition}\label{prop: cox-reconstruction}
There are a principal $T$-bundle $P\to A$ and an isomorphism over $A$
$$X\cong P\times^T F,$$
where $T$ is the dense torus of the fixed toric fiber $F$.
\end{proposition}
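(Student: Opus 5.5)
The plan is to use Proposition~\ref{prop: relative-generators} to write $\mathcal R\cong\mathrm{Sym}_{\mathcal O_A}(\mathcal M)$ with $\mathcal M=\bigoplus_{\rho}\mathcal M_\rho$. Here the summands are line bundles indexed by the $e$ torus-invariant prime divisors (rays $\rho$) of $F$, and $\mathcal M_\rho$ has Cox degree $\lambda_\rho=[D_\rho]$. I then recover $X$ as a relative Cox quotient of the rank-$e$ vector bundle $\mathbb V:=\mathrm{Spec}_A\mathrm{Sym}(\mathcal M)$.

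First, choose a Zariski open cover $\{A_i\}$ of $A$ on which every $\mathcal M_\rho$ is trivial, with frames $x_{\rho,i}$. On overlaps, $x_{\rho,i}=t_{\rho,ij}\,x_{\rho,j}$ with $t_{\rho,ij}\in\mathcal O^*(A_{ij})$. These frames give graded isomorphisms $\mathcal R|_{A_i}\cong\mathrm{Cox}(F)\otimes\mathcal O_{A_i}$ sending $x_{\rho,i}$ to the Cox variable $x_\rho$. By Proposition~\ref{prop: relative-cox-construction}, the open subset $\widehat{\mathcal X}\subseteq\mathrm{Spec}_A\mathcal R$ restricts over $A_i$ to $U_F\times A_i$. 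This needs one check: under this new trivialization, the open subset is still cut out by the irrelevant ideal. That holds because the irrelevant locus is defined intrinsically fiberwise, as the complement of the characteristic space, i.e.\ by the monomials $x^{\hat\sigma}$, and these are preserved by any graded automorphism of $\mathrm{Cox}(F)$ that sends each variable to a multiple of itself. Taking the quotient by $S$ then gives $X|_{A_i}\cong F\times A_i$. The transition maps are the diagonal automorphisms $x_\rho\mapsto t_{\rho,ij}x_\rho$ of $\mathbf A^e$.

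Next, identify these transitions with torus translations. The diagonal torus $\mathbf G_m^e=\mathrm{Hom}(\mathbb Z^{\Sigma(1)},\mathbf G_m)$ fits into the exact sequence $1\to T\to\mathbf G_m^e\to S\to1$. This sequence is dual to $0\to M\to\mathbb Z^{\Sigma(1)}\to\Lambda\to0$, and it is exact since $F$ is smooth and complete. The subgroup $S$ acts trivially on $F=U_F/S$, so the transition cocycle $t_{ij}=(t_{\rho,ij})_\rho$ acts on $F$ through its image $\bar t_{ij}\in\mathbf G_m^e/S$. The quotient of $\mathbf G_m^e$ by $S$ is identified with $T$, acting on $F$ as the dense torus, because the Cox quotient presentation is equivariant for $\mathbf G_m^e\to\mathbf G_m^e/S$. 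The images $\bar t_{ij}$ satisfy the cocycle condition and define a principal $T$-bundle $P\to A$. The local isomorphisms $X|_{A_i}\cong F\times A_i$ then glue to $X\cong P\times^T F$. I will follow the sign convention of Definition~\ref{def: toric fibration}, inverting $\bar t_{ij}$ if necessary.

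I expect the main obstacle to be the claim in the first step that the open immersion $\widehat{\mathcal X}\hookrightarrow\mathrm{Spec}_A\mathcal R$ matches $U_F\times A_i$ under the new frames, and not just under the trivializations from Proposition~\ref{prop: relative-cox-construction}. To handle it, I would argue fiberwise. On each fiber $\mathcal R_y\cong\mathrm{Cox}(X_y)$, and the complement of the characteristic space is $V(J)$, where $J$ is the irrelevant ideal. Both $J$ and the characteristic space are defined intrinsically in terms of $X_y$, as the complement of the non-free locus of the $S$-action. The frame change between the two trivializations is a graded automorphism of $\mathrm{Cox}(F)$ inducing an automorphism of $F$, so it preserves $U_F$. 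Alternatively, one could say that $\widehat{\mathcal X}$ is the locus where the $S$-action is free with separated quotient, which is intrinsic. Once the open sets match, the remaining steps are formal.
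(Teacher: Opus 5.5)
Your proposal is correct and follows essentially the paper's route: frames of the line summands of $\mathcal M$ give diagonal $\mathbf G_m^e$-valued transition functions, the comparison with the Cox trivializations of Proposition~\ref{prop: relative-cox-construction} is a $\Lambda$-graded automorphism preserving $U_F$, and dividing by $S$ gives $X\cong P\times^T F$ with $T=\mathbf G_m^e/S$.

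For that key step, justify the preservation of $U_F$ as the paper does, via $U_F=\bigcup_{m>0,\ s\in\mathrm{Cox}(F)_{mH_F}}D(s)$, rather than by your description of $U_F$ as the complement of the non-free locus of $S$, which fails in general (on the Hirzebruch surface $F_1$, a point where exactly the coordinates of the two invariant fibres vanish has trivial stabilizer in $S$ but lies outside $U_F$); also, your torus sequence should read $1\to S\to\mathbf G_m^e\to T\to 1$.
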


\begin{proof}
Write $\mathcal M= \bigoplus_{i=1}^e\mathcal M_i$, where $e= \mathrm{rank}(\mathcal V)$ is the number of Cox variables of $F$, and let $\lambda_i\in\Lambda= \mathrm{Pic}(F)$ be the Cox degree of $\mathcal M_i$. Put $T_0=(\mathbf{G}_m)^e$, the torus acting by independent rescaling of the coordinates. Let $M_F$ be the character lattice of the dense torus of $F$, and index the torus invariant prime divisors of $F$ so that their classes are $\lambda_1,\ldots,\lambda_e$. The standard toric divisor sequence is
$$0\to M_F\to\mathbb{Z}^e\to\Lambda \to 0.$$
The map $\mathbb{Z}^e\to\Lambda$ sends the $i$-th standard basis vector to $\lambda_i$. Dually,
$$1\to S\to T_0\to T\to 1.$$
Here $S= \mathrm{Hom}(\Lambda,\mathbf{G}_m)$ is the characteristic torus and $T= \mathrm{Hom}(M_F,\mathbf{G}_m)$ is the dense torus of $F$. This sequence explains how diagonal changes of Cox coordinates descend, modulo $S$, to automorphisms of $F$ induced by elements of $T$.

Let $\mathcal P_0\to A$ be the product of the frame bundles of the line bundles $\mathcal M_i$, with its standard right $T_0$-action. Let $T_0$ act on $\mathbf{A}^e$ by the inverse coordinatewise action
$$(t_1,\ldots,t_e)\cdot(z_1,\ldots,z_e)=(t_1^{-1}z_1,\ldots,t_e^{-1}z_e).$$
The inverse action occurs because $\mathrm{Spec}_A\mathrm{Sym}_{\mathcal O_A}(\mathcal M)$ is the total space of $\mathcal M^\vee$. With this action, Equation~\eqref{eq: relative polynomial cox} gives
$$\mathrm{Spec}_A\mathcal R\cong \mathcal P_0\times^{T_0}\mathbf{A}^e.$$

We must also recover the characteristic torsor $\widehat{\mathcal X}\to X$ inside this affine bundle. Recall that $U_F\subseteq\mathrm{Spec}\mathrm{Cox}(F)\cong \mathbf A^e$ is the Cox characteristic space of $F$, and that $H_F$ is the ample class induced by the restrictions of $H$ to the fibers. The open subset $U_F$ is the semistable locus for $H_F$:
$$U_F= \bigcup_{\substack{m > 0\\ s\in\mathrm{Cox}(F)_{mH_F}}}D(s)\subseteq\mathbf{A}^e.$$
Every $\Lambda$-graded automorphism of the Cox ring preserves each vector space $\mathrm{Cox}(F)_{m H_F}$ and therefore preserves $U_F$. The relative Cox construction gives a canonical open immersion
$$\widehat{\mathcal X}\hookrightarrow\mathrm{Spec}_A\mathcal R.$$
Choose an \'{e}tale cover giving the compatible local trivializations in Proposition~\ref{prop: relative-cox-construction} and trivializing the line bundles $\mathcal M_i$. The Cox trivialization identifies this open immersion with the product of $U_F\hookrightarrow\mathbf{A}^e$ and the base. The chosen frames of the $\mathcal M_i$, together with \eqref{eq: relative polynomial cox}, give a second trivialization of $\mathcal R$ as a $\Lambda$-graded polynomial algebra. The two trivializations differ by a $\Lambda$-graded automorphism, which preserves $U_F$. Thus the second trivialization also identifies $\widehat{\mathcal X}$ with the product family with fiber $U_F$.

On overlaps, the changes of frames lie in $T_0$ and preserve $U_F$. The local descriptions therefore glue to give
$$\widehat{\mathcal X}\cong\mathcal P_0\times^{T_0}U_F.$$

Replacing $s$ by $s^{-1}$ in the standard Cox action does not change its orbits. Thus the inverse $S$-action on $U_F$ still has geometric quotient $U_F/S=F$ \cite[Theorem~2.1]{Cox95}. The induced action of $T=T_0/S$ on $F$ is given by $(t,z) \mapsto t^{-1}\cdot z$, where $\cdot$ is the usual torus action. We will use this action below. Since $\widehat{\mathcal X}\to X$ is an $S$-torsor, taking the quotient by $S$ gives
$$X\cong\mathcal P_0\times^{T_0}(U_F/S).$$
Hence $X\cong\mathcal P_0\times^{T_0}F$. Set $P:= \mathcal P_0\times^{T_0}T= \mathcal P_0/S$. The exact torus sequence shows that $P\to A$ is a principal $T$-bundle, and
$$\mathcal P_0\times^{T_0}F\cong P\times^T F.$$
This completes the proof.
\end{proof}

\medskip

\subsection{Equivariant generators and descent}

Recall that the relative Cox algebra is graded by $\Lambda= \mathrm{Pic}(F)$, with pieces $\mathcal R_\lambda=(\mathrm{alb}_X)_*\mathcal L_\lambda$. Proposition~\ref{prop: relative-generators} gives homogeneous line bundles $\mathcal M_i\subseteq\mathcal R_{\lambda_i}$ that freely generate this algebra. Their evaluation maps define the coordinate divisors in the toric presentation of Proposition~\ref{prop: cox-reconstruction}. To descend the union of these divisors through a finite quotient, we will choose the generating lines so that the group permutes them.

We first describe how an automorphism acts on these lines. Let $\delta_X: X\to X$ be an automorphism inducing an automorphism $\delta_A:A\to A$, and let $\rho_\delta:\Lambda\to\Lambda$ be its pullback on the fiberwise Picard lattice. For each $\lambda\in\Lambda$, the line bundle $\delta_X^*\mathcal L_\lambda\otimes\mathcal L_{\rho_\delta(\lambda)}^{-1}$ is trivial on every fiber. As in Proposition~\ref{prop: relative-cox-pullback}, it is therefore pulled back from a line bundle $\mathcal L_{A,\delta,\lambda}$ on $A$. Choosing the resulting isomorphisms on a basis of $\Lambda$ and extending by tensor products and duals gives
$$\delta_X^*\mathcal L_\lambda\cong\mathcal L_{\rho_\delta(\lambda)}\otimes\mathrm{alb}_X^*\mathcal L_{A,\delta,\lambda}.$$
Taking direct images gives multiplicatively compatible isomorphisms
\begin{equation}\label{eq: semigraded-action}
\phi_{\delta,\lambda}:
\delta_A^*\mathcal R_\lambda
\xrightarrow{\sim}
\mathcal R_{\rho_\delta(\lambda)}
\otimes\mathcal L_{A,\delta,\lambda}.
\end{equation}
For a group of such automorphisms, we say that the isomorphisms \eqref{eq: semigraded-action} permute homogeneous line subbundles $\mathcal M_i\subseteq\mathcal R_{\lambda_i}$ if, for every group element $\delta$ and every $i$, the isomorphism in degree $\lambda_i$ sends $\delta_A^*\mathcal M_i$ onto $\mathcal M_j\otimes\mathcal L_{A,\delta,\lambda_i}$ for some $j$ with $\lambda_j= \rho_\delta(\lambda_i)$. This condition includes both the change of Cox degree and the line bundle pulled back from the base.

In the following, we also write $\delta$ for $\delta_X$ when no confusion is possible.

\begin{proposition}\label{prop: equivariant relative generators}
Let a finite group $G$ act on $X$ and $A$ so that $\mathrm{alb}_X:X\to A$ is $G$-equivariant. Assume the following.
\begin{enumerate}[label=\textup{(\arabic*)},leftmargin=2.5em]
\item $G$ centralizes the subgroup $K\subseteq\mathrm{Gal}(f)$ preserving every fiber of $\mathrm{alb}_X$.
\item $G$ preserves the ample class $H_F$ induced by $H$ on the fibers.
\item There is a homomorphism $\beta:G\to G$ satisfying
$$f\circ\delta= \beta(\delta)\circ f \ \ \ \text{for}\ \ \ \delta\in G.$$
\end{enumerate}
Then the bundle $\mathcal M$ in Proposition~\ref{prop: relative-generators} may be chosen so that the isomorphisms \eqref{eq: semigraded-action} induced by $G$ permute its homogeneous line summands.
\end{proposition}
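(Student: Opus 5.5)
The plan is to rerun the inductive construction in the proof of Proposition~\ref{prop: relative-generators}, degree by degree in $\deg_B$, and make a $G$-compatible choice at each stage. Fix $\delta\in G$. Since $\delta$ commutes with every element of $K$ by (1), conjugating a graded lift of $k\in K$ by a graded lift of $\delta$ to $\widehat{\mathcal X}$ gives a graded lift of $k$. The isomorphisms \eqref{eq: semigraded-action} are only defined up to the base twists $\mathcal L_{A,\delta,\lambda}$, so such lifts of $\delta$ exist after trivializing these twists locally on $A$.

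From this I would deduce that $\phi_\delta$ conjugates $\Gamma$ into $\Gamma\cdot S$. This needs a check that conjugation preserves the condition $\Phi\circ\widetilde\gamma=\Phi$ up to an element of $S$; the lifts $\Gamma$ are pinned down by the constant $c(\theta)\in S$, and $\delta$ acts on $\widehat{\mathcal X}_g$ compatibly through $\beta(\delta)$ by (3). Since $S$ acts on each homogeneous piece $\mathcal V_{y,\nu}$ by a scalar, the fiberwise map
$$\phi_{\delta,y}:\mathcal V_{\delta_A(y),\nu}\to\mathcal V_{y,\rho_\delta(\nu)}$$
intertwines $\Gamma_\nu$ and $\Gamma_{\rho_\delta(\nu)}$ modulo scalars. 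Here $\rho_\delta$ preserves the set of Cox degrees and also preserves $H_F$ by (2), so it preserves $\deg_B$ as well. The last fact follows from the relation $\rho_\delta B=B\rho_{\beta(\delta)}$, which comes from (3) by pulling back along $f\circ\delta=\beta(\delta)\circ f$.

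For the nontrivial isotypic part $\mathcal W_\lambda$ (Step~1 of Proposition~\ref{prop: relative-generators}), I would stop choosing pseudoreflections from an arbitrary generating set. Instead I would apply Lemma~\ref{lem: reflection-centralizer} fiberwise to the reflection group $\Gamma_\lambda$ acting on $\mathcal W_{y,\lambda}$, where $\mathcal W_{y,\lambda}^{\Gamma_\lambda}=0$, and to the group $J$ generated by $\Gamma_\lambda$ together with the maps $\phi_{\delta,y}$ for the elements $\delta$ stabilizing the orbit of $\lambda$. This gives a decomposition of $\mathcal W_{y,\lambda}$ into lines permuted by $J$. In dimension $\ge3$ these lines are the normal lines of the pseudoreflections. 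In the two-dimensional case they are the two eigenlines of a chosen reflection. I would take these lines globally to be the eigenline bundles $\mathcal H_\gamma$ of the uniform pseudoreflections from Lemma~\ref{lem: uniform-reflections}, for all $\gamma$ in the relevant classes; these bundles are canonically attached to group elements, so $\phi_\delta$ carries $\delta_A^*\mathcal H_\gamma$ to $\mathcal H_{\gamma'}\otimes\mathcal L_{A,\delta,\lambda}$, where $\gamma'$ is the conjugate of $\gamma$ times a scalar. The lemma also shows that these lines form a direct sum decomposition, so no basis choice is needed and $G$-permutation is automatic. This uses that, in dimension $\ge3$, distinct pseudoreflections in an irreducible block with the same normal line give the same line.

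For the invariant part $\mathcal E_{B\mu}$ (Step~2), I would take the lift $g^*\mathcal M_\mu\otimes\mathcal L_{A,\mu}^{-1}$ as before. By induction the summands of $\mathcal M_\mu$ are permuted by $G$, and relation (3) shows that $\delta$ acting on degree $B\mu$ corresponds, under \eqref{eq: invariant-cox-sheaf}, to $\beta(\delta)$ acting on degree $\mu$ pulled back by $g$. Hence the pulled-back lines are permuted by $G$. The base case is automatic, since $\mathcal E_\lambda=0$ in minimal degree.

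I expect the main obstacle to be the first step: showing precisely that $\phi_\delta$ normalizes $\Gamma$ modulo $S$, and that the twist line bundles $\mathcal L_{A,\delta,\lambda}$ and the $\mathcal L_{A,\mu}$ are compatible with $\beta$. This amounts to bookkeeping of the isomorphisms \eqref{eq: relative-twist} against \eqref{eq: semigraded-action}, and may require a further multiplication base change to trivialize torsion discrepancies in $\mathrm{Pic}^0(A)$, as in Step~2 of Lemma~\ref{lem: relative-theta}.
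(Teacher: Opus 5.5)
\textbf{There is a genuine gap, in the nontrivial isotypic part $\mathcal W_\lambda$.} The rest of your plan is in line with the paper. This includes the $G$-invariance of $\deg_B$, the projective centralizing property (conjugating a lift of $k\in K$ by a lift of $\delta$ gives a lift of the same $k$, so it differs by an element of $S$, which is scalar on each Cox-degree piece), and the fixed parts via $g^*\mathcal M_\mu\otimes\mathcal L_{A,\mu}^{-1}$.

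\emph{The application of Lemma~\ref{lem: reflection-centralizer} is invalid as written.}
\begin{enumerate}
\item That lemma needs $J$ to centralize the image of the reflection group in $\mathrm{PGL}$. So you cannot put $\Gamma_\lambda$ into $J$ unless $\Gamma_\lambda$ is projectively abelian. Moreover, the conclusion (lines permuted by $J\supseteq\Gamma_\lambda$) would make $\Gamma_\lambda$ monomial. This fails for $X=\mathbf P^1\times E$ with $f$ the product of the degree-$60$ icosahedral Galois map and $[2]_E$. There $K\cong A_5$, and $\Gamma$ acts on the two-dimensional degree-one block with projective image $A_5$, which permutes no pair of lines.
\item $\phi_{\delta,y}$ goes from the fiber over $\delta_A(y)$ to the fiber over $y$. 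In the intended application $G$ acts freely on $A$, so no group of such maps acts on a single $\mathcal W_{y,\lambda}$. The paper gets genuine elements of $\mathrm{PGL}(W)$ by writing $\mathbf P(\mathcal A_W\otimes W)\cong A\times\mathbf P(W)$ and noting that the induced maps $A\to\mathrm{PGL}(W)$ are constant. Only lifts of the stabilizer $G_{\lambda,W}$ enter $J$.
\end{enumerate}

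\emph{The essential missing idea is how to lift $G$-permuted lines from $\mathcal V$ to $G$-permuted line subbundles of $\mathfrak m$.} Your canonical lifts $\mathcal H_\gamma$ only cover lines that are normal lines of pseudoreflections in a $G$-stable family, and the lemma does not produce such lines in general.
\begin{enumerate}
\item The normal lines are in general more than $\dim W$ in number and do not form a direct sum. So a choice is unavoidable, contrary to your claim.
\item In the two-dimensional case the lemma yields the two eigenlines $L_1,L_2$ of one reflection $\gamma$, with $L_1$ the eigenvalue-$1$ line. $L_1$ is not $\mathcal H_\gamma$. It equals $\mathcal H_{\gamma'}$ for a $G$-conjugate $\gamma'$ only if some element of the stabilizer exchanges $L_1$ and $L_2$; otherwise nothing in your argument shows it is a normal line at all.
\item $L_1$ has no canonical lift. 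The $1$-eigenspace of $\gamma$ on $\mathcal R_\lambda$ also contains lifts of $\mathcal E_\lambda$ and of the other isotypic summands, and possibly nonlinear monomials of Cox degree $\lambda$.
\end{enumerate}
The same gap then propagates through your induction, since the fixed-part lifts in degree $B\mu$ assume $\mathcal M_\mu$ is already $G$-permuted.

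\textbf{How the paper closes it.} It works in two stages.
\begin{enumerate}
\item It decomposes $\mathcal V=\bigoplus_i\mathcal N_i$ into $G$-permuted lines, working only at the level of $\mathcal V$. The nontrivial parts are chosen on one component in each $G$-orbit and transported; this is well defined because the stabilizer permutes the chosen lines. The fixed parts are transported through $\mathcal E_{B\mu}\cong g^*\mathcal V_\mu\otimes\mathcal L_{A,\mu}^{-1}$.
\item It averages a homogeneous splitting $s_0:\mathcal V\to\mathfrak m$ under the affine right action $\delta*s=\phi^{\mathfrak m}_{\delta}\circ\delta_A^*s\circ(\phi^{\mathcal V}_{\delta})^{-1}$. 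Here the $S$-valued ambiguity of the local lifts cancels. It then puts $\mathcal M_i=s(\mathcal N_i)$.
\end{enumerate}
Adding this averaging step, and restricting $J$ to lifts of the stabilizer, repairs your argument.
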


\begin{proof}
The isomorphisms \eqref{eq: semigraded-action} preserve the ideal $\mathfrak m$ of nonzero Cox degrees and induce isomorphisms on the bundle $\mathcal V= \mathfrak m/\mathfrak m^2$ of generator classes modulo products. We first check their compatibility with the Galois action and with pullback by $f$. We then decompose $\mathcal V$ into lines permuted by the isomorphisms induced by $G$, and lift those lines to generators by averaging a homogeneous splitting of $\mathfrak m\to\mathcal V$.

\noindent\textbf{Step 1.} We check compatibility of the $G$-action with the grading, the reflection action, and the invariant algebra.

We first show that $\deg_B$ is preserved by $G$. On the base change $X_g = X\times_{A,g}A$, where $g:A\to A$ is the isogeny induced by $f$, the element $\delta$ induces
$$\delta_g(x,b)=(\beta(\delta)(x),\delta_A(b)),$$
because $g\circ\delta_A= \beta(\delta)_A\circ g$. The relative quotient $u=(f,\mathrm{alb}_X):X\to X_g$ satisfies $u\circ\delta= \delta_g\circ u$. Its fiberwise Picard pullback $B:\Lambda\to\Lambda$ therefore satisfies
\begin{equation}\label{eq: equivariant-degree-map}
\rho_\delta\circ B = B\circ\rho_{\beta(\delta)}.
\end{equation}
It follows that $B^{-j}\rho_\delta= \rho_{\beta^j(\delta)}B^{-j}$ for every $j\geq1$. Recall from Definition~\ref{def: relative degree} that
$$\deg_B(\lambda)=N\sum_{j\geq 1}(B^{-j}\lambda\cdot H_F^{\dim F-1}),$$
where $N$ is the fixed positive integer clearing the denominators of $(B-\mathrm{id})^{-1}$. Since every element of $G$ preserves $H_F$, replacing $\lambda$ by $\rho_\delta(\lambda)$ leaves each intersection number in this sum unchanged. Hence
$$\deg_B(\rho_\delta(\lambda))= \deg_B(\lambda).$$
Thus the grading defined by $\deg_B$ is $G$-invariant. This argument only uses positive iterates of $\beta$ and does not require $\beta$ to be invertible.

We next compare the $G$-action with the reflection group $\Gamma$. Recall that $\Phi:\widehat{\mathcal X}\to\widehat{\mathcal X}_g$ is the $\Gamma$-Galois lift of $u:X\to X_g$ from Lemma~\ref{lem: relative-theta}, and that $S= \mathrm{Hom}(\Lambda,\mathbf G_m)$ is the characteristic torus. After an \'{e}tale localization $U\to A$ trivializing the line bundles on the base that occur in the pullback formulas, choose lifts $\widetilde\delta$ and $\widetilde{\delta_g}$ to these $S$-torsors. These lifts respect the Cox grading with degree maps $\rho_\delta$ and $\rho_{\beta(\delta)}$, respectively. Equation~\eqref{eq: equivariant-degree-map} shows that $\Phi\circ\widetilde\delta$ and $\widetilde{\delta_g}\circ\Phi$ are equivariant with respect to the same homomorphism of tori. They also cover the same morphism. Write $a:\widehat{\mathcal X}_U\to U$ and $b:\widehat{\mathcal X}_{g,U}\to U$ for the natural projections, so $b\circ\Phi=a$. Their ratio is the unique morphism $r:\widehat{\mathcal X}_U\to S$ satisfying
$$(\Phi\circ\widetilde\delta)(p)=r(p)\cdot(\widetilde{\delta_g}\circ\Phi)(p).$$
The common equivariance gives $r(s\cdot p)=r(p)$. Thus $r$ descends through the source $S$-torsor to $X_U=X\times_AU$, and then to $U$ by flat base change and $(\mathrm{alb}_X)_*\mathcal O_X=\mathcal O_A$, since $S$ is affine. Hence $r=c\circ a$ for a morphism $c:U\to S$. Define
$$\widetilde{\delta_g}^{'}(q):=c(b(q))\cdot\widetilde{\delta_g}(q).$$
By replacing $\widetilde{\delta_g}$ with $\widetilde{\delta_g}^{'}$, we have
$$\Phi\circ\widetilde\delta= \widetilde{\delta_g}\circ\Phi.$$
Thus conjugation by $\widetilde{\delta}$ normalizes $\Gamma= \mathrm{Gal}(\Phi)$. On overlaps, two source lifts differ by an $S$-valued section on the base. Since $S$ centralizes $\Gamma$, the induced conjugation action is independent of the lift and hence glues.

For $\gamma\in\Gamma$, put $\gamma'= \widetilde\delta\gamma\widetilde\delta^{-1}$. Recall that $K_B= \ker(\widehat B)\subseteq S$, where $\widehat B:S\to S$ is dual to $B$. In the central extension $1\to K_B\to\Gamma\to K\to 1$, the images of $\gamma'$ and $\gamma$ in $K$ agree because $G$ centralizes $K$. Hence $\gamma'\gamma^{-1}\in K_B$. Since $K_B\subseteq S$ acts by scalars on each homogeneous piece, the stabilizer in $G$ of a Cox degree centralizes the projective image of $\Gamma$ on that piece.

Finally, we check that the invariant algebra description is compatible with these actions. The line bundles $\mathcal L_{A,\mu}$ are defined by
$$f^*\mathcal L_\mu\cong\mathcal L_{B\mu}\otimes\mathrm{alb}_X^*\mathcal L_{A,\mu},$$
and the $\mathcal L_{A,\delta,\lambda}$ play the same role for pullback by $\delta$. Comparing the two pullbacks of $\mathcal L_\mu$ under $f\circ\delta= \beta(\delta)\circ f$ gives
$$\delta_A^*\mathcal L_{A,\mu}\otimes\mathcal L_{A,\delta,B\mu}\cong\mathcal L_{A,\rho_{\beta(\delta)}(\mu)}\otimes g^*\mathcal L_{A,\beta(\delta),\mu}.$$
Together with the equivariance of $\Phi$, this comparison makes the invariant algebra isomorphism
$$(\mathcal R^\Gamma)_{B\mu}\cong g^*\mathcal R_\mu\otimes\mathcal L_{A,\mu}^{-1}$$
compatible with the source action of $\delta$ and the target action of $\beta(\delta)$. Recall that the quotient $\mathfrak m^\Gamma/(\mathfrak m^\Gamma)^2$ records the generator classes of the invariant algebra. Passing to these quotients gives the same compatibility for $\bigl(\mathfrak m^\Gamma/(\mathfrak m^\Gamma)^2\bigr)_{B\mu}\cong g^*\mathcal V_\mu\otimes\mathcal L_{A,\mu}^{-1}$. The natural maps $\bigl(\mathfrak m^\Gamma/(\mathfrak m^\Gamma)^2\bigr)_{B\mu}\to\mathcal V_{B\mu}^\Gamma$ are compatible as well, since they come from the inclusion $\mathcal R^\Gamma\subseteq\mathcal R$.

\medskip

\noindent\textbf{Step 2.} We decompose $\mathcal V$ into line bundles permuted by $G$.

As in Proposition~\ref{prop: relative-generators}, use the Reynolds projector to write
$$\mathcal V_\lambda= \mathcal E_\lambda\oplus\mathcal W_\lambda,\qquad \mathcal E_\lambda= \mathcal V_\lambda^\Gamma,$$
where $\mathcal W_\lambda$ is the sum of the nontrivial isotypic components. Since the lifts of $G$ normalize $\Gamma$, they preserve its Reynolds projector. Thus both summands are carried to the corresponding summands in degree $\rho_{\delta}(\lambda)$ by \eqref{eq: semigraded-action}.

We first decompose the bundles $\mathcal W_\lambda$. By Step~1 of Proposition~\ref{prop: relative-generators}, each nontrivial isotypic component has the form $\mathcal A_W\otimes W$, where $\mathcal A_W$ is a line bundle and $W$ is a fixed nontrivial irreducible representation. The image of $\Gamma$ in $\mathrm{GL}(W)$ is a reflection group with no fixed vectors. Let $G_{\lambda, W}$ be the stabilizer of this component. Under the canonical identification
$$\mathbf{P}(\mathcal A_W\otimes_{\mathbb{C}}W)\cong A\times\mathbf{P}(W),$$
the line bundle factors in \eqref{eq: semigraded-action} disappear after projectivization. Thus, for each element of $G_{\lambda,W}$, the induced isomorphisms between projective fibers are described by a morphism $A\to\mathrm{PGL}(W)$. This morphism is constant because $A$ is projective and $\mathrm{PGL}(W)$ is affine. By Step~1, the resulting elements of $\mathrm{PGL}(W)$ centralize the projective image of the reflection group on $W$.

Lemma~\ref{lem: reflection-centralizer}, applied to the inverse image of this projective group in $\mathrm{GL}(W)$, gives a decomposition of $W$ into lines permuted by $G_{\lambda,W}$. Tensoring with $\mathcal A_W$ gives line subbundles. Choose this decomposition on one component in every $G$-orbit and transport it by \eqref{eq: semigraded-action}. Two choices of transport differ by an element of the component stabilizer, which permutes the chosen lines. They therefore give the same unordered set of line subbundles. We thus obtain decompositions of all $\mathcal W_\lambda$ into line bundles permuted by the isomorphisms induced by $G$.

We now decompose the fixed parts by induction on $\deg_B(\lambda)$, with $\deg_B$ as in Definition~\ref{def: relative degree}. By Step~1, $\deg_B$ is $G$-invariant. If $\mathcal E_\lambda\neq 0$, Step~2 of Proposition~\ref{prop: relative-generators} gives a degree $\mu$ occurring in $\mathcal V$ with
$$\lambda = B\mu,\qquad \mathcal E_{B\mu}\cong g^*\mathcal V_\mu\otimes\mathcal L_{A,\mu}^{-1},\qquad \deg_B(\mu)< deg_B(\lambda).$$
In particular, the fixed part vanishes in every Cox degree minimizing $\deg_B$ among those occurring in $\mathcal V$. At each later stage, transport the already chosen line decomposition of $\mathcal V_\mu$ through this isomorphism to obtain one of $\mathcal E_{B\mu}$. The compatibility proved in Step~1 shows that $\delta$ carries these lines to those obtained from $\mathcal V_{\rho_{\beta(\delta)}(\mu)}$. By \eqref{eq: equivariant-degree-map}, these are precisely the lines chosen in $\mathcal E_{\rho_\delta(B\mu)}$. This proves equivariance of the induction. Combining the decompositions of the fixed and nontrivial parts gives
$$\mathcal V= \bigoplus_{i=1}^e\mathcal N_i,$$
where $e= \mathrm{rank}(\mathcal V)$ and the $\mathcal N_i$ are homogeneous line bundles permuted by the isomorphisms induced by $G$.

\medskip

\noindent\textbf{Step 3.} We choose the Cox generators equivariantly.

It remains to lift these line subbundles to $\mathfrak m$ compatibly with the isomorphisms induced by $G$. Let $s_0:\mathcal V\to\mathfrak m$ be a homogeneous splitting given by Proposition~\ref{prop: relative-generators}. We make this splitting equivariant by averaging. Denote the isomorphisms induced by \eqref{eq: semigraded-action} on $\mathfrak m$ and $\mathcal V$ by $\phi^{\mathfrak m}_{\delta,\lambda}$ and $\phi^{\mathcal V}_{\delta,\lambda}$. For a homogeneous splitting $s$, define
$$(\delta * s)_{\rho_\delta(\lambda)}= \phi^{\mathfrak m}_{\delta,\lambda}\circ\delta_A^*s_\lambda\circ(\phi^{\mathcal V}_{\delta,\lambda})^{-1}.$$
Here the common factor $\mathcal L_{A,\delta,\lambda}$ cancels because tensoring both the source and the target by the same line bundle does not change their Hom bundle.
For $\delta,\epsilon\in G$, the composite of their chosen local lifts differs from the chosen local lift of $\delta\epsilon$ by an $S$-valued section. On the piece of Cox degree $\lambda$, this section acts by the same invertible function on $\mathfrak m_\lambda$ and $\mathcal V_\lambda$. It therefore cancels in the conjugation defining $\delta * s$, so
$$\epsilon *(\delta * s)=(\delta\epsilon)* s.$$
The order is reversed because the operations are defined by pullback. Thus they give an affine right action of $G$ on the space of homogeneous splittings of $\mathfrak m\to\mathcal V$.

Every $\delta * s_0$ is a right inverse to $\mathfrak m\to\mathcal V$, so their average
$$s = \frac{1}{|G|}\sum_{\delta\in G}\delta * s_0$$
is again a homogeneous splitting and is fixed by this action. Put $\mathcal M_i=s(\mathcal N_i)$ and $\mathcal M= \bigoplus_i\mathcal M_i$. Equivariance of $s$ shows that the isomorphisms induced by $G$ permute the $\mathcal M_i$. Since their direct sum maps isomorphically onto $\mathcal V$, Step~3 of the proof of Proposition~\ref{prop: relative-generators} shows that multiplication induces an isomorphism
$\mathrm{Sym}_{\mathcal O_A}(\mathcal M)\cong\mathcal R$.
\end{proof}

\begin{corollary}\label{cor: toric-boundary-descent}
Assume the hypotheses of Proposition~\ref{prop: equivariant relative generators}, and suppose that $G$ acts freely on $A$. Put $C=A/G$ and $Y=X/G$. Then there is a reduced divisor $D\subseteq Y$ such that $Y\to C$ is a toric fibration in the sense of Definition~\ref{def: toric fibration}, with relative toric boundary $D$. Hence $(Y,D)\to C$ is also a toric fibration in the sense of Meng and Zhong. If $D_A$ is the pullback of $D$ to $X$, then
$$(X,D_A)\cong\bigl(P\times^T F,P\times^T(F\setminus T)\bigr)$$
for a principal $T$-bundle $P\to A$, where $T$ is the dense torus of $F$.
\end{corollary}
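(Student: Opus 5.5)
Apply Proposition~\ref{prop: equivariant relative generators} to choose the generating bundle $\mathcal M=\bigoplus_{i=1}^e\mathcal M_i$ so that the isomorphisms \eqref{eq: semigraded-action} induced by $G$ permute the summands $\mathcal M_i$. Each $\mathcal M_i\subseteq\mathcal R_{\lambda_i}=(\mathrm{alb}_X)_*\mathcal L_{\lambda_i}$ is a line subbundle. Its evaluation map $\mathrm{alb}_X^*\mathcal M_i\to\mathcal L_{\lambda_i}$ vanishes along a divisor $D_i\subseteq X$. On every fiber this divisor is the zero locus of one Cox variable $x_i$, that is, one torus-invariant prime divisor of $F$. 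Define $D_A:=\sum_i D_i$, which is reduced. The proof of Proposition~\ref{prop: cox-reconstruction} identifies $X$ with $\mathcal P_0\times^{T_0}F$ and $P=\mathcal P_0/S$. In that identification the coordinate hyperplane $\{z_i=0\}\subseteq\mathbf A^e$ is $T_0$-stable, so it glues to $\mathcal P_0\times^{T_0}(\{z_i=0\}\cap U_F)$. Its quotient by $S$ is $P\times^T D_{\rho_i}$, and this equals $D_i$. Hence $(X,D_A)\cong(P\times^TF,P\times^T(F\setminus T))$, and $X\to A$ is a split toric fibration with relative boundary $D_A$.

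Next I would check that $D_A$ is $G$-invariant. Fix $\delta\in G$. The isomorphism $\phi_{\delta,\lambda_i}$ sends $\delta_A^*\mathcal M_i$ onto $\mathcal M_j\otimes\mathcal L_{A,\delta,\lambda_i}$ with $\lambda_j=\rho_\delta(\lambda_i)$. It is compatible with the evaluation maps and with the pullback $\delta^*\mathcal L_{\lambda_i}\cong\mathcal L_{\lambda_j}\otimes\mathrm{alb}_X^*\mathcal L_{A,\delta,\lambda_i}$. The line-bundle twist from the base does not change the zero locus, so $\delta^{-1}(D_i)=D_j$. Thus $G$ permutes the components $D_i$, and $D_A$ is $G$-stable.

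It remains to descend to $Y=X/G\to C=A/G$. Since $G$ acts freely on $A$ and $\mathrm{alb}_X$ is equivariant, $G$ acts freely on $X$. So $X\to Y$ and $A\to C$ are finite étale and $X\cong Y\times_CA$. Put $D:=D_A/G\subseteq Y$. It is reduced, and its pullback to $X$ is $D_A$. Smoothness and projectivity of $Y\to C$ follow from étale descent. For the toric structure, I would descend the relative torus. The torus $\mathcal T_A=T\times A$ acting on $X$ is the identity component of the group scheme of relative automorphisms of $(X,D_A)$ over $A$ that act trivially on the boundary strata combinatorics. More concretely, it is the group scheme of fiberwise automorphisms preserving each $D_i$ and acting trivially on $\Lambda$, restricted to its identity component. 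This intrinsic characterization is preserved by the $G$-action, since $G$ permutes the $D_i$ and conjugates fiberwise automorphisms. Galois descent then gives a group scheme $\mathcal T\to C$, étale locally isomorphic to $\mathbf G_m^r$, acting on $Y$. The étale-local equivariant product structure $Y\times_CU\cong X(\Sigma)\times U$ comes from that on $X$ over étale opens of $A$, which are étale over $C$. The complement of the dense orbit is $D$.

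The main obstacle is this descent of the torus action. One has to make sure that $G$ normalizes the $T$-action on $X$, i.e.\ that $\delta\circ t\circ\delta^{-1}$ lies again in the relative torus, possibly twisted by an automorphism of $T$ coming from the permutation of the $D_i$. I would argue fiberwise: the relative torus is the identity component of the automorphism group scheme of the pair $(X_y,D_A|_{X_y})$, which is the torus $T$ for a smooth complete toric pair. Conjugation by $\delta$ therefore maps it to itself. The Meng--Zhong statement then follows from the remark after Definition~\ref{def: toric fibration}.
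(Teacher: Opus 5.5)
Your proposal is correct, and up to the last step it follows the paper's argument. You take $D_i$ to be the zero loci of the evaluation maps $\mathrm{alb}_X^*\mathcal M_i\to\mathcal L_{\lambda_i}$. You use the proof of Proposition~\ref{prop: cox-reconstruction} to identify $(X,D_A)$ with $(P\times^T F,P\times^T\partial F)$. You note that twists by base line bundles do not change zero loci, so $G$ permutes the $D_i$. Finally, you descend $D_A$ through the free quotient, using $X\cong Y\times_C A$.

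The difference is in how you obtain the relative torus on $Y\to C$. The paper does not construct it. It notes that $Y\to C$ is smooth and projective and that $D$ is a relative normal crossing divisor. Every fiber of $(Y,D)\to C$ is isomorphic to $(F,\partial F)$, because this holds after the finite \'{e}tale base change $A\to C$. It then cites \cite[Proposition~4.3.1]{AWZ21} to conclude that this is a toric fibration in the sense of Definition~\ref{def: toric fibration}.

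You instead Galois-descend $T\times A$ itself, realized as the identity component of $\mathrm{Aut}_A(X,D_A)\cong P\times^T\mathrm{Aut}(F,\partial F)$. This component is $T\times A$ because $\mathrm{Aut}^0(F,\partial F)=T$ and $T$ is commutative. Conjugation by $G$ preserves it and satisfies the cocycle condition. The affine group scheme and its equivariant action then descend along the $G$-torsor $A\to C$. In effect you re-prove the cited proposition in the special case where the family trivializes after a finite \'{e}tale Galois cover. Your route is more self-contained. It also shows explicitly that the descended torus is the twist of $T\times A$ by the action of $G$ on $T$ through its permutation of the boundary components, so it is in general non-split. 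The paper's route is shorter because it hands the construction of the torus to \cite{AWZ21}.

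Two small points. First, the purely fiberwise conjugation argument in your last paragraph only gives a bijection on points, so you need the group-scheme formulation you state first to get morphisms. Second, projectivity of $Y\to C$ is better deduced from the projectivity of $Y=X/G$, as the paper does, than from \'{e}tale descent.
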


\begin{proof}
Write $\mathcal M= \bigoplus_{i=1}^e\mathcal M_i$ as in Proposition~\ref{prop: equivariant relative generators}, and let $\lambda_i$ be the Cox degree of $\mathcal M_i$. Since $\mathcal M_i\subseteq\mathcal R_{\lambda_i}=(\mathrm{alb}_X)_*\mathcal L_{\lambda_i}$, evaluation gives a morphism of line bundles
$$\mathrm{alb}_X^*\mathcal M_i\to\mathcal L_{\lambda_i}.$$
In a local Cox presentation, this map is given by the coordinate corresponding to $\mathcal M_i$. Its zero divisor $D_i$ is therefore the corresponding coordinate divisor. Put $D_A= \sum_iD_i$. The construction in Proposition~\ref{prop: cox-reconstruction} identifies
$$(X,D_A)\cong\bigl(P\times^T F,P\times^T D_F\bigr),$$
where $D_F=F\setminus T$ is the toric boundary. Thus $D_A$ is a reduced relative simple normal crossing divisor.

The isomorphisms induced by $G$ permute the summands $\mathcal M_i$. Tensor factors pulled back from $A$ and scalar factors do not change their zero divisors, so $G$ permutes the divisors $D_i$. The action on $X$ is free because the action on $A$ is free. Hence $D_A$ descends through the finite \'{e}tale Galois morphism $X\to Y$ to a reduced divisor $D\subseteq Y$.

Since $A\to C$ is a $G$-torsor and $X\to A$ is equivariant, the quotient square is Cartesian:
$$X\cong Y\times_C A.$$
The finite quotient $Y=X/G$ is projective, so $Y\to C$ is projective. Smoothness and relative normal crossings descend along the finite \'{e}tale cover $A\to C$. After the base change $A\to C$, the family is $\bigl(P\times^T F,P\times^T D_F\bigr)$. Hence every fiber of $(Y,D)\to C$ is isomorphic to $(F,D_F)$. By \cite[Proposition~4.3.1]{AWZ21}, this gives a toric fibration in the sense of Definition~\ref{def: toric fibration}. Its \'{e}tale local product presentations give local product presentations in the analytic topology, so it is also a toric fibration in the sense of \cite[Definition~2.8]{MZ24}.
\end{proof}

\section{Proofs of the main results}

In this section, we finish the proof of our main results. We first prove Proposition~\ref{prop: smooth-mrc}.

\begin{proof}[Proof of Proposition~\ref{prop: smooth-mrc}]
Proposition~\ref{prop: smooth albanese} proves the assertion about the Albanese morphism.

Lemma~\ref{lem: invariant cover} gives a connected finite \'{e}tale Galois cover $\tau_0:Y\to X$ and an int-amplified lift $\widetilde f:Y\to Y$ making the following square Cartesian:
$$
\begin{tikzcd}
Y \arrow[r, "\widetilde f"] \arrow[d, "\tau_0"'] & Y \arrow[d, "\tau_0"]\\
X \arrow[r, "f"'] & X
\end{tikzcd}
$$
Moreover, we have
$$q(Y)= \widetilde q(Y)= \widetilde q(X).$$
Put $A_0= \mathrm{Alb}(Y)$, and let $\mathrm{alb}_Y:Y\to A_0$ be the Albanese morphism. Lemma~\ref{lem: maximal irregularity mrc} identifies $\mathrm{alb}_Y$ with the special MRC fibration and shows that its fibers are irreducible and rationally connected. Proposition~\ref{prop: smooth albanese} shows that $\mathrm{alb}_Y$ is smooth.

Put $G_0= \mathrm{Gal}(\tau_0)$. Albanese functoriality gives an action of $G_0$ on $A_0$ for which $\mathrm{alb}_Y$ is equivariant. The action of $G_0$ on $Y$ is free because $\tau_0$ is \'{e}tale. Lemma~\ref{lem: free action on base} therefore shows that the action on $A_0$ is free. Set $C=A_0/G_0$ and denote the quotient map by $\pi_0:A_0\to C$. Then $C$ is a smooth projective $Q$-abelian variety, while $\pi_0$ is finite, \'{e}tale, and Galois. Moreover,
$$\dim C= \dim A_0 = q(Y)= \widetilde q(X).$$

The $G_0$-invariant morphism $\pi_0\circ \mathrm{alb}_Y$ descends to a morphism $h:X\to C$. The natural diagram
\begin{equation}\label{eq: mrc-quotient-square}
\begin{tikzcd}
Y \arrow[r, "\mathrm{alb}_Y"] \arrow[d, "\tau_0"'] & A_0 \arrow[d, "\pi_0"]\\
X \arrow[r, "h"'] & C
\end{tikzcd}
\end{equation}
is Cartesian. Indeed, the induced morphism $Y\to X\times_C A_0$ is a morphism between finite \'{e}tale covers of $X$, both of degree $|G_0|$. If two closed points $y_1,y_2\in Y$ have the same image, then $y_2= \gamma(y_1)$ for some $\gamma\in G_0$ and $\mathrm{alb}_Y(y_2)= \mathrm{alb}_Y(y_1)$. Thus $\gamma$ fixes $\mathrm{alb}_Y(y_1)$, so the freeness of the action on $A_0$ gives $\gamma = 1$. The morphism is therefore injective on every closed fiber. Since the two covers have the same degree, it is an isomorphism.

Since $X$ is projective, the morphism $h$ is projective. It follows from \eqref{eq: mrc-quotient-square} by \'{e}tale descent that $h$ is smooth and surjective with connected rationally connected fibers. The variety $C$ contains no rational curves. Indeed, after pulling back $\pi_0$ along a morphism $\mathbf{P}^1\to C$, every connected component of the resulting finite \'{e}tale cover of $\mathbf{P}^1$ maps isomorphically to $\mathbf{P}^1$. The morphism therefore lifts to $A_0$ and is constant. Consequently, every rational curve on $X$ lies in a fiber of $h$, while every fiber is rationally connected. Thus $h$ represents the maximal rationally connected fibration of $X$.
\end{proof}

\medskip

\begin{proof}[Proof of Theorem~\ref{thm: main}]
\noindent\textbf{Step 1.} We construct the smooth MRC fibration and record the commuting Galois actions.

Lemma~\ref{lem: invariant cover} gives a connected finite \'{e}tale Galois cover $\tau_0:Y\to X$ and a lift $\widetilde f:Y\to Y$ making the following square Cartesian:
\begin{equation}\label{eq: main-cartesian-lift}
\begin{tikzcd}
Y \arrow[r, "\widetilde f"] \arrow[d, "\tau_0"'] & Y \arrow[d, "\tau_0"]\\
X \arrow[r, "f"'] & X
\end{tikzcd}
\end{equation}
Since $f$ is Galois and int-amplified, so is $\widetilde f$, and
$$q(Y)= \widetilde q(Y).$$
Fix an ample Cartier divisor $H_X$ such that $f^*H_X-H_X$ is ample, and put $H_Y= \tau_0^*H_X$. Then $\widetilde f^*H_Y-H_Y$ is ample, and $H_Y$ is invariant under $G_0:= \mathrm{Gal}(\tau_0)$. We use $H_Y$ in the fiberwise and relative Cox constructions below.

Apply Proposition~\ref{prop: albanese-structure}(1) to $(Y,\widetilde f)$, and choose the Albanese morphism $\mathrm{alb}_Y:Y\to A_0:= \mathrm{Alb}(Y)$ so that the induced endomorphism is an int-amplified isogeny $g_0:A_0\to A_0$. Using the construction in the proof of Proposition~\ref{prop: smooth-mrc}, we obtain the smooth $Q$-abelian quotient $C=A_0/G_0$ and a morphism $h:X\to C$ representing the maximal rationally connected fibration. With $\pi_0:A_0\to C$ the quotient map, the square \eqref{eq: mrc-quotient-square} is Cartesian.

Proposition~\ref{prop: isotrivial-toric}, applied to $(Y,\widetilde f)$, shows that $\mathrm{alb}_Y$ is \'{e}tale locally trivial with a fixed smooth projective toric fiber $F$.

Let $K\subseteq\mathrm{Gal}(\widetilde f)$ be the subgroup preserving every fiber of $\mathrm{alb}_Y$, as in Proposition~\ref{prop: albanese-structure}(2). Lemma~\ref{lem: cartesian galois actions} gives an automorphism $\alpha:G_0\to G_0$ such that
$$\widetilde f\circ\gamma= \alpha(\gamma)\circ\widetilde f$$
for every $\gamma\in G_0$, and it shows that every elements in $G_0$ commute with those in $\mathrm{Gal}(\widetilde f)$. In particular, the elements in $G_0$ commute with the elements in $K$, and the induced affine actions on $A_0$ satisfy
$$g_0\circ\gamma= \alpha(\gamma)\circ g_0.$$

\medskip

\noindent\textbf{Step 2.} We construct a finite \'{e}tale cover of $X$ whose Albanese morphism is a split toric fibration and descend the toric boundary to $X$.

If $\dim F=0$, then $\mathrm{alb}_Y$ is an isomorphism, and the assertions of this step hold with $A=A_0$ and $\pi= \pi_0$. For the remainder of this step, assume that $\dim F > 0$.

Starting from $(Y,\widetilde f,H_Y)$, perform the finitely many base changes by multiplication isogenies required for the relative Cox construction of Section~\ref{sec: relative-cox}. Denote the composite of these isogenies by
$$[m]:A\to A_0,$$
where $A$ is a second copy of $A_0$. We use the same notation $g_0:A\to A$ for the corresponding isogeny. It commutes with $[m]$. Put $\pi= \pi_0\circ[m]:A\to C$. For $\gamma\in G_0$, write
$$\gamma(x)= \ell_\gamma(x)+c_\gamma.$$
By Lemma~\ref{lem: multiplication-galois-cover} and the description in its proof, $\pi$ is a connected finite \'{e}tale Galois cover with group
$$G= \{(\gamma,b):\gamma\in G_0,\ b\in A,\ [m]b=c_\gamma\}.$$
Set
$$Z=X\times_C A\cong Y\times_{A_0,[m]}A,$$
and let $p_Z:Z\to A$ be the projection. Since $(\mathrm{alb}_Y)_*\mathcal O_Y= \mathcal O_{A_0}$, flat base change gives $(p_Z)_*\mathcal O_Z= \mathcal O_A$. Thus $Z$ is connected, and $Z\to X$ is a connected finite \'{e}tale Galois cover with group $G$.

Under the identification $Z\cong Y\times_{A_0,[m]}A$, the actions of $G$ and $K$ are
$$(\gamma,b)\cdot(y,x)=(\gamma(y),\ell_\gamma(x)+b),\qquad k\cdot(y,x)=(k(y),x).$$
Since every elements in $G_0$ commute with those in $K$, these actions commute as well.

The endomorphism $\widetilde f$ lifts to
$$f_A:Z\to Z,\qquad f_A(y,x)=(\widetilde f(y),g_0(x)).$$
Together with $\widetilde f\circ\gamma= \alpha(\gamma)\circ\widetilde f$, the construction in the proof of Lemma~\ref{lem: multiplication-galois-cover} gives a homomorphism
$$\beta:G\to G,\qquad \beta(\gamma,b)=(\alpha(\gamma),g_0(b)),$$
such that $f_A\circ\delta= \beta(\delta)\circ f_A$ for every $\delta\in G$. We use only that $\beta$ is a homomorphism. It need not be an automorphism.

Let $H_Z$ be the pullback of $H_Y$ to $Z$. Both $H_Z$ and $f_A^*H_Z-H_Z$ are ample, so $f_A$ is int-amplified. It is finite and surjective, as in the base change discussion at the start of Section~\ref{sec: relative-cox}. Moreover, $H_Z$ is $G$-invariant, so $G$ preserves its fiberwise class $H_F$.

Since $p_Z$ is the base change of $\mathrm{alb}_Y$, it has connected rationally connected fibers. Every morphism from $Z$ to an abelian variety is constant on these fibers and hence factors uniquely through $p_Z$ by the rigidity lemma \cite[Lemma~1.15]{Deba01}. Thus $p_Z$ is the Albanese morphism, after choosing compatible base points.

By the base change argument at the start of Section~\ref{sec: relative-cox}, the relative quotient by $K$ is
$$Z/K\cong Z\times_{A,g_0}A.$$
We can therefore apply Proposition~\ref{prop: equivariant relative generators} to the Albanese morphism $p_Z:Z\to A$ and its relative Cox algebra, with the $G$-action and the homomorphism $\beta$ constructed above. Since $G$ acts freely on $A$, that proposition and Corollary~\ref{cor: toric-boundary-descent} give a $G$-invariant reduced divisor $D_A\subseteq Z$, a principal $T$-bundle $P\to A$, and a reduced divisor $D\subseteq X$ such that, writing $\partial F=F\setminus T$,
$$(Z,D_A)\cong\bigl(P\times^T F,P\times^T\partial F\bigr),\qquad D_A=(Z\to X)^*D.$$
Moreover, $(X,D)\to C$ is a toric fibration. In particular, all its fibers are isomorphic to $(F,\partial F)$.

\medskip

\noindent\textbf{Step 3.} We descend the endomorphism and determine its Galois group.

By Step 1, the isogeny $g_0:A_0\to A_0$ satisfies $g_0\circ\gamma= \alpha(\gamma)\circ g_0$ for every $\gamma\in G_0$, where $\alpha$ is an automorphism of $G_0$. Hence $g_0$ descends to a surjective endomorphism $f_C:C\to C$, giving the commutative diagram
$$
\begin{tikzcd}
A_0 \arrow[r, "g_0"] \arrow[d, "\pi_0"'] & A_0 \arrow[d, "\pi_0"]\\
C \arrow[r, "f_C"'] & C
\end{tikzcd}
$$
The identities defining $h$ and the lift $\widetilde f$ give $h\circ f\circ\tau_0= \pi_0\circ g_0\circ\mathrm{alb}_Y=f_C\circ h\circ\tau_0$. Since $\tau_0$ is surjective, we obtain $h\circ f=f_C\circ h$. The surjectivity of $h$ gives the uniqueness of $f_C$. If $C$ is a point, the remaining assertions are immediate, so assume that $\dim C>0$.

By \cite[Lemma~3.4]{Meng20}, the endomorphism $f_C$ is int-amplified and hence finite. If $f$ is $q$-polarized, then \cite[Theorem~3.11]{MZ18} shows that $f_C$ is $q$-polarized. Since $\pi_0$ is surjective and \'{e}tale and $f_C\circ\pi_0= \pi_0\circ g_0$ is \'{e}tale, \'{e}tale locality on the source shows that $f_C$ is \'{e}tale.

By Proposition~\ref{prop: albanese-structure}(2), the action of $\mathrm{Gal}(\widetilde f)$ on $A_0$ consists of all translations by points of $\ker(g_0)$. Since every elements in $G_0$ commute with those in $\mathrm{Gal}(\widetilde f)$ on $Y$ and $\mathrm{alb}_Y$ is surjective, its action on $A_0$ commutes with all these translations. They therefore descend to an action of $\ker(g_0)$ on $C=A_0/G_0$. For $k\in\ker(g_0)$, let $t_k$ denote translation by $k$. Then $g_0\circ t_k=g_0$ shows that the descended action is by automorphisms over $f_C$.

This action is faithful. Indeed, if $t_k$ induces the identity on $C$, then $t_k= \gamma$ for some $\gamma\in\mathrm{Gal}(\pi_0)=G_0$. Hence
$$g_0=g_0\circ t_k=g_0\circ\gamma= \alpha(\gamma)\circ g_0.$$
Since $g_0$ is surjective, $\alpha(\gamma)=1$. As $\alpha$ is an automorphism, $\gamma=1$ and $k=0$.

Finally, the commutative diagram gives $\deg f_C= \deg g_0=|\ker(g_0)|$. Thus the faithful group of automorphisms over $f_C$ has order $\deg f_C$, so $f_C$ is Galois and
$$\mathrm{Gal}(f_C)\cong\ker(g_0).$$
In particular, $\mathrm{Gal}(f_C)$ is abelian. This proves the theorem.
\end{proof}

\begin{proof}[Proof of Corollary~\ref{cor: albanese toric cover}]
Take $\widetilde X=X\times_C A$ for the cover $A\to C$ in Theorem~\ref{thm: main}, and let $\mathrm{alb}_{\widetilde X}:\widetilde X\to A$ be the projection. Since $h$ is proper with geometrically connected fibers, $h_*\mathcal O_X= \mathcal O_C$. Flat base change gives $(\mathrm{alb}_{\widetilde X})_*\mathcal O_{\widetilde X}= \mathcal O_A$, so $\widetilde X$ is connected. The morphism $\widetilde X\to X$ is the base change of the finite \'{e}tale Galois cover $A\to C$ and is therefore finite, \'{e}tale, and Galois. The theorem identifies $\mathrm{alb}_{\widetilde X}$ with the Albanese morphism of $\widetilde X$ and gives an isomorphism $\widetilde X\cong P\times^T F$ over $A$. Hence $\mathrm{alb}_{\widetilde X}$ is a split toric fibration.
\end{proof}

As an application, we show Theorem~\ref{thm:log-cy}.

\begin{proof}[Proof of Theorem~\ref{thm:log-cy}]
    By Theorem~\ref{thm: main} and Corollary~\ref{cor: albanese toric cover}, there exists a finite \'etale cover $\tau: \widetilde{X}\to X$ from a smooth variety $\widetilde{X}$ such that the Albanese map $\mathrm{alb}_{\widetilde{X}}: \widetilde{X}\to A:=\mathrm{Alb}(\widetilde{X})$ is a split toric fibration. In particular, there exists a reduced divisor $D$ on $X$ such that $$ (\widetilde X,\widetilde D:=\tau^*D)\cong(P\times^T F,P\times^T\partial F) $$ over $A$,
where $F$ is a smooth projective toric variety with dense torus $T$, $\partial F=F\setminus T$, and $P\to A$ is a principal $T$-bundle.

A torus invariant rational top form on $F$ has divisor $-\partial F$ \cite[Section~8.1]{CLS11}. The transition maps of $P\times^T F$ preserve this form when viewed as a relative form, so
$$ K_{\widetilde X/A}+\widetilde D\sim0. $$
Since $A$ is abelian, $K_A\sim0$, and hence $K_{\widetilde X}+\widetilde D\sim0$.

Moreover, $\partial F$ has simple normal crossings, so the local product presentations show that $\widetilde D$ also has simple normal crossings. Thus $(\widetilde X,\widetilde D)$ is log canonical, and the same holds for $(X,D)$ because $\tau$ is surjective and \'{e}tale. The canonical divisor formula gives
$$ \tau^*(K_X+D)\sim K_{\widetilde X}+\widetilde D\sim0. $$
Pushing forward by $\tau$, we obtain $(\deg\tau)(K_X+D)\sim0$. Therefore $K_X+D\sim_{\mathbb Q}0$, and $(X,D)$ is a log Calabi--Yau pair.
\end{proof}

\medskip

Motivated by the quotient constructions in \cite{MY21} and \cite[Example~3.10]{Yosh21b}, we provide an example showing that the smoothness assumption in Theorem~\ref{thm: rationally connected} cannot be removed.

\begin{example}\label{ex: klt-counterexample}
Let $E= \mathbb{C}/(\mathbb{Z}+i\mathbb{Z})$ be the elliptic curve with complex multiplication by $i$, and put $Y= \mathbf{P}^1\times E$. Consider the involution
$$\iota([x:y],a)=([y:x],-a),$$
and let $\pi:Y\to X=Y/\langle\iota\rangle$ be the quotient. The fixed locus of $\iota$ is the product of the set of fixed points of $[x:y]\mapsto[y:x]$ and $E[2]$; these sets have two and four points, respectively. At each fixed point, the differential of $\iota$ acts as $-\mathrm{id}$ on the tangent space. Thus $\pi$ is quasi-\'{e}tale, and $X$ has eight quotient singularities of type $A_1:= \mathbb{C}^2/\frac{1}{2}(1,1)$. In particular, $X$ is a $\mathbb{Q}$-factorial canonical surface.

Projection to the second factor induces a morphism $X\to E/\langle-1\rangle\cong\mathbf{P}^1$. Its general fiber is a smooth rational curve, and the image of $\{[1:1]\}\times E$ is a section. The generic fiber therefore has a $\mathbb{C}(\mathbf{P}^1)$-point and is isomorphic to $\mathbf{P}^1$ over $\mathbb{C}(\mathbf{P}^1)$. Hence $\mathbb{C}(X)$ is a purely transcendental extension of $\mathbb{C}$ of transcendence degree two, so $X$ is rational and in particular rationally connected.

The surface $X$ is not of Fano type. Indeed, if it is, then the quasi-\'{e}tale cover $Y$ is of Fano type by \cite[Proposition~6.1]{Yosh21b}. A variety of Fano type is rationally connected \cite{Zhang06}, whereas $Y$ maps onto the elliptic curve $E$, a contradiction. Since every projective toric variety is of Fano type \cite[Example~11.4.26]{CLS11}, the surface $X$ above is not toric.

Define $\varphi:\mathbf{P}^1\to\mathbf{P}^1$ by $\varphi([x:y])=[x^2:y^2]$ and $\beta:E\to E$ by $\beta(a)=(1+i)a$. Then $\widetilde f:= \varphi\times\beta$ commutes with $\iota$ and descends to a finite endomorphism $f:X\to X$ of degree four. The Galois group of $\widetilde f$ is
$$\mathrm{Gal}(\widetilde f)= \langle\delta\rangle\times\ker\beta\cong(\mathbb{Z}/2\mathbb{Z})^2,$$
where $\delta([x:y])=[-x:y]$ and $\ker\beta$ acts by translations on $E$. This group commutes with $\iota$, since $\delta$ commutes with the coordinate swap and $\ker\beta\subseteq E[2]$. Its intersection with $\langle\iota\rangle$ is trivial, so its descended action over $f$ is faithful. Since its order equals $\deg f=4$, the endomorphism $f$ is Galois.

The product of the quotient maps for $[x:y]\mapsto[y:x]$ on $\mathbf{P}^1$ and $a\mapsto-a$ on $E$ factors through a finite morphism $\nu:X\to\mathbf{P}^1\times\mathbf{P}^1$. The maps $\varphi$ and $\beta$ descend to degree-two endomorphisms of the respective quotient curves. Hence the line bundle $\mathcal L:= \nu^*\mathcal O_{\mathbf{P}^1\times\mathbf{P}^1}(1,1)$ is ample and satisfies
$$f^*\mathcal L\cong\mathcal L^{\otimes2}.$$
Thus $X$ is a rationally connected $\mathbb{Q}$-factorial canonical surface with a $2$-polarized Galois endomorphism, but $X$ is not toric.
\end{example}

\end{document}